\newcommand{\myauthor}{David Gepner and Jeremiah Heller}
\newcommand{\mytitle}{The tom Dieck splitting theorem in equivariant motivic homotopy theory}
\title{\mytitle}
\author{ David Gepner\footnote{David
Gepner was supported
by DFG award GE 2504/1-1 and NSF award DMS-1714273.}~ and Jeremiah Heller\footnote{Jeremiah Heller was supported by DFG award HE6740/1-1 and NSF award DMS-1710966.}}
\date{}
\setlist[enumerate, 1]{font=\upshape, nosep,  wide=0.5em}
\definecolor{refkey}{gray}{0.5}
\definecolor{labelkey}{gray}{0.5}
\definecolor{note}{rgb}{0.94, 0.99, 1.00}
\colorlet{myurlcolor}{Aquamarine}
\colorlet{mylinkcolor}{violet}
\colorlet{mycitecolor}{YellowOrange}
\definecolor{reference}{rgb}{.93,.51,.93}
\definecolor{citation}{rgb}{1,.68,.26}
\definecolor{mrnumber}{rgb}{.80,.40,0}
\newcommand\myshade{85}
\newcommand{\A}{\mathds{A}}
\newcommand{\V}{\mathds{V}}
\renewcommand{\P}{\mathds{P}}
\newcommand{\Z}{\mathds{Z}}
\newcommand{\N}{\mathds{N}}
\renewcommand{\L}{\mathrm{L}}
\newcommand{\G}{\mathds{G}}
\newcommand{\MM}{\mathcal{M}}
\newcommand{\NNN}{\mathcal{N}}
\newcommand{\NN}{\mathrm{N}}
\newcommand{\W}{\mathrm{W}}
\newcommand{\TT}{\mcal{T}}
\newcommand{\VV}{\mcal{V}}
\newcommand{\WW}{\mcal{W}}
\newcommand{\OO}{\mathcal{O}}
\renewcommand{\AA}{\mathcal{A}}
\newcommand{\BB}{\mathcal{B}}
\newcommand{\CC}{\mathcal{C}}
\newcommand{\DD}{\mathcal{D}}
\renewcommand{\S}{\mathcal{S}}
\renewcommand{\SS}{\mathds{1}}
\newcommand{\oX}{\overline{X}}
\newcommand{\iso}{\cong}
\newcommand{\wkeq}{\simeq}
\newcommand{\Sch}{\mathrm{Sch}}
\newcommand{\Sm}{\mathrm{Sm}}
\newcommand{\coev}{\mathrm{coev}}
\renewcommand{\i}{\infty}
\newcommand{\SH}{\mathcal{S}\mathrm{pt}}
\newcommand{\HH}{\mathcal{S}\mathrm{pc}}
\newcommand{\bb}{\bullet}
\newcommand{\spt}{\SH}
\newcommand{\set}{\mathrm{Set}}
\newcommand{\pt}{\mathrm{pt}}
\newcommand{\ev}{\mathrm{ev}}
\newcommand{\hh}{\mathbf{h}}
\newcommand{\tte}{\mathbf{t}}
\newcommand{\mgf}{{\bf \Phi}}
\newcommand{\Fil}{\mathrm{Fil}}
\newcommand{\op}{\mathrm{op}}
\newcommand{\Pre}{\mathcal{P}}
\newcommand{\Shv}{\mathcal{S}\mathrm{hv}}
\newcommand{\FF}{\mathcal{F}}
\newcommand{\E}{\mathcal{E}}
\newcommand{\Eg}{\mathbf{E}}
\newcommand{\EE}{\mathbf{E}}
\newcommand{\Es}{\mathrm{E}_{\bullet} }
\newcommand{\stab}{\mathrm{Stab}}
\newcommand{\Stab}{\mathrm{Stab}}
\newcommand{\Mod}{\mathrm{Mod}}
\newcommand{\Th}{\mathrm{Th}}
\newcommand{\id}{\mathrm{id}}
\newcommand{\Rep}{\mathrm{Rep}}
\newcommand{\Sph}{\mathrm{Sph}}
\newcommand{\mcal}[1]{\mathcal{#1}}
\newcommand{\ul}[1]{\underline{\smash{#1}}}
\newcommand{\cd}{\smash \cdot}
\newcommand{\co}{\mathrm{co}}
\newcommand{\Orb}{\mathrm{Orb}}
\newcommand{\wt}[1]{\widetilde{#1}}
\newcommand{\et}{\acute{e}t}
\renewcommand{\emptyset}{\varnothing}
\newcommand{\Nis}{\mathrm{Nis}}
\newcommand{\mot}{\mathrm{mot}}
\newcommand{\all}{\mathrm{all}}
\newcommand{\pr}{\mathrm{pr}}
\newcommand{\fpr}{\mathrm{fpr}}
\newcommand{\red}{\mathrm{red}}
\newcommand{\fppf}{\mathrm{fppf}}
\newcommand{\Ntriv}{N\textrm{-}\mathrm{triv}}
\newcommand{\Nfree}{N\textrm{-}\mathrm{free}}
\newcommand{\Hfree}{H\textrm{-}\mathrm{free}}
\newcommand{\minus}{\smallsetminus}
\newcommand{\can}{\mathrm{can}}
\DeclareMathOperator*{\colim}{\mathrm{colim}}
\DeclareMathOperator{\spec}{\mathrm{Spec}}
\DeclareMathOperator{\proj}{\mathrm{Proj}}
\DeclareMathOperator{\Hom}{Hom}
\DeclareMathOperator{\Sym}{Sym}
\DeclareMathOperator{\Sing}{\mathrm{Sing}}
\DeclareMathOperator{\map}{Map}
\DeclareMathOperator{\Fun}{Fun}
\newcommand{\stable}{\ul{\mathrm{Map}}}
\def\sCirclearrowleft{\ensuremath{%
		\rotatebox[origin=c]{90}{$\scriptstyle \circlearrowleft$}}}
\newcommand{\eSch}{\Sch^{\sCirclearrowleft}}
\newcommand{\CAlg}{{\rm CAlg}}
\newcommand{\Cat}{{\rm Cat}}
\newcommand{\LPr}{\mathcal{P}{\mathrm{r}}^{\mathrm{L}}}
\newcommand{\LPrx}{\mathcal{P}{\mathrm{r}}^{\mathrm{L},\otimes}}
\newcommand{\Cati}{\mathrm{Cat}_{\infty}}
\newcommand*\circled[1]{\tikz[baseline=(char.base)]{
		\node[shape=circle,draw,inner sep=2pt] (char) {#1};}}
\newcommand{\ctext}[1]{\text{\makebox[0pt]{#1}}}
 \numberwithin{equation}{section} 
\theoremstyle{plain}
\newtheorem*{theorem*}{Theorem}
\newaliascnt{theorem}{equation}  
\newtheorem{theorem}[theorem]{Theorem}  
\newaliascnt{proposition}{equation}  
\newtheorem{proposition}[proposition]{Proposition}
\newaliascnt{lemma}{equation}  
\newtheorem{lemma}[lemma]{Lemma}
\newaliascnt{corollary}{equation}  
\newtheorem{corollary}[corollary]{Corollary}
\newaliascnt{claim}{equation}  
\newaliascnt{conjecture}{equation}  
 \theoremstyle{definition}
\newaliascnt{definition}{equation}  
\newtheorem{definition}[definition]{Definition}
\newaliascnt{example}{equation}  
\newtheorem{example}[example]{Example}
\newaliascnt{remark}{equation}  
\newtheorem{remark}[remark]{Remark}
\newaliascnt{assumption}{equation}  
\newaliascnt{condition}{equation}  
\newtheorem{condition}[condition]{Condition}
\newaliascnt{notation}{equation}  
\newtheorem{notation}[notation]{Notation}
\newcommand{\aref}[1]{\autoref{#1}}
\begin{document}

\maketitle  

\begin{abstract}
 We establish,  in the setting of equivariant motivic homotopy theory for a finite group, a version of tom Dieck's splitting theorem for the fixed points of a suspension spectrum. Along the way we establish structural results and constructions for equivariant motivic homotopy theory of independent interest. This includes geometric fixed point functors and the motivic Adams isomorphism.
\paragraph{Key Words.}
    Motivic homotopy theory, equivariant homotopy theory, Adams isomorphism, tom Dieck splitting.

    \paragraph{Mathematics Subject Classification 2010.}
    Primary:
    \href{https://mathscinet.ams.org/msc/msc2010.html?t=14Fxx&btn=Current}{14F42},
    \href{https://mathscinet.ams.org/msc/msc2010.html?t=55Pxx&btn=Current}{55P91}.
    Secondary:
    \href{https://mathscinet.ams.org/msc/msc2010.html?t=55Pxx&btn=Current}{55P42},
    \href{https://mathscinet.ams.org/msc/msc2010.html?t=55Pxx&btn=Current}{55P92}.
 
 \end{abstract}

\tableofcontents

\section{Introduction}

In his 1970 ICM talk \cite{Seg70}, Segal sketched a computation of the endomorphism ring of the equivariant sphere spectrum for a finite group $G$, identifying this endomorphism ring with the Burnside ring of finite $G$-sets. Using other methods, this computation  was recovered and massively generalized by tom Dieck's splitting theorem \cite{tD75}.
These results form a crucial layer of the foundations  on which the successes of equivariant homotopy in the ensuing decades were built, from early foundations \cite{LMS}, to Carlsson's resolution of the Segal completion conjecture \cite{Car}, to the Hill-Hopkins-Ravenel solution of the Kervaire invariant one problem \cite{HHR}.

 An equivariant version of motivic homotopy theory 
 was introduced by Voevodsky
 \cite{Deligne} to study quotients of motives by group actions, which played a role in his work on the Bloch-Kato conjecture. A number of authors have subsequently further developed this theory, and variants,  including
 Hu-Kriz-Ormsby \cite{HKO}, Heller-Krishna-{\O}stv{\ae}r \cite{HKO:EMHT}, Herrmann \cite{Herrmann}, and Carlsson-Joshua \cite{CJ}. The state-of-the-art is  Hoyois's   \cite{Hoyois:6}, where he develops the formalism of   Grothendieck's six operations in this theory.

 In this paper we establish an analogue of tom Dieck's splitting in the context of stable motivic homotopy theory for finite group actions. Throughout, we assume that $G$ is a finite group whose order is coprime to the characteristics of the residue fields of the base-scheme $B$; in other words, the group scheme associated to $G$ is linearly reductive over $B$.
Our splitting theorem, proved in \aref{thm:mtd} below, computes the $N$-fixed points of suspension spectra (more generally of ``split spectra") as a motivic $G/N$-spectrum, where $N\unlhd G$ is a normal subgroup. In case $N=G$, this takes the following form, where $(H)$ denotes the conjugacy class of a subgroup.
 
\begin{theorem}[Motivic tom Dieck splitting]\label{thm:imtd}
Let $G$ be a finite group whose order is invertible on $B$. Let $X$ be a based motivic $G$-space over $B$. There is an equivalence of motivic spectra
$$
\Theta_X:\bigoplus_{(H)} 
 \left(\Sigma^\i( X^{H}) \right)_{\hh\W H}
\xrightarrow{\sim} (\Sigma^\i X)^G
$$ 
\end{theorem}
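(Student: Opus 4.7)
The plan is to imitate tom Dieck's classical inductive proof, using the two constructions flagged in the abstract: the geometric fixed point functor and the motivic Adams isomorphism.

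First, I would construct the map $\Theta_X$. For each subgroup $H \le G$, restriction along $N_G H \twoheadrightarrow \W H$ makes $X^H$ a $\W H$-space, and the $G$-action extends the inclusion $X^H \hookrightarrow X$ to a $G$-equivariant composite
$$
G_+ \wedge_{N_G H} (E\W H_+ \wedge X^H) \to G_+ \wedge_{N_G H} X^H \to X.
$$
Passing to suspension spectra, taking $G$-fixed points, and applying the motivic Adams isomorphism --- which identifies the $G$-fixed points of the $\W H$-free $G$-spectrum $G_+ \wedge_{N_G H}(E\W H_+ \wedge \Sigma^\i X^H)$ with the $\W H$-homotopy orbits $(\Sigma^\i X^H)_{\hh \W H}$ --- yields a map $(\Sigma^\i X^H)_{\hh \W H} \to (\Sigma^\i X)^G$; summing over conjugacy classes gives $\Theta_X$.

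Next, I induct on $|G|$ via the isotropy cofiber sequence. The base case $G = 1$ is trivial. For $|G| > 1$, let $\AA$ denote the family of proper subgroups of $G$ and consider the cofiber sequence
$$
E\AA_+ \to S^0 \to \widetilde{E}\AA
$$
of based motivic $G$-spaces. Smashing with $\Sigma^\i X$ and applying $(-)^G$ yields a cofiber sequence of motivic spectra. By the defining property of the geometric fixed point functor, the cofiber $(\Sigma^\i X \wedge \widetilde{E}\AA)^G$ identifies with $\Sigma^\i X^G$, matching the $(H) = (G)$ summand of $\Theta_X$ (as $\W G = 1$); the corresponding summand map of $\Theta_X$ provides a section, splitting this summand off the cofiber sequence. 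It then remains to identify the fiber $(\Sigma^\i X \wedge E\AA_+)^G$ with $\bigoplus_{(H),\, H \ne G} (\Sigma^\i X^H)_{\hh \W H}$.

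For this last identification, I would filter $E\AA_+$ by subfamilies $\AA_{\le (H)}$ of subgroups subconjugate to a proper $H$, so the layers are indexed by conjugacy classes of proper subgroups. The $(H)$-layer identifies with $G_+ \wedge_{N_G H}(E\W H_+ \wedge \Sigma^\i X^H)$ and the motivic Adams isomorphism computes its $G$-fixed points as $(\Sigma^\i X^H)_{\hh \W H}$. The summand maps of $\Theta_X$ provide sections of each successive layer's projection, so the filtration splits and the fiber assembles into the claimed direct sum. The main obstacle is the Adams-isomorphism step: showing that $G$-fixed points of a $\W H$-free spectrum give $\W H$-homotopy orbits in the motivic setting, with the naturality needed for the layer-by-layer splitting. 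This is exactly the content of the motivic Adams isomorphism developed earlier in the paper; granting this and the geometric fixed point identification, the theorem follows by a formal assembly of the isotropy filtration.
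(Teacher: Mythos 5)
Your construction of $\Theta_X$ and your overall strategy coincide with the paper's: the summand maps are assembled from the counit $G_+\wedge_{\NN_GH}X^H\to X$, the canonical map $\Sigma^\i(X^H)\to(\Sigma^\i X)^H$ splitting the projection to geometric fixed points, and the Adams isomorphism; and the proof is isotropy separation along a chain of adjacent families, checking that exactly one summand survives on each layer. (The paper runs the filtration from $\emptyset$ to $\FF_{\all}$ in a single pass rather than first splitting off $\wt\EE\mcal{P}$ and inducting on $|G|$, and it does not need the filtration to \emph{split} — it only checks that $\Theta$ is an equivalence on each associated graded piece — but these are differences of bookkeeping.) The one step to be careful about is your identification of the $(H)$-layer: $\EE(\FF',\FF)\wedge\Sigma^\i X$ is \emph{not} equivalent as a $G$-spectrum to $G_+\wedge_{\NN_GH}(\EE\W H_+\wedge\Sigma^\i X^H)$; for $H\neq e$ the two already have different underlying nonequivariant spectra, since $\EE(\FF',\FF)$ is nonequivariantly contractible while $\Sigma^\i X^H$ need not be. The substitution of $X^H$ for $X$ is legitimate only after applying the fixed point functor, and this is precisely where geometric fixed points enter: by \aref{prop:EENadj} the layer is unchanged by smashing with $\wt\EE\FF[H]$, so its fixed points factor through $(\Sigma^\i X)^{\mgf H}\simeq\Sigma^\i(X^H)$; this is \aref{prop:Phiadj}, and combined with \aref{thm:adams} it yields exactly your claimed computation of the layer's fixed points. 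With that repair, your assembly of the layers is the paper's argument.
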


The reader familiar with tom Dieck's theorem \cite{tD75} will recognize this result as taking a very similar  form as the classical result. The key difference here is that 
the functor $(-)_{\hh G}$, is an algebro-geometric, or motivic, version of the homotopy orbits functor rather than the familiar categorical construction. Recall that the ordinary homotopy orbits functor is defined as follows. A $G$-spectrum $Y$ determines a diagram on the category $BG\simeq B_{{\rm Nis}}G$ and the  homotopy orbits is the colimit of this diagram: $Y_{hG}\simeq \colim_{BG} Y$. The motivic version should then be thought of as a motivic, or parameterized, colimit of $Y$ over the category $B_{\et}G$ of \'etale $G$-torsors. We don't make the version of the definition, as just stated, precise here, but instead provide a direct construction of the functor $(-)_{\hh G}$.  
 First, recall that Morel-Voevodsky \cite{MV:A1} introduce a geometric model for the classifying space of \'etale $G$-torsors. This construction is distinct from the usual simplicial construction of the classifying space; rather, the simplicial construction is a model for the classifying space of Nisnevich $G$-torsors.  The equivariant manifestation of this fact is that the universal free motivic $G$-space $\EE G$ is not equivalent to the usual simplicial construction $\mathrm{E}_\bb G$.
The motivic homotopy orbits of a $G$-spectrum $Y$ is defined here by a variant on the standard formula $Y_{hG}\simeq (\mathrm{E}_\bb G_+\otimes Y)/G$, obtained by replacing the use of $\mathrm{E}_\bb G$ by $\EE G$. That is, we take $Y_{\hh G} \simeq (Y\otimes \EE G_+)/G$ as the definition of the motivic homotopy orbits of $Y$.

  Before explaining the intermediate results leading to the splitting theorem,   
 we pause to point out an obvious, but important, difference between ordinary equivariant and motivic equivariant homotopy theory.  
  In the topological case, equivalences are detected by the fixed point functors for subgroups $H\leq G$. This corresponds to the fact that  a set of generators is given by the orbits $G/H$, or that the homotopy theory of $G$-spaces can be presented as presheaves of spaces on the category of $G$-orbits.
On the other hand,  generators for equivariant motivic homotopy theory  are smooth schemes over $B$ with a $G$-action. Orbits $G/H$ are examples
 of smooth $G$-schemes over $B$, but of course there are many more.  Equivalences between motivic $G$-spaces or $G$-spectra are not detected by fixed points,\footnote{Of course one can define a homotopy theory which has this property, but as pointed out by Herrmann \cite{Herrmann}, equivariant algebraic $K$-theory is not representable in the resulting homotopy category.}  because smooth $G$-schemes cannot in any meaningful way be decomposed into pieces of the form $G/H\times X$ (where $X$ has trivial action). 
However, 
 by analyzing filtrations of equivariant motivic homotopy theory arising from localizations and colocalizations determined by families of subgroups, as in \aref{sec:universalF},
one can see that  
 equivalences can be detected using only (desuspensions of) smooth $G$-schemes of very special form, namely those of the form $G\times_{K}X$ such that there is a normal subgroup $N\unlhd K$ which acts trivially on $X$ and the quotient $K/N$ acts freely on $X$. These are the $G$-schemes whose stabilizers are concentrated at a single conjugacy class.

In addition to the  six functor formalism established in \cite{Hoyois:6}, the proof of \aref{thm:imtd} relies on several new results for equivariant motivic homotopy theory, which should be of independent interest. 
As with the splitting theorem, there are versions for all of these results established relative to a normal subgroup $N\trianglelefteq G$; for simplicity we discuss here in the introduction only the results for $N=G$.

The first key ingredient which we need is the geometric fixed points functor, constructed in \aref{sub:gfp}. 
The
 geometric fixed points $X^{\mgf G}$ of a motivic $G$-spectrum $X$ may be obtained as the $G$-fixed points of a suitable localization of $X$, namely one determined by smooth $G$-schemes with trivial action. This functor satisfies analogues of the  main features of the  geometric fixed points functor from ordinary equivariant motivic homotopy theory, as follows. 
\begin{enumerate}
\item It is a symmetric monoidal left adjoint and  $(\Sigma^\i Y_+)^{\mgf G}\simeq \Sigma^\i(Y^G_+)$ for any $Y\in \Sm_B$. 
\item $X^{\mgf G}\simeq (X\otimes \wt \EE \mcal{P})^G$ where $\wt\EE\mcal{P}$ is the 
 unreduced suspension of the universal motivic $G$-space associated to the family $\mcal{P}$ of proper subgroups of $G$.
\item  $X^{\mgf G}\simeq (X[a^{-1}])^G$, where $a$ is the euler class $a:S^0\to T^{\overline{\rho}_G}$ and $\overline{\rho}_G$ is the reduced regular representation.
\end{enumerate}
 Here, given a representation $V$, we write $T^V$ for the associate motivic sphere (i.e, its Thom space). The connection between the second two items is provided by  a geometric presentation  for universal motivic $G$-spaces for families, established in \aref{sec:universalF}, analogous to Morel-Voevodsky's geometric description of classifying spaces. In particular, for the family of proper subgroups, what we find is that $\wt\EE\mcal{P}$ may be described by the  formula 
$\wt\EE\mcal{P}\simeq T^{\i \overline{\rho}_G}=\colim_n T^{n\overline{\rho}_G}$, analogous to the familiar formula from topology.

Also of interest is that the construction of the geometric fixed points functor here permits a motivic version of the Tate square for $C_p$-equivariant motivic spectra in \aref{sub:Tate}.  This is a homotopy pushout square of motivic spectra
\[
\begin{tikzcd}
	X^{C_{p}} \ar[r] \ar[d] & X^{\mgf C_{p}} \ar[d] \\
	X^{\hh C_{p}} \ar[r] & X^{\tte C_{p}},
	\end{tikzcd}
\]
where $X^{\hh C_{p}}$ is a motivic version of the homotopy fixed points functor, defined using $\EE C_p$.

A second key ingredient entering into the splitting theorem is the motivic Adams isomorphism,  proved in \aref{thm:adams} below. This fundamental result identifies the quotient of a free $G$-spectrum with its fixed points. 
There is a natural transformation from the former to the latter and the bulk of the work in the section \aref{sec:adams} is devoted to verifying that this morphism is an equivalence.
Our strategy is to first check that this transformation is an equivalence on the full subcategory of dualizable free $G$-spectra. 
Of course, unless the base is a field of characteristic zero, this doesn't suffice to conclude the result in general. But since $\EE G_+$ is a colimit of dualizable spectra,
it does imply that the  fixed points of $\EE G_+$ coincides with $\mathbf{B}G_+$. Using that $\mathbf{B}G_+$ contains $\SS_B$ as a summand, this lets us define an inverse to the Adams transformation to obtain the general result.  
It is worth pointing out that if $f:T\to B$ is an \'etale torsor, then the Adams isomorphism for $f_\#\SS_T$  is a straightforward consequence of ambidexterity, proved in \cite{Hoyois:6}, for the finite \'etale map $f$.  An obvious strategy presents itself. If $q:X\to B$ is a smooth $G$-scheme over $B$ with free action, then $g:X\to X/G$ is an \'etale torsor and $p:X/G\to B$ is smooth. Since $(g_\#\SS_X)^G\simeq (g_\#\SS_X)/G$ and $p_\#$ commutes with the quotient functor, to 
verify the Adams isomorphism for $p_\#(g_\#\SS_X)$, it would suffice to check that the fixed points functor commutes with $p_\#$. 
Establishing this change-of-base formula directly appears to be as difficult as  the Adams isomorphism itself and we actually obtain this base change formula as \emph{a consequence} of the Adams isomorphism. 
It is interesting to note that from the viewpoint of motivic homotopy theory of stacks, this is an instance of smooth-proper base change formula, along the non-representable map $B G\to B$.

Once all of the foundational results are in place, the proof of the splitting theorem is actually fairly straightforward. It is not hard to write down the map $\Theta_X$ of the statement of the theorem and to check it is an equivalence it suffices to check it is an equivalence when $X$ is concentrated at a single conjugacy class, a case which follows from the analysis in \aref{sec:universalF} of localizations and colocalizations of equivariant motivic homotopy.

\paragraph{Outline:}
We utilize the language of $\infty$-categories throughout this paper.
We begin in \aref{sec:EMHT} by recalling the construction of the $\infty$-categories of motivic $G$-spaces and motivic $G$-spectra from \cite{Hoyois:6}, as well as a few extensions used in this paper. In \aref{sec:universalF} we  study the colocalizations and localizations of equivariant motivic homotopy theory which are determined by a family. 
In \aref{sec:fp} we define fixed point functors and the geometric fixed points. In \aref{sec:quotient} we define the quotient functor on $N$-free spectra and in  \aref{sec:adams} we prove the motivic Adams isomorphism. Finally in \aref{sec:mtds}, we prove the motivic tom Dieck splitting theorem.

\paragraph{Acknowledgements:}
This project was begun  awhile ago. During its   long gestation period we have had the  pleasure and benefit of many interesting and helpful conversations on the material in this paper.  
We especially thank Tom Bachmann,  Elden Elmanto, 
 Christian Haesemeyer, Marc Hoyois, Niko Naumann, Markus Spitzweck, and Paul Arne {\O}stv{\ae}r, and 
the 2016 WiT team:  Agnes Beaudry, Kathryn Hess, Magda Kedziorek, Mona Merling, Vesna Stojanoska.

\paragraph{Notation:} Throughout, $B$ is a quasi-compact, quasi-separated base scheme and $G$ is a finite group whose order is invertible in $\mcal{O}_{B}$.  
We write $\Sch^{G}_B$ for the category of $G$-schemes which are finitely presented and $G$-quasi-projective over $B$. 
For $S\in \Sch^G_B$, write $\Sch^G_S$ for the slice category over $S$ and $\Sm^G_S\subseteq \Sch^G_S$ for the full subcategory whose objects are smooth over $S$.

If $\mcal{E}$ is a locally free  $\mcal{O}_S$-module, we write 
\[
 \V_S(\mcal{E}) := \ul{\spec} (\Sym (\mcal{E}^{\vee})) \;\;\text{and}\;\;
 \P_S(\mcal{E}) := \ul{\proj}(\Sym (\mcal{E}^{\vee}))
\]
respectively for the associated vector bundle scheme and the associated projective bundle on $S$. A representation of $G$ over $B$ is a locally free  $G$-module on $B$. 
If $M$ is a $G$-set we let 
\[
\rho_{M} = \mcal{O}_{B}[M]:=\mcal{O}_{B}\otimes\Z[M]
\]
denote the associated permutation representation. In particular, $\rho_{G}$ is the regular representation.

We use the language of $\i$-categories in this paper and  mostly follow the terminology in \cite{HTT, HA} with the exception that we write $\Cati$ for the $\i$-category of not necessarily small $\i$-categories.

\section{Equivariant motivic homotopy theory}\label{sec:EMHT}

We recall definitions and  basic properties of equivariant motivic homotopy theory.  
We will use the $\infty$-categorical approach to equivariant motivic spectra  introduced
by Hoyois \cite{Hoyois:6}. 
Since we are working with finite groups, the unstable homotopy category agrees with those constructed by Voevodsky \cite{Deligne} and Heller-Krishna-{\O}stv{\ae}r \cite{HKO:EMHT} and the stable homotopy category agrees with the one  from \cite[Appendix A.4]{HVO:BL}.

\subsection{Equivariant geometry}

Let $S\in \Sch^G_B$. If $\phi:G\to K$ is a group homomorphism, we write
\[
\phi^{-1}:\Sch^K_S\to \Sch^G_{\phi^{-1}S}
\]
for the restriction functor, which regards a $K$-scheme over $S$ as a $G$-scheme over $S$ via $\phi$. When no confusion should arise, we write $\Sch^G_{S}$ instead of $\Sch^G_{\phi^{-1}S}$.

Let $X\in \Sch^G_B$. 
 Say that $G$ {\em acts freely} on $X$ if the action of $G(T)$ on $X(T)$ is free for any $T\in \Sch_B$. 
If $G$ acts freely on $X$, then the fppf-quotient $(X/G)_{\fppf}$ is representable by an object $X/G\in \Sch_B$ (since all of our schemes are quasiprojective), see \cite[\href{https://stacks.math.columbia.edu/tag/07S7}{Tag 07S7}]{stacks-project}. Since $G$ is smooth, the fppf-torsor $X\to X/G$ is an \'etale torsor (as it is a smooth map and so \'etale locally admits sections).

\begin{definition}
The stabilizer
 of a point $x\in X$ is the subgroup $\stab(x)\leq G$ defined by
\[
\stab(x) = \{g\in G \mid g\cd x = x\textrm{ and $g$ acts as $\id$ on $k(x)$} \}.
\]
\end{definition}
Then, $G$ acts freely on $X$ provided $\stab(x)=\{e\}$ for all $x\in X$.
More generally, if $H\leq G$ is a subgroup which acts freely on $X$ then the quotient $X/H$ inheirits an action of  the Weyl group 
$\W(H)=\W_G(H):=\NN_G(H)/H$, so defines a functor 
$(-)/H:\Sch^{G,\Hfree}_B \to \Sch^{\W H}_{B}.$
This is the composite of the restriction  functor $\Sch^{G,\Hfree}_B\to \Sch^{\NN H,\Hfree}_B$ and followed by the quotient $\Sch^{\NN H,\Hfree}_B \to \Sch^{\W H}_{B}$.

Let  $N\trianglelefteq G$ be a normal subgroup. 
Suppose that  either
\begin{enumerate}[(i)]
	\item $N$ acts trivially on $S$, or 
	\item $N$ acts freely on $S$. 
\end{enumerate}
In either case,  the 
quotient functor yields a functor
\[
\Sm^{G,\Nfree}_S\to \Sm^{G/N}_{S/N}.
\]

Write $q:S\to S/N$ for the quotient map of schemes and  $q^{-1}:\Sm^{G/N}_{S/N} \to \Sm^{G/N}_S$ for the functor defined by $q^{-1}(Y) =  Y\times_{S/N}S$.

\begin{proposition}\label{prop:qe}
	Suppose that $N$ acts freely on $S$, then $(-)/N$ and $\pi^{-1}q^{-1}$ are inverse equivalences
	\[
	(-)/N:\Sm^G_S \rightleftarrows \Sm^{G/N}_{S/N}:\pi^{-1}q^{-1}.
	\]
\end{proposition}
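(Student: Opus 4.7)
The plan is to verify the two functors are well-defined, and then exhibit natural isomorphisms witnessing the claimed adjoint equivalence; both can be deduced from a single cartesian-square observation.

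First I would check well-definedness. Given $X \in \Sm^G_S$, the $G$-equivariant structure map $X \to S$ forces the subgroup $N \trianglelefteq G$ to act freely on $X$ (since it already acts freely on $S$), so the quotient $X/N$ exists as a scheme in $\Sch_B$ by the result cited from the Stacks project, and inherits a $G/N$-action over $S/N$. Smoothness of $X/N \to S/N$ follows by étale descent along the $N$-torsor $q\colon S \to S/N$, once we know the base change to $S$ recovers $X$ (see the next step). Conversely, if $Y \in \Sm^{G/N}_{S/N}$, then $q^{-1}Y = Y\times_{S/N} S$ is smooth over $S$ and $\pi^{-1}q^{-1}Y$ carries a $G$-action (through $\pi$) that is $N$-free on the second factor; so the target of $\pi^{-1}q^{-1}$ lies in $\Sm^G_S$ and moreover has free $N$-action.

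The heart of the argument is the following cartesian-square observation: for any $X\in \Sm^{G}_S$, the natural diagram
\[
\begin{tikzcd}
X \ar[r] \ar[d] & S \ar[d, "q"] \\
X/N \ar[r] & S/N
\end{tikzcd}
\]
is cartesian. Indeed, the canonical comparison $X \to (X/N) \times_{S/N} S$ is $N$-equivariant between two étale $N$-torsors over $X/N$ (the left side because $N$ acts freely on $X$, the right side because it is the pullback of the $N$-torsor $q$), so it is an isomorphism. This is precisely the statement that the unit $\id \Rightarrow \pi^{-1}q^{-1}((-)/N)$ is an equivalence. It also supplies the missing smoothness fact in step one, since smoothness of $X\to S$ together with faithfully flatness of $q$ and étale descent implies smoothness of $X/N \to S/N$.

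For the counit, given $Y\in \Sm^{G/N}_{S/N}$, the $N$-action on $\pi^{-1}q^{-1}Y = Y\times_{S/N}S$ is trivial on the first factor (as $Y$ lives over $S/N$ with $G/N$-action) and given by the torsor action on the second, so formation of the $N$-quotient commutes with the fibre product and yields
\[
(\pi^{-1}q^{-1}Y)/N \;\cong\; Y\times_{S/N}(S/N) \;\cong\; Y,
\]
naturally and $G/N$-equivariantly. One then checks these unit and counit maps are mutually inverse in the appropriate sense, which is immediate from their explicit descriptions. The only step requiring care is the cartesian-square claim above, since it must be verified at the level of $G$-equivariant schemes; however, the two-torsor argument makes it essentially formal. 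Preservation of finite presentation and of $G$-quasi-projectivity under both operations is standard, since étale torsor quotients of quasi-projective $G$-schemes are again quasi-projective under our standing hypotheses on $G$ and $B$.
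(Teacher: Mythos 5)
Your proposal is correct and follows essentially the same route as the paper: the key step in both is the cartesian square $X \cong (X/N)\times_{S/N}S$ obtained by descent along the torsor $S\to S/N$ (which you justify via the standard comparison of two $N$-torsors over $X/N$), giving that the unit is an isomorphism, combined with the identification $(Y\times_{S/N}S)/N\cong Y$ for the counit. You supply more detail on well-definedness and smoothness than the paper's terse argument, but the mathematical content is the same.
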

\begin{proof}
	Let $f:X\to S$ be in $\Sm^G_S$. By descent we have a cartesian square in $\Sm_S$ (hence in $\Sm^G_S$)
	\[
	\begin{tikzcd}
	X\ar[r]\ar[d] & S\ar[d] \\
	X/N\ar[r] & S/N.
	\end{tikzcd}
	\] 
It follows that $(-)/N$ is fully faithful. It is also essentially surjective since if $Y\in \Sm^{G/N}_{S/N}$ then $Y\cong (Y\times_{S/N}S)/N$.
\end{proof}

Let $\iota:H\hookrightarrow G$ be a monomorphism of groups. The induction-restriction adjunction
\[
\iota_!:\Sch^H_S\rightleftarrows \Sch^G_S:\iota^{-1}
\]
restricts to 
an adjunction
\[
\iota_!: \Sm^H_S\rightleftarrows \Sm^G_S: \iota^{-1}.
\]
When $H\leq G$ is a subgroup and $\iota$ is the inclusion, we often write 
$
\iota_!(X) = G\times_{H} X
$ 
and this scheme is described concretely as follows. 
The scheme $G\times X$ becomes an $H$-scheme under the action $h(g,x) = (gh^{-1}, hx)$ and we define  
$$
G\times_{H} X = (G\times X)/H.
$$
The scheme $G\times_H X$ has a left $G$-action through the action of $G$ on itself. We can describe
$G\times_{H}X$ in slightly more concrete terms as follows.  Choose a complete set of left coset representatives $g_i$, then  $G\times_{H}X= \coprod_{g_{i}}X_i$, each $X_i$ is a copy of $X$ and $g\in G$ acts as $k:X_{i}\to X_{j}$ where $k\in H$ satisfies $gg_i=g_jk$.

Let $X\in \Sch^G_B$. The presheaf of sets $X^G$ is the presheaf of sets on $\Sch_B$ defined by 
 \[
 X^G(Y) = \{ y\in X(Y)\mid \textrm{$y$ is fixed by $G$}\}.
 \] 
If $X\to B$ is seperated, the presheaf $X^G$ is represented by a closed subscheme of $X$ which is finitely presented over $B$, which is moreover smooth over $B$ if $X$ is, \cite[Proposition XII.9.2, Corollaire XII.9.8]{SGA3} or \cite[Proposition A.8.10]{CGP:pseudo}.
Note that if $H\leq G$ is a subgroup, the fixed point subscheme $X^{H}$ comes equipped with an action of the Weyl group $\W(H)$.

Now, suppose that  $N\unlhd G$ is a normal subgroup which acts trivially on $S$.  Write $\pi:G\to G/N$ for the quotient map. Restricting action along $\pi$ defines a functor 
$\pi^{-1}:\Sch_S^{G/N} \to \Sch^{G}_S$, which is left adjoint to fixed points. 
We will usually simply write again $X$ instead of $\pi^{-1}X$, whenever context makes the meaning clear. 
Now, restricting attention to smooth $S$-schemes we obtain the adjunction
\[
\pi^{-1}:\Sm_S^{G/N}\rightleftarrows \Sm^{G}_{S}:(-)^N.
\]

 \subsection{Families of subgroups}
Families of subgroups provide a convenient way to filter  equivariant motivic homotopy theory. 

\begin{definition}\label{defn:family}
A \emph{family} $\FF$ of subgroups of $G$ is a set of subgroups which is 
closed under taking subgroups and conjugation. 
\end{definition}

\begin{example}
The following families play an important role. 
\begin{enumerate}
\item The trivial family $\FF_\mathrm{triv} := \{e\}$. 
\item The family of all subgroups $\FF_\mathrm{all} := \{H\leq G \mid \textrm{$H$ is a subgroup}\}$.
\item The family of proper subgroups $\mcal{P}:=\{H \lneq G \mid \textrm{$H$ is a proper subgroup}\}$.
\item For a normal subgroup $N \trianglelefteq G$, define $\FF[N] :=\{H\leq G \mid N \not\subseteq H \}$.  Note that $\mcal{P} = \FF[G]$.
\item For a normal subgroup $N\trianglelefteq G$, define $\FF(N):=
\{H\leq G \mid H\cap N = \{e\}\}$. Note that $\FF({G}) = \FF_\mathrm{triv}$.
\end{enumerate}
\end{example}
 
If $\FF$ is a family, we write $\co(\FF):=\FF_\mathrm{all} - \FF$ for its complement. Note  that $\co(\FF)$ is not a family.\footnote{Rather, it is a {\em cofamily}, meaning it is closed under conjugation and $K\in \co(\FF)$ whenever $K$ contains a subgroup in $\co(\FF)$}
  
\begin{remark}
	A family of subgroups can be equivalently viewed as a sieve on the orbit category $\Orb^G$. The sieve corresponding to $\FF$ is the full subcategory $\Orb^{G}[\FF]\subseteq \Orb^G$ of orbits such that  $G/H\in\Orb^{G}[\FF]$ if and only if $H\in \FF$. 
\end{remark}

A family $\FF$ determines a sieve on $\Sm^G_S$ by letting $\Sm^G_S[\FF]\subseteq \Sm^G_S$ be the full subcategory whose objects are smooth $G$-schemes over $S$ such that all stabilizers are contained in $\FF$. It is useful to make the following more general definition.
\begin{definition}\label{def:Ecat}
 Let $\mathcal{E}$ be a set of subgroups of $G$ which is closed under conjugacy. Write $\Sm_S^G[\mathcal{E}]\subseteq \Sm_S^G$ for the full subcategory whose objects are those smooth $G$-schemes $X$ over $S$ such that $\stab(x)\in \mathcal{E}$ for every point $x\in X$.
\end{definition}

\begin{notation}\label{not:useful}
 Let $X\in \Sch^G_S$. Write 
 \[
 X^{\FF} := \bigcup_{H\in\co(\FF)} X^{H}
 \]
 and
 \[
 X(\FF) := X - X^{\FF}.
 \]
\end{notation}
The subset $X(\FF)\subseteq X$ is the set of points  whose stabilizers are  in $\FF$. Observe that $X(\FF)\subseteq X$ an open invariant subscheme and $X^{\FF}\subseteq X$ is a closed invariant subscheme since  $X^{\FF}$ is a finite union of closed subschemes.

\subsection{Motivic \texorpdfstring{$G$}{G}-spaces}

Recall that the Nisnevich topology is defined via a $cd$-structure.

\begin{definition}[{\cite{V:cd}}]
Let $\CC$ be a small category which has an initial object $\emptyset$.

\begin{enumerate}
\item A $cd$-structure on $\CC$ is a collection $\mathcal{A}$ of commutative squares in $\CC$ such that if $Q\in \mathcal{A}$ and if $Q'$ is isomorphic to $Q$, then $Q'\in \mathcal{A}$. 

\item Given a $cd$-structure $\mathcal{A}$ in $\CC$, the Grothendieck topology $t_{\mathcal{A}}$ generated by $\mathcal{A}$ is the smallest topology on $\CC$ such that 
\begin{enumerate}
\item the empty sieve is a covering sieve of $\emptyset$, and 
\item given any square in $\mathcal{A}$ 
\begin{equation*}
\begin{tikzcd}
V \ar[r] \ar[d] & Y \ar[d,"p"] \\
U \ar[r, "j"] & X,
\end{tikzcd} 
\end{equation*}
 the sieve generated by $\{U\to X, Y\to X\}$ is a covering sieve.
\end{enumerate}
\end{enumerate}

\end{definition}

\begin{definition}
An equivariant map $f:Y\to X$ is said to be \emph{fixed point reflecting} at $y\in Y$ if $f$ induces an isomorphism $\stab(y) \iso \stab(f(y))$. If this condition holds at every $y\in Y$, then $f$ is simply said to be \emph{fixed point reflecting}.
\end{definition}

Let $\CC_S\subseteq \Sch^G_S$ be a full subcategory containing $\emptyset$. 
We often require $\CC_S$ to satisfy one or both of the following properties:
\begin{enumerate}
 \item[(P)] If $Y\to X$ is  fixed point reflecting and \'etale,
 then $Y\in \CC_S$ whenever $X\in \CC_S$.
 \item[(H)] If $X\in \CC_S$ then so is $X\times_{S}\A^1$. 
\end{enumerate}

Our primary examples of interest are the categories $\Sm_S^G[\E]$, where $\E$ is a set of subgroups closed under conjugacy. More generally, we could also consider the following property. 
\begin{enumerate}
\item[(P$^\prime$)] If $Y\to X$ is an equivariant \'etale map, then $Y\in \CC_S$ whenever $X\in \CC_S$. 
\end{enumerate}

The condition (P) on $\CC_S$ guarantees that the fixed point Nisnevich $cd$-structure (defined below) is complete while the condition (P$^\prime$) guarantees that the Nisnevich $cd$-structure is complete. Some categories of interest in this paper, e.g., $\Sm_S^G[\co(\FF)]$ for a family $\FF$, do not satisfy (P$^\prime$) but do satisfy the  weaker property (P).  We will see below in \aref{prop:fpNis=N} that when $\CC_S$ satisfies (P$^\prime$) then the topology associated to the fixed point Nisnevich $cd$-structure coincides with the Nisnevich topology.

\begin{definition}
Let $\CC_S\subseteq \Sch^G_S$ be a full subcategory containing $\emptyset$. 
\begin{enumerate}
\item The \emph{Nisnevich} $cd$-structure ${\rm Nis}$ on $\CC_S$ consists of 
 cartesian squares
\begin{equation}\label{eqn:EDS}
\begin{tikzcd}
V \ar[r, hookrightarrow] \ar[d] & Y \ar[d, "p"] \\
U \ar[r, hookrightarrow, "j"] & X,
\end{tikzcd} 
\end{equation}
where $j$ is open immersion, $p$ is \'etale, and the map 
$(Y- V)_{\rm red} \to (X- U)_{\rm red}$ is an isomorphism in $\Sch_S$.

\item The \emph{fixed point Nisnevich} $cd$-structure ${\rm fpNis}$ on $\CC_S$ consists of cartesian squares as in the Nisnevich $cd$-structure, but with the added condition that $p$ is a fixed point reflecting \'etale map.
\end{enumerate}
\end{definition}
\begin{remark}\label{rem:red}
In general, if $G$ is a flat group scheme over $B$, one may choose a scheme structure on $Z:=X\minus U$ so that $Z$ is invariant under the $G$-action \cite[Lemma 2.1]{Hoyois:6}. Since $G$ is a finite discrete group, $Z_{\red}$ is invariant and the map $p^{-1}(Z)_\red\to Z_\red$ is equivariant.
\end{remark}

\begin{proposition}\label{prop:fpNis=N}
Suppose that $\CC_S$ satisfies 	{\rm (P$^\prime$)}. The
 topology $t_{{\rm fpNis}}$ coincides with the Nisnevich topology on $\CC_S$.
\end{proposition}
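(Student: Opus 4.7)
The containment $t_{{\rm fpNis}}\subseteq t_{{\rm Nis}}$ is immediate, since every fpNis square is a Nisnevich square. For the reverse, it is enough to show that each Nisnevich distinguished square \eqref{eqn:EDS} is refined by an fpNis covering family in $\CC_S$.

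The plan is to carve out of $Y$ the locus
\[
Y_{{\rm fpr}}:=\{y\in Y\mid \stab(y)=\stab(p(y))\}
\]
on which $p$ is fixed point reflecting, and show that the restriction of the given Nisnevich square to $Y_{{\rm fpr}}$ is an fpNis square in $\CC_S$ refining the original. The first step is to show that $Y_{{\rm fpr}}$ is open and $G$-invariant. Invariance is immediate from $\stab(gy)=g\,\stab(y)g^{-1}$ and the corresponding identity for $\stab(p(gy))$. For openness, the complement can be written as
\[
Y\smallsetminus Y_{{\rm fpr}} \;=\; \bigcup_{g\in G}\bigl(p^{-1}(X^{\langle g\rangle})\smallsetminus Y^{\langle g\rangle}\bigr),
\]
and the key input is that, because $p$ is \'etale and $\langle g\rangle$ acts trivially on $X^{\langle g\rangle}$, the fixed subscheme $Y^{\langle g\rangle}\subseteq p^{-1}(X^{\langle g\rangle})$ is open and closed (\'etale-locally on $X^{\langle g\rangle}$, the pullback of $p$ is a disjoint union of sheets permuted by $\langle g\rangle$, with the fixed ones forming the fixed locus). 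Hence each term in the union is closed in the closed subscheme $p^{-1}(X^{\langle g\rangle})$ of $Y$, so the union, being finite, is closed in $Y$.

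The next step is to verify the two remaining properties of $Y_{{\rm fpr}}$: first, $Y_{{\rm fpr}}\in\CC_S$, which follows from property (P$^\prime$) applied to the \'etale open immersion $Y_{{\rm fpr}}\hookrightarrow Y$; and second, $Y-V\subseteq Y_{{\rm fpr}}$, which holds because the hypothesized isomorphism $(Y-V)_{{\rm red}}\xrightarrow{\sim}(X-U)_{{\rm red}}$ is $G$-equivariant and preserves residue fields, so identifies stabilizers at corresponding points. Setting $V':=Y_{{\rm fpr}}\times_X U$, the containment $Y-V\subseteq Y_{{\rm fpr}}$ yields $Y_{{\rm fpr}}-V'=Y-V$, so the restricted square with vertices $(V',Y_{{\rm fpr}},U,X)$ is a genuine fpNis square in $\CC_S$, whose covering sieve is contained in that of the original Nisnevich square. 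This gives $t_{{\rm Nis}}\subseteq t_{{\rm fpNis}}$.

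The main obstacle is the openness of $Y_{{\rm fpr}}$, which rests on the equivariant-\'etale structural fact recalled above (clopen-ness of fixed loci under \'etale covers with trivial base action); once this is in hand, the rest is a direct refinement argument using the (P$^\prime$) hypothesis.
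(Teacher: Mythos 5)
Your proof is correct and follows the same strategy as the paper: both directions reduce to showing that every Nisnevich square is refined by the fixed point Nisnevich square obtained by restricting $p$ to the invariant open locus $\fpr(Y)$ where $p$ is fixed point reflecting, using that $Y\minus V\subseteq\fpr(Y)$. The only point of divergence is how openness of $\fpr(Y)$ is established: the paper cites a general result of Rydh (unramified morphisms with universally closed stabilizers), whereas you argue directly by writing the complement as the finite union of the closed sets $p^{-1}(X^{\langle g\rangle})\minus Y^{\langle g\rangle}$. That decomposition is valid, and your self-contained route is a reasonable alternative; the one thing to tighten is the parenthetical justification that $Y^{\langle g\rangle}$ is clopen in $p^{-1}(X^{\langle g\rangle})$ --- a general \'etale morphism is \emph{not} \'etale-locally on the base a disjoint union of sheets (that is the finite \'etale case). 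The clean argument is that $Y^{\langle g\rangle}$ is the equalizer of $g$ and $\id$ on the $X^{\langle g\rangle}$-scheme $p^{-1}(X^{\langle g\rangle})$, i.e.\ the pullback of the diagonal of $p^{-1}(X^{\langle g\rangle})\to X^{\langle g\rangle}$ along $(\id,g)$, and this diagonal is an open immersion since the map is unramified, and closed since it is separated. With that substitution your argument is complete; you also usefully make explicit two steps the paper leaves implicit, namely why $Y\minus V\subseteq\fpr(Y)$ and why $\fpr(Y)\in\CC_S$.
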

\begin{proof}
Every $t_{\rm fpNis}$-cover is a Nisnevich cover. We show that the reverse implication holds. It suffices to show that any Nisnevich square 
$$
\begin{tikzcd}
V \ar[r, hookrightarrow] \ar[d] & Y \ar[d, "p"] \\
U \ar[r, hookrightarrow, "j"] & X.
\end{tikzcd} 
$$
admits a 
$t_{\rm fpNis}$-refinement. 

Write $\fpr(Y)\subseteq Y$ for the set of points where $p$ is fixed point reflecting. Since $Y\to X$ is unramified and $\stab(X)\to X$ is universally closed, \cite[Proposition 3.5]{Rydh} applies to show that the set $\fpr(Y)\subseteq Y$ is an invariant open subset. We have that 
$Y\minus V\subseteq \fpr(Y)$. It follows that the outer square
\begin{equation}\label{eqn:Nisrefine}
\begin{tikzcd}
 \fpr(V) \ar[r, hookrightarrow]\ar[d, hookrightarrow] & \fpr(Y) \ar[d, hookrightarrow, "i"] \\
V \ar[r, hookrightarrow] \ar[d] & Y \ar[d, "p"] \\
U \ar[r, hookrightarrow, "j"] & X
 \end{tikzcd}
\end{equation}
is a fixed point Nisnevich square (as is the top square). In particular, $\{j,pi\}$ is a  $t_{\rm fpNis}$-cover which refines 
$\{j, p\}$.
\end{proof}

Write $\Pre(\CC_S)$ for the $\infty$-category of presheaves of spaces on $\CC_S$.
Note that  $\CC_S$ does not necessarily contain a terminal object, in particular a terminal object $\Pre(\CC_S)$ is in general not representable. We write $\pt\in\Pre(\CC_S)$
 for this terminal object. Of course if $S\in\CC_S$ then $\pt$ is representable; it is the presheaf represented by $S$.

\begin{definition}
Say that $F\in \Pre(\CC_S)$ is \emph{Nisnevich excisive} if
  \begin{enumerate}
  \item $F(\emptyset)$ is contractible, and 
  \item  for any Nisnevich  square  \eqref{eqn:EDS} in  $\CC_S$, the square 
  \begin{equation*}
\begin{tikzcd}
F(X) \ar[r] \ar[d] & F(Y) \ar[d] \\
F(V) \ar[r] & F(U)
\end{tikzcd} 
\end{equation*}
is cartesian.
  \end{enumerate}
 
Write $\Shv_{\Nis}(\CC_S)\subseteq\Pre(\CC_S)$ for the subcategory of Nisnevich excisive presheaves of spaces on $\CC_S$.

\end{definition}

Temporarily say that $F$ is ``fixed point Nisnevich excisive" if $F(\emptyset)\simeq \pt$
and $F(\mathcal{Q})$ is cartesian for any fixed point Nisnevich square $\mathcal{Q}$. There is no real need for this extra terminology by the following proposition.

\begin{proposition}\label{prop:fp&Nis}
Let $F\in \Pre(\CC_S)$ and suppose $\CC_S$ satisfies {\rm (P$^\prime$)}. Then the following are equivalent.
\begin{enumerate}
\item $F$ is fixed point Nisnevich excisive.
\item $F$ is Nisnevich excisive.
\item $F$ is a sheaf for the Nisnevich topology.
\end{enumerate}
\end{proposition}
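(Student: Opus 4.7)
The implication (2) $\Rightarrow$ (1) is immediate, since every fpNis square is a Nisnevich square by definition. The plan is therefore to establish (1) $\Rightarrow$ (2) and, separately, (2) $\Leftrightarrow$ (3).

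For (1) $\Rightarrow$ (2), I would recycle the refinement construction from the proof of \aref{prop:fpNis=N}. Given a Nisnevich square $\mathcal{Q}$ with corners in $\CC_S$, assumption (P$^\prime$) ensures that the open subschemes $\fpr(Y)\subseteq Y$ and $\fpr(V)\subseteq V$ again belong to $\CC_S$, so the stacked diagram \eqref{eqn:Nisrefine} lives entirely in $\CC_S$. As already observed in the proof of \aref{prop:fpNis=N}, the outer rectangle of \eqref{eqn:Nisrefine} is a fpNis square; moreover, the top square is also fpNis, since $i:\fpr(Y)\hookrightarrow Y$ is an open immersion (hence automatically \'etale and fixed point reflecting), and because $Y\setminus V\subseteq \fpr(Y)$ the map $(\fpr(Y)\setminus \fpr(V))_{\red}\to (Y\setminus V)_{\red}$ is an isomorphism. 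Applying a presheaf $F$ satisfying (1) produces two cartesian squares of spaces, and the pasting lemma for pullbacks then forces the bottom square $\mathcal{Q}$ to also be carried to a cartesian square by $F$. Together with $F(\emptyset)\simeq \pt$, this is precisely Nisnevich excision.

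The equivalence (2) $\Leftrightarrow$ (3) is then an invocation of Voevodsky's general cd-structure theorem \cite{V:cd}: for a complete, regular, and bounded cd-structure, excisive presheaves coincide with sheaves for the generated topology. Assumption (P$^\prime$) ensures completeness, because pullbacks of distinguished Nisnevich squares remain in $\CC_S$; regularity and boundedness are verified exactly as in the classical Morel--Voevodsky setting, using only that open immersions are monomorphisms and that the \'etale maps in a Nisnevich square restrict to isomorphisms on reduced complements. The main subtle point in the whole proposition is the (1) $\Rightarrow$ (2) step---namely, recognizing \eqref{eqn:Nisrefine} as a decomposition of an arbitrary Nisnevich square into two fpNis pieces and verifying that (P$^\prime$) keeps all of the auxiliary opens $\fpr(Y),\fpr(V)$ inside $\CC_S$; the (2) $\Leftrightarrow$ (3) half is by now entirely formal.
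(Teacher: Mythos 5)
Your proof is correct but takes a genuinely more hands-on route for the $(1)\Leftrightarrow(2)$ half. The paper dispatches the whole proposition in two sentences: under (P$^\prime$) both the ${\rm Nis}$ and ${\rm fpNis}$ $cd$-structures are complete and regular (citing \cite[Theorem 3.2.5]{AHW1}), so for \emph{each} of them excisiveness coincides with sheafiness for the generated topology; since \aref{prop:fpNis=N} identifies the two generated topologies, conditions (1), (2), (3) all collapse to one. You instead prove $(1)\Leftrightarrow(2)$ directly: $(2)\Rightarrow(1)$ is formal, and for the converse you note that (P$^\prime$) (indeed already (P), since open immersions are fixed point reflecting and \'etale) keeps $\fpr(Y)$ and $\fpr(V)$ inside $\CC_S$, recognize that the top square and outer rectangle in \eqref{eqn:Nisrefine} are both ${\rm fpNis}$-distinguished, and apply the two-out-of-three pasting lemma for cartesian squares, so a ${\rm fpNis}$-excisive $F$ sends the original Nisnevich square to a pullback. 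You then appeal to the $cd$-structure comparison only once, for $(2)\Leftrightarrow(3)$. This buys a self-contained, elementary argument for $(1)\Leftrightarrow(2)$ at the cost of a slightly longer proof, while the paper's version is terser but invokes the general machinery for both $cd$-structures. One small inaccuracy: the version of the descent theorem the paper cites (via \cite{AHW1}) needs only completeness and regularity in the $\infty$-categorical setting; boundedness, which you also list, enters only in the hypercomplete (Jardine-model) comparison and is not part of the hypotheses used here.
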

 \begin{proof}
If $\CC_S$ satisfies (P$^\prime$), then the $cd$-structures ${\rm fpNis}$ and ${\rm Nis}$ both satisfy the conditions of 
\cite[Theorem 3.2.5]{AHW1}, i.e. they are complete and regular, in the terminology of \cite{V:cd}. Together with \aref{prop:fpNis=N}, it follows that $(1)$, $(2)$ and $(3)$ are equivalent. 

 \end{proof}

\begin{definition}
Let $\CC_S\subseteq \Sch_S$ be a full subcategory which contains $\emptyset$ and satisfies properties (P) and (H).
Say that $F\in\Pre(\CC_S)$ is \emph{$\A^1$-homotopy invariant} if for any $Y\in \CC_S$, the projection map $\pi:Y\times\A^1\to Y$  induces an equivalence $F(Y)\wkeq F(Y\times \A^1)$. We write 
    $\Pre_{\A^1}(\CC_S)\subseteq\Pre(\CC_S)$ for the full subcategory consisting of the $\A^1$-homotopy invariant presheaves.
\end{definition}

The property that a presheaf is Nisnevich excisive is defined by a small set of conditions. It follows from \cite[Section 5.5.4]{HTT} that the inclusion $\Shv_{\Nis}(\CC_S)\subseteq \Pre(\CC_S)$ is an accessible localization. 
Write $\L_{\Nis}$ for the resulting localization endofunctor on $\Pre(\CC_S)$.
Moreover, this localization is \emph{left-exact} in the sense of \cite[Section 6.2.2]{HTT}.

Similarly, the property that a presheaf is $\A^1$-homotopy invariant is defined by a small set of conditions, so that the inclusion $\Pre_{\A^1}(\CC_S)\subseteq \Pre(\CC_S)$ is also an accessible localization. Write $\L_{\A^1}$ for the resulting localization endofunctor. It can be described explicitly by the formula 
$\L_{\A^1}\simeq \Sing_{\A^1}$, where 

\begin{equation}\label{eqn:LA1}
\Sing_{\A^1}(F)(U) := \colim_{\Delta}(n\mapsto F(U\times\Delta^{n}_{S})).
\end{equation}

A map $f:F_1\to F_2$ in $\Shv_{\Nis}(\CC_S)$ is a \emph{Nisnevich equivalence} provided $\L_{\Nis}(f)$ is an equivalence and 
an \emph{${\A^1}$-equivalence} provided $\L_{\A^1}(f)$ is an equivalence.
 \begin{remark}
 	Say that $F\in \Pre(\CC_S)$ is {\em strongly $\A^1$-homotopy invariant} if for any projection $E\to X$ of a $G$-affine bundle in $\CC_S$, the induced map is an equivalence  $F(X)\wkeq F(E)$. 
 	Any $X\in \Sm^G_S$ is Nisnevich locally affine. This implies that  if $F$ is Nisnevich excisive, then $F$ is strongly $\A^1$-invariant if and only if it $\A^1$-homotopy invariant (as $E\to X$ always has local sections in this case), see \cite[Remark 3.13]{Hoyois:6}.

 In particular, the motivic localization considered here agrees with the one in \cite{Hoyois:6}.  
 \end{remark}

\begin{definition}
\begin{enumerate}
\item  A \emph{motivic $G$-space over $S$} is a 
 Nisnevich excisive and $\A^1$-homotopy invariant presheaf $F\in\Pre(\Sm^G_S)$.
\item Write $\HH^G(S)$ for the $\infty$-category of motivic $G$-spaces. The category of based motivic $G$-spaces is $\HH_\bb^G(S)=\HH^G(S)_{\pt/}$.
\item 
More generally,  write $\HH(\CC_S)$  for the $\infty$-categories of Nisnevich excisive $\A^1$-homotopy invariant presheaves on $\CC_S$
and $\HH_\bb(\CC_S)=\HH(\CC_S)_{\pt/}$.
 \end{enumerate}
 \end{definition}
When $\E$ is a set of subgroups, closed under conjugacy,  we use the notation
\begin{align*}
 \HH^{G,\E}(S)&:= \HH(\Sm^{G}_S[\E] )  \\ \HH_\bb^{G,\E}(S)&:=\HH_\bb(\Sm^{G}_S[\E] ). 
\end{align*}

The inclusion $\HH(\CC_S)\subseteq \Pre(\CC_S)$ is an accessible localization and we write 
\[
\L_{\mot}:\Pre(\CC_S)\to \Pre(\CC_S)
\] 
for the corresponding localization endofunctor.
The motivic localization functor may be computed by the formula 
\cite[Lemma 3.2.6]{MV:A1}
\begin{equation}\label{eqn:motformula}
\L_{\mot}(F) = \L_{\Nis}\colim_{n\to \infty} (\L_{\A^1}\circ \L_{\Nis})^{n}(F).
\end{equation}

\begin{proposition}
The motivic localization functor 
$\L_\mot:\Pre(\CC_S)\to\Pre(\CC_S)$  is locally cartesian\footnote{A localization endofunctor $\L:\DD\to \DD$ is {\em locally cartesian} if $\L(A\times_{B}X)\to A \times_{B}\L(X)$ is an equivalence for any maps $A\to B$, $X\to B$ in $\DD$ where  $A,B\in \L(\DD)$ \cite[\textsection 1]{GK15}. } and preserves finite products.
 In particular, colimits in $\HH^G(\CC_S)$ are universal. 
\end{proposition}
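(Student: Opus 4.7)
The plan is to exploit the explicit formula $\L_\mot \simeq \L_\Nis\,\colim_n (\L_{\A^1}\circ \L_\Nis)^n$ from \eqref{eqn:motformula} and check the locally cartesian property for each of the two building blocks $\L_\Nis$ and $\L_{\A^1}$. Once this is in hand, preservation of finite products is the special case $B=\pt$ of the locally cartesian identity (noting that $\pt$ is motivically local), and universality of colimits in $\HH(\CC_S)$ is a formal consequence: colimits in $\HH(\CC_S)$ are computed by applying $\L_\mot$ to the corresponding colimits in $\Pre(\CC_S)$, the latter being universal since $\Pre(\CC_S)$ is an $\i$-topos, and the locally cartesian property of $\L_\mot$ transfers this universality to the reflective subcategory $\HH(\CC_S)$.

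The left exactness of $\L_\Nis$ is already recorded in the excerpt, so it is in particular locally cartesian. The main work is to establish that $\L_{\A^1}$ is locally cartesian, where I would argue directly from \eqref{eqn:LA1}. Given $A, B \in \Pre_{\A^1}(\CC_S)$ and maps $A\to B$, $X\to B$, the $\A^1$-invariance of $A$ and $B$ yields $A(U\times \Delta^n_S) = A(U)$ and $B(U\times \Delta^n_S)=B(U)$, so the simplicial space $(A\times_B X)(U\times \Delta^\bullet_S)$ is a levelwise pullback of $X(U\times\Delta^\bullet_S)\to B(U)$ along the constant map $A(U)\to B(U)$. Since colimits in $\S$ (and hence pointwise in $\Pre(\CC_S)$) are universal, geometric realizations commute with such pullbacks, yielding $\L_{\A^1}(A\times_B X)\simeq A\times_B \L_{\A^1}(X)$.

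To combine these into the statement for $\L_\mot$, I would proceed by induction. If $A, B$ are motivically local, they are in particular both Nisnevich and $\A^1$-local, hence untouched by either $\L_\Nis$ or $\L_{\A^1}$. This lets the inductive step for each iterate $(\L_{\A^1}\L_\Nis)^n$ propagate: the identity $(\L_{\A^1}\L_\Nis)^n(A\times_B X) \simeq A\times_B (\L_{\A^1}\L_\Nis)^n(X)$ is preserved by first applying $\L_\Nis$ (using left exactness with the fact that $A, B$ are Nisnevich local) and then $\L_{\A^1}$ (using the locally cartesian property just established with the fact that $A, B$ are $\A^1$-local). The filtered colimit over $n$ commutes with the pullback by universality of filtered colimits in $\S$, and a final application of the left-exact $\L_\Nis$ delivers $\L_\mot(A\times_B X)\simeq A\times_B \L_\mot(X)$. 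The crucial technical point throughout is that the base objects of the pullback must be motivically local in order for both component localizations to act trivially on them, which keeps the inductive bookkeeping clean; this is the only real subtlety, the rest being routine manipulation of the formula \eqref{eqn:motformula}.
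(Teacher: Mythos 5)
Your proposal is essentially the paper's own proof, written out in full: the paper's argument is the single sentence that $\L_{\Nis}$ and $\L_{\A^1}$ each have both properties and that $\L_\mot$ inherits them via the formula \eqref{eqn:motformula}, and your verification of the two building blocks (left exactness of $\L_{\Nis}$, the $\Sing_{\A^1}$ computation using universality of colimits in $\S$) together with the induction over the iterates is exactly the intended expansion. The key observation you isolate --- that motivically local objects are simultaneously Nisnevich- and $\A^1$-local, so they pass untouched through every stage of the formula --- is indeed what makes the bookkeeping work.

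One step is stated too quickly: preservation of finite products is \emph{not} literally the case $B=\pt$ of the locally cartesian identity, because that identity only applies when the factor $A$ is already local, whereas preserving finite products requires $\L_\mot(X\times Y)\simeq \L_\mot(X)\times\L_\mot(Y)$ for arbitrary $X,Y$. You need the additional input that $X\times Y\to\L_\mot(X)\times Y$ is a motivic equivalence, i.e.\ that motivic equivalences are stable under product with an arbitrary object (which the paper records elsewhere, and which is also visible directly from the $\Sing_{\A^1}$ formula via siftedness of $\Delta^{\op}$). With that one line added, the deduction $\L(X\times Y)\simeq\L(\L X\times Y)\simeq\L X\times\L Y$ goes through and the rest of your argument stands.
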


\begin{proof}
The localization functors $\L_{\Nis}$, $\L_{\A^1}$ satisfy these properties and therefore so does $\L_{\mot}$ using \eqref{eqn:motformula}.	

\end{proof}

We will make use of the notion of the tensor product of presentable $\infty$-categories as defined and studied in \cite[Section 4.8.1]{HA}, see especially \cite[Proposition 4.8.1.17]{HA}.
The category $\HH(\CC_S)$ is cartesian monoidal (i.e., it is symmetric monoidal with respect to the cartesian product). The symmetric monoidal product on $\HH(\CC_S)$
 extends to one on
\[
\HH_\bb(\CC_S)\simeq\HH(\CC_S)\otimes\S_\bullet,
\]
 where  $\S_\bullet:=\S_{\pt/}$ denotes the $\infty$-category of pointed spaces, see \cite[Lemma 3.6]{GGN} and \cite[Proposition 4.8.2.11]{HA}. 
We write $\wedge$ for the symmetric monoidal product on $\HH_\bb(\CC_S)$.
Sometimes we will need to use the symmetric monoidal product on $\Pre_{\bullet}(\CC_S):=\Pre(\CC_S)_{\pt/}$, which we denote $\wedge^{\Pre}$ to avoid confusion. 
The symmetric monoidal localization functor $\L_{\mot}:\Pre(\CC_S) \to \HH(\CC_S)$ induces a unique symmetric monoidal localization functor $\L_{\mot}:\Pre_{\bb}(\CC_S)\to \HH_{\bb}(\CC_S)$. In particular, given $X,Y\in\Pre_{\bullet}(\CC_S)$, then   
\[
\L_{\mot}(X)\wedge \L_{\mot}(Y)\simeq \L_{\mot}(X\wedge^{\Pre} Y).
\]

Given a functor $u:\CC \to \DD$,
the functor 
$u^*:\Pre(\DD) \to \Pre(\CC)$ defined by precomposition with $u$ has both a left adjoint $u_!$ as well as a right adjoint $u_*$,
\[
\begin{tikzcd}
\Pre(\CC)\ar[rr, "u_!", bend left] \ar[rr, "u_*"' , bend right]&  & \Pre(\DD) \ar[ll, "u^*"'].
\end{tikzcd}
\]

A functor $u:\CC\to \DD$ between small $\i$-categories equipped with Grothendieck topologies is called \emph{topologically cocontinuous}\footnote{This is called  ``cocontinuous" in \cite[III.2.1]{SGA4}. We follow the terminology in \cite{Khan:MV} to avoid confusion with the category theorist's terminology ``cocontinuous functor".}
 if for every $Y\in \CC$ and every covering sieve $R$ on $u(Y)$, the sieve $u^*R\times_{u^*u_!Y}Y\hookrightarrow Y$, consisting of arrows $Z\to Y$ such that $uZ\to uY$ factors through $R$, is a covering sieve on $Y$.

\begin{lemma}\label{lem:cocoeq}
Let $u:\CC\to \DD$ be a topologically cocontinuous functor between small $\i$-categories equipped with Grothendieck topologies $\tau_\CC$ and $\tau_\DD$ respectively. 
Then $u^*:\Pre(\DD)\to \Pre(\CC)$ preserves all $\tau_\DD$-local equivalences.
\end{lemma}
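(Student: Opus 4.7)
The plan is to reduce to checking the statement on a generating class of $\tau_\DD$-local equivalences. Since $u^*$ has a right adjoint $u_*$, it preserves all colimits, so the class of morphisms sent by $u^*$ to $\tau_\CC$-local equivalences is strongly saturated (closed under colimits, cobase change, and 2-of-3). By the theory of accessible left exact localizations on presheaf $\infty$-topoi (see \cite[\textsection 6.2.2]{HTT}), the $\tau_\DD$-local equivalences are the strongly saturated class generated by the inclusions $R \hookrightarrow h_D$ corresponding to covering sieves $R$ on objects $D \in \DD$. So it is enough to show that $u^*(R \hookrightarrow h_D)$ is a $\tau_\CC$-local equivalence for each such generator.

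Next, I would apply the standard fiberwise criterion for a monomorphism of presheaves on $\CC$ to be a $\tau_\CC$-local equivalence: $T \hookrightarrow U$ is a local equivalence iff for every $C \in \CC$ and every section $s \in U(C)$, the pullback sieve $s^* T \hookrightarrow h_C$ is a $\tau_\CC$-covering sieve. In the case at hand, $(u^* h_D)(C) = \Map_\DD(u(C), D)$, so a section is a morphism $\alpha: u(C) \to D$, and the pullback of $u^* R$ along $\alpha$ unwinds to
\[
\{\beta: C' \to C \mid \alpha \circ u(\beta) \in R\} = u^{-1}(\alpha^* R),
\]
with $\alpha^* R$ denoting the pullback of the sieve $R$ along $\alpha$, regarded as a sieve on $u(C)$.

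Since covering sieves are stable under pullback, $\alpha^* R$ is a $\tau_\DD$-cover of $u(C)$, and the hypothesis that $u$ is topologically cocontinuous then says exactly that $u^{-1}(\alpha^* R)$ is a $\tau_\CC$-cover of $C$. This verifies the fiberwise criterion for the generator and finishes the reduction. The main obstacle is largely bookkeeping: one must carefully invoke the $\infty$-categorical sheafification machinery in order to reduce to sieve-inclusion generators, and one must check that the fiberwise characterization of monomorphisms that are local equivalences remains valid in the $\infty$-categorical setting. Both are consequences of \cite[\textsection 6.2.2]{HTT}; once they are in hand, the remainder of the argument is a direct unwinding of the definition of topologically cocontinuous. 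Note also that we do not need to know that sheafification is left exact, only that $u^*$ preserves local equivalences, which is why a pure strongly saturated class argument suffices.
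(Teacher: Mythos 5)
Your argument is correct and is essentially the paper's own proof: both reduce, via the strongly saturated class generated by sieve inclusions and the fact that $u^*$ preserves colimits, to showing $u^*R\to u^*h_D$ is a local equivalence, and both verify this fiberwise by pulling back along a section $\alpha:u(C)\to D$, identifying the resulting sieve on $C$ as $u^{-1}(\alpha^*R)$, and invoking stability of covers under pullback followed by topological cocontinuity. The only cosmetic difference is that the paper phrases the fiberwise step via universality of colimits in $\Pre(\CC)$ rather than as a named ``fiberwise criterion,'' but the content is identical.
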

\begin{proof}
The set of $\tau_D$-local equivalences in $\Pre(\DD)$ is the 
closure under pushouts, small colimits, and $2$-out-of-$3$, of the set of maps $R\hookrightarrow X$, where $R$ is a covering sieve on $X\in \DD$. Since $u^*$ preserves colimits, it suffices to show that $u^*R\to u^*X$ is a $\tau_\CC$-local equivalence. Since colimits are universal in $\Pre(\CC)$, it suffices to show that $u^*R\times_{u^*X}Y\to Y$ is an equivalence for any map $Y\to u^*X$ where $Y\in \CC$. To see this, note that it is in fact a covering sieve. Indeed,   $R\times_{X}uY\to uY$ is a covering sieve and therefore 
$u^*( R\times_{X}uY)\times_{u^*u_!Y} Y\simeq u^*R\times_{u^*X}Y \to Y$
is covering since $u$ is cocontinuous. 
\end{proof}

For the remainder of this section 
\begin{equation}\label{eqn:u}
u:\CC_S\subseteq\DD_S
\end{equation}
is the inclusion between full subcategories of $\Sch_S^G$, both containing $\emptyset$ and both satisfying properties (P) and (H).
   Main examples of interest to keep in mind are the inclusions $\Sm^G_S[\E]\subseteq \Sm^G_S[\E']$.

\begin{lemma}\label{lem:ucoco}
Let  $u:\CC_S\subseteq \DD_S$ be as in \eqref{eqn:u}. Then $u$ is topologically cocontinuous.
\end{lemma}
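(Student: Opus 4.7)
My plan is to reduce the problem to a check on generating covers. Since the fixed point Nisnevich topology on $\DD_S$ is determined by its $cd$-structure, any covering sieve $R$ on $X \in \CC_S$ (viewed in $\DD_S$) is refined by the sieve generated by the legs of some fpNis square over $X$. So cocontinuity will reduce to showing that every fpNis square in $\DD_S$ whose bottom-right corner lies in $\CC_S$ already lies entirely in $\CC_S$: the legs will then generate an fpNis cover of $X$ in $\CC_S$, which is contained in the pullback sieve $u^*R$ on $X$.

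The second step is where property (P) enters. Given a fixed point Nisnevich square of the shape \eqref{eqn:EDS} with $X \in \CC_S$, the morphism $p : Y \to X$ is fixed point reflecting and \'etale by definition, so $Y \in \CC_S$ by (P) directly. For the open immersion $j : U \hookrightarrow X$, I would first note that any open immersion is an \'etale monomorphism, hence automatically fixed point reflecting—a monomorphism induces isomorphisms on residue fields, so the inclusion $\stab(u) \hookrightarrow \stab(j(u))$ is also a surjection. Property (P) then gives $U \in \CC_S$, and the pullback $V$ lies in $\CC_S$ by the same argument applied to either leg. Thus the entire fpNis square is a square in $\CC_S$, and its legs generate an fpNis covering sieve on $X$ in $\CC_S$.

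The only delicate part is the first step, ensuring that ``covering sieve in $\DD_S$'' can be tested by a single generating fpNis square (rather than by some more complicated iterated refinement). In the $cd$-structure framework of \cite{V:cd}, a sieve is covering precisely when it contains a sieve generated by the legs of some $cd$-square (up to the empty cover of $\emptyset$, which is automatic since $\emptyset \in \CC_S$), so this reduction is automatic. I do not expect any real obstacle beyond this bookkeeping; the main content is the observation about property (P) applied to both legs of an fpNis square.
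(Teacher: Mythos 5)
Your argument is essentially the paper's: both reduce cocontinuity to the observation that property (P) forces any fixed point Nisnevich square in $\DD_S$ whose base lies in $\CC_S$ to lie entirely in $\CC_S$ (your remark that an equivariant open immersion is automatically fixed point reflecting, so that (P) applies to both legs, is correct). One imprecision in your reduction step: for the topology generated by a $cd$-structure, a sieve on $X$ is covering precisely when it contains the sieve generated by a \emph{simple covering} in the sense of \cite[Section 2]{V:cd}, i.e.\ an \emph{iterated} refinement by distinguished squares, not the legs of a single square (already a Nisnevich cover by three opens needs two nested squares); moreover this characterization requires the $cd$-structure to be \emph{complete}, which is exactly what (P) guarantees for $\CC_S$ and should be invoked explicitly. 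Your key observation repairs this immediately, since it gives the inductive step showing that every simple covering of $X\in\CC_S$ formed in $\DD_S$ is already a simple covering in $\CC_S$, which is how the paper phrases the proof.
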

\begin{proof}
	Let $X\in \CC_S$. Since the $cd$-structure ${\rm fpNis}$ on each of these categories is complete, the covering sieves on $X$ and on $u(X)$, in both cases, are exactly those which contain the sieve generated by a simple covering, see \cite[Section 2]{V:cd}. Property (P) implies that any simple covering of  $X$ in $\DD_S$ is a simple covering in $\CC_S$. It follows that the pullback $u^*R$ of any covering sieve $R$ is again a covering sieve.
\end{proof}

\begin{proposition}\label{prop:peq}
Let   $u:\CC_S\subseteq \DD_S$ be as in \eqref{eqn:u}.  
\begin{enumerate}
\item The functor $u_!:\Pre(\CC_S)\to \Pre(\DD_S)$ preserves all Nisnevich  and all motivic equivalences.
\item The functor $u^*:\Pre(\DD_S)\to \Pre(\CC_S)$ preserves all Nisnevich  and all motivic equivalences.
\end{enumerate}
\end{proposition}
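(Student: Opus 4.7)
The plan is to handle both statements by identifying the relevant classes of equivalences as strong saturations of small sets of explicit generators (Nisnevich covering sieve inclusions on the one hand, $\A^1$-projections on the other), and checking the assertion separately on each generator together with the standard stability under small colimits, pushouts, retracts, and $2$-out-of-$3$. The motivic case will then follow by combining these via the formula \eqref{eqn:motformula}.

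For (1), I would first exploit that $u:\CC_S\hookrightarrow\DD_S$ is a full inclusion: $u_!$ takes a representable presheaf to the presheaf represented by the same object of $\DD_S$, and as a left adjoint it preserves colimits, hence pushouts. Any Nisnevich (or fpNis) $cd$-square with vertices in $\CC_S$ is again such a square in $\DD_S$, so the generating Nisnevich local equivalence $U\cup_V Y\to X$ attached to a square in $\CC_S$ is sent by $u_!$ to the analogous generating equivalence in $\Pre(\DD_S)$. Property (H) ensures that the generating $\A^1$-equivalences $Y\times\A^1\to Y$ for $Y\in\CC_S$ are likewise mapped to generating $\A^1$-equivalences in $\Pre(\DD_S)$. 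Since $u_!$ also preserves colimits, retracts, and $2$-out-of-$3$, it preserves the strong saturation of either set of generators, hence all Nisnevich, $\A^1$, and motivic equivalences.

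For (2), the Nisnevich statement is a direct application of \aref{lem:cocoeq}, using the topological cocontinuity of $u$ established in \aref{lem:ucoco}. For $\A^1$-equivalences, I would use the pointwise formula $\L_{\A^1}\simeq\Sing_{\A^1}$ of \eqref{eqn:LA1}. Property (H) guarantees that $u(Z\times\Delta^n_S)=u(Z)\times\Delta^n_S$ lies in $\DD_S$ whenever $Z\in\CC_S$, and unwinding the defining colimit then gives a natural equivalence $u^*\Sing_{\A^1}\simeq\Sing_{\A^1}u^*$, so $u^*$ preserves $\A^1$-equivalences. Finally, because $u^*$ sits in the adjoint triple $u_!\dashv u^*\dashv u_*$, it preserves both limits and colimits, and so \eqref{eqn:motformula} implies that it preserves motivic equivalences as well.

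No serious obstacle appears in this proof; it is essentially a bookkeeping exercise against the definitions and the closure properties of localizations, once the two key inputs—topological cocontinuity of $u$ and property (H)—have been arranged upstream. The only place where the hypothesis on $\CC_S\subseteq\DD_S$ is used in a non-formal way is in recognizing that $cd$-squares and $\A^1$-projections in $\CC_S$ survive as honest generators in $\DD_S$.
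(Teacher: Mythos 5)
Your proof is correct and follows essentially the same route as the paper's: part (1) by checking the generating Nisnevich squares and $\A^1$-projections are preserved by the colimit-preserving functor $u_!$, and part (2) by combining \aref{lem:cocoeq} and \aref{lem:ucoco} with the compatibility of $u^*$ with $-\times\A^1$. The only cosmetic difference is that for the $\A^1$-part of (2) you argue via the commutation $u^*\Sing_{\A^1}\simeq\Sing_{\A^1}u^*$, whereas the paper invokes $u^*(X\times\A^1)\simeq u^*(X)\times\A^1$ directly; both are fine.
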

\begin{proof}        
Recall that if $\mathcal{A}$ is a presentable $\i$-category and $S$ is a set of morphisms in $\mathcal{A}$, then the class of $S$-local equivalences is the closure of $S$ under pushouts, small colimits, and $2$-out-of-$3$, see e.g., \cite[Proposition 5.5.4.15]{HTT}.

The first statement then follows from the fact that $u_!$ preserves colimits, fixed point Nisnevich squares, and that $u_!(X\times \A^1) \simeq u_!(X)\times \A^1$.
The second statement follows from \aref{lem:cocoeq}, \aref{lem:ucoco}, and that $u^*(X\times \A^1)\simeq u^*(X)\times \A^1$. 
\end{proof}

\begin{corollary}\label{prop:iphi}
 Let $u:\CC_S\subseteq \DD_S$ be as above. There are natural equivalences 
 $ u^*\circ \L_{\Nis}\wkeq \L_{\Nis}\circ u^*$ and $u^*\circ \L_{\mot} \wkeq \L_{\mot}\circ u^*$ of functors $\Pre(\DD_S)\to \Pre(\CC_S)$.
\end{corollary}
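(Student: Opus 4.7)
The plan is to combine Proposition \ref{prop:peq}(2), which gives that $u^{*}$ preserves Nisnevich and motivic equivalences, with the complementary observation that $u^{*}$ also preserves Nisnevich-local and motivic-local objects. Once both halves are in hand, the universal property of accessible localization pins down the comparison map as an equivalence.

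First I would verify that $u^{*}$ takes $\tau$-local presheaves to $\tau$-local presheaves, for $\tau\in\{\Nis,\mot\}$. Because $u$ is a fully faithful inclusion of full subcategories and $\emptyset\in\CC_{S}$, any Nisnevich square $\mathcal{Q}$ in $\CC_{S}$ is also a Nisnevich square in $\DD_{S}$, and $u(\emptyset)=\emptyset$. Hence if $G\in\Shv_{\Nis}(\DD_{S})$ then $u^{*}G(\mathcal{Q}) = G(u(\mathcal{Q}))$ is cartesian and $u^{*}G(\emptyset) = G(\emptyset)\simeq\pt$, so $u^{*}G\in\Shv_{\Nis}(\CC_{S})$. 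Property (H) on $\CC_{S}$ ensures that $X\times\A^{1}\in\CC_{S}$ whenever $X\in\CC_{S}$ and that $u(X\times\A^{1}) = u(X)\times\A^{1}$, so $\A^{1}$-invariance is likewise preserved under $u^{*}$. Consequently $u^{*}$ sends $\HH(\DD_{S})\subseteq\Pre(\DD_{S})$ into $\HH(\CC_{S})\subseteq\Pre(\CC_{S})$.

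With this in hand, let $F\in\Pre(\DD_{S})$ and consider the unit map $\eta_{F}\colon F\to\L_{\tau}F$, which is a $\tau$-equivalence with $\tau$-local target. Applying $u^{*}$ yields a map $u^{*}F\to u^{*}\L_{\tau}F$ which is again a $\tau$-equivalence by \aref{prop:peq}(2), and whose target is $\tau$-local by the paragraph above. By the universal property of the accessible localization $\L_{\tau}$ (see \cite[\textsection 5.5.4]{HTT}), any $\tau$-equivalence into a $\tau$-local object is canonically equivalent to the localization map of its source; hence there is a unique equivalence $\L_{\tau}u^{*}F\wkeq u^{*}\L_{\tau}F$ compatible with the localization units, and this is natural in $F$. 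Taking $\tau = \Nis$ and $\tau=\mot$ (noting that $\L_{\mot}$ is accessible by the formula \eqref{eqn:motformula}) yields both asserted natural equivalences.

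No serious obstacle arises; the only subtle point is the verification that $u^{*}$ preserves local objects, which reduces cleanly to the hypotheses (P) and (H) plus full faithfulness of $u$. Everything else is formal adjunction yoga combined with the preservation of equivalences supplied by \aref{prop:peq}.
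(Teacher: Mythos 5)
Your proof is correct and is essentially the argument the paper intends: the corollary is stated with no written proof, as an immediate consequence of \aref{prop:peq}, and your two ingredients (preservation of $\tau$-equivalences from \aref{prop:peq}(2), plus preservation of $\tau$-local objects and the universal property of accessible localization) are exactly what is needed to make that deduction precise. The preservation of local objects that you verify by hand is recorded in the paper as \aref{cor:sev}(1) immediately afterwards, by the same observation that $u$ preserves Nisnevich squares and products with $\A^1$.
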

\hfill \qed

The adjoint pairs $(u_!,u^*)$ and $(u^*,u_*)$ extend to adjoint pairs on $\A^1$-invariant Nisnevich sheaves. Overloading notation, we continue to write $u_!,u^*,u_*$ for the induced functors on categories of  $\A^1$-invariant Nisnevich sheaves.

\begin{proposition}\label{cor:sev}
	Let $u:\CC_S\subseteq \DD_S$ be as in \eqref{eqn:u}.  
\begin{enumerate}
\item The restriction functor $u^*:\Pre(\DD_S)\to \Pre(\CC_S)$ preserves $\A^1$-invariant Nisnevich sheaves.
\item The induced functor 
$u^*:\HH(\DD_S)\to \HH(\CC_S)$ is symmetric monoidal and has a left adjoint $u_!$ and a right adjoint $u_*$.
\item Similarly $u^*:\HH_\bb(\DD_S)\to \HH_\bb(\CC_S)$ is symmetric monoidal and has a left and a right adjoint, which we again denote respectively by $u_!$ and $u_*$. 
\end{enumerate}

 Moreover, these functors fit into commutative diagrams

\[
\begin{tikzcd}
\Pre(\CC_S) \ar[r,shift left, "u_!"] \ar[d, shift right, "\L_{\mot}"']\ar[d, shift left, hookleftarrow] &  \Pre(\DD_S) \ar[l, shift left, "u^*"] \ar[d, shift right, "\L_{\mot}"'] \ar[d, shift left, hookleftarrow]\\
\HH(\CC_S)\ar[r, shift left, "u_!"] \ar[d, shift right, "(-)_+"'] \ar[d, shift left, leftarrow] & \HH(\DD_S) \ar[l, shift left, "u^*"]  \ar[d, shift right, "(-)_+"'] \ar[d, shift left, leftarrow]\\
\HH_\bb(\CC_S) \ar[r, shift left, "u_!"] & \ar[l, shift left, "u^*"] \HH_\bb(\DD_S)
\end{tikzcd}
\,\,\textrm{ and } \,\,\,
\begin{tikzcd}
\Pre(\DD_S) \ar[r,shift left, "u^*"] \ar[d, shift right, "\L_{\mot}"']\ar[d, shift left, hookleftarrow] &  \Pre(\CC_S) \ar[l, shift left, "u_*"] \ar[d, shift right, "\L_{\mot}"'] \ar[d, shift left, hookleftarrow]\\
\HH(\DD_S)\ar[r, shift left, "u^*"] \ar[d, shift right, "(-)_+"'] \ar[d, shift left, leftarrow] & \HH(\CC_S) \ar[l, shift left, "u_*"]  \ar[d, shift right, "(-)_+"'] \ar[d, shift left, leftarrow]\\
\HH_\bb(\DD_S) \ar[r, shift left, "u^*"] & \ar[l, shift left, "u_*"] \HH_\bb(\CC_S).
\end{tikzcd}
\]

\end{proposition}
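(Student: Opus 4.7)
The plan is to verify each claim in turn, using the compatibility of $u^*$ with the motivic localization functor established in \aref{prop:peq} and \aref{prop:iphi} as the central input.

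For part (1), since $\CC_S \subseteq \DD_S$ is a full subcategory, every Nisnevich square in $\CC_S$ is automatically a Nisnevich square in $\DD_S$. Consequently, if $F \in \Pre(\DD_S)$ is Nisnevich excisive, then so is $u^* F$. Property (H) ensures that $X \times \A^1 \in \CC_S$ whenever $X \in \CC_S$, so $\A^1$-invariance of $F$ restricts to $\A^1$-invariance of $u^* F$. This already establishes the first claim.

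For part (2), the strategy is to show that $u^* : \HH(\DD_S) \to \HH(\CC_S)$ preserves both small colimits and all limits, and then invoke the presentable adjoint functor theorem to obtain left and right adjoints. Colimit preservation follows from \aref{prop:iphi} combined with the fact that $u^*$ preserves colimits on presheaves as a left adjoint to $u_*^{\Pre}$. Limit preservation is immediate since limits in $\HH$ are computed as in $\Pre$ (by left-exactness of $\L_{\mot}$) and $u^*$ on presheaves preserves limits. The left adjoint may be identified concretely as $\L_{\mot} \circ u_!^{\Pre}$ restricted to motivic spaces, while the right adjoint $u_*$ may be identified with the restriction of the presheaf-level $u_*^{\Pre}$: the latter preserves motivic-local objects by an adjunction argument using \aref{prop:peq}(2), namely that $u^*$ carries motivic equivalences in $\Pre(\DD_S)$ to motivic equivalences in $\Pre(\CC_S)$. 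Symmetric monoidality with respect to the cartesian product is inherited from the presheaf level, where $u^*$ preserves finite products pointwise, together with symmetric monoidality of $\L_{\mot}$.

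Part (3) follows by observing that $u^*$ preserves terminal objects, and hence induces a functor $\HH_\bb(\DD_S) \to \HH_\bb(\CC_S)$. Via the description $\HH_\bb(\CC_S) \simeq \HH(\CC_S) \otimes \S_\bb$ as a tensor product of presentable $\i$-categories, this inherits a symmetric monoidal structure (for the smash product) and both adjoints by the same arguments. For the commutative diagrams, the top $u^*$ squares commute by part (1); the top $u_!$ and $u_*$ squares commute by the explicit identifications $u_!^{\HH} \simeq \L_{\mot} \circ u_!^{\Pre}$ and $u_*^{\HH} \simeq u_*^{\Pre}$ (the latter restricted to motivic-local objects) established above; and the lower squares connecting $\HH$ and $\HH_\bb$ commute by uniqueness of adjoints, since $u^*$ manifestly commutes with the forgetful functor between pointed and unpointed objects, and passing to left or right adjoints forces the corresponding $u_!$ and $u_*$ squares to commute. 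I do not anticipate a genuine technical obstacle: the proof is systematic bookkeeping guided by adjoint functor theorems, with all substantive input already supplied by \aref{prop:peq} and \aref{prop:iphi}.
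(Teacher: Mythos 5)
The proposal is correct and takes essentially the same approach as the paper: show $u^*$ preserves motivic-local objects, then exploit preservation of limits and colimits (the latter via $u^*\L_{\mot}\simeq\L_{\mot}u^*$ from \aref{prop:iphi}/\aref{prop:peq}) to obtain $u_!$ and $u_*$ by the presentable adjoint functor theorem, with symmetric monoidality coming from limit preservation into a cartesian monoidal category. Your explicit identifications $u_!^{\HH}\simeq\L_{\mot}\circ u_!^{\Pre}$ and $u_*^{\HH}\simeq u_*^{\Pre}|_{\HH}$ are correct and make the commutativity of the displayed diagrams transparent, whereas the paper leaves this implicit.
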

\begin{proof}
Since $u_!$ preserves fixed point Nisnevich squares and $u_!(X\times \A^1)\simeq u_!(X)\times \A^1$,  it follows that $u^*$ preserves $\A^1$-invariant Nisnevich sheaves on $\CC_S$ and thus restricts to a limit preserving functor $u^*:\HH(\DD_S)\to \HH(\CC_S)$. As these are categories are presentable, $u^*$ has a left adjoint $u_!$. It follows from \aref{prop:peq} that $u^*:\HH(\DD_S)\to \HH(\CC_S)$ preserves colimits. 
In particular it has a right adjoint $u_*$. 
Note that $u^*$ is symmetric monoidal since it preserves limits and  $\HH(\DD_S)$ is cartesian monoidal.

The functors $u^*$ and $u_*$ preserve final objects so induce adjoint pairs on based spaces. Since $u^*$ preserves limits, it has a left adjoint $u_!$. 
It is straightforward to verify that these fit into the displayed commutative diagrams.

\end{proof}

\begin{remark}
Suppose that $\CC_S$ and $\DD_S$ are also closed under binary products. Then $u_!:\HH(\CC_S)\to\HH(\DD_S)$ preserves binary products (the monoidal product on these categories). However, $u_!$ is not in general a symmetric monoidal functor since it does not always preserve the unit object $\pt$ (because it is not in general representable) but there is always a canonical map $u_!(\pt)\to\pt$ adjoint to the equivalence $\pt\simeq u^*(\pt)$.
Similarly, $u_!:\HH_\bb(\CC_S) \to \HH_\bb(\DD_S)$ preserves the smash product, but need not preserve the unit object $S^0\simeq\pt\coprod\pt$, though again there is a canonical map $u_!(S^0)\to S^0$.
\end{remark}

\begin{proposition}\label{prop:i*ff}
Let $u:\CC_S\subseteq \DD_S$ be as above. 
 The functors
\begin{align*}
u_!, u_*:\HH(\CC_S) &  \to \HH(\DD_S) \\
u_!, u_*:\HH_\bb(\CC_S) & \to \HH_\bb(\DD_S)
\end{align*}
 are all full and faithful. In particular if $\E$ is a family of subgroups closed under conjugacy and $u:\Sm^G_S[\E]\subseteq \Sm^G_S$ is the inclusion, 
$u_!, u_*:\HH^{G,\E}(S) \to \HH^G(S)$ and
$u_!, u_*:\HH_\bb^{G,\E}(S) \to \HH_\bb^G(S)$ are full and faithful.
\end{proposition}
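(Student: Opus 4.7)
The plan is to reduce fully faithfulness on the motivic categories to the (essentially tautological) fact that left/right Kan extension along a fully faithful functor is fully faithful at the level of presheaves, and then transport this across the motivic localization using the commutation relations already established in this section.

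First I would note that since $u:\CC_S\hookrightarrow \DD_S$ is the inclusion of a full subcategory, it is fully faithful as a functor of $\infty$-categories. Hence, by the standard pointwise formulas for left/right Kan extension (which make the comma category $u\!\downarrow\! uC$ have a terminal object), both presheaf-level adjoints are fully faithful:
\[
u^*\circ u_!^{\Pre}\wkeq \id_{\Pre(\CC_S)} \wkeq u^*\circ u_*^{\Pre}.
\]

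Next I would upgrade the $u_!$ statement to the motivic setting. By the commutative diagram in \aref{cor:sev} (applied to $u_!$), for $F\in \HH(\CC_S)$ we have $u_!^{\HH}F\wkeq \L_{\mot}\,u_!^{\Pre}F$, where on the right we regard $F$ as a presheaf via the inclusion $\HH(\CC_S)\hookrightarrow\Pre(\CC_S)$. Combining this with \aref{prop:iphi}, which gives $u^*\circ \L_{\mot}\wkeq \L_{\mot}\circ u^*$, and using that $F$ is already motivic-local, I compute
\[
u^*u_!^{\HH}F \;\wkeq\; u^*\L_{\mot}\,u_!^{\Pre}F \;\wkeq\; \L_{\mot}\,u^*u_!^{\Pre}F \;\wkeq\; \L_{\mot}F \;\wkeq\; F.
\]
So $u_!^{\HH}$ is fully faithful.

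For $u_*^{\HH}$ I would bypass an explicit formula and instead use the adjunction triple $u_!^{\HH}\dashv u^*\dashv u_*^{\HH}$ together with the Yoneda lemma. For $F,G\in\HH(\CC_S)$, the computation
\[
\Map(F,u^*u_*^{\HH}G) \wkeq \Map(u_!^{\HH}F, u_*^{\HH}G) \wkeq \Map(u^*u_!^{\HH}F,G)\wkeq \Map(F,G),
\]
where the last equivalence uses the fully faithfulness of $u_!^{\HH}$ established in the previous step, shows that $u^*u_*^{\HH}G\wkeq G$ for all $G$. The based version is identical: because $u^*$ preserves the terminal object, the triple $u_!\dashv u^*\dashv u_*$ descends to $\HH_{\bb}(\CC_S)\rightleftarrows\HH_{\bb}(\DD_S)$, and the same argument goes through verbatim. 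The last sentence (the case $\CC_S=\Sm^G_S[\E]$, $\DD_S=\Sm^G_S$) is a direct specialization, provided $\Sm^G_S[\E]$ satisfies (P) and (H), which is immediate from the definitions.

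The only step that requires real care — and which I expect is the main obstacle — is verifying the relation $u_!^{\HH}F\wkeq \L_{\mot}u_!^{\Pre}F$ and the intertwining $u^*\L_{\mot}\wkeq \L_{\mot}u^*$; but both are recorded in \aref{cor:sev} and \aref{prop:iphi}, so in this paper the proof reduces to a short formal chain.
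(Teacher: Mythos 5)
Your proof is correct, and for $u_!$ it is essentially identical to the paper's: both reduce to the presheaf-level identity $u^*u_!^{\Pre}\wkeq\id$ (a consequence of $u$ being the inclusion of a full subcategory, so the relevant slice/coslice over an object of $\CC_S$ has a (co)final object) and then transport across the localization via $u^*\L_{\mot}\wkeq\L_{\mot}u^*$ and the commutative square from \aref{cor:sev}.

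For $u_*$ you take a mildly different route. The paper observes that $u_*^{\Pre}$ already preserves $\A^1$-invariant Nisnevich sheaves (this is encoded in the second diagram of \aref{cor:sev}), so the motivic-level counit $u^*u_*\to\id$ is literally the restriction of the presheaf-level one, which was already shown to be an equivalence. You instead derive fully faithfulness of $u_*$ from that of $u_!$ using the adjoint triple $u_!\dashv u^*\dashv u_*$ and Yoneda. That argument is valid: tracing the chain $\Map(F,u^*u_*G)\wkeq\Map(u_!F,u_*G)\wkeq\Map(u^*u_!F,G)\wkeq\Map(F,G)$ through the adjunction units/counits and a triangle identity shows the composite is exactly post-composition with the $(u^*,u_*)$-counit, so Yoneda gives that the counit is an equivalence (this is the standard fact that in an adjoint triple the outer functors are fully faithful simultaneously). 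The paper's argument is slightly more economical since it never needs to re-derive anything in $\HH$; yours is slightly more formal/self-contained and avoids an explicit identification of $u_*^{\HH}$ with the restriction of $u_*^{\Pre}$.

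One small imprecision in the pointed case: the triple does not literally ``descend'' --- while $u^*$ and $u_*$ restrict to $\HH_\bb$ because they preserve the terminal object, the based left adjoint is $u_!^{\bb}(X,x_0)\simeq u_!(X)\sqcup_{u_!(\pt)}\pt$, not the restriction of the unbased $u_!$ (indeed $u_!(\pt)\not\simeq\pt$ in general, as the paper notes). The argument still goes through because $u^*$ preserves pushouts, so $u^*u_!^{\bb}(X,x_0)\simeq u^*u_!(X)\sqcup_{u^*u_!(\pt)}\pt\simeq X$, but it is worth saying this rather than asserting the triple descends.
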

\begin{proof}
 First we note that the unit of the adjunction $u_!^{\Pre}:\Pre(\CC_S)\rightleftarrows  \Pre(\DD_S):u^*$  is an equivalence 
 $\eta:\id\wkeq u^*u^{\Pre}_!$ and the counit of $u^*:\Pre(\DD_S)\rightleftarrows  \Pre(\CC_S):u_*$ is an equivalence $\epsilon:u^*u_*\wkeq \id$. Indeed $u^{\Pre}_!F$ and $u_*F$ are respectively computed by the left and right Kan extensions of $F$ along $u$ and so 
$u^{\Pre}_!F(W) \wkeq \colim_{ W\to X}F(X)$ and 
$u_*F(W) \wkeq \lim_{X\to W}F(X)$. Here the indexing categories are respectively the categories of morphisms  $W\to X$ and $X\to W$ where $X\in \Sm^G[\E]$. In particular, if $W\in \Sm^G_S[\E]$ these have an initial, respectively terminal, object and so  $\id\wkeq u^*u^{\Pre}_!$ and $u^*u_*\wkeq \id$, as claimed.

 The counit of  $u^*:\HH(\DD_S)\rightleftarrows  \HH(\CC_S):u_*$ is thus an equivalence as $u^*u_*\simeq \id$. To show that
 the unit of $u_!:\HH(\CC_S)\rightleftarrows  \HH(\DD_S):u^*$ is an equivalence
  $\id\simeq u^*u_!$, we note that by
  \aref{prop:iphi} (and writing $\iota:\HH^G(\CC_S)\subseteq \Pre(\CC_S)$ for the inclusion) we have $u^*u_! \wkeq u^*\L_{\mot}u_!^{\Pre}\iota\wkeq \L_{\mot}u^*u_!^{\Pre}\iota \wkeq \L_{\mot}\iota\simeq \id$. Thus both of the functors $u_!,u_*:\HH(\CC_S)\to \HH(\DD_S)$ are full and faithful.

The pointed cases follow
 immediately from the above considerations. 

\end{proof}

Let $\iota:H\hookrightarrow G$ be a group monomorphism. Let $\CC_S^H\subseteq\Sch^H_S$ and $\CC^G\subseteq \Sch^G_S$ be full subcategories satisfying (P) and (H), as above. Suppose further that the induction-restriction adjunction 
$\iota_!:\Sch^{H}_S\rightleftarrows \Sch^{G}_S:\iota^*$ 
restricts to the adjunction
\[\iota_!:\CC^{H}_S\rightleftarrows \CC^{G}_S:\iota^*.\]

\begin{lemma}\label{lem:indres}
Let $\iota$, $\CC_S^G$, $\CC^G_S$ be as above. 
\begin{enumerate}
	\item The restriction functor $\iota^*:\Pre(\CC^G_S)\to \Pre(\CC^H_S)$ preserves $\A^1$-invariant Nisnevich sheaves.
	\item The induced functor 
	$\iota^*:\HH(\CC^G_S)\to \HH(\CC^H_S)$ is symmetric monoidal and has a left adjoint $\iota_!$ and a right adjoint $\iota_*$.
	\item Similarly $\iota^*:\HH_\bb(\CC^G_S)\to \HH_\bb(\CC^H_S)$ is symmetric monoidal and has a left and a right adjoint, which we again denote respectively by $\iota_!$ and $\iota_*$. 
\end{enumerate}

Moreover, these functors fit into commutative diagrams

\[
\begin{tikzcd}
\CC^H_S \ar[r, "\iota_!"] \ar[d] &  \CC^G_S  \ar[d] \\
\HH(\CC^H_S)\ar[r, shift left, "\iota_!"] \ar[d, shift right, "(-)_+"'] \ar[d, shift left, leftarrow] & \HH(\CC^G_S) \ar[l, shift left, "\iota^*"]  \ar[d, shift right, "(-)_+"'] \ar[d, shift left, leftarrow]\\
\HH_\bb(\CC^H_S) \ar[r, shift left, "\iota_!"] & \ar[l, shift left, "\iota^*"] \HH_\bb(\CC^G_S)
\end{tikzcd}
\,\,\textrm{ and } \,\,\,
\begin{tikzcd}
\CC^G_S \ar[r, "\iota^*"] \ar[d] &  \CC^H_S \ar[d] \\
\HH(\CC^G_S)\ar[r, shift left, "\iota^*"] \ar[d, shift right, "(-)_+"'] \ar[d, shift left, leftarrow] & \HH(\CC^H_S) \ar[l, shift left, "\iota_*"]  \ar[d, shift right, "(-)_+"'] \ar[d, shift left, leftarrow]\\
\HH_\bb(\CC^G_S) \ar[r, shift left, "\iota^*"] & \ar[l, shift left, "\iota_*"] \HH_\bb(\CC^H_S).
\end{tikzcd}
\]
\end{lemma}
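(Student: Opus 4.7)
The plan is to follow the strategy used in the proofs of \aref{lem:ucoco}, \aref{prop:peq}, and \aref{cor:sev}, adapting the arguments from the case of an inclusion $u:\CC_S \subseteq \DD_S$ to the case of the induction functor $\iota_!:\CC^H_S \to \CC^G_S$. The functor $\iota^*$ on presheaves is precisely precomposition with $\iota_!$, so the presheaf-level considerations for $\iota^*$ reduce to geometric properties of $\iota_!$ (i.e.\ $G\times_H(-)$).

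First I would verify two geometric facts about the induction functor: (a) $\iota_!$ sends fixed point Nisnevich squares to fixed point Nisnevich squares, and (b) $\iota_!(X\times\A^1)\simeq \iota_!(X)\times \A^1$. Both are immediate once one uses the description $G\times_H X = \coprod_{g_i} X_i$ after choosing coset representatives: induction is \'etale-locally just a disjoint union of copies, so it preserves open immersions, \'etale maps, and the fixed-point-reflecting condition; and $\A^1$ carries trivial action so it factors out of the quotient. Combined with the formula $\iota^* F = F\circ \iota_!$, these imply statement (1): $\iota^*$ preserves both Nisnevich excisive and $\A^1$-invariant presheaves. Hence $\iota^*$ restricts to a limit-preserving functor $\HH(\CC^G_S)\to \HH(\CC^H_S)$, and by presentability this has a left adjoint, which we denote $\iota_!$. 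Symmetric monoidality of $\iota^*$ is automatic from limit-preservation since $\HH(\CC^G_S)$ is cartesian monoidal.

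For the existence of the right adjoint $\iota_*$, I need to show $\iota^*:\HH(\CC^G_S)\to\HH(\CC^H_S)$ preserves colimits. Mimicking the proof of \aref{cor:sev}, this reduces to showing that the induction functor $\iota_!:\CC^H_S\to \CC^G_S$ is topologically cocontinuous for the ${\rm fpNis}$ topology, so that \aref{lem:cocoeq} applies to show $\iota^*:\Pre(\CC^G_S)\to\Pre(\CC^H_S)$ preserves all Nisnevich local equivalences (and then, by the same argument as in \aref{prop:peq}, all motivic equivalences as well). The main obstacle here is the cocontinuity check: given $Y\in \CC^H_S$ and a covering sieve $R$ on $\iota_!Y = G\times_H Y$, one must produce a covering sieve on $Y$ inside $(\iota_!)^*R$. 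Since the ${\rm fpNis}$ $cd$-structure is complete, it suffices to handle simple coverings generated by fixed-point-reflecting Nisnevich squares, and then one uses that the canonical $H$-equivariant inclusion $Y\hookrightarrow G\times_H Y$ (as the identity coset) is fixed point reflecting and its pullback of an ${\rm fpNis}$ cover is again an ${\rm fpNis}$ cover in $\CC^H_S$.

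Once $\iota_!\dashv \iota^*\dashv \iota_*$ is established on $\HH$, the pointed statement (3) follows formally by smashing with $\S_\bullet$: $\iota^*$ preserves the terminal object since $\iota_!$ preserves $\emptyset$ and carries the pullback-to-a-point structure, and the left/right adjoints on pointed categories are induced by the standard $(-)_+\dashv$ forget adjunction, as in the proof of \aref{cor:sev}. Finally, the commutative diagrams in the statement just record the naturality of $(-)_+$ and the compatibility of the $\HH$-level adjoints with their presheaf-level counterparts under $\L_\mot$; these are straightforward once the adjunctions themselves are in place, using \aref{prop:iphi}-style base change along $\L_\mot$ for the restriction functor.
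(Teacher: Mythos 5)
Your proposal is correct in outline, but for the key step --- showing that $\iota^*$ preserves colimits (equivalently, that it preserves motivic equivalences at the presheaf level), which is what produces the right adjoint $\iota_*$ --- you take a genuinely different and heavier route than the paper. You argue via topological cocontinuity of the scheme-level functor $\iota_!$ together with \aref{lem:cocoeq}; this works, but the verification is more delicate than your sketch suggests: given a covering sieve $R$ on $G\times_H Y$, it is not enough to note that restricting a generating fpNis cover $\{W_\alpha\to G\times_H Y\}$ along the identity-coset inclusion $Y\hookrightarrow G\times_H Y$ yields an fpNis cover of $Y$; you must also check that each $W_\alpha|_Y\to Y$ lies in the cocontinuity sieve, i.e.\ that $G\times_H(W_\alpha|_Y)\to G\times_H Y$ factors through $R$ (it does, via the canonical $G$-map $G\times_H(W_\alpha|_Y)\to W_\alpha$ adjoint to the projection), and your write-up omits this last point. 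The paper avoids cocontinuity entirely by observing that the scheme-level restriction $\iota^{-1}$ --- the right adjoint of $\iota_!$ --- \emph{also} sends Nisnevich squares to Nisnevich squares and commutes with $-\times\A^1$. Since $\iota^*$ on presheaves is precomposition with $\iota_!$, it sends the representable $X$ to the representable $\iota^{-1}X$, hence carries the generating motivic equivalences to motivic equivalences and therefore preserves them all; colimit preservation on $\HH(\CC^G_S)$, the existence of $\iota_*$, and the compatibility of the presheaf-level and $\HH$-level adjoints (hence the commutative diagrams) all follow at once. Your treatment of part (1), the left adjoint via presentability, symmetric monoidality from limit preservation, and the pointed case agrees with the paper's.
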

\begin{proof}
The functors $i_!:\CC^H_S\to \CC^G_S$, $i^*:\CC^G_S\to \CC^H_S$ both send distinguished squares to distinguished squares and
$\iota_!(X)\times \A^1\cong \iota_!(X\times \A^1)$ and  
$\iota^*(X)\times \A^1\cong \iota^*(X\times \A^1)$.
It follows that $\iota_!:\Pre(\CC^H_S)\to \Pre(\CC^G_S)$ and $\iota^*:\Pre(\CC^G_S)\to \Pre(\CC^G_S)$ preserve all motivic equivalences. It follows that these induce functors on the category of motivic spaces as displayed above. Since $\iota^*:\HH(\CC^G_S)\to \HH(\CC^G_S)$ preserves limits and these categories are cartesian monoidal, it follows that $\iota^*$ is symmetric monoidal. 

Since $\iota^*$, $\iota_*$ preserve final objects they induce an adjoint pair on based spaces. Since $\iota^*$ preserves limits, it has a left adjoint $\iota_!$. 
It is straightforward to verify that these fit into commutative diagrams as displayed.

\end{proof}

\begin{remark}\label{rem:pw}
The adjunctions of the previous lemma admit the following alternate description in terms change of base functors. 
There is an equivalence of categories 
\begin{equation}\label{eqn:scheq}
\Sch^G_{G\times_HS} \xrightarrow{\sim}\Sch^H_S
\end{equation}
induced by taking the fiber over $\{e\}\times S\subseteq G\times_HS$. 
Write $\CC_{G\times_HS}\subseteq \Sch^G_{G\times_HS}$ for the  full subcategory corresponding to $\CC^H_S$ under \eqref{eqn:scheq}. 
The restriction functor $\iota^*$ corresponds to pullback along $f:G\times_HS\to S$. 
Moreover, the equivalence \eqref{eqn:scheq} induces an equivalence of motivic spaces
\[
\HH(\CC_{G\times_HS})\simeq \HH(\CC_S^H)
\]
and under this equivalence the functors 
$i_!, i^*, i_*$ are respectively identified with the functors $f_\#, f^*, f_*$.

\end{remark}

\subsection{Motivic  \texorpdfstring{$G$}{G}-spectra}
We recall the construction of categories of motivic $G$-spectra from \cite{Hoyois:6} and some variants.

Let $\CC^{\otimes}$ be a presentably symmetric monoidal $\infty$-category and $X$ a set of objects in $\CC$.
 If $I = \{x_1,\ldots, x_n\}$ is a finite subset of $X$, write $\bigotimes I = x_1\otimes \cdots \otimes x_n$. Write
$$
\CC[X^{-1}] := \colim_{\substack{I\subseteq X \\ I\textrm{ finite}}}\CC[(\bigotimes I)^{-1}],
$$
where $\CC[x^{-1}]$ denotes the symmetric monoidal inversion of an object $x\in \CC$ in a presentable symmetric monoidal $\infty$-category $\CC$, see \cite[Section 2.1]{Robalo}.

Alternatively, one may consider the stabilization in $\Mod_{\CC}$, the $\i$-category of $\CC$-modules in $\LPrx$. Recall that if $\MM\in \Mod_{\CC}$ and $x\in \CC$ then $\Stab_x(\MM)$ is the colimit, in $\Mod_{\CC}$, 
of the sequence $\MM\xrightarrow {-\otimes x} \MM \xrightarrow{-\otimes \MM} \MM \to\cdots$. More generally, for a set of objects $X$ in $\CC$, define
\[
\Stab_X(\MM) :=  \colim_{\substack{I\subseteq X \\  I\textrm{ finite}}}\Stab_{\otimes I}(\MM).
\]
An object $x\in \CC$ is \emph{$n$-symmetric} if the cyclic permutation on $x^{\otimes n}$ is homotopic to the identity. If each $x\in X$ is $n$-symmetric for some $n\geq 2$, then the canonical map of $\CC[X^{-1}]$-modules is an equivalence
\[
\MM\otimes_{\CC}\CC[X^{-1}]\xrightarrow{\sim}\Stab_{X}(\MM),
\]
see \cite[Corollary 2.22]{Robalo}, \cite[Section 6.1]{Hoyois:6}.

Let $\E$ be a finite rank locally free $G$-module on $S$. Write $T^\E\in \HH_\bb(S)$ for the associated motivic sphere, defined as the Thom space
\[
T^\E = \V(\E)/\V(\E)- z(S),
\]
where $z:S\to \V(\E)$ is the zero section. We will also write $\Sigma^{\E}$ for the associated endofunctor 
\[
\Sigma^\E\wkeq T^{\E}\wedge - .
\]

We will also be interested in stabilizing the categories $\HH^{G,\FF}_\bb(S)$, for a family $\FF$. Observe that
$\HH^{G,\FF}_\bb(S)$ is an $\HH^{G}_\bb(S)$-module and 
$u_!:\HH_\bb^{G,\FF}(S)\to \HH_\bb^G(S)$ is a map of $\HH^{G}_\bb(S)$-modules 
since if $X\in \Sm^G_S[\FF]$ and $Y\in \Sm^G_S$  then  $X\times Y\in \Sm^G_S[\FF]$.  
In particular, even though spheres $T^\E\in \HH^{G}_\bb(S)$ are generally not objects of $\HH^{G,\FF}_\bb(S)$, they still determine endofunctors
$\Sigma^{\E}:\HH^{G,\FF}_\bb(S)\to\HH^{G,\FF}_\bb(S)$.

Write  $\Sph_B^G:=\{T^{\E}\mid \E\in \Rep_B^G \}$ where $\Rep_B^G$ is the set 
of finite rank $G$-vector bundles over $B$. 

\begin{definition}
A subset $\TT\subseteq \Sph_B^G$ is \emph{stabilizing} if there is some $T^\E\in \TT$ such that $T^\E\wkeq T\wedge T^{\E'}$, for some locally free $G$-module $\E'$.
\end{definition}
\begin{definition}	\hspace{1mm}
\begin{enumerate}
\item Let $p:S\to B$ be a $G$-scheme over $B$ and $\TT\subseteq \Sph^G_B$ a stabilizing subset.
Write 
 $$
 \SH^G_{\TT}(S):= \HH_\bb^G(S)[(p^*\TT)^{-1}].
 $$
If $\TT = \{T^\E\}$ consists of a single sphere, we write 
$\SH^G_{T^\E}(S)$ in place of $\SH^G_\TT(S)$. 
When $\TT =\Sph^G_B$, we simply write
$$
\SH^G(S):=\SH^G_{\Sph^G_B}(S).
$$
\item Let $\FF$ be a family of subgroups closed under conjugation. Define
\[
\SH^{G,\FF}_{\TT}(S):= \HH_\bb^{G,\FF}(S)\bigotimes_{\HH_\bb^{G}(S)} \SH^G_{\TT}(S).
\]

\end{enumerate}
\end{definition}

Write 
\[
\Sigma^{\infty}_{\TT}:\HH^{G,\FF}_{\bb}(S) \to \SH^{G,\FF}_{\TT}(S)
\]
for the stabilization functor. In case $\TT = \Sph_B^G$, we simply write $\Sigma^{\infty}$. When no confusion should arise, given $X\in \HH^{G,\FF}_{\bb}(S)$  we will  write again $X$ for its image in 
$\SH^{G,\FF}_{\TT}(S)$ instead of $\Sigma^{\infty}_{\TT}X$.

\begin{proposition}\label{prop:summary}
	Let $S\in \Sch^G_B$. Then $\SH^{G,\FF}_{\TT}(S)$ is a symmetric monoidal stable $\i$-category satisfying the following properties.
	\begin{enumerate} 
		\item There is a canonical equivalence of $\HH^G_\bb(S)$-modules	
		\[
		\SH^{G,\FF}_{\TT}(S)\simeq \stab_{\TT}(\HH_{\bb}^{G,\FF}(S)).
		\]
		\item It is generated under sifted colimits by the compact objects  $\Sigma^{-k\VV}\Sigma^{\i}_{\TT}X_+$ where $k\geq 0$, $T^\VV\in\TT$, $p:X\to S$ is in $\Sm^{G}_S[\FF]$, and $X$ is affine. 
\item The family of functors 
\[
\{p^*:\SH^{G,\FF}_{\TT}(S) \to \SH^{G,\FF}_{\TT}(X)\,|\,\, p:X\to S\textrm{ is in } \Sm^{G}_S[\FF] \}
\]
 is conservative. 
	\end{enumerate}
\end{proposition}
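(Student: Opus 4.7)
The plan is to establish the three items sequentially, leveraging Robalo's machinery for the stabilization and standard generator arguments for the rest.

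For (1), the equivalence of $\HH^G_\bb(S)$-modules
\[
\SH^{G,\FF}_{\TT}(S) = \HH_\bb^{G,\FF}(S)\otimes_{\HH_\bb^{G}(S)} \HH^{G}_\bb(S)[(p^*\TT)^{-1}] \wkeq \Stab_{\TT}(\HH_{\bb}^{G,\FF}(S))
\]
follows from the general comparison recalled just before the definition, that $\MM\otimes_\CC \CC[X^{-1}]\wkeq \Stab_X(\MM)$ as soon as each $x\in X$ is $n$-symmetric for some $n\geq 2$. I would apply this with $\CC=\HH^G_\bb(S)$, $\MM=\HH^{G,\FF}_\bb(S)$, and $X=\TT$. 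The $n$-symmetry of each $T^\VV$ in $\HH^G_\bb(S)$ is the nontrivial input here and is established in Hoyois's framework \cite[Section 6.1]{Hoyois:6} by reduction to representation spheres.

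For (2), I would first recall that $\HH^{G,\FF}_\bb(S)$ is an accessible localization of $\Pre_\bb(\Sm^G_S[\FF])$ and hence is generated under colimits by the pointed representables $X_+$ for $X\in \Sm^G_S[\FF]$. Since every such $X$ is Nisnevich-locally affine and Nisnevich squares become pushouts after motivic localization, the affine objects already suffice as a set of generators. Passing to the stabilization via (1), one presents $\SH^{G,\FF}_\TT(S)$ as a sequential colimit of copies of $\HH^{G,\FF}_\bb(S)$ with transition maps $\Sigma^\VV$ for $T^\VV\in \TT$; the compact objects $\Sigma^{-k\VV}\Sigma^\i_\TT X_+$ then manifestly form a set of compact generators. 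In a stable presentable $\infty$-category, compact generation by colimits is equivalent to compact generation by sifted colimits, giving the stated form.

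For (3), I would argue by adjunction using the generators from (2). For any $p:X\to S$ in $\Sm^G_S[\FF]$, the smooth base change picture summarized in \aref{rem:pw} yields, after stabilization, a left adjoint $p_\#$ to $p^*$ satisfying $p_\#\SS_X\wkeq \Sigma^\i_\TT X_+$. Hence for any morphism $f:Y\to Z$ in $\SH^{G,\FF}_\TT(S)$ and any generator $\Sigma^{-k\VV}\Sigma^\i_\TT X_+$,
\[
\Map\bigl(\Sigma^{-k\VV}\Sigma^\i_\TT X_+,\,f\bigr)\wkeq \Map_{\SH^{G,\FF}_\TT(X)}\bigl(\SS_X,\, \Sigma^{k p^*\VV} p^* f\bigr).
\]
If $p^*f$ is an equivalence for every such $p$, the right-hand side is an equivalence for each generator, so by (2) the map $f$ is itself an equivalence.

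The main obstacle is the first one: verifying the $n$-symmetry hypothesis for the equivariant motivic spheres $T^\VV$ that makes Robalo's comparison applicable. Once (1) is in place, items (2) and (3) are essentially formal from the generator description together with the smooth pullback-pushforward adjunctions already set up in \aref{cor:sev} and \aref{rem:pw}.
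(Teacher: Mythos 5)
Your proposal is correct and follows essentially the same route as the paper, which simply defers items (1) and (2) to \cite[Proposition 6.4]{Hoyois:6} (whose content is exactly the Robalo comparison via $n$-symmetry of the spheres plus the affine-generators argument you give) and deduces (3) from (2) by the same $p_\#\dashv p^*$ adjunction. The only point you leave implicit is the stability of $\SH^{G,\FF}_\TT(S)$ itself, which the paper notes follows from $\TT$ being stabilizing together with $T\wkeq S^1\wedge\G_m$.
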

\begin{proof}
	That $\SH^{G,\FF}_{\TT}(S)$ is stable is a consequence of the fact that $\TT$ is stabilizing and that
	$T\wkeq S^1\wedge \G_m$. It is symmetric monoidal by construction. The arguments for the remaining points of (1) and (2) are the same as \cite[Proposition 6.4]{Hoyois:6}.
	Lastly,  (3) follows from (2). 
\end{proof}

\begin{remark}\label{rem:single}
Over an affine base, every representation is the quotient of a finite sum of copies of the regular representation $\rho_G$. This implies that for any $S$,
\[
\SH^G(S) \wkeq \SH^G_{T^{\rho_G}}(S).
\] 
\end{remark}

Let $N\trianglelefteq G$ be a normal subgroup and $\pi:G\to G/N$ the quotient homomorphism. This induces a function 
$\pi^*: \Rep_B^{G/N} \to \Rep_B^{G}$, and we write		
\[
\Ntriv= \{T^{\E} \mid \E\in \pi^*( \Rep_B^{G/N}) \}\subseteq \Sph_B^G
\]
for the associated set of ``$N$-trivial $G$-spheres''. 
This stabilizing set of spheres plays an important role in later sections.

\begin{lemma}\label{lem:ideal}
Let $\FF$ be a family. The adjunction
$u_!\colon\HH_\bb^{G,\FF}(S)\rightleftarrows \HH_\bb^G(S):u^*$ of $\HH^{G}_\bb(S)$-modules induces an adjoint pair
\[
u_!:\SH^{G,\FF}_{\TT}(S)\rightleftarrows \SH_\TT^G(S):u^*.
\]
Moreover, $u^*$ is symmetric monoidal and
 $$
u_!:\SH^{G,\FF}_{\TT}(S)\hookrightarrow \SH^G_{\TT}(S)
$$
is full and faithful with  essential image  the localizing tensor ideal
generated by $\Sigma^{-nV}X_+$, where $T^V\in p^*\TT$ and $X\in \Sm^{G}_S[\FF]$.  

\end{lemma}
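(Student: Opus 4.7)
The plan is to construct the adjunction by a tensor product argument and then reduce fully faithfulness stably to the unstable case already established in \aref{prop:i*ff}.

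First, I would observe that $(u_!, u^*)$ is an adjunction of $\HH^G_\bb(S)$-modules in $\LPr$: both functors preserve colimits (by \aref{prop:peq} and \aref{cor:sev}), and $u^*$ is symmetric monoidal, hence in particular a map of $\HH^G_\bb(S)$-modules with respect to its natural module structure. Tensoring over $\HH^G_\bb(S)$ with $\SH^G_\TT(S)$ inside $\LPr$ yields functors
\[
u_! = u_! \otimes_{\HH^G_\bb(S)} \id \colon \SH^{G,\FF}_\TT(S) \to \SH^G_\TT(S), \qquad u^* = u^* \otimes_{\HH^G_\bb(S)} \id \colon \SH^G_\TT(S) \to \SH^{G,\FF}_\TT(S),
\]
and the unit/counit tensored provide the adjunction relations stably. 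Symmetric monoidality of $u^*$ follows because $u^*$ is already symmetric monoidal on $\HH^G_\bb(S)$-modules, and its tensor with the identity on $\SH^G_\TT(S)$ preserves this structure (equivalently, $u^*$ takes the invertible objects $T^V \in p^*\TT$ to invertible objects, so it descends through the universal property of the symmetric monoidal inversion).

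Next, I would prove that $u_!$ is fully faithful stably by showing that the unit map $\id \to u^* u_!$ is an equivalence. Both $u_!$ and $u^*$ preserve colimits, so it suffices to check this on a generating family. By \aref{prop:summary}(2), $\SH^{G,\FF}_\TT(S)$ is generated under colimits by the objects $\Sigma^{-nV} \Sigma^\infty_\TT X_+$ with $T^V \in p^*\TT$ and $X \in \Sm^G_S[\FF]$ affine. Because $u_!$ and $u^*$ are $\HH^G_\bb(S)$-module maps, they commute with smashing with $T^V$, so the unit on $\Sigma^{-nV} \Sigma^\infty_\TT X_+$ is obtained by applying $\Sigma^{-nV} \Sigma^\infty_\TT$ to the unit on $X_+$. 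The latter is an equivalence by \aref{prop:i*ff} applied to the unstable adjunction.

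Finally, I would identify the essential image of $u_!$. Fully faithfulness implies the essential image is a full localizing subcategory of $\SH^G_\TT(S)$ containing (and, since $u_!$ preserves colimits, generated by) the image of the generators, namely the objects $\Sigma^{-nV} X_+$ with $T^V \in p^*\TT$ and $X \in \Sm^G_S[\FF]$. To see this subcategory is a tensor ideal, note that $\Sm^G_S[\FF]$ is closed under products with arbitrary objects of $\Sm^G_S$ (stabilizers of $X \times Y$ are subgroups of stabilizers of $X$, hence in $\FF$). Consequently $\Sigma^{-nV} X_+ \wedge \Sigma^{-mW} Y_+ \simeq \Sigma^{-(nV+mW)}(X \times Y)_+$ lies in the image, and since the image is closed under colimits and desuspensions, it is closed under smashing with arbitrary generators of $\SH^G_\TT(S)$, hence with all objects. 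The main obstacle I expect is verifying cleanly that the module-theoretic tensor product in $\LPr$ transports the unstable adjunction with all its structure (unit, counit, module compatibility) to the stable categories; this is largely formal but requires care in invoking the universal properties of both $\SH^G_\TT(S)$ as symmetric monoidal inversion and as stabilization.
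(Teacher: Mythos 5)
Your proposal is correct and follows essentially the same route as the paper: the paper's proof simply invokes the identification of $\SH^{G,\FF}_{\TT}(S)$ and $\SH^{G}_{\TT}(S)$ with $\stab_\TT$ of the unstable categories (equivalently, the module tensor product you use) to transport the adjunction, and deduces full faithfulness from the unstable statement in \aref{prop:i*ff}, exactly as you do via the unit on generators. The one difference is that the paper's written proof stops there, whereas you also supply the (correct) argument identifying the essential image as a localizing tensor ideal, using that $\Sm^G_S[\FF]$ is a sieve under products; this fills in a claim the paper leaves implicit.
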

\begin{proof}
 
That the adjunction $(u_!,u^*)$ of $\HH^G_\bb(S)$-modules induces an adjoint pair on categories of motivic spectra follows from the 
description of $\SH^G_\TT(S)$ and $\SH^{G,\FF}(S)$ respectively as $\stab_\TT(\HH^G_\bb(S))$ and $\stab_\TT(\HH^{G,\FF}_\bb(S))$, see
 the discussion preceeding Definition 6.1 in \cite{Hoyois:6}. 
This also implies that $u_!$ is full and faithful, since $u_!\colon\HH_\bb^{G,\FF}(S)\hookrightarrow \HH_\bb^G(S)$ is,  by \aref{prop:i*ff}.

\end{proof}

Let $\CC^{\otimes}$ be a presentably symmetric monoidal $\i$-category and $E\in \CC$ is an idempotent object. Recall \cite[Definition 4.8.2.1]{HA} that this means there is a map $e:\SS \to E$ such that  $\id\otimes e:E \simeq E\otimes \SS \to E\otimes E$ and $e\otimes \id:E\simeq \SS\otimes E \to E\otimes E$ are equivalences. 
Tensoring with $E$ is a localization functor $\L=E\otimes -:\MM\to \MM$ on any $\MM\in \Mod_\CC$, see \cite[Proposition 4.8.2.4]{HA}. 
\begin{lemma}
With notation as above,
\[
\L\MM \simeq \L\CC\otimes_{\CC} \MM.
\]
\end{lemma}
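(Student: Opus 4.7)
The plan is to exploit the canonical commutative algebra structure on an idempotent object, which lets us identify the local subcategory with a module category and then invoke base change for module categories.

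First I would invoke \cite[Proposition 4.8.2.9]{HA} to equip $E$ with its canonical structure of a commutative algebra in $\CC$ whose multiplication $E\otimes E\to E$ is an equivalence (making it an \emph{idempotent algebra} in the sense of loc.\ cit.). Then, applying \cite[Proposition 4.8.2.10]{HA} to any $\MM\in \Mod_\CC$, the forgetful functor $\Mod_E(\MM)\to \MM$ is fully faithful with essential image precisely the full subcategory of $E$-local objects (i.e., those $X\in \MM$ for which $e\otimes\id_X\colon X\to E\otimes X$ is an equivalence). This yields $\Mod_E(\MM)\simeq \L\MM$ as full subcategories of $\MM$, and in particular $\Mod_E(\CC)\simeq \L\CC$ as full subcategories of $\CC$.

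Next I would appeal to the base-change formula for module categories: there is a canonical equivalence
\[
\Mod_E(\CC)\otimes_\CC \MM \simeq \Mod_E(\MM)
\]
in $\Mod_{\L\CC}$, sending $(N,M)$ to $N\otimes_\CC M$ with its induced $E$-module structure. This follows from \cite[\textsection 4.8.4]{HA}, specifically the compatibility of relative tensor products in $\LPrx$ with the formation of module categories over an $E_\infty$-algebra (applied here to the trivial module structure on $\MM$). Stringing the identifications together produces
\[
\L\CC\otimes_\CC \MM \simeq \Mod_E(\CC)\otimes_\CC \MM \simeq \Mod_E(\MM)\simeq \L\MM,
\]
as required.

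The main technical point will be pinning down the base-change equivalence $\Mod_E(\CC)\otimes_\CC \MM \simeq \Mod_E(\MM)$ in $\Mod_{\L\CC}$, which requires unpacking the relative tensor product of presentable $\CC$-modules; once this is in hand, the rest of the argument is a direct application of the idempotent algebra formalism, since the identification of $\L\MM$ with $\Mod_E(\MM)$ is purely formal from the vanishing of higher coherence data forced by the invertibility of $E\otimes E\to E$.
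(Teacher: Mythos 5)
Your argument is correct and is essentially the proof given in the paper: both identify $\L\MM$ with $\Mod_E(\MM)$ via the idempotent-algebra formalism of \cite[Section 4.8.2]{HA} and then conclude by the base-change equivalence $\Mod_E(\CC)\otimes_\CC\MM\simeq\Mod_E(\MM)$ of \cite[Theorem 4.8.4.6]{HA}. The only cosmetic difference is that the paper notes the identification $\Mod_E(\MM)\simeq\L\MM$ requires an argument ``similar to'' \cite[Proposition 4.8.2.10]{HA} (which is stated for $\CC$ itself rather than an arbitrary $\CC$-module), whereas you apply it directly; the extension is routine, as you indicate.
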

\begin{proof}

An argument similar to \cite[Proposition 4.8.2.10]{HA} shows that the forgetful functor $\Mod_{E}(\MM)\to \MM$ determines an equivalence of $\CC$-modules 
$\Mod_{E}(\MM)\simeq \L\MM$. Since  $\Mod_{E}(\MM)\simeq \Mod_{E}(\CC)\otimes_{\CC} \MM$ by \cite[Theorem 4.8.4.6]{HA}, the lemma follows.
\end{proof}

 We record the following result which we'll use a few times. A similar statement can be found in \cite[Lemma 26]{Bachmann:etale}.

\begin{lemma}\label{lem:Lcomm}
	Let $\CC^{\otimes}$ be a presentably symmetric monoidal $\i$-category, 
	$X$ a set of objects. Suppose that $E\in \CC$ is an idempotent object.
	Write $\L=E\otimes-$ for the associated symmetric monoidal localization endofunctor. Then there is an equivalence in $\CAlg(\LPrx)$
	\[
	\L(\CC[X^{-1}])\simeq (\L \CC)[X^{-1}].
	\]
\end{lemma}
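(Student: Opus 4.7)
The plan is to reduce both sides to a common base-change expression, using the preceding lemma to trade $\L$ for the functor $\L\CC \otimes_\CC -$, and then showing that the symmetric monoidal inversion $(-)[X^{-1}]$ commutes with base change along $\CC \to \L\CC$ in $\CAlg(\LPrx)$. First I would apply the preceding lemma with $\MM := \CC[X^{-1}]$, regarded as an object of $\Mod_\CC$, to obtain
\[
\L(\CC[X^{-1}]) \simeq \L\CC \otimes_\CC \CC[X^{-1}].
\]
Since both $\L\CC$ and $\CC[X^{-1}]$ are objects of $\CAlg(\LPrx)_{\CC/}$, this relative tensor product is computed by the pushout in $\CAlg(\LPrx)$, so the equivalence upgrades from an equivalence of $\CC$-modules to one of commutative algebras.

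Next I would analyze the functor $\L\CC\otimes_\CC(-)\colon \CAlg(\LPrx)_{\CC/}\to\CAlg(\LPrx)_{\L\CC/}$ applied to $\CC[X^{-1}]$. By definition $\CC[X^{-1}] = \colim_{I\subseteq X} \CC[(\otimes I)^{-1}]$ is a filtered colimit over finite subsets $I$ taken in $\CAlg(\LPrx)$; since base change is a left adjoint it preserves this colimit. For fixed finite $I$, the universal property of the symmetric monoidal inversion of an object in $\CAlg(\LPrx)$, as developed by Robalo, characterizes $\CC[(\otimes I)^{-1}]$ as the initial object of $\CAlg(\LPrx)_{\CC/}$ in which $\otimes I$ becomes $\otimes$-invertible. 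This universal property base-changes in the standard way: the pushout $\L\CC \otimes_\CC \CC[(\otimes I)^{-1}]$ is the initial object of $\CAlg(\LPrx)_{\L\CC/}$ in which the image of $\otimes I$ is $\otimes$-invertible, namely $(\L\CC)[(\otimes I)^{-1}]$. Reassembling the colimit gives $(\L\CC)[X^{-1}]$, and chaining the equivalences proves the lemma.

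The main obstacle is the algebra-level upgrade of the preceding lemma: as stated there the equivalence lives in $\Mod_\CC$, whereas here I need it in $\CAlg(\LPrx)$. This is mild: the idempotent $E$ determines a smashing symmetric monoidal localization, so $\Mod_E(\Mod_\CC)$ identifies with the reflective subcategory $\L\Mod_\CC \subseteq \Mod_\CC$, and this identification is compatible with the forgetful functor from algebras, so the reflection of a $\CC$-algebra $A$ is computed as $E\otimes_\CC A$ with its induced algebra structure. Once this compatibility is pinned down, the rest of the argument is a routine manipulation of universal properties and filtered colimits, which is why I have not attempted to grind it out in detail.
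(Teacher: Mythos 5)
Your proof is correct and follows essentially the same route as the paper's, whose entire proof is the single observation that both sides identify with $\L\CC \otimes_\CC \CC[X^{-1}]$. Your expansion---applying the preceding lemma to $\MM = \CC[X^{-1}]$, base-changing the universal property of inversion term-by-term along the defining filtered colimit, and flagging the upgrade from $\Mod_\CC$ to $\CAlg(\LPrx)$---simply makes explicit the details the paper leaves implicit.
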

\begin{proof}
Both of these categories can be identified with $\L\CC\otimes_{\CC}\CC[X^{-1}]$. 
	 
\end{proof}

Next we record the motivic version of the Wirthmueller isomorphism, which is a special case of  the ambidexterity equivalence proved in \cite{Hoyois:6}. If $H\leq G$ is a subgroup and $\iota$ is the inclusion we sometimes write
\[
G_+\ltimes_{H} X:=\iota_!X.
\]

\begin{proposition}[Wirthmueller isomorphism, \cite{Hoyois:6}]\label{prop:wirthmueller}
	Let $\iota:H\hookrightarrow G$ be a group monomorphism. Let $\FF$,$\FF'$ be families of subgroups respectively of $H$ and of $G$ such that the induction-restriction adjunction
	restricts to $\iota_!:\Sm^H_S[\FF]\rightleftarrows\Sm^G_S[\FF']:\iota^{-1}$
	Then there is an induced  adjunction
\[
\iota_!:\SH^{H,\FF}(S)\rightleftarrows \SH^{G,\FF'}(S):\iota^*,
\]
such that $\iota_!(\Sigma^\i X)\simeq \Sigma^\i(\iota_!X)$ and $\iota^*(\Sigma^\i X)\simeq \Sigma^\i(\iota^{-1}X)$. Moreover $\iota^*$ admits a right adjoint $\iota_*$ and there is an equivalence 
	\[
	\iota_!\xrightarrow{\sim} i_*.
	\]
\end{proposition}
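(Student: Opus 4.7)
The plan is to identify the adjunctions with the six-functor pushforward-pullback pair along a finite étale map, and then quote the motivic ambidexterity result of \cite{Hoyois:6} to obtain the equivalence $\iota_! \xrightarrow{\sim} \iota_*$.

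First I would establish the adjunction at the level of spectra. Lemma \aref{lem:indres} already gives an adjunction $\iota_!: \HH_\bb^{H,\FF}(S) \rightleftarrows \HH_\bb^{G,\FF'}(S): \iota^*$ with $\iota^*$ symmetric monoidal. Since $\iota^*$ preserves the motivic spheres $T^\E$ (the restriction of a $G$-representation is an $H$-representation of the same underlying rank), the same stabilization argument used for Lemma \aref{lem:ideal}, applied to the $\HH^G_\bb(S)$-module structure via $\iota^*$, produces an adjunction $\iota_!: \SH^{H,\FF}(S) \rightleftarrows \SH^{G,\FF'}(S): \iota^*$ compatible with $\Sigma^\infty$ on both sides. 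Since $\SH^{G,\FF'}(S)$ is presentable and $\iota^*$ preserves colimits (being a left adjoint at the unstable level that stabilizes to a colimit-preserving functor), $\iota^*$ automatically admits a right adjoint $\iota_*$.

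Next I would invoke \aref{rem:pw}. Under the equivalence $\Sch^H_S \simeq \Sch^G_{G\times_H S}$, and its induced equivalence of motivic categories, the functors $\iota_!, \iota^*, \iota_*$ are identified with $f_\#, f^*, f_*$ where $f: G\times_H S \to S$ is the projection. The same identification extends to spectra by applying stabilization to both sides of the equivalence. Crucially, the map $f$ is a finite étale $G$-equivariant morphism: as a scheme, $G\times_H S$ is a disjoint union of copies of $S$ indexed by $G/H$, and the $G$-action permutes these copies, so $f$ is étale, finite, and $G$-equivariant. In particular, it lives in each of the subcategories $\Sm^G_S[\FF']$ since $f$ is trivial on stabilizers of $S$-points.

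Finally, the equivalence $\iota_! \xrightarrow{\sim} \iota_*$ is precisely the motivic ambidexterity theorem of \cite{Hoyois:6}, which provides a canonical equivalence $f_\# \simeq f_*$ for any finite étale morphism $f$, applied to our particular $f: G\times_H S \to S$. The main obstacle, and the only nontrivial input, is this ambidexterity statement itself; all other steps are bookkeeping that matches the present setup with Hoyois's six-functor framework. One technical point to verify is that Hoyois's ambidexterity, which is established for $\SH^G$, restricts well to the full subcategories $\SH^{G,\FF'}(S)$ determined by a family—this follows from \aref{prop:i*ff} and \aref{lem:ideal}, since $\iota_!$ sends generators $X \in \Sm^H_S[\FF]$ to $\iota_!X \in \Sm^G_S[\FF']$ by hypothesis on the families, so the equivalence, once known in the ambient category $\SH^G(S)$, descends to the localizing ideal corresponding to $\FF'$.
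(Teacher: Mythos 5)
Your proposal is correct and follows essentially the same route as the paper: identify $\iota_!,\iota^*,\iota_*$ with $f_\#,f^*,f_*$ along the finite \'etale map $f:G\times_HS\to S$ via \aref{rem:pw}, then quote Hoyois's ambidexterity. The paper handles the restriction to family subcategories a bit more explicitly by writing $\iota_!\simeq u^*\iota_!'v_!$ and $\iota_*\simeq u^*\iota_*'v_!$ (where $\iota_!',\iota_*'$ are the full-family functors), which is the precise form of your final ``descends to the localizing ideal'' remark.
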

\begin{proof}
The first statements are straightforward. We explain the last statement. Consider the $G$-equivariant map $f:G\times_HS\to S$. Then $\iota_!,\iota^*,\iota_*$ are identified with $f_\#,f^*,f_*$ via the equivalence $\SH^{G,\overline{\FF}}(G\times_HS)\simeq \SH^{H,\FF}(S)$, see \aref{rem:pw}. Here $\overline{\FF}$ is the family of subgroups of $G$ generated by $\FF$.

We have that
$u^*\iota_!'v_!\simeq \iota_!$ and $u^*\iota_*'v_!\simeq \iota_*$, where we write $u:\Sm^{G,\FF}_S\subseteq \Sm^G_S$ and $v:\Sm^{H,\FF}_S\subseteq \Sm^H_S$ for the inclusions and  $\iota'_!,\iota'_*:\SH^{H}(S)\to \SH^G(S)$ are the corresponding functors for the family $\FF_\all=\FF'$. In particular,  the general case follows from the case $\FF=\FF_\all$.
But in this case, the Wirthmueller isomorphism is the  ambidexterity equivalence \cite[Theorem 1.5]{Hoyois:6} for the finite \'etale morphism $f$. 
\end{proof}

\subsection{Functoriality}
We record basic functoriality of the categories of motivic spaces and spectra, as the group $G$, family $\FF$, and base scheme $S$ vary. 

\begin{definition}
	\begin{enumerate}
		\item The category $\eSch_B$ of {\em equivariant $B$-schemes} has objects pairs $(G, S)$ consisting of a finite group $G$ (whose order is invertible in $\OO_B$)  and $S\in \Sch_B^G$.   
		A morphism $(G', S') \to (G,S)$ is a pair $(\phi, f)$ where 
		$\phi:G'\to G$ is a homomorphism of groups and $f:S'\to S$ is 
		a $\phi$-equivariant map of $B$-schemes. 
		
		\item The category $\eSch_B[\cd]$ {\em equivariant $B$-schemes and families} has objects consisting of triples $(G,\FF, S)$ where $(G,S)\in \eSch_B$ and $\FF$ is a family of subgroups of $G$. 
		A morphism $(G',\FF',S')\to (G,\FF,S)$ is a triple $(\phi,i,f)$ where $(\phi, f)$ is a morphism in $\eSch_B$ and $i:\phi^{-1}\FF\subset\FF'$ is an inclusion of posets. 
	\end{enumerate}
	
\end{definition}

The inclusion $i:\phi^{-1}\FF\subset\FF'$ is  unique if it exists. When no confusion should arise, we write $(\phi, f)$ instead of  $(\phi, i, f)$ for a morphism in $\eSch_B[\cd]$.

We identify $\eSch_B$ with the full subcategory of $\eSch_B[\cd]$ whose objects are triples of the form $(G,\FF_{\all}, S)$. The inclusion 
$\eSch_B\subseteq \eSch_B[\cd]$ is left adjoint to the forgetful functor.

In this subsection we extend the construction of the previous sections  extend to {\em functors}
\[
\HH^{\times}, \HH_{\bb}^{\wedge}, \SH^{\otimes}:(\eSch_B[\cd])^{\op}\to \mathrm{\CAlg(\LPr)}
\]
whose respective values on $(G,\FF,S)$ are  the symmetric monoidal $\i$-categories $\HH^{G,\FF}(S)$, $\HH^{G,\FF}_{\bb}(S)$, and $\SH^{G,\FF}(S)$, and   on morphisms, 
$(\phi, i, f)^*\simeq i_!\phi^*f^*$.

\begin{lemma}
	The categories $\eSch_B$ and $\eSch_B[\cd]$  
	admit finite products.
	In particular, they are cartesian symmetric monoidal. 
\end{lemma}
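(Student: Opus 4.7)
The plan is to write down explicit finite products in each category and verify the universal properties by unpacking the definitions. First, for $\eSch_B$, I would set
\[
(G_1, S_1) \times (G_2, S_2) := (G_1 \times G_2,\; S_1 \times_B S_2),
\]
with $G_1 \times G_2$ acting componentwise on the fiber product (well-defined since each $G_i$ acts trivially on $B$), equipped with the evident projection morphisms $(\pi_i, p_i)$. A group homomorphism $G' \to G_1 \times G_2$ is the same as a pair $(\phi_1, \phi_2)$, and by the universal property of fiber products in $\Sch_B$, a $(\phi_1, \phi_2)$-equivariant $B$-morphism $S' \to S_1 \times_B S_2$ is the same as a pair of $\phi_i$-equivariant $B$-morphisms $f_i : S' \to S_i$. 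The terminal object is $(\{e\}, B)$.

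For $\eSch_B[\cd]$, writing $\pi_i : G_1 \times G_2 \to G_i$ for the projections, I would take
\[
(G_1, \FF_1, S_1) \times (G_2, \FF_2, S_2) := \bigl(G_1 \times G_2,\; \pi_1^{-1}\FF_1 \cup \pi_2^{-1}\FF_2,\; S_1 \times_B S_2\bigr).
\]
Each $\pi_i^{-1}\FF_i$ is closed under taking subgroups and conjugation in $G_1 \times G_2$ (routine from the family axioms for $\FF_i$), so the union $\pi_1^{-1}\FF_1 \cup \pi_2^{-1}\FF_2$ is again a family, and the projections are morphisms in $\eSch_B[\cd]$ because $\pi_i^{-1}\FF_i$ sits inside the proposed family. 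For the universal property, given $(\phi, f)$ in $\eSch_B$ from $(G', S')$ to the underlying $\eSch_B$-product, the additional condition needed to upgrade to a morphism in $\eSch_B[\cd]$ is
\[
\phi^{-1}\bigl(\pi_1^{-1}\FF_1 \cup \pi_2^{-1}\FF_2\bigr) \;=\; \phi_1^{-1}\FF_1 \cup \phi_2^{-1}\FF_2 \;\subseteq\; \FF',
\]
which holds exactly when each $\phi_i^{-1}\FF_i \subseteq \FF'$, i.e. when each $(\phi_i, f_i)$ is a morphism in $\eSch_B[\cd]$. The terminal object is $(\{e\}, \emptyset, B)$, since the empty family vacuously satisfies the closure axioms and $\phi^{-1}\emptyset = \emptyset \subseteq \FF'$ is automatic.

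Finally, the second sentence of the lemma is formal: any category admitting finite products carries a canonical cartesian symmetric monoidal structure. No step is a genuine obstacle; the only subtlety is selecting the correct family on the product, namely $\pi_1^{-1}\FF_1 \cup \pi_2^{-1}\FF_2$, rather than, say, $\{H_1 \times H_2 : H_i \in \FF_i\}$ or $\{H : \pi_i(H) \in \FF_i \text{ for } i=1,2\}$, neither of which realizes the universal property.
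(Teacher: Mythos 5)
Your proof is correct and takes the same approach as the paper's, which simply asserts that $(G_1\times G_2,\FF_1\times\FF_2,S_1\times_B S_2)$ is the product and leaves the verification (and the meaning of $\FF_1\times\FF_2$) to the reader. You correctly identify this product family as $\pi_1^{-1}\FF_1\cup\pi_2^{-1}\FF_2$ and check the universal property and terminal object, which is exactly the content the paper elides.
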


\begin{proof}
	It is straightforward to check that 
	$(G, S)\times (G',S') = (G\times G', S\times_B S' )$  and 
	$(G,\FF,S)\times (G',\FF',S')= (G\times G',\FF\times \FF',S\times_{B}S')$ define a cartesian product. 	

\end{proof}

\begin{corollary}
The assignment $(G,\FF,S)\mapsto\Sm_S^G[\FF]$, $(\phi,i,f)\mapsto i\phi^{-1}f^{-1}$ 
extends to a functor
\[
\eSch_B[\cd]^{\op}\to\CAlg(\Cat_\infty).
\]
\end{corollary}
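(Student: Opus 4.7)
The strategy is to construct the functor strictly at the $1$-categorical level and then pass to $\CAlg(\Cati)$ via nerves. The construction factors through the standard functoriality of $(G,S)\mapsto \Sm_S^G$; the genuinely new content concerns the interaction with families $\FF$.

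First I would check that for each $(G,\FF,S)$ the full subcategory $\Sm_S^G[\FF]\subseteq \Sm_S^G$ is closed under fiber product over $S$ and so inherits a cartesian symmetric monoidal structure. If $X,Y\in \Sm_S^G[\FF]$ and $z\in X\times_S Y$ has images $x\in X$ and $y\in Y$, then $\stab(z)=\stab(x)\cap\stab(y)$ is a subgroup of $\stab(x)\in \FF$, and $\FF$ is closed under subgroups since it is a family. The base $S$ serves as the unit; its own stabilizers at relevant points are subgroups of elements of $\FF$, so lie in $\FF$.

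Second, for a morphism $(\phi,i,f)\colon (G',\FF',S')\to (G,\FF,S)$ I would verify that $i\phi^{-1}f^{-1}$ defines a cartesian symmetric monoidal functor $\Sm_S^G[\FF]\to \Sm_{S'}^{G'}[\FF']$. Base change $f^{-1}=(-)\times_S S'$ preserves smoothness, $G$-quasiprojectivity, and, along the projection $X\times_S S'\to X$, stabilizer subgroups. Restriction $\phi^{-1}$ turns a point with $G$-stabilizer $H$ into a point with $G'$-stabilizer $\phi^{-1}(H)$, so the image of $\phi^{-1}f^{-1}$ lies in $\Sm_{S'}^{G'}[\phi^{-1}\FF]$. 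The datum $i\colon \phi^{-1}\FF\subseteq \FF'$ then provides the fully faithful inclusion into $\Sm_{S'}^{G'}[\FF']$. Each of $f^{-1}$, $\phi^{-1}$, and $i$ preserves fiber products and the terminal object, so their composite is cartesian symmetric monoidal.

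Finally, strict compatibility on composable morphisms follows from the strict compatibilities of scheme pullback, of restriction along composed group homomorphisms, and from the fact that the inclusion $i$, when it exists, is unique. This yields a strict functor from $\eSch_B[\cd]^{\op}$ into the $1$-category of cartesian symmetric monoidal $1$-categories, which extends along the nerve to the desired $\i$-functor into $\CAlg(\Cati)$. The only step requiring more than routine bookkeeping is the stabilizer argument of the second paragraph; this is precisely why the datum $i$ is included in the definition of morphisms in $\eSch_B[\cd]$.
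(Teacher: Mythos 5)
Your strategy---equip each $\Sm_S^G[\FF]$ with a cartesian symmetric monoidal structure, check the transition functors preserve it, and rectify into $\CAlg(\Cati)$---is a reasonable alternative to the paper's route, which instead deduces the corollary formally from the preceding lemma that $\eSch_B[\cd]$ itself is cartesian symmetric monoidal. Your stabilizer computations are essentially correct: closure of $\Sm_S^G[\FF]$ under binary fiber products over $S$ needs only the containment $\stab(z)\subseteq\stab(\mathrm{pr}_1(z))$ together with closure of $\FF$ under subgroups, and the identification $\stab_{G'}(x)=\phi^{-1}(\stab_G(x))$ is exactly why the datum $i:\phi^{-1}\FF\subseteq\FF'$ is what makes $i\phi^{-1}f^{-1}$ land in $\Sm_{S'}^{G'}[\FF']$.

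There is, however, a genuine gap at the unit. The assertion that ``the base $S$ serves as the unit; its own stabilizers at relevant points are subgroups of elements of $\FF$'' is false: if $G\neq\{e\}$ acts trivially on $S$ and $\FF$ is a proper family (say $\FF=\FF_{\triv}$ for $G=C_2$), then every point of $S$ has stabilizer $G\notin\FF$, so $S\notin\Sm_S^G[\FF]$. The paper flags exactly this issue (``$\CC_S$ does not necessarily contain a terminal object, in particular a terminal object of $\Pre(\CC_S)$ is in general not representable''), and it is the reason the later remark warns that $u_!$ fails to be symmetric monoidal because it does not preserve units. So what your argument actually produces is a \emph{nonunital} symmetric monoidal structure on $\Sm_S^G[\FF]$, which is not yet an object of $\CAlg(\Cati)$; the unit has to be supplied formally, e.g.\ by running the lax-cartesian-structure machinery over the cartesian monoidal category $\eSch_B[\cd]$ (the paper's implicit route) or by only asserting the unital statement after passing to presheaf categories, where the terminal object exists even though it is not representable. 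A secondary, more routine point: scheme-theoretic pullback is functorial only up to canonical isomorphism, so your construction yields a pseudofunctor rather than a strict functor, and the passage to $\Cati$ requires the standard rectification (Grothendieck construction or relative nerve), not literally ``the nerve.''
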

\hfill $\qed$

Composing with the symmetric monoidal presheaves functor, we obtain a functor

\begin{align*}
(\eSch_B[\cd])^{\op}
& \to\CAlg(\LPr), \\
 (G,\FF, S) & \mapsto \Pre(\Sm^G_S[\FF]) \\
(\phi, i, f) & \mapsto i_!\phi^*f^*.
\end{align*}

To obtain the desired functoriality of equivariant motivic spaces and spectra, we follow the techniques of \cite[Section 6.1]{BH}. 
Recall from loc. cit. that we have a commutative diagram of  $\i$-categories
\[
\begin{tikzcd}
\mathcal{M}\Cat_\i\ar[r]\ar[d] & \mathcal{O}\Cat_\i\ar[r]\ar[d] & \mathcal{E}\ar[d]\\
\Cat_\i\ar[r, "{\Fun(\Delta^1,-)}"] & \Cat_\i\ar[r] & \mathrm{Pos},
\end{tikzcd}
\]
in which all squares are cartesian. 
Here $\mathrm{Pos}$ denotes the $\i$-category of (not necessarily small) posets, the lower right hand horizontal arrow sends an $\i$-category to the poset of subsets of the set of equivalence classes of objects,  and $\mathcal{E}\to\mathrm{Pos}$ is the universal cocartesian fibration, restricted to posets.  The $\i$-categories $\mathcal{O}\Cat_\i$ and $\mathcal{M}\Cat_\i$ are respectively the $\i$-categories of
$\i$-categories equipped with a collection of equivalence classes of objects respectively a collection of equivalence classes of arrows.

\begin{lemma}\label{lem:loc}
Let $(\CC,W)\in \mathcal{M}\Cat_\i$ such that $\CC$ is presentable and $W$ is of small generation.
\begin{enumerate}
\item The partial adjoint to 
\[
\Cat_\i\to \mathcal{M}\Cat_\i, \;\;\;\CC\mapsto (\CC,{\rm equivalences}) 
\]
 is defined at $(\CC,W)$ and the localization $\CC[W^{-1}]$ is again presentable.

\item Suppose that $\CC$ admits a symmetric monoidal structure $\CC^{\otimes}\in \CAlg(\Cati)$ 
and 
$W$ is stable under the monoidal product. Then $(\CC,W)$ lifts to $(\CC,W)^{\otimes}\in \CAlg(\mathcal{M}\Cati)$ and the partial left adjoint to \[\CAlg(\Cat_\i)\to \CAlg(\mathcal{M}\Cat_\i), \;\;\;\CC\mapsto (\CC,{\rm equivalences}) \] is defined at $(\CC,W)^{\otimes}$.
\end{enumerate}
\end{lemma}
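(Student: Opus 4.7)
The plan is to deduce both parts from the theory of accessible localizations of presentable $\infty$-categories as developed in HTT and HA, combined with a careful unwinding of the auxiliary categories $\mathcal{M}\Cati$ and $\mathcal{O}\Cati$ described above.

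For (1), since $W$ is of small generation, pick a small set $W_0 \subseteq W$ that generates $W$ under $2$-out-of-$3$, pushouts, and small colimits in $\Fun(\Delta^1,\CC)$. Then $W$-local and $W_0$-local objects coincide, and HTT 5.5.4.15 applies to the pair $(\CC, W_0)$: it produces a strongly reflective accessible subcategory $\CC_0 \subseteq \CC$ of $W_0$-local objects, together with a localization endofunctor $L: \CC \to \CC$ whose essential image is $\CC_0$. Setting $\CC[W^{-1}] := \CC_0$, the functor $L: \CC \to \CC[W^{-1}]$ is presentable (being accessible with a presentable target) and universal among functors out of $\CC$ sending arrows in $W$ to equivalences. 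This universal property is precisely the assertion that the partial left adjoint to $\Cati \to \mathcal{M}\Cati$, $\CC \mapsto (\CC,\,\text{equivalences})$, is defined at $(\CC,W)$ with value $\CC[W^{-1}]$; and $\CC[W^{-1}]$ is presentable because reflective accessible subcategories of presentable $\infty$-categories are presentable.

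For (2), I would first construct the promotion $(\CC, W)^\otimes \in \CAlg(\mathcal{M}\Cati)$. Unwinding the cartesian diagram defining $\mathcal{M}\Cati$, an object of $\CAlg(\mathcal{M}\Cati)$ is a symmetric monoidal $\infty$-category together with a collection of arrows closed under the tensor product (i.e.\ stable under $\otimes$). The hypothesis on $W$ gives exactly such a lift. Next, to verify that the partial left adjoint to $\CAlg(\Cati) \to \CAlg(\mathcal{M}\Cati)$ is defined at $(\CC, W)^\otimes$, I would apply the symmetric monoidal localization theorem HA 4.1.7.4: since $W$ is closed under tensoring with arbitrary objects of $\CC$, the subcategory $\CC[W^{-1}] \subseteq \CC$ is a monoidal localization, i.e.\ $\CC[W^{-1}]$ admits a (unique) symmetric monoidal structure $\CC[W^{-1}]^\otimes$ making $L: \CC^\otimes \to \CC[W^{-1}]^\otimes$ a symmetric monoidal functor. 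The universal property of $L$ in $\CAlg(\Cati)$, namely that precomposition with $L$ induces a fully faithful embedding $\Fun^\otimes(\CC[W^{-1}], \DD) \hookrightarrow \Fun^\otimes(\CC, \DD)$ with essential image the symmetric monoidal functors that send $W$ to equivalences, is the assertion that the partial left adjoint is defined at $(\CC,W)^\otimes$ with value $\CC[W^{-1}]^\otimes$.

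The genuine work in either part is already packaged inside the HTT and HA results cited. The main bookkeeping obstacle is to match the abstract formulation in terms of $\mathcal{M}\Cati$ and the partial adjoint to the concrete input of Lurie's theorems: namely, one must verify that a morphism in $\CAlg(\mathcal{M}\Cati)$ out of $(\CC, W)^\otimes$ is the same datum as a symmetric monoidal functor out of $\CC$ inverting $W$, and that the "defined at" language corresponds exactly to the existence of an initial such functor. Once this translation is made explicit via the cartesian fibration $\mathcal{E} \to \mathrm{Pos}$, the proof reduces to assembling the presentability of $\CC[W^{-1}]$ from (1) with the symmetric monoidal refinement provided by HA 4.1.7.4.
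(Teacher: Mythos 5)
Your proof follows essentially the same approach as the paper: part (1) is deduced from HTT Proposition 5.5.4.15 (together with the standard fact that an accessible reflective subcategory of a presentable $\infty$-category is presentable, which is HTT 5.5.4.20 — the paper's other cited result), and part (2) from the symmetric monoidal localization theorem in HA. The only discrepancy is a cosmetic one of citation: the paper invokes HA Proposition 2.2.1.9 for the monoidal refinement, whereas you cite HA 4.1.7.4; both are pointing at the same underlying fact that a localization compatible with the monoidal structure (here guaranteed by $W$ being stable under $\otimes$) carries a unique symmetric monoidal structure for which the localization functor is symmetric monoidal, and the bookkeeping you sketch to translate "partial left adjoint defined at $(\CC,W)^{\otimes}$" into the universal property of $L$ is exactly the translation the paper leaves implicit.
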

\begin{proof}
	The first item follows from \cite[Proposition 5.5.4.15, Proposition 5.5.4.20]{HTT}. It
	follows from \cite[Proposition 2.2.1.9]{HA}
	that $\CC[W^{-1}]$ inheirits a monoidal structure such that $\CC\to \CC[W^{-1}]$ is monoidal. This implies the second item.
\end{proof}

Consider the subfunctor of the composition
\[
(\eSch_B[\cd])^{\op}\to\Cat_\infty\overset{\mathcal{P}}{\to}\LPr\to\Fun(\Delta^1,\Cat_\i)
\]
whose value on the object $(G,\FF,S)$ is the full subcategory
\[
W_{(G,\FF,S)}\subset\Fun(\Delta^1,\mathcal{P}(\Sm_S^G[\FF]))
\]
consisting of the motivic equivalences. Since motivic equivalences are stable under smash product and are preserved by the functors $f^*$, $\phi^*$, and $i_!$,
the assignments
\[
(G,\FF,S)\mapsto(\Pre(\Sm_S^G[\FF]),W_{(G,\FF,S)}),\,(\Pre_\bb(\Sm_S^G[\FF]), W_{(G,\FF,S)})
\]
induce functors
\[
(\eSch_B[\cd])^{\op}\to\mathcal{M}\Cat_\infty.
\]
The images of 
$(\mathcal{P}(\Sm_S^G[\FF]), W_{(G,\FF,S)})$ and $(\Pre_\bb(\Sm_S^G[\FF]), W_{(G,\FF,S)})$
under the partial left adjoint to $\CAlg(\Cat_\i)\to \CAlg(\mathcal{M}\Cat_\i)$ are 
respectively 
$\HH^{G,\FF}(S)$ and $\HH^{G,\FF}_\bb(S)$.

Write $\Sph^G_S$ for the set of spheres $\{T^\E\}$ where $\E$ is an equivariant vector bundle over $S$. The assignment $(G,\FF,S)\mapsto \Sph^G_S$ is a presheaf of sets on $\eSch_B[\cd]^{\op}$, which we write as $\Sph$. Let
\[
\TT:\eSch_B[\cd]^{\op}\to\set
\] 
be a subpresheaf of $\Sph$, which is closed under smash product and takes values in stabilizing sets of spheres, i.e. there is some $T^\E\in\TT_{(G,\FF,S)}$ such that $T^\E\wkeq T\wedge T^{\E'}$. 
We obtain a functor
\[
\eSch_B[\cd]^{\op}\to\CAlg(\mathcal{O}\Cat_\i)
\]
which on objects is the assignment $(G,\FF,S)\mapsto(\HH_\bullet^{G,\FF}(S),\TT_{(G,\FF,S)})$.
By the following lemma, we obtain $\SH^{G,\FF}_{\TT_{(G,\FF,S)}}(S)$ as the image of
$(\HH_\bullet^{G,\FF}(S),\TT_{(G,\FF,S)})$ under
  the partial left adjoint of $\CAlg(\Cati)\to \CAlg(\mathcal{O}\Cati)$. $\CC\mapsto (\CC,\pi_0{\rm Pic}(\CC))$. 
\begin{lemma}
Let $(\CC^\otimes,U)$ be an object of $\CAlg(\mathcal{O}\Cat_\i)$ such that $\CC$ is presentable symmetric monoidal and $U$ is small.
Then the partial adjoint of
\[
\CAlg(\Cati)\to \CAlg(\mathcal{O}\Cati), \;\;\CC\mapsto (\CC,\pi_0{\rm Pic}(\CC))
\]
is defined at $(\CC^\otimes,U)$. 
\end{lemma}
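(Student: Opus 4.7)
The plan is to exhibit the value of the partial left adjoint at $(\CC^\otimes,U)$ as the symmetric monoidal inversion $\CC[U^{-1}]$, already recalled earlier in the paper via Robalo's construction. Concretely, I would set
\[
\CC[U^{-1}] \;:=\; \colim_{\substack{I\subseteq U\\ I\text{ finite}}} \CC\bigl[(\textstyle\bigotimes I)^{-1}\bigr],
\]
where each $\CC[x^{-1}]$ is the presentable symmetric monoidal $\i$-category obtained by inverting a single object, and the filtered colimit is formed in $\CAlg(\LPrx)$. Since $U$ is small, this colimit is indexed by a small filtered poset; since $\LPrx$ is cocomplete and $\CAlg$ preserves small colimits of presentable symmetric monoidal $\i$-categories, $\CC[U^{-1}]$ is again a presentable symmetric monoidal $\i$-category, and by construction the structure functor $\CC\to\CC[U^{-1}]$ is symmetric monoidal and sends every $x\in U$ to an invertible object, i.e.\ into $\pi_0{\rm Pic}(\CC[U^{-1}])$.

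The substance of the lemma is the universal property: for every $\DD^\otimes\in\CAlg(\Cati)$ I need a natural equivalence
\[
\Map_{\CAlg(\Cati)}\bigl(\CC[U^{-1}],\DD\bigr)\;\wkeq\;
\Map_{\CAlg(\mathcal{O}\Cati)}\bigl((\CC^\otimes,U),(\DD^\otimes,\pi_0{\rm Pic}(\DD))\bigr),
\]
where the right-hand side picks out those symmetric monoidal functors $\CC\to\DD$ whose underlying map of equivalence classes of objects carries $U$ into $\pi_0{\rm Pic}(\DD)$. For a single object $x\in\CC$ this is exactly the defining universal property of $\CC[x^{-1}]$ from \cite[Section 2.1]{Robalo}; for a finite subset $I\subseteq U$ it follows by iterating, since tensor products of invertible objects are invertible. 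Taking the filtered colimit over finite $I\subseteq U$ on the source side converts the universal property into a limit of mapping spaces on the target side, and $\pi_0{\rm Pic}(\DD)$ being closed under finite tensor products ensures that a map sending each $x\in U$ to an invertible object automatically sends every finite tensor product $\bigotimes I$ to an invertible object. This matches the mapping space in $\CAlg(\mathcal{O}\Cati)$.

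The main step to be careful about is the last one: verifying that a coherent datum of a symmetric monoidal functor $\CC\to\DD$ together with witnesses that each element of $U$ is carried to an invertible object is genuinely the same as a cocone under the filtered diagram $\{\CC[(\bigotimes I)^{-1}]\}_{I\subseteq U\text{ finite}}$. This requires identifying the $\i$-category $\CAlg(\mathcal{O}\Cati)_{(\CC^\otimes,U)/}$ with pairs $(\DD^\otimes,F\colon\CC\to\DD)$ equipped with a compatible system of trivializations of $F(x)\otimes F(x)^{\vee}\to \SS$ for $x\in U$, and then using cofinality of finite subsets in the poset of subsets of $U$ to reduce to the single-object case. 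Everything else — presentability, existence of the localization, and the symmetric monoidal structure on $\CC[U^{-1}]$ — reduces to standard facts already invoked in the preceding Lemma \ref{lem:loc} and its proof.
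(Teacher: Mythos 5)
Your overall strategy --- take $\CC[U^{-1}]$ to be Robalo's formal inversion, computed as a filtered colimit over finite subsets $I\subseteq U$, and then verify that this object corepresents the functor $\DD\mapsto\Map_{\CAlg(\mathcal{O}\Cati)}\bigl((\CC^\otimes,U),(\DD^\otimes,\pi_0\mathrm{Pic}(\DD))\bigr)$ --- is the right shape and is indeed how one unpacks the paper's one-line citation of \cite[\S 6.1]{Hoyois:6}. However, the crucial step is not a restatement of Robalo's universal property, and the way you dispatch it conceals a real gap.

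Robalo's universal property for $\CC[x^{-1}]$ (\cite[\S 2.1]{Robalo}) is a statement \emph{internal to} $\CAlg(\LPrx)$: the formal inversion is initial among \emph{presentable} symmetric monoidal targets equipped with a \emph{colimit-preserving} symmetric monoidal functor from $\CC$ sending $x$ to an invertible object. The lemma you are proving, on the other hand, asserts that the partial left adjoint to $\CAlg(\Cati)\to\CAlg(\mathcal{O}\Cati)$ is defined at $(\CC^\otimes,U)$; the universal property there is taken against \emph{all} symmetric monoidal $\i$-categories $\DD$ and \emph{all} symmetric monoidal functors, with no presentability or cocontinuity requirement. These are genuinely different universal properties, and the second does not formally follow from the first: given an arbitrary $F\colon\CC\to\DD$ with $F(U)\subseteq\mathrm{Pic}(\DD)$, there is no way to enlarge $\DD$ to a presentable category and replace $F$ by a cocontinuous functor without further hypotheses, so you cannot simply invoke Robalo. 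Writing ``for a single object $x$ this is exactly the defining universal property of $\CC[x^{-1}]$'' therefore asserts the thing that actually needs to be proved.

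The gap is closed exactly where the $n$-symmetry hypothesis (which the paper invokes for the motivic spheres $T^{\E}$ via the cyclic-permutation computation, and which is built into the setup of \cite[\S 6.1]{Hoyois:6}, but is not written into the statement of the lemma you are proving) enters. When each $x\in U$ is $n$-symmetric for some $n\geq 2$, the formal inversion agrees with the sequential colimit $\colim(\CC\xrightarrow{x\otimes -}\CC\to\cdots)$, first in $\Cat_\i$ and then, since the forgetful functor commutes with filtered colimits, in $\CAlg(\Cati)$. A sequential colimit in $\CAlg(\Cati)$ does have the required universal property against arbitrary symmetric monoidal targets: given $F\colon\CC\to\DD$ with $F(x)$ invertible, the maps $F_n(c):=F(c)\otimes F(x)^{\otimes(-n)}$ assemble into a cocone, and one checks that the induced map out of the colimit is the unique extension. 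That cocone construction, iterated over finite $I\subseteq U$ and combined with the cofinality argument you gesture at, is the substance of the proof; without symmetry one would have no reason to expect the colimit description to apply, and hence no access to the $\CAlg(\Cati)$-level universal property that the statement of the lemma requires.

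So: the architecture of your argument is correct, but you should either (a) add the $n$-symmetry hypothesis on $U$ and replace the appeal to Robalo's universal property with the sequential-colimit cocone argument above, or (b) explicitly explain why the universal property can be upgraded from $\CAlg(\LPrx)$ to $\CAlg(\Cati)$ in your setting. As written, the ``main step to be careful about'' that you flag at the end is precisely where the proof is currently incomplete.
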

\begin{proof}
	This follows from \cite[Section 6.1]{Hoyois:6}.
\end{proof}

\section{Filtering by isotropy}\label{sec:universalF}
In this section we develop techniques to define and analyze filtrations of motivic $G$-spaces and spectra by families of isotropy.

\subsection{Universal Motivic \texorpdfstring{$\mathcal{F}$}{F}-spaces}

Let $\FF$ be a family of subgroups. 
In classical equivariant homotopy theory there is a $G$-space $\Es \FF$ characterized by the property that 
a $G$-space $X$ admits a unique map  $X\to \Es\FF$ if all of the stabilizers of $X$ are in $\FF$, and no maps from $X$ to $\Es\FF$ otherwise. The $G$-space $\Es\FF$ formally exists as a presheaf on $\Sm^G_B$ and hence as a motivic $G$-space over $B$, but it doesn't have the correct universality property. 

\begin{example}
 Let $G\neq \{e\}$, $B=\spec(k)$ a field, and $L/k$ be a finite Galois extension such that 
 $G\subseteq {\rm Gal}(L/k)$ and consider $\spec(L)$ as a smooth $G$-scheme over $k$ via the Galois action. Then $\spec(L)^{H} = \emptyset$ for all $e\neq H\subseteq G$, i.e. it has a free $G$-action. However, 
 we claim that
 $$
 \map_{\HH^G(k)}(\spec(L), \Es G) = \emptyset.
 $$
 Indeed,
 $$
(\Es G)_{0}(\spec(L))= \Hom_{\Sm_k^G}(\spec(L), \coprod_{G}\spec(k)) = \emptyset,
 $$ 
 and so the claim follows, since $ (\Es G)_{0}(\spec(L))$ surjects onto $\pi_{0}(\map_{\HH^G(k)}(\spec(L), \Es G))$ (see e.g., \cite[Corollary 2.3.22, Remark 3.2.5]{MV:A1})).
 
\end{example}

\begin{definition}
Let $\FF$ be a family of subgroups in $G$. The \emph{universal motivic 
$\FF$-space over $S$} is the object $\EE\FF_S\in \Pre(\Sm^G_S)$ whose value on $X\in\Sm^G_S$ is
$$
%
\EE\FF_S(X) = \begin{cases}
                  \pt &  X\in \Sm^G_S[\FF] \\
                           \emptyset &   \text{else}.
                 \end{cases}
$$
When the base $S$ is understood, we simply write $\EE \FF$.
\end{definition}

\begin{proposition}
Let $\FF$ be a family of subgroups in $G$. The presheaf
$\EE\FF$ is a motivic $G$-space.
\end{proposition}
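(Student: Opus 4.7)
The plan is to verify the two defining conditions of a motivic $G$-space: $\A^1$-homotopy invariance and Nisnevich excision. The key observation is that the value $\EE\FF(X)$ depends only on the truth of a condition that is local on $X$, namely whether all stabilizers of points lie in $\FF$, so both conditions ultimately reduce to set-theoretic bookkeeping of stabilizers under the maps involved.

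For $\A^1$-invariance, I would first note that $G$ acts trivially on $\A^1$, so a direct check shows that for any $X \in \Sm^G_S$ and $(x,a)\in X\times_S\A^1$ we have $\stab(x,a)=\stab(x)$. Hence $X\times_S\A^1\in\Sm^G_S[\FF]$ if and only if $X\in\Sm^G_S[\FF]$, which gives $\EE\FF(X)\simeq\EE\FF(X\times_S\A^1)$. For Nisnevich excision, the category $\Sm^G_S$ satisfies property (P$^\prime$) since étale-over-smooth is smooth, so by \aref{prop:fp&Nis} it suffices to check that $\EE\FF$ sends fixed point Nisnevich squares to cartesian squares (and that $\EE\FF(\emptyset)=\pt$, which is vacuous).

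Given a fpNis square \eqref{eqn:EDS}, I would split into cases based on whether $X\in\Sm^G_S[\FF]$. If yes, then open immersions trivially preserve stabilizers, and the fixed-point-reflecting condition on $p$ guarantees $\stab(y)=\stab(p(y))$ for every $y\in Y$; so all four entries lie in $\Sm^G_S[\FF]$, every value of $\EE\FF$ equals $\pt$, and the square is cartesian. If instead $X\notin\Sm^G_S[\FF]$, choose $x\in X$ with $\stab(x)\notin\FF$; if $x\in U$ then $U\notin\Sm^G_S[\FF]$ so $\EE\FF(U)=\emptyset$, while if $x\in X\minus U$ the isomorphism $(Y\minus V)_\red\to(X\minus U)_\red$ combined with fixed-point-reflection (see \aref{rem:red}) produces $y\in Y$ with $\stab(y)=\stab(x)\notin\FF$, forcing $\EE\FF(Y)=\emptyset$. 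In either subcase the pullback $\EE\FF(Y)\times_{\EE\FF(V)}\EE\FF(U)$ is $\emptyset$ in the $\i$-category of spaces, matching $\EE\FF(X)=\emptyset$. The main subtlety I anticipate is this last case analysis: justifying that the fpNis reduction genuinely allows one to import the equality of stabilizers from the closed complement via fixed-point-reflection, and keeping track of the empty-space pullbacks in $\Pre(\Sm^G_S)$.
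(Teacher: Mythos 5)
Your argument is correct and follows essentially the same route as the paper's: for $\A^1$-invariance, observe that stabilizers are unchanged along the projection $X\times_S\A^1\to X$, and for Nisnevich excision, do a case analysis on whether $X\in\Sm^G_S[\FF]$, using the isomorphism $(Y\minus V)_\red\cong(X\minus U)_\red$ to transfer a bad stabilizer from $X\minus U$ to $Y$. The only cosmetic difference is your detour through \aref{prop:fp&Nis} to reduce to fpNis squares; this is valid but unnecessary here, since $X\in\Sm^G_S[\FF]$ already forces $Y\in\Sm^G_S[\FF]$ for any equivariant \'etale $p$ (stabilizers can only shrink and $\FF$ is closed under subgroups), and the closed-complement isomorphism is automatically fixed-point reflecting, which is all that the other direction of the case analysis requires.
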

\begin{proof}
We need to check that  $\EE\FF$ is Nisnevich excisive and $\A^1$-homotopy invariant. 
From the definition we have that $\Eg\FF(\emptyset)=\pt$.
Given a Nisnevich square (\ref{eqn:EDS}), the possible values of 
\begin{equation*}
\begin{tikzcd}
\Eg\FF(X) \ar[r] \ar[d] & \Eg\FF(U) \ar[d] \\
\Eg\FF(Y) \ar[r] & \Eg\FF(V)
\end{tikzcd}
\end{equation*}
are the squares
\begin{equation*}
\begin{tikzcd}
\emptyset\ar[d] \ar[r] & \emptyset\ar[d] \\
\emptyset \ar[r]& \emptyset,
\end{tikzcd}
\;\;
\begin{tikzcd}
\emptyset\ar[d] \ar[r] & \emptyset\ar[d] \\
\emptyset \ar[r]& \pt,
\end{tikzcd}
\;\;
\begin{tikzcd}
\emptyset\ar[d] \ar[r] & \emptyset\ar[d] \\
\pt \ar[r]& \pt,
\end{tikzcd}
\;\;
\begin{tikzcd}
\emptyset\ar[d] \ar[r] & \pt\ar[d] \\
\emptyset \ar[r]& \pt,
\end{tikzcd}
\;\; 
\begin{tikzcd}
\pt\ar[d] \ar[r] & \pt\ar[d] \\
\pt \ar[r]& \pt,
\end{tikzcd}
\end{equation*}
which are all cartesian squares. It follows that $\Eg\FF$ is Nisnevich excisive. Also $\EE\FF$ is $\A^1$-homotopy invariant since  
$(X\times_S\A^1_S)^{H} = X^{H}\times_{S^H} \A^1_{S^H}$.
\end{proof}

 Totaro \cite{Totaro:chow} and Morel-Voevodsky
 \cite[Section 4.2]{MV:A1} construct a geometric model for the classifying space of an algebraic group. This is generalized by Hoyois in \cite[Section 2]{Hoyois:cdh},
 to construct a geometric model for certain equivariant classifying spaces. 
Similar considerations lead to geometric models for the spaces $\EE \FF$.

\begin{definition}\label{def:EFapprox}
A \emph{system of approximations to $\EE\FF_S$} is a diagram $(U_i)_{i\in I}$ which is a subdiagram of a diagram $(V_i)_{i\in I}$ of inclusions of $G$-equivariant vector bundles over $S$, where $I$ is a filtered poset and subject to the following conditions:

\begin{enumerate}
\item Each $U_i$ is in $\Sm_S[\FF]$ and 
$U_i\subseteq V_i$ is an open subscheme. 
\item For $i\in I$, there exists an element $2i\in I$ with the property that $2i\geq i$  and such that there is an isomorphism $V_{2i} \iso V_{i}\oplus V_{i}$ of $G$-vector bundles which identifies
the inclusion 
$V_i\hookrightarrow V_{2i}$ with the inclusion $(\id,0):V_{i}\hookrightarrow  V_i\oplus V_i$. 
\item Under the isomorphism $V_{2i}\iso V_i\oplus V_i$, $(U_i\times V_i)\cup (V_i\times U_i)\subseteq U_{2i}$.
\item 
There is a Nisnevich cover $\{T_j\to S\}$ such that for any affine $X$ in $\Sm_{T_j}^G[\FF]$, there is an $i\in I$ such that  $(U_i)_{X}\to X$ admits a section. 
\end{enumerate}  
\end{definition}

\begin{example}\label{ex:bc}
Write $\rho=\rho_G$. Let  $U_{n} \subseteq \V_B({n\rho})$ be the open invariant subscheme 
$$
U_{n}:=\V(n\rho)\minus \bigcup_{H\in\co(\FF)}\V({n\rho})^{H}.
$$ 
The  inclusions 
$\V({n\rho})\subseteq \V({(n+1)\rho})$ induce maps 
$U_{n}\to U_{n+1}$.

Let $f:S\to B$ in $\Sch_B^G$. Then $(f^*U_n)_{n\in \N}$ is a system of approximations to $\EE\FF_S$. The conditions (1)-(3) are clear. To check the last condition we may assume that $B$ and $S$ are affine (since $S$ is equivariant locally affine). Let $X\in \Sm_S^G[\FF]$ be affine. Then
for $n$ sufficiently large, there is an equivariant closed immersion of $B$-schemes, 
$X\hookrightarrow \V(n\rho)$. For any $H\not\in \FF$, we have $X\cap (\V(n\rho)^{H} = \emptyset$ which means that $X\to \V(n\rho)$ factors to give a map $X\to U_n$ over $B$. This defines the desired section $X\to f^*U_n$. 
\end{example}

\begin{example}\label{ex:key}
Let $N\trianglelefteq G$ be a normal subgroup. 
The family $\FF[N]$ consists of all subgroups not containing $N$. 
Write $W:=\rho_{G}/\rho_{{G/N}}$ for the quotient representation (where $\rho_{G/N}$ is viewed as a $G$-representation via the quotient homomorphism $G\to {G/N}$). 
 Let $U_n = \V_S(nW)\minus \{0\}$. This defines a system of approximations to $\EE\FF[N]_S$. The conditions (1)-(3) of the definition are clear. As in the previous example, to check the last condition, it suffices to assume that $B$ and $S$ are affine. 
Let $X\in \Sm_S^G[\FF[N]]$ be affine. Then
for $n$ large enough, there is an equivariant closed immersion of $B$-schemes 
$X\hookrightarrow \V_B(n\rho_G)$. The preimage of $0$ under the projection $p:\V(n\rho_G)\to \V(nW)$ is $\V(n\rho_{G/N})$. Since $N$ is not contained in any stablizer of $X$, $X\cap \V_B(n\rho_{{G/N}}) = \emptyset$, which implies that restriction of $p$ to $X$ factors through $U_n$. 
This defines the desired section $X\to U_n$. 
\end{example}

If $(U_i)_{i\in I}$ is a system of approximations to $\EE\FF_S$, define 
\[
U_{\infty} := \colim_IU_i \in \HH^G(S). 
\]

\begin{proposition}\label{prop:key}
Let $\FF$ be a family of subgroups in $G$ and $(U_i)_{i\in I}$ a system of approximations to $\EE\FF_S$.  Then there is an equivalence 
$$
U_{\infty} \xrightarrow{\wkeq} \EE\FF_S
$$  
in $\HH^{G}(S)$.
\end{proposition}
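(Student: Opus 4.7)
First I would construct the canonical comparison map $\alpha\colon U_\infty\to \EE\FF_S$ in $\HH^G(S)$. For each $i$, $U_i\in \Sm_S^G[\FF]$, so $\EE\FF_S(U_i)=\pt$ by definition; since $\EE\FF_S$ is already a motivic $G$-space, this gives an essentially unique map $U_i\to \EE\FF_S$, and these maps are coherently compatible in $i$, assembling to the desired map $\alpha$ out of $U_\infty=\colim_iU_i$.

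Next I would reduce to a question on the restricted site $\Sm_S^G[\FF]$. Since $\HH^G(S)$ is generated under colimits by its representables, it suffices to check that $\alpha$ induces an equivalence on $\Map(X,-)$ for each $X\in \Sm_S^G$. For $X\notin \Sm_S^G[\FF]$, the codomain $\Map_{\HH^G(S)}(X,\EE\FF_S)=\EE\FF_S(X)=\emptyset$, and composition with $\alpha$ forces $\Map_{\HH^G(S)}(X,U_\infty)=\emptyset$ as well. For $X\in \Sm_S^G[\FF]$, write $u\colon \Sm_S^G[\FF]\hookrightarrow \Sm_S^G$ for the inclusion; the adjunction $u_!\dashv u^*$ of \aref{cor:sev}, together with the full-faithfulness of $u_!$ from \aref{prop:i*ff}, identifies
\[
\Map_{\HH^G(S)}(X,U_\infty)\simeq \Map_{\HH^{G,\FF}(S)}(X,u^*U_\infty),
\]
while $\Map_{\HH^G(S)}(X,\EE\FF_S)=\pt$. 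Since $u^*$ preserves colimits, it thus suffices to prove $\colim_iU_i\simeq \pt$ in $\HH^{G,\FF}(S)$.

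The core of the proof is this last contractibility statement. By \aref{prop:summary}(2) the affines in $\Sm_S^G[\FF]$ are compact generators, and $\Map_{\HH^{G,\FF}(S)}(Y,\colim_iU_i)\simeq \colim_i\Map(Y,U_i)$ for such $Y$, so I need to show this filtered colimit of spaces is contractible. Condition (4) provides the nonemptiness Nisnevich-locally on $S$ by producing a section $Y\to U_i$ for some $i$. For the vanishing of $\pi_0$ in the colimit, given two maps $f_0,f_1\colon Y\to U_i$, the straight-line $\A^1$-homotopies
\[
(t,y)\mapsto (f_0(y),tf_1(y)),\qquad (t,y)\mapsto ((1-t)f_0(y),f_1(y))
\]
take values in $U_i\times V_i$ and $V_i\times U_i$ respectively, each contained in $U_{2i}$ by condition (3); concatenating these with their symmetric analogues produces an $\A^1$-chain $(f_0,0)\sim (f_0,f_1)\sim (0,f_1)\sim (f_1,f_1)\sim (f_1,0)$ in $U_{2i}$, so the images of $f_0$ and $f_1$ are $\A^1$-homotopic there. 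Iterating this construction inside $V_{2^n i}\cong V_i^{\oplus 2^n}$ (using conditions (2) and (3) to navigate successive summands) yields the higher-order $\A^1$-homotopies that kill $\pi_k$ for all $k$.

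The principal obstacle is the last step: the $\pi_0$-argument is clean, but extending it systematically requires recognizing that conditions (2) and (3) organize the tower $U_i\subset U_{2i}\subset U_{4i}\subset\cdots$ as an iterated motivic join, with $U_{2^ni}$ playing the role of the $(n+1)$-fold join of $U_i$ with itself up to $\A^1$-equivalence, so that $U_\infty$ is an infinite join and hence contractible. Making this join interpretation rigorous in the unstable motivic setting, and verifying that the contracting homotopies of all orders can be produced from the combinatorial data (1)--(4) alone, constitutes the bulk of the technical work.
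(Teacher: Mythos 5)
Your outline follows the paper's proof essentially step for step: construct the unique comparison map out of the colimit, observe both sides are empty on $X\notin\Sm_S^G[\FF]$, and reduce to showing that $U_\infty$ becomes contractible after restriction to $\Sm_S^G[\FF]$, using condition (4) to produce sections and conditions (2)--(3) to produce contracting $\A^1$-homotopies. The one step you flag as ``the bulk of the technical work'' --- that a filtered system of open subschemes of vector bundles satisfying the doubling conditions, once it admits a section over $X$, has contractible $\Sing_{\A^1}(U_\infty)(X)$ --- is precisely \cite[Lemma 2.6]{Hoyois:cdh}, which is what the paper invokes; so your plan is sound but the decisive contractibility argument is outsourced rather than supplied, and your $\pi_0$ computation alone does not finish it. Two smaller wrinkles: (i) condition (4) only guarantees sections Nisnevich-locally on $S$, so you must first reduce to the case $S=T_j$ (as the paper does) before claiming a section $X\to U_i$ exists for every affine $X\in\Sm_S^G[\FF]$; and (ii) your appeal to \aref{prop:summary}(2) to commute $\Map(Y,-)$ past the filtered colimit is a statement about compact generation of the \emph{stable} category $\SH^{G,\FF}_\TT(S)$ and does not directly give compactness of representables in the unstable category $\HH^{G,\FF}(S)$. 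The paper sidesteps this by working with the explicit model $\Sing_{\A^1}(U_\infty)(X)=\colim_\Delta\colim_IU_i(X\times\Delta^n)$ evaluated on affines, where the colimit is computed sectionwise by construction; you should do the same rather than argue via compactness.
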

\begin{proof}

To prove the result, it suffices to work  Nisnevich locally on $S$, and so we may assume that $S=T_j$ in the last condition of \aref{def:EFapprox}. 
For each $i$, there is a unique map $U_{i}\to \EE\FF$ which induces the unique map $U_{\infty}\to \EE\FF$. It suffices to show that 
$\Sing_{\A^1}(U_{\infty})(X) \to \Sing_{\A^1}(\EE\FF)(X)$ is an equivalence for any affine $X$ in $\Sm^G_S$. Both sides are empty if $X\not\in\Sm^G_S[\FF]$, so we just need to show that 
 $\Sing_{\A^1}(U_{\infty})(X)$
 is contractible for any affine $X$ in $\Sm^G_S[\FF]$,
By assumption there is a section of $U_\infty\times_S X \to X$ and so the result follows from 
\cite[Lemma 2.6]{Hoyois:cdh}. 

\end{proof}

\begin{proposition}\label{prop:bcE}
Let $\FF$ be a family of subgroups. Let $f:S'\to S$ 
be a morphism
in $\Sch_B^G$. Then

\[
 f^*(\EE\FF_{S})\wkeq \EE\FF_{S'}.
\] 
\end{proposition}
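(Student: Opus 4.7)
The plan is to leverage the geometric approximations to $\EE\FF$ constructed in Example~\ref{ex:bc} together with the fact that pullback preserves colimits.

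First, I would invoke Example~\ref{ex:bc}: taking $\rho=\rho_G$ and
\[
U^S_n := \V_S(n\rho) \minus \bigcup_{H\in\co(\FF)} \V_S(n\rho)^H,
\]
the sequence $(U^S_n)_{n\in\N}$ is a system of approximations to $\EE\FF_S$. By Proposition~\ref{prop:key}, the induced map gives an equivalence $\colim_n U^S_n \xrightarrow{\wkeq} \EE\FF_S$ in $\HH^G(S)$. The same construction with $S$ replaced by $S'$ produces a system $(U^{S'}_n)_{n\in\N}$ of approximations to $\EE\FF_{S'}$, with $\colim_n U^{S'}_n \wkeq \EE\FF_{S'}$ in $\HH^G(S')$.

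Next, I would apply $f^*$ to the first equivalence. Since $\HH^{\times}$ takes values in $\CAlg(\LPr)$ (by the functoriality established at the end of the previous subsection), the pullback $f^* : \HH^G(S) \to \HH^G(S')$ is a symmetric monoidal left adjoint, hence preserves colimits, yielding
\[
f^*\EE\FF_S \wkeq \colim_n f^*U^S_n.
\]
It then suffices to identify $f^*U^S_n$ with $U^{S'}_n$ for each $n$. Since each $U^S_n$ is representable by a smooth $G$-scheme over $S$, $f^*$ sends it to its base change $U^S_n \times_S S'$ in $\Sm^G_{S'}$, viewed as a representable motivic $G$-space over $S'$. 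Formation of equivariant vector bundles is compatible with base change, so $\V_S(n\rho) \times_S S' \cong \V_{S'}(n\rho)$, and formation of fixed-point subschemes for the finite group $G$ commutes with base change as well (using that $\V_S(n\rho)\to S$ is smooth and separated, via the cited results of SGA~3 and CGP). Consequently $U^S_n \times_S S' \cong U^{S'}_n$, and the desired equivalence follows by stringing these identifications together:
\[
f^*\EE\FF_S \wkeq \colim_n f^*U^S_n \wkeq \colim_n U^{S'}_n \wkeq \EE\FF_{S'}.
\]

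The main potential obstacle is the verification that the nontrivial ingredient—formation of fixed-point subschemes—commutes with arbitrary base change $f:S'\to S$; this is however a consequence of the results already cited in the excerpt, and the remainder of the argument is purely formal. One could alternatively proceed presheaf-theoretically, observing that the condition defining $\EE\FF$ depends only on stabilizer sets (which are preserved under $f^{-1}$), but the route through the geometric approximations is cleaner since $f^*$ in this context is the left adjoint of restriction and requires this sort of explicit colimit presentation.
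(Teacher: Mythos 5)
Your argument is correct and takes essentially the same route as the paper, whose proof reads simply ``Follows from \aref{prop:key} together with \aref{ex:bc}'': present $\EE\FF$ via the geometric approximations $U_n$, use that $f^*$ preserves colimits, and identify the terms of the colimit. The only cosmetic difference is that the paper takes the approximating systems over $S$ and $S'$ to be pullbacks of a single system defined over $B$, so the identification $f^*U_n^S\cong U_n^{S'}$ is immediate from functoriality of pullback, whereas you verify it directly via the (correct) observation that formation of fixed-point subschemes and open complements commutes with base change.
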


\begin{proof}
Follows from \aref{prop:key} together with \aref{ex:bc}.
\end{proof}

Write  $i:\Sm^{G}_S[\FF] \subseteq\Sm^G_S$ for the inclusion of categories. 
\begin{proposition}\label{prop:EEFFcoloc}
	There is a canonical equivalence of endofunctors 
	\[
	i_!i^*\wkeq \EE\FF\times -:\HH^G(S)\to \HH^G(S)
	\] 
	and 
	\[
	i_!i^*\wkeq \EE\FF_+\wedge -:\HH_\bb^G(S) \to \HH_\bb^G(S).
	\]  
\end{proposition}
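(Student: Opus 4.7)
My plan is to exhibit both $i_!i^*$ and $\EE\FF\times-$ as idempotent colocalizations of $\HH^G(S)$ onto the essential image of $i_!$, and then verify that the canonical natural transformation between them is an equivalence. The functor $i_!i^*$ is a colocalization because $i_!$ is fully faithful (so $i^*i_!\simeq\id$) by \aref{prop:i*ff}. The functor $\EE\FF\times-$ is an idempotent smashing colocalization because $\EE\FF\times\EE\FF\simeq\EE\FF$, which can be verified pointwise on presheaves using that each value of $\EE\FF$ is either $\pt$ or $\emptyset$.

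The main technical step will be showing that the essential images agree. Using \aref{prop:key} and \aref{ex:bc}, I would write $\EE\FF\simeq\colim_i U_i$ with $U_i\in\Sm^G_S[\FF]$. For any representable $Z\in\Sm^G_S$, the product $U_i\times_S Z$ has stabilizers contained in those of $U_i$, hence lies in $\Sm^G_S[\FF]$; by universality of colimits in $\HH^G(S)$ and the fact that every motivic $G$-space is a colimit of representables, $\EE\FF\times X$ always lies in the essential image of $i_!$. Conversely, if $Z\in\Sm^G_S[\FF]$, then closure of $\FF$ under subgroups forces $\Hom_{\Sm^G_S}(W,Z)=\emptyset$ for $W\notin\Sm^G_S[\FF]$, so a pointwise check yields $\EE\FF\times Z\simeq Z$; extension by universality of colimits then gives $\EE\FF\times X\simeq X$ for every $X$ in the essential image of $i_!$.

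To conclude, I would construct $\alpha\colon i_!i^*\Rightarrow\EE\FF\times-$ via the universal property of products: the component to $X$ is the counit, and the component to $\EE\FF$ is the unique map available because $\map(i_!i^*X,\EE\FF)\simeq\map(i^*X,i^*\EE\FF)\simeq\map(i^*X,\pt)\simeq\pt$. Applying $i^*$ and using $i^*\EE\FF\simeq\pt$ shows $i^*\alpha$ reduces to the identity on $i^*X$; since both source and target of $\alpha$ lie in the essential image of $i_!$ and $i^*$ is fully faithful on this image, $\alpha$ is an equivalence. The pointed version follows by the same strategy: $\EE\FF_+\wedge-$ is an idempotent smashing colocalization of $\HH^G_\bb(S)$, the identity $(\EE\FF\times Z)_+\simeq\EE\FF_+\wedge Z_+$ identifies the essential image on spaces of the form $Z_+$, and extension by colimits together with the analogous construction of a natural transformation completes the argument.
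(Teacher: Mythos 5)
Your proof is correct and uses the same essential ingredients as the paper's argument: the presentation $\EE\FF\simeq\colim_i U_i$ with $U_i\in\Sm^G_S[\FF]$ together with universality of colimits (to see that $\EE\FF\times X$ lands in the essential image of $i_!$), full faithfulness of $i_!$ from \aref{prop:i*ff}, and the observation that $i^*\EE\FF\simeq\pt$. The organization differs slightly. The paper obtains the equivalence by composing two equivalences read directly off the adjunction data: $\EE\FF\times X\simeq i_!i^*(\EE\FF\times X)$ (triangle identity, since $\EE\FF\times X\simeq i_!E$ for some $E$) and $i_!i^*(\EE\FF\times X)\simeq i_!i^*X$ (since $i^*$ collapses the projection $\EE\FF\to\pt$). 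You instead construct a natural transformation $\alpha\colon i_!i^*\Rightarrow\EE\FF\times-$ via the universal property of the product and then check $\alpha$ is an equivalence by applying $i^*$ and using full faithfulness on the essential image; this requires the extra step of verifying $\EE\FF\times i_!Y\simeq i_!Y$ (which the paper never needs) and a mild care point about the naturality of the component $i_!i^*X\to\EE\FF$, which is fine because $\EE\FF$ is terminal in the essential image of $i_!$. Both routes are valid; yours makes the idempotent-colocalization viewpoint on $\EE\FF\times-$ explicit, while the paper's is a bit shorter.
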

\begin{proof}
	We treat the unbased case,  the based case then follows.
Let $X\in \HH^G(S)$. We have $(i^*X)(W) \simeq X(W)$ for $W\in \Sm^G_S[\FF]$. In particular, the projection $\EE\FF\to \pt$ induces the equivalences $i^*(\EE\FF\times X) \simeq i^*(X)$ and thus 
	$$
	i_!i^*(\EE\FF\times -) \xrightarrow{\sim} i_!i^*(-).
	$$

Now $\EE\FF$ is equivalent to  $\colim_i U_i$ where $U_i\in\Sm^{G}_S[\FF] $. If  $W\in \Sm^G_S$ then each $U_i\times W$ is in $\Sm^{G}_S[\FF] $. 
	It follows that if $X$ is any object of $\HH^G(S)$ then, writing $X$ as a colimit of objects of $\Sm_S^G$,  we see that
	$\EE\FF\times X \simeq i_!(E)$ for some $E\in \HH^{G,\FF}(S)$. In particular, since $\eta:\id\wkeq i^*i_! $ is an equivalence by \aref{prop:i*ff}, we have 
	$i_!\eta:i_!(E)\wkeq i_!i^*i_!(E)$. From the triangle identity for the unit and counit, we have $\epsilon i_!:i_!i^*(i_!(E)) \wkeq i_!(E)$ is also an equivalence. It follows that  we have equivalences
	\[
	\EE\FF\times - \wkeq i_!i^*(\EE\FF\times -) \wkeq i_!i^*.
	\]
\end{proof}

\begin{corollary}\label{cor:EEFFequiv}
	Let $\FF$ be a family of subgroups.
	\begin{enumerate}
		\item The essential images of $\EE\FF\times -$ and  $\EE\FF_+\wedge -$ are respectively the subcategories  
		$\HH^{G,\FF}(S)\subseteq \HH^G(S)$ and $\HH_\bb^{G,\FF}(S)\subseteq \HH_\bb^G(S)$. 
		\item The projection $\EE\FF\times X \to X$ 
		is an equivalence  for $X\in \HH^G(S)$ if and only if $X \simeq i_!(X')$ for some $X'\in \HH^{G,\FF}(S)$.
		\item The projection 
		$\EE\FF_+\wedge Y \to Y$ is an equivalence for $Y\in \HH_\bb^G(S)$ if and only if $Y \simeq i_!(\tilde{Y})$ for some $\tilde{Y}\in \HH_\bb^{G,\FF}(S)$.
\item The canonical maps are equivalences 
\begin{align*}
\map_{\HH^G(S)}(\EE\FF\times X, X')& \wkeq \map_{\HH^G(S)}(\EE\FF\times X,\EE\FF\times X') \\
\map_{\HH_\bb^G(S)}(\EE\FF_+\wedge Y, Y')& \wkeq \map_{\HH_\bb^G(S)}(\EE\FF_+\wedge Y,\EE\FF_+\wedge Y').
\end{align*}
\end{enumerate}
\end{corollary}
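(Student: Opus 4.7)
The entire corollary is a formal consequence of \aref{prop:EEFFcoloc} (which identifies $\EE\FF\times-\simeq i_!i^*$ and $\EE\FF_+\wedge-\simeq i_!i^*$ on the pointed side), combined with the fact from \aref{prop:i*ff} that $i_!$ is fully faithful; equivalently, that the unit $\eta\colon\id\xrightarrow{\sim} i^*i_!$ is an equivalence. My plan is simply to unwind what this formal input buys us for each of the four items, treating the unpointed and pointed cases in parallel since the arguments are identical.

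For (1), I will observe that the essential image of $i_!i^*$ is contained in the essential image of $i_!$, which is $\HH^{G,\FF}(S)$ (respectively $\HH^{G,\FF}_\bb(S)$). Conversely, for any $X'\in\HH^{G,\FF}(S)$, applying $i_!i^*$ to $i_!(X')$ and using $\eta$ yields $i_!i^*i_!(X')\simeq i_!(X')$, so every object of $\HH^{G,\FF}(S)$ lies in the essential image. For (2) and (3), the canonical map $\EE\FF\times X\to X$ is precisely the counit $\varepsilon\colon i_!i^*X\to X$ of the adjunction $i_!\dashv i^*$. A standard argument shows that when $i_!$ is fully faithful the counit is an equivalence at $X$ if and only if $X\in\mathrm{ess.im}(i_!)$: if $X\simeq i_!(X')$ then $\varepsilon_{i_!X'}$ is inverse to $i_!\eta_{X'}$ by the triangle identities, while the converse is immediate since $i_!i^*X$ is manifestly in the essential image of $i_!$.

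For (4), I will use the adjunction together with the unit equivalence to rewrite both mapping spaces in the same terms. On the one hand,
\[
\map_{\HH^G(S)}(\EE\FF\times X, X')\simeq \map_{\HH^G(S)}(i_!i^*X, X')\simeq \map_{\HH^{G,\FF}(S)}(i^*X, i^*X'),
\]
and on the other,
\[
\map_{\HH^G(S)}(\EE\FF\times X, \EE\FF\times X')\simeq \map_{\HH^{G,\FF}(S)}(i^*X, i^*i_!i^*X')\simeq \map_{\HH^{G,\FF}(S)}(i^*X, i^*X'),
\]
where the last equivalence uses $\eta\colon\id\xrightarrow{\sim}i^*i_!$. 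A comparison shows that the map induced by $\varepsilon_{X'}\colon\EE\FF\times X'\to X'$ is identified with the identity under these equivalences. The pointed case is identical with $i_!,i^*$ acting on $\HH_\bb$.

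I do not anticipate any serious obstacle: the work was already done in establishing \aref{prop:EEFFcoloc} and the fully faithfulness of $i_!$. The only mild care required is verifying that under the chain of adjunction equivalences in (4) the map induced by the counit really does correspond to the identity, which is a standard triangle-identity check.
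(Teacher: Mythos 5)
Your proposal is correct and matches the paper's intent: the corollary is stated with no written proof precisely because it is the formal consequence of \aref{prop:EEFFcoloc} together with the fully faithfulness of $i_!$ from \aref{prop:i*ff} that you spell out. Your unwinding of the four items (essential image of $i_!i^*$, counit an equivalence iff in the essential image, and the adjunction computation for the mapping spaces) is the standard argument the authors are leaving implicit.
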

\hfill \qed

Recall that $i_!:\SH^{G,\FF}_\TT(S)\to \SH^G_\TT(S)$ is full and faithful with essential image the localizing tensor ideal generated by  $T^{-\E}\otimes X_+$ where  $X\in \Sm_S^{G}[\FF] $ and $T^\E\in\TT$, see \aref{lem:ideal}.

\begin{proposition}\label{prop:EEimage}
	
There is an equivalence of colocalization endofunctors
\[
 \EE\FF_+\otimes -\simeq i_!i^*:\SH^G_{\TT}(S)\to \SH^G_{\TT}(S).
\]
In particular there is a natural equivalence
\[
i_!i^*\simeq i_!i^*(\SS_S)\otimes \id.
\]
\end{proposition}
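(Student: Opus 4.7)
The plan is to bootstrap the unstable colocalization equivalence $\EE\FF_+\wedge-\simeq i_!i^*$ from \aref{prop:EEFFcoloc} to the stable category. Both $\EE\FF_+\otimes-$ and $i_!i^*$ are endofunctors of $\SH^G_\TT(S)$, and there is a natural transformation $\EE\FF_+\otimes X\to X$ induced by the canonical projection $\EE\FF\to \pt$. The strategy is to show that this natural transformation exhibits $\EE\FF_+\otimes-$ as a colocalization onto the essential image of $i_!\colon\SH^{G,\FF}_\TT(S)\hookrightarrow\SH^G_\TT(S)$, which by \aref{lem:ideal} coincides with the image of $i_!i^*$.

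First I would show that $\EE\FF_+\otimes X$ lies in the essential image of $i_!$ for every $X\in\SH^G_\TT(S)$. By \aref{lem:ideal}, this essential image is a localizing tensor ideal, so it suffices to check that $\EE\FF_+$ itself lies there. But by \aref{prop:key} (combined with the model in \aref{ex:bc}), $\EE\FF$ is a filtered colimit of objects $U_i\in\Sm^G_S[\FF]$, so $\EE\FF_+\simeq\colim_i(U_i)_+$ lies in the localizing subcategory generated by $(\Sm^G_S[\FF])_+$, which sits inside the tensor ideal above.

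Next I would check that the transformation $\EE\FF_+\otimes X\to X$ is an equivalence whenever $X$ lies in the essential image of $i_!$. Since both $\EE\FF_+\otimes-$ and $\id$ preserve colimits, and since by \aref{lem:ideal} the essential image of $i_!$ is generated under colimits and desuspensions by objects of the form $\Sigma^{-\E}\Sigma^\infty_\TT Y_+$ with $Y\in\Sm^G_S[\FF]$ and $T^\E\in \TT$, it suffices to verify this on such generators. Smashing with the invertible object $T^{-\E}$ reduces this to the case $\Sigma^\infty_\TT Y_+$, and here the result is obtained by applying $\Sigma^\infty_\TT$ to the unstable equivalence $\EE\FF_+\wedge Y_+\xrightarrow{\sim}Y_+$ supplied by \aref{cor:EEFFequiv}, using that $\Sigma^\infty_\TT$ is symmetric monoidal so intertwines $\wedge$ with $\otimes$.

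Combining the two steps, $\EE\FF_+\otimes-$ is right adjoint to the inclusion of the essential image of $i_!$ (i.e.\ a colocalization onto it), as is $i_!i^*$ by fully faithfulness of $i_!$. The universal property of colocalizations then yields the asserted natural equivalence $\EE\FF_+\otimes-\simeq i_!i^*$. Evaluating at $\SS_S$ gives $i_!i^*(\SS_S)\simeq\EE\FF_+\otimes\SS_S\simeq\EE\FF_+$, and substituting back produces the ``in particular'' statement $i_!i^*\simeq i_!i^*(\SS_S)\otimes\id$. The only subtle point is ensuring the natural transformation coming from $\EE\FF\to\pt$ matches the counit of the $(i_!,i^*)$ adjunction; this is automatic once we know both are colocalizations onto the same full subcategory, since any two such colocalizations are uniquely equivalent.
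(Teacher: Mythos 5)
Your proposal is correct and follows essentially the same route as the paper, whose proof is simply the one-line observation that the stable statement follows from the unstable colocalization equivalence of \aref{prop:EEFFcoloc}; your argument supplies the standard details (reduction to generators of the localizing tensor ideal via \aref{lem:ideal} and \aref{prop:summary}, monoidality of $\Sigma^\infty_\TT$, and uniqueness of colocalizations onto a fixed full subcategory) that the paper leaves implicit.
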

\begin{proof}
This follows from \aref{prop:EEFFcoloc}.

\end{proof}

\subsection{Filtrations by adjacent families}\label{sec:filtadj}

We recall the definition of adjacent families. 
\begin{definition}
Let $\FF\subseteq\FF'$ be an inclusion of families of subgroups of $G$.
	\begin{enumerate}
		\item We say that $\FF$ and $\FF'$ are \emph{adjacent} if there is a subgroup $H\leq G$ such that 
		$\FF'\minus\FF = \{ (H) \}$.
		\item If $N\trianglelefteq G$ is a normal subgroup, say tha  $\FF$ and $\FF'$ are \emph{$N$-adjacent} if there is a subgroup $H\leq N$ such that $\FF'\minus \FF = \{K\leq G \mid (K\cap N) = (H) \}$ (where as before, $(A)$ denotes $G$-conjugacy class of a subgroup $A$).
		\item Say that $\FF$ and $\FF'$ are \emph{$N$-adjacent at $H\leq N$} if the families are $N$-adjacent and  $\FF'\minus \FF$    is the set of subgroups $K\leq G$ such that  $(K\cap N) = (H)$
	\end{enumerate}
\end{definition}

Of course, if $N=G$ then $N$-adjacent families are exactly adjacent families. 
Since $G$ is finite, one can always find a filtration
\begin{equation*}
\emptyset = \FF_{-1} \subseteq \FF_{0}\subseteq \FF_{1}\subseteq \cdots \subseteq \FF_{n}=\FF_{\all},
\end{equation*}
such that each pair $\FF_i\subseteq \FF_{i+1}$ is $N$-adjacent.
 For example, one can be produced as follows. 
Define the sequence of families
\[
\{e\}=\Fil_0^G\subset \Fil_1^G\subset \cdots \subset \Fil_i^G\subset \cdots \subset \Fil_{N}^G=\FF_{{\all}}
\]
by setting $\Fil_0 = \{e\}$ and inductively defining $\Fil_i^G$ by
\[
\Fil_{i}^G := \{ H\leq G \mid \textrm{each proper subgroup $K<H$ is in $\Fil_{i-1}^G$}  \}.
\]
(Since $G$ is finite, this sequence terminates at a finite stage.)
Each $\Fil_i^G$ is a family. More generally, define
\[
\Fil_{i}^{N \trianglelefteq G}:= \{ K\leq G \mid K\cap N\in \Fil_{i}^{N}  \}.
\]
The families just defined aren't adjacent, but 
$\Fil_{i}^{N \trianglelefteq G}\minus \Fil_{i-1}^{N \trianglelefteq G}$ is a finite union of conjugacy classes. Let $\{(H_i)\}$ be the set of these conjugacy classes. Then the families
\[
\Fil_{i-1}^{N \trianglelefteq G}\subseteq \Fil_{i-1}^{N \trianglelefteq G}\cup \{(H_1)\}\subseteq \Fil_{i-1}^{N \trianglelefteq G}\cup\{(H_1), (H_2)\} \subseteq \cdots \subseteq \Fil_{i}^{N \trianglelefteq G}
\] 
are all $N$-adjacent.

In anycase, a filtration 
$\emptyset  \subseteq \FF_{0}\subseteq \FF_{1}\subseteq \cdots \subseteq \FF_{all}$
gives rise to a filtration
\begin{equation}\label{eqn:fbyf}
\ast \to \EE\FF_{0+}\wedge X\to \EE\FF_{1+}\wedge X\to \cdots \to \EE\FF_{n+}\wedge X\simeq X
\end{equation}
of an object $X\in \CC$ whenever $\CC$ is an $\HH^G_{\bb}(S)$-module (e.g., $\SH^G(S)$).

To utilize this filtration, we need to analyze the filtration quotients, which we will do below in 
\aref{prop:EENadj} and \aref{prop:Phiadj}.

\subsection{Universal spaces for pairs }

\begin{definition}
 Let $\FF\subseteq \FF'$ be a subfamily. Define the based motivic $G$-space 
$\EE(\FF', \FF)$ so that it sits in the cofiber sequence
$$
\EE\FF_{+}\to \EE\FF'_{+} \to \EE(\FF', \FF).
$$	
If $\FF'=\FF_{\all}$ define
 $\widetilde{\EE}\FF := \EE(\FF_\mathrm{all},\FF)$
\end{definition}

Note that
$\EE(\FF',\emptyset)\wkeq\EE(\FF')_+$. At the other extreme, since $\EE\FF_\mathrm{all} \simeq \pt$, the space  $\widetilde{\EE}\FF$ sits in the cofiber sequence in $\HH_\bb^G(S)$
\begin{equation}\label{eqn:cof}
\EE\FF_{+} \to S^{0} \to \widetilde{\EE }\FF.
\end{equation}

\begin{proposition}\label{lem:EErew}
	Let $\FF\subseteq \FF'$ be a subfamily. 
There is a canonical equivalence 
$\EE(\FF', \FF)\wkeq \wt\EE\FF\wedge\EE\FF'_+$. In particular, $\wt\EE\FF\wedge\EE\FF_+\wkeq \pt$.
\end{proposition}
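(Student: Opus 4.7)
The plan is to compute $\EE\FF \times \EE\FF'$ directly from the pointwise definition and then smash with the defining cofiber sequence for $\wt\EE\FF$.

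First, I would observe that in $\Pre(\Sm^G_S)$, the product $\EE\FF \times \EE\FF'$ has value $\pt \times \pt = \pt$ on $X \in \Sm^G_S[\FF] \cap \Sm^G_S[\FF'] = \Sm^G_S[\FF]$ (using $\FF \subseteq \FF'$) and $\emptyset$ otherwise, so $\EE\FF \times \EE\FF' \simeq \EE\FF$ as presheaves, hence as motivic $G$-spaces. Passing to based motivic $G$-spaces via the standard identification $(X \times Y)_+ \simeq X_+ \wedge Y_+$ yields an equivalence
\[
\EE\FF_+ \wedge \EE\FF'_+ \wkeq \EE\FF_+
\]
in $\HH_\bb^G(S)$.

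Next, I would smash the defining cofiber sequence $\EE\FF_+ \to S^0 \to \wt\EE\FF$ (obtained by setting $\FF'=\FF_\all$ in the definition) with $\EE\FF'_+$. Since $\HH_\bb^G(S)$ is presentably symmetric monoidal, the functor $-\wedge\EE\FF'_+$ preserves cofiber sequences, so we get
\[
\EE\FF_+\wedge\EE\FF'_+ \to \EE\FF'_+ \to \wt\EE\FF\wedge\EE\FF'_+.
\]
Using the equivalence of the previous paragraph, the first map is identified with the canonical map $\EE\FF_+ \to \EE\FF'_+$ induced by the inclusion of families. Comparing with the defining cofiber sequence $\EE\FF_+ \to \EE\FF'_+ \to \EE(\FF',\FF)$ produces the claimed equivalence $\EE(\FF',\FF) \wkeq \wt\EE\FF\wedge\EE\FF'_+$, and naturality of cofiber sequences makes this equivalence canonical.

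For the ``in particular'' clause, I would specialize to $\FF'=\FF$: the defining map $\EE\FF_+ \to \EE\FF_+$ is an equivalence (it is induced by the identity inclusion $\FF\subseteq\FF$), so its cofiber $\EE(\FF,\FF)$ is contractible, and the equivalence just established gives $\wt\EE\FF\wedge\EE\FF_+ \wkeq \pt$. There is no real obstacle here; the only subtlety worth verifying carefully is the identification of the product $\EE\FF\times\EE\FF'$, which is immediate from the pointwise formula and the hypothesis $\FF\subseteq\FF'$.
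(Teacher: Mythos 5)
Your proof is correct and is essentially the same argument the paper gives. The paper presents it as a single commutative diagram whose two rows are the defining cofiber sequence for $\EE(\FF',\FF)$ and the cofiber sequence obtained by smashing $\EE\FF_+ \to S^0 \to \wt\EE\FF$ with $\EE\FF'_+$, with the left and middle vertical arrows being the equivalences $\EE\FF_+ \simeq (\EE\FF\times\EE\FF')_+$ and $\EE\FF'_+ \simeq (\EE\FF_{\all}\times\EE\FF')_+$ you identify; your version just spells out the pointwise computation of $\EE\FF\times\EE\FF'$ and the identification of the first map more explicitly.
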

\begin{proof}

This follows from the commutative diagram
$$
\begin{tikzcd}
\EE\FF_+ \ar[r]\ar[d, "\sim"'] & \EE\FF'_+\ar[r]\ar[d, "\sim"] & \EE(\FF',\FF)\ar[d] \\
 (\EE\FF\times\EE\FF')_+ \ar[r] & (\EE\FF_\mathrm{all}\times\EE\FF')_+ \ar[r] & \wt\EE\FF\wedge \EE\FF'_+
\end{tikzcd}
$$
induced by inclusions of families, in which the rows are each cofiber sequences and the left and middle vertical arrows are equivalences.
\end{proof}

\begin{corollary}\label{cor:wtEEloc}
	Let $X\in \HH^G_\bb(S)$. 
The map $X\to \wt\EE\FF\wedge X$ induced by $S^0\to \wt\EE\FF$, induces an equivalence
$$
\map_{\HH_\bb^G(S)}(\wt\EE\FF\wedge X, \wt\EE\FF\wedge Y) 
\wkeq \map_{\HH_\bb^G(S)}( X,\wt\EE\FF\wedge Y).
$$
\end{corollary}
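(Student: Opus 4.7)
The plan is to read off the corollary as a standard ``null-object'' argument for the localization $\wt\EE\FF\wedge -$. Smashing the defining cofiber sequence $\EE\FF_+\to S^0\to\wt\EE\FF$ with $X$ yields a cofiber sequence
$$
\EE\FF_+\wedge X \to X \to \wt\EE\FF\wedge X
$$
in $\HH^G_\bb(S)$. Applying $\map_{\HH^G_\bb(S)}(-,\wt\EE\FF\wedge Y)$ produces a fiber sequence
$$
\map(\wt\EE\FF\wedge X,\wt\EE\FF\wedge Y)\to\map(X,\wt\EE\FF\wedge Y)\to\map(\EE\FF_+\wedge X,\wt\EE\FF\wedge Y),
$$
in which the second map is precisely the one induced by $S^0\to\wt\EE\FF$. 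Hence it suffices to prove that the third term is contractible.

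To show this, I would apply \aref{cor:EEFFequiv}(4) with $Y'=\wt\EE\FF\wedge Y$, giving an equivalence
$$
\map(\EE\FF_+\wedge X,\wt\EE\FF\wedge Y)\wkeq \map(\EE\FF_+\wedge X,\EE\FF_+\wedge \wt\EE\FF\wedge Y).
$$
Now \aref{lem:EErew} gives $\EE\FF_+\wedge\wt\EE\FF\wkeq\pt$, so the target of the mapping space on the right is the zero object of $\HH^G_\bb(S)$, and the mapping space is therefore contractible. Combined with the fiber sequence above, this yields the desired equivalence.

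There is no real obstacle here: the work has already been done in establishing the monadic ``colocalization'' property of $\EE\FF_+\wedge-$ (\aref{cor:EEFFequiv}) and the annihilation formula $\EE\FF_+\wedge\wt\EE\FF\wkeq\pt$ (\aref{lem:EErew}). The corollary is essentially the formal statement that once one has a cofiber sequence $A\to S^0\to B$ with $A\wedge B\wkeq\pt$, smashing with $B$ is a Bousfield-type localization whose local objects are the $B$-modules.
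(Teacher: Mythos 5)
Your proof is correct and is essentially the paper's own argument: the authors likewise apply $\map(-,\wt\EE\FF\wedge Y)$ to the cofiber sequence $\EE\FF_+\wedge X\to X\to\wt\EE\FF\wedge X$ and kill the third term using \aref{cor:EEFFequiv} together with $\EE\FF_+\wedge\wt\EE\FF\wkeq\pt$ from \aref{lem:EErew}. (The paper also notes the one-line alternative you allude to at the end: $\EE\FF_+\wedge-$ is a colocalization and $\wt\EE\FF\wedge-$ a localization endofunctor.)
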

\begin{proof}

This follows from the previous proposition, \aref{cor:EEFFequiv}, and the fiber sequence
$\map(\wt\EE\FF\wedge X, \wt\EE\FF\wedge Y)\to  
\map( X,\wt\EE\FF\wedge Y) \to \map( \EE\FF_+\wedge X,\wt\EE\FF\wedge Y)$. 
Alternatively, simply note that $\EE\FF_+\otimes - $ is a colocalization, $\wt\EE\FF\otimes -$ is a localization endofunctor.
\end{proof}

 \begin{lemma}\label{prop:usefulF}
 	Let $\FF\subseteq\FF'$ be an inclusion of families and $\mcal{E}$  a family such that $\mcal{E}\cap\FF = \mcal{E}\cap\FF'$. Then the map $S^{0}\to \wt{\Eg } \mcal{E}$ induces an equivalence of based motivic $G$-spaces
 	$$
 	\Eg (\FF',\FF)\xrightarrow{\sim}\Eg (\FF',\FF)\wedge \wt{\Eg } \mcal{E}.
 	$$ 
 \end{lemma}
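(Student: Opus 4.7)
The plan is to apply $-\wedge\EE(\FF',\FF)$ to the cofiber sequence $\EE\mcal{E}_+\to S^{0}\to\wt\EE\mcal{E}$ defining $\wt\EE\mcal{E}$. This produces a cofiber sequence in $\HH_\bb^G(S)$ whose rightmost map is precisely the map $\EE(\FF',\FF)\to\EE(\FF',\FF)\wedge\wt\EE\mcal{E}$ whose equivalence we are asked to verify. Hence it suffices to prove the vanishing
$$
\EE\mcal{E}_+\wedge\EE(\FF',\FF)\simeq \ast.
$$

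To establish this I would first record the elementary ``intersection'' formula $\EE\FF_1\times\EE\FF_2\simeq\EE(\FF_1\cap\FF_2)$ for any two families, which is immediate from the pointwise definition of $\EE\FF$ once one observes that a smooth $G$-scheme has stabilizers in $\FF_1\cap\FF_2$ exactly when it has stabilizers in each of $\FF_1$ and $\FF_2$. Combining this with the identification $\EE(\FF',\FF)\simeq \wt\EE\FF\wedge\EE\FF'_+$ provided by \aref{lem:EErew}, together with $(A\times B)_+\simeq A_+\wedge B_+$, one gets
$$
\EE\mcal{E}_+\wedge\EE(\FF',\FF)\;\simeq\;\wt\EE\FF\wedge(\EE\mcal{E}\times\EE\FF')_+\;\simeq\;\wt\EE\FF\wedge\EE(\mcal{E}\cap\FF')_+.
$$

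This is where the hypothesis enters: using $\mcal{E}\cap\FF'=\mcal{E}\cap\FF$, the right hand side can be rewritten as $\wt\EE\FF\wedge\EE(\mcal{E}\cap\FF)_+\simeq \EE\mcal{E}_+\wedge(\wt\EE\FF\wedge\EE\FF_+)$, and the parenthetical factor is contractible by the second half of \aref{lem:EErew}. This gives the desired vanishing and completes the proof. I do not anticipate a serious obstacle: the entire argument is a formal manipulation of already-established equivalences, and the hypothesis is consumed at exactly one step, namely the replacement of $\EE(\mcal{E}\cap\FF')$ by $\EE(\mcal{E}\cap\FF)$.
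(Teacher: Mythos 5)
Your proposal is correct, and its skeleton is the same as the paper's: both arguments smash $\EE(\FF',\FF)$ with the cofiber sequence $\EE\mcal{E}_+\to S^0\to\wt\EE\mcal{E}$ and reduce the lemma to the single vanishing statement $\EE\mcal{E}_+\wedge\EE(\FF',\FF)\simeq\ast$. Where you differ is in how that vanishing is justified. The paper instead smashes $\EE\mcal{E}_+$ with the other defining cofiber sequence $\EE\FF_+\to\EE\FF'_+\to\EE(\FF',\FF)$ and argues that the first map becomes an equivalence, citing the hypothesis together with \aref{prop:wtEEFF1FF2property} — a forward reference to a later statement formulated for $S^1$-spectra under \aref{a:pair}. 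You avoid that entirely by recording the pointwise intersection formula $\EE\FF_1\times\EE\FF_2\simeq\EE(\FF_1\cap\FF_2)$ (which is indeed immediate from the $\pt$/$\emptyset$ description of $\EE\FF$, since products of motivic spaces are computed in presheaves) and then using both halves of \aref{lem:EErew}: rewrite $\EE\mcal{E}_+\wedge\EE(\FF',\FF)\simeq\wt\EE\FF\wedge\EE(\mcal{E}\cap\FF')_+$, use $\mcal{E}\cap\FF'=\mcal{E}\cap\FF$ to convert this to $\EE\mcal{E}_+\wedge(\wt\EE\FF\wedge\EE\FF_+)\simeq\ast$. This keeps the whole argument unstable, uses only results proved earlier in the section, and makes explicit the intersection formula that is implicitly doing the work in the paper's step as well; it is, if anything, a cleaner organization of the same formal manipulation.
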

 \begin{proof}
 	Smashing $\Eg\mcal{E}_+$ with the defining cofiber sequence for $\Eg(\FF', \FF)$ yields the cofiber sequence
 	$$
 	\Eg \FF_+\wedge\Eg \mcal{E}_+\xrightarrow{f} 
 	\Eg \FF'_{+}\wedge \Eg \mcal{E}_{+}
 	\to \Eg (\FF',\FF)\wedge\Eg \mcal{E}_+.
 	$$
 	Smashing $\Eg(\FF',\FF)$ with the defining cofiber sequence 
 	for $\wt{\Eg}\mcal{E}$ yields the cofiber sequence
 	$$
 	\Eg (\FF',\FF)\wedge\Eg \mcal{E}_+
 	\to \Eg (\FF',\FF) 
 	\xrightarrow{i}
 	\Eg (\FF',\FF)\wedge \wt{\Eg } \mcal{E}.
 	$$
 	By the hypothesis and \aref{prop:wtEEFF1FF2property}, $f$ is an equivalence, and so  $\Eg (\FF',\FF)\wedge\Eg \mcal{E}_+$ is contractible. This implies that $i$ is an  equivalence.
 \end{proof}

To continue the analysis, we pass to the $\i$-categorical stabilization (i.e., $S^1$-spectra) of the various categories of motivic $G$-spaces. We write 
\[
\spt_{S^1}^{G,\E}(S):=\stab(\HH^{G,\E}_\bb(S)).
\]
Recall that $\spt_{S^1}^{G,\E}(S)$ can  be identified with the 
category of $\A^1$-invariant Nisnevich sheaves of spectra.
We write $\stable_{\CC}(X,Y)$ for the spectrum of maps in a stable $\i$-category $\CC$.

Let $u:\CC_S\subseteq \DD_S$ be as in \eqref{eqn:u}. Then we have induced functors 
\[
\begin{tikzcd}
\spt_{S^1}(\CC_S)\ar[rr, "u_!", bend left] \ar[rr, "u_*"' , bend right]&  & \spt_{S^1}(\DD_S) \ar[ll, "u^*"'].
\end{tikzcd}
\]
Since $u^*: \Pre(\DD_S)\to \Pre(\CC_S)$ is a symmetric monoidal left adjoint, it follows that $\spt_{S^1}(\DD_S)\to \spt_{S^1}(\CC_S)$ is as well. Since $u_!,u_*: \Pre(\CC_S)\to \Pre(\DD_S)$ are full and faithful by \aref{prop:i*ff}, it follows that 
$u_!,u_*:\spt_{S^1}(\CC_S)\to \spt_{S^1}(\DD_S)$ are as well.

We introduce a minor technical condition on pairs $\FF\subseteq \FF'$ over $S$, which we sometimes require. 
\begin{condition}\label{a:pair}
Let $\FF\subseteq \FF'$. Suppose there is a normal subgroup $N\unlhd G$ such that
\begin{enumerate}
\item  $N$ as well as the elements of 
$(\FF'\cap\FF[N])\minus \FF$ act trivially on $S$, and

\item  $\FF\subseteq \FF'\cap\FF[N]$.
\end{enumerate}
\end{condition}

We will say that $\FF$ satisfies this condition if the pair $\FF\subseteq \FF_{\all}$ satisfies the condition.

\begin{remark}
The pair $\FF\subseteq \FF'$ satisfies \aref{a:pair} in the following two important cases, which cover all of the cases relevant to this paper.
\begin{enumerate}
	\item The base $S$ has trivial action (in which case we take $G=N$).
	\item The normal subgroup $N$ acts trivially on $S$ and $\FF=\FF'\cap\FF[N]$. 
\end{enumerate} 
\end{remark}

\begin{proposition}\label{prop:wtEEFF1FF2property}
Let $\FF\subseteq \FF'$ be a subfamily satisfying \aref{a:pair}, $\E=\FF'\minus\FF$, and $u:\Sm^G_S[\E]\subseteq \Sm^G_S$ the inclusion. 
Let
$f:X_1\to X_2$ be a map in $\spt_{S^1}^G(S)$. Suppose that $S$ has trivial action. Then the following are equivalent.
\begin{enumerate}
\item The map $f_*:X_1(W) \to X_2(W)$ is an equivalence, for any $W\in \Sm_S^G[\E]$.
	\item The  $u^*(f):u^*(X_1)\to u^*(X_2)$ is an equivalence in $\spt_{S^1}^{G,\E}(S)$.
	\item The map 
	\[
	\EE(\FF',\FF)\otimes X_1\to \EE(\FF',\FF)\otimes X_2
	\]
	is an equivalence in $\spt_{S^1}^G(S)$. 
\end{enumerate}

\end{proposition}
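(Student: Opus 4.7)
The plan is to verify $(1) \Leftrightarrow (2)$ and $(2) \Leftrightarrow (3)$, with the substantive content lying in $(2) \Rightarrow (3)$.

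The equivalence $(1) \Leftrightarrow (2)$ is essentially by construction: $\spt_{S^1}^{G,\E}(S)$ is compactly generated by $\Sigma^\infty W_+$ for affine $W \in \Sm^G_S[\E]$ (as in \aref{prop:summary}), and by the $(u_!,u^*)$-adjunction together with the full faithfulness of $u_!$ from \aref{prop:i*ff}, one has $\stable(\Sigma^\infty W_+, u^* X_j) \simeq X_j(W)$ for such $W$. Hence $u^*(f)$ is an equivalence if and only if $f(W)$ is for every $W \in \Sm^G_S[\E]$, which is precisely condition (1).

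For $(3) \Rightarrow (2)$ I would apply the symmetric monoidal left adjoint $u^*$ (\aref{cor:sev}, \aref{lem:ideal}) to the defining cofiber sequence $\EE\FF_+ \to \EE\FF'_+ \to \EE(\FF',\FF)$. Since $\FF$ is closed under subgroups and $\E = \FF' \smallsetminus \FF$, one has $\Sm^G_S[\E] \cap \Sm^G_S[\FF] = \varnothing$, so $u^*\EE\FF \simeq \varnothing$ and $u^*\EE\FF_+ \simeq \pt$; since $\Sm^G_S[\E] \subseteq \Sm^G_S[\FF']$, $u^*\EE\FF'_+ \simeq S^0$. Taking cofibers gives $u^*\EE(\FF',\FF) \simeq S^0$, and hence $u^*(\EE(\FF',\FF) \otimes f) \simeq u^*(f)$.

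The substantive direction is $(2) \Rightarrow (3)$. The strategy is to establish the stronger natural equivalence $\EE(\FF',\FF) \otimes X \simeq u_! u^* X$; given this, $\EE(\FF',\FF) \otimes f \simeq u_! u^*(f)$ is an equivalence whenever $u^*(f)$ is. To prove this equivalence I would first reduce to $N$-adjacent pairs by filtering $\FF = \FF_0 \subseteq \cdots \subseteq \FF_m = \FF'$ as in \aref{sec:filtadj}, with \aref{a:pair} restricting to each adjacent subpair after a suitable choice of subnormal subgroup, and then proceed by induction on the resulting tower of cofiber sequences obtained by smashing with $X$. For an adjacent pair $\FF_i \subseteq \FF_i \cup \{(H)\}$ with new conjugacy class $(H)$, I would construct a compatible pair of approximations $(U_n) \hookrightarrow (V_n)$ by the method of \aref{ex:key} applied to a normal subgroup controlling $H$, arranged so that $\colim U_n \simeq \EE\FF_i$, $\colim V_n \simeq \EE\FF_{i+1}$, and each closed complement $V_n \smallsetminus U_n$ is smooth with all stabilizers in the single conjugacy class $\{(H)\}$. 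Motivic purity then identifies $V_n/U_n$ as a Thom space over an object of $\Sm^G_S[\{(H)\}]$, which places $\EE(\FF_{i+1},\FF_i)$ in the essential image of $u_!$ for the inclusion $\Sm^G_S[\{(H)\}] \hookrightarrow \Sm^G_S$. Combined with the computation $u^*\EE(\FF_{i+1},\FF_i) \simeq S^0$ from the second paragraph and the projection formula for $u_!$ (a consequence of its $\HH^G_\bb(S)$-module structure), the asserted equivalence follows.

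The main obstacle is the construction of a compatible pair of approximations whose closed complement lies entirely in $\Sm^G_S[\{(H)\}]$; \aref{a:pair} together with the triviality of the $G$-action on $S$ is precisely what allows the defining normal subgroup in \aref{ex:key} to be chosen so that the complement strata separate as required, isolating the new conjugacy class $(H)$ from the rest of $\FF_i$.
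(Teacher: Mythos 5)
Your reductions $(1)\Leftrightarrow(2)$ and $(3)\Rightarrow(2)$ are correct and match the paper. The gap is in the central claim you use for $(2)\Rightarrow(3)$: the natural equivalence $\EE(\FF',\FF)\otimes X\simeq u_!u^*X$ is false. Take $G=C_p$, $\FF=\FF_\triv$, $\FF'=\FF_{\all}$, so that $\E=\{(G)\}$ and $\Sm^G_S[\E]$ is the category of smooth $G$-schemes on which all stabilizers equal $G$. Since $S$ has trivial action, $\SS_S$ lies in the image of $u_!$ and $u_!u^*\SS_S\simeq\SS_S$; but $\EE(\FF_{\all},\FF_\triv)\otimes\SS_S\simeq\wt\EE\mcal{P}\simeq\colim_n T^{n\overline{\rho}_G}$ (\aref{ex:key}), whose underlying nonequivariant spectrum is contractible. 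More structurally, $\EE(\FF',\FF)\otimes-$ is the localization $\wt\EE\FF\otimes-$ composed with the colocalization $\EE\FF'_+\otimes-$ and is neither; in particular there is no natural counit $\EE(\FF',\FF)\otimes X\to X$. What is true — and what the paper deduces as \aref{cor:u*u_*} \emph{from} this proposition — is only that $\EE(\FF',\FF)\otimes u_!u^*X\to\EE(\FF',\FF)\otimes X$ is an equivalence, i.e.\ the two functors agree after a further smash with $\EE(\FF',\FF)$.

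The geometric step has a second, related defect. Even for an adjacent pair, $\EE(\FF_{i+1},\FF_i)$ does not lie in the essential image of $u_!$ for $\Sm^G_S[\{(H)\}]\subseteq\Sm^G_S$: purity produces Thom spaces $\Th(V)$ of equivariant bundles over schemes concentrated at $(H)$, but away from the zero section the total space $\V(V)\minus Z$ has strictly smaller stabilizers, so $\Th(V)$ is not built out of objects of $\Sm^G_S[\{(H)\}]$. This is precisely where the paper's argument does its real work: it tests against the generators $W_+$, $W\in\Sm^G_S[\FF']$ affine, uses the gluing sequence $W(\FF)_+\to W_+\to p_\#i_*(W^{\FF}_+)$ and purity, and then invokes linear reductivity to split $V\cong V'\oplus V^H$ so that $\wt\EE\FF\otimes\Th(V')\simeq\wt\EE\FF\otimes W^{\FF}_+$ — the non-fixed part of the bundle is killed only \emph{after} smashing with $\wt\EE\FF$. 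To repair your proof, replace the claimed identification of $\EE(\FF',\FF)\otimes-$ with $u_!u^*$ by a mapping-spectrum computation of this kind, using the (co)localization properties of $\wt\EE\FF\otimes-$ and $\EE\FF'_+\otimes-$ from \aref{cor:wtEEloc} and \aref{cor:EEFFequiv}; note also that the paper needs a separate argument for the non-adjacent piece $\FF\subseteq\FF'\cap\FF[N]$ allowed by \aref{a:pair}, which your filtration-by-adjacent-pairs outline does not address.
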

\begin{proof}

The first two items are immediately equivalent.

To see the (3) implies (2), we have 
$ u^*(\EE(\FF',\FF)\otimes X)\simeq  u^*(\EE(\FF',\FF))\otimes u^*(X)$ 
and it is straightforward that $u^*(\EE(\FF',\FF))\simeq S^{0}$ in $\spt_{S^1}^{G,\E}(S)$.

Now we show that (2) implies (3).  
First, we assume that all elements of $\E$ act trivially on $S$.
Filter the inclusion $\FF\subseteq \FF'$ by adjacent families
and consider the resulting sequence
\[
\EE\FF_+=\EE\FF_{0+}\to\EE\FF_{1+}\to \EE\FF_{2+}\to \cdots 
\EE\FF_{n-1+}\to \EE\FF_{n+}=\EE\FF'_+.
\]
An inductive argument shows that it suffices to establish that
\[
\EE(\FF_{r},\FF_{r-1})\otimes X_1 \to \EE(\FF_{r},\FF_{r-1})\otimes X_2
\]
is an equivalence for $1\leq i\leq n$. Thus, we may assume that $\FF',\FF$ are adjacent, say   $\FF'\minus \FF = \{(H)\}$
and $H$ acts trivially on $S$.

By \aref{prop:EEFFcoloc} and \aref{lem:EErew}, in order to show that the map displayed above is an equivalence, it  suffices to  show that 
\[
\stable_{\spt^G_{S^1}(S) }(W_+,\wt{\EE}\FF\otimes f)
\] 
is an equivalence for any $p:W\to S$ in $\Sm^G_S[\FF']$, with $W$ affine.
From the gluing sequence \cite[Proposition 5.2]{Hoyois:6}, we have the cofiber sequence in $\spt^G_{S^1}(S) $
\[
W(\FF)_+\to W_+ \to p_\#i_*(W^{\FF}_+)
\]
where $i:W^{\FF}\subseteq W$ is the inclusion (see \aref{not:useful}). 
By \aref{lem:EErew} we have 
\[
\stable_{\spt^G_{S^1}(S) }(-,\wt{\EE}\FF\otimes f)\simeq \stable_{\spt^G_{S^1}(S) }(\wt{\EE}\FF\otimes -, \wt{\EE}\FF\otimes f).
\]
We have $\wt{\EE}\FF\otimes W(\FF)_+\simeq \pt$, using  \aref{cor:EEFFequiv} and \aref{lem:EErew} and so we conclude that 
\[
\stable_{\spt^G_{S^1}(S) }(p_\#i_*(W^{\FF}_+),\wt{\EE}\FF\otimes f)\simeq \stable_{\spt^G_{S^1}(S) }(W_+,\wt{\EE}\FF\otimes f).
\]

The canonical map $G\times_{\NN(H)}W^{H}\to W^{\FF}$ is an isomorphism. Indeed, it suffices to check that $W^{H}\cap W^{gHg^{-1}} = \emptyset$ if $H\neq gHg^{-1}$. Now, if $w\in W^{H}\cap W^{gHg^{-1}}$, then $\stab(w)$ contains both of these subgroups. But since $\stab(w)\in \FF'$ and $(H)$ is a maximal element of this poset, we have that $H=\stab(w)=gHg^{-1}$. In particular, $W^{\FF}$ is smooth over $S$, since $W^{H}\to S$ is smooth. 
 It follows from purity \cite[Proposition 5.7]{Hoyois:6} that 
$p_\#i_*W^{\FF} \simeq \Th(\NNN_{i})$.
We show that if $V\to W^\FF$ is an equivariant vector bundle, then $\stable_{\spt^G_{S^1}(S)}(\Th(V),\wt{\EE}\FF\otimes f)$ is an equivalence. For this, we may assume that $H$ is normal and in particular that $W^\FF=W^H$. Indeed, we have $\Th(V)\simeq G_+\ltimes_{\NN H}\Th(V|_{W^H})$, so via the induction-restriction adjunction, we can replace $G$ by $\NN H$ and $\FF, \FF'$ by $\FF|_{\NN H}$, $\FF'|_{\NN H}$, if necessary.  
Since $G$ is linearly reductive, we can write $V\cong V'\oplus V^H$. Since $(V')^H=0$, all stabilizers of $V'\minus \{0\}$ are in $\FF$ and so  
$\wt\EE\FF\otimes V'\minus \{0\}_+\simeq \ast$ which implies that 
\[
 \wt\EE\FF\otimes \Th(V')\simeq \wt\EE\FF\otimes W^\FF_+. 
\]
Now it follows that
\begin{align*}
\stable_{\spt^G_{S^1}(S)}(\Th(V),\wt{\EE}\FF\otimes f)& \simeq 
\stable_{\spt^G_{S^1}(S)}(\Th(V')\otimes \Th(V^H),\wt{\EE}\FF\otimes f) \\
&\simeq \stable_{\spt^{G}_{S^1}(S)}(W^{\FF}_+\otimes \Th(V^H), \wt{\EE}\FF\otimes f)\\
&\simeq \stable_{\spt^{G,\E}_{S^1}(S)}(W^{\FF}_+\otimes \Th(V^H),u^*(\wt{\EE}\FF\otimes f))\\
&\simeq \stable_{\spt^{G,\E}_{S^1}(S)}(W^{\FF}_+\otimes \Th(V^H),u^*( f)),
\end{align*}
which is an equivalence, as needed.

Next we consider the case when $\FF=\FF'\cap \FF[N]$, where $N$ is a normal subgroup of $G$ which acts trivially on $S$. Again we consider $p:W\to S$ in $\Sm^G_S[\FF']$
 and the cofiber sequence in $\spt^G_{S^1}(S) $
\[
W(\FF[N])_+\to W_+ \to p_\#i_*(W^{N}_+)
\]
where now $i:W^{N}\subseteq W$ is the inclusion. Since $W(\FF[N])_+\in \Sm^G_S[\FF]$
we have that $\wt{\EE}\FF\otimes W(\FF[N])_+\simeq \pt$ and  we conclude that 
\[
\stable_{\spt^G_{S^1}(S) }(p_\#i_*(W^{N}_+),\wt{\EE}\FF\otimes f)\simeq \stable_{\spt^G_{S^1}(S) }(W_+,\wt{\EE}\FF\otimes f).
\]
Since $N$ acts trivially on $S$, $W^N\to S$ is again smooth and so we have $p_\#i_*W^{N} \simeq \Th(\NNN_{i})$ and a similar argument as in the previous paragraph shows that 
\[
\stable_{\spt^G_{S^1}(S)}(\Th(\NN_{i}),\wt{\EE}\FF\otimes f)
\]
 is an equivalence.

Now we consider the general case. 
 Let $N$ be the normal subgroup as in \aref{a:pair} and 
consider the inclusions $\FF\subseteq \FF''\subseteq \FF'$, where we
write $\FF'' = \FF'\cap\FF[N]$.   
From the cofiber sequence
\[
\EE(\FF'',\FF) \to \EE(\FF',\FF) \to \EE(\FF',\FF'')
\]
we see that it suffices to show that $\EE(\FF'',\FF)\otimes f$ and $\EE(\FF',\FF'')\otimes f$
are both equivalences. The case $\FF''\subseteq \FF'$ is covered by the previous paragraph and since all elements of  $\FF''\minus \FF$ act trivially on $S$, this case is covered by the first.

\end{proof}

\begin{corollary}\label{cor:u*u_*}
Let $\FF\subseteq \FF'$ be a subfamily which satisfies \aref{a:pair}. Write $\E=\FF'\minus\FF$, and $u:\Sm^G_S[\E]\subseteq \Sm^G_S$ the inclusion. 
Then the map
$$
\EE(\FF',\FF)\otimes u_!u^*X \to \EE(\FF',\FF)\otimes X
$$
is an equivalence for any $X\in \spt_{S^1}^G(S)$.
\end{corollary}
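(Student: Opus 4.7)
The plan is to deduce this directly from \aref{prop:wtEEFF1FF2property}. Consider the counit morphism
$$
\epsilon_X: u_!u^*X \longrightarrow X
$$
of the adjunction $u_!\dashv u^*$. The claim is exactly that $\EE(\FF',\FF)\otimes \epsilon_X$ is an equivalence, so by the equivalence of conditions (2) and (3) in \aref{prop:wtEEFF1FF2property} (applicable since the pair $\FF\subseteq \FF'$ satisfies \aref{a:pair}), it suffices to show that $u^*(\epsilon_X): u^*u_!u^*X\to u^*X$ is an equivalence in $\spt_{S^1}^{G,\E}(S)$.

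This last statement is formal from the fact, recorded in \aref{prop:i*ff} (and extended to the stable setting by the discussion preceding \aref{prop:wtEEFF1FF2property}, since $u_!$ remains fully faithful after stabilization), that $u_!$ is fully faithful: the unit $\eta_Y: Y\to u^*u_!Y$ is an equivalence for every $Y\in \spt_{S^1}^{G,\E}(S)$. Applying this with $Y=u^*X$ and combining with the triangle identity
$$
u^*(\epsilon_X)\circ \eta_{u^*X} \;=\; \id_{u^*X},
$$
we conclude that $u^*(\epsilon_X)$ is the inverse of the equivalence $\eta_{u^*X}$, hence an equivalence.

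There is no real obstacle here: the content of the corollary is entirely packaged into \aref{prop:wtEEFF1FF2property}, which provides the bridge between detecting equivalences of $\EE(\FF',\FF)$-modules on the one hand and the restriction $u^*$ on the other. Given that bridge, the result is a one-line consequence of the (stable version of the) fully faithfulness of $u_!$.
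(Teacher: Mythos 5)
Your proposal is correct and follows exactly the paper's own argument: reduce via the equivalence of conditions (2) and (3) in \aref{prop:wtEEFF1FF2property} to showing $u^*(\epsilon_X)$ is an equivalence, then conclude from the triangle identity together with the fact that the unit $\id\to u^*u_!$ is an equivalence because $u_!$ is fully faithful. No further comment is needed.
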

\begin{proof}
	By \aref{prop:wtEEFF1FF2property}, it suffices to show that $u^*\epsilon :u^*(u_!u^*X) \to u^*(X)$ is an equivalence. This follows from the triangle identity for the unit and counit since 
the unit $\eta:\id\wkeq u^*u_!$ is an equivalence, as $u_!$ is full and faithful.  
\end{proof}

\subsection{Localization at a cofamily}

\begin{proposition}\label{cor:j!j*}
Let $\FF$ be a family of subgroups which satisfies \aref{a:pair} 
and write $j:\Sm^G_S[\co(\FF)]\subseteq \Sm^G_S$ for the inclusion. Then the natural equivalence $j^*(\wt\EE\FF\otimes -)\simeq j^*$ induces an equivalence of localization endofunctors
 $$
  \wt\EE\FF\otimes - \wkeq j_*j^*: \spt_{S^1}^G(S)\to  \spt_{S^1}^G(S).
$$
  
\end{proposition}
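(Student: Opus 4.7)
The plan is to construct a natural comparison map directly from the given equivalence and then verify it is an equivalence using \aref{prop:wtEEFF1FF2property}. By the adjunction $j^*\dashv j_*$, the stated natural equivalence $j^*(\wt\EE\FF\otimes X)\wkeq j^*X$ transposes to a natural map
\[
\alpha_X:\wt\EE\FF\otimes X\to j_*j^*X,
\]
and I will argue that each $\alpha_X$ is an equivalence. Applying $j^*$ and composing with $j^*j_*\wkeq\id$ (which holds because $j_*$ is fully faithful, by the stable version of \aref{prop:i*ff} discussed above) recovers by construction the identity on $j^*X$, so $j^*\alpha_X$ is an equivalence. Since $\FF$ satisfies \aref{a:pair}, \aref{prop:wtEEFF1FF2property} applied to the pair $\FF\subseteq\FF_\all$ (with $\E=\co(\FF)$ and $u=j$) then shows that $\wt\EE\FF\otimes\alpha_X$ is an equivalence in $\spt_{S^1}^G(S)$.

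The next step is to promote this to showing that $\alpha_X$ itself is an equivalence, which will follow once I check that both its source and target are $\wt\EE\FF$-local, so that $\alpha_X\wkeq\wt\EE\FF\otimes\alpha_X$. The source is local because $\wt\EE\FF$ is idempotent: \aref{lem:EErew} gives $\wt\EE\FF\wedge\EE\FF_+\wkeq\pt$, which forces $S^0\to\wt\EE\FF$ to become an equivalence after tensoring with $\wt\EE\FF$. For the target, I plan to show more generally that every object in the essential image of $j_*$ is $\wt\EE\FF$-local. Using \aref{prop:EEimage} to identify $\EE\FF_+\otimes-\wkeq i_!i^*$, together with the full faithfulness of $i_!$, this reduces to the vanishing $i^*j_*Z\wkeq 0$ for every $Z$, where $i:\Sm^G_S[\FF]\hookrightarrow\Sm^G_S$ denotes the inclusion. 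For $W\in\Sm^G_S[\FF]$ the adjunctions identify $(i^*j_*Z)(W)\wkeq\stable(j^*\Sigma^\i W_+, Z)$, so it suffices to verify $j^*\Sigma^\i W_+\wkeq 0$. The point is that for any non-empty $V\in\Sm^G_S[\co(\FF)]$ there are no $G$-equivariant morphisms $V\to W$: such a morphism would take a point $v\in V$ to some $f(v)\in W$ with $\stab(v)\subseteq\stab(f(v))\in\FF$, forcing $\stab(v)\in\FF$ since $\FF$ is closed under subgroups, contradicting $V\in\Sm^G_S[\co(\FF)]$.

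The main obstacle will be the locality of $j_*j^*X$, which reduces to the vanishing $i^*j_*\wkeq 0$ computed in the previous paragraph; once that is in hand, \aref{prop:wtEEFF1FF2property} combined with the idempotence of $\wt\EE\FF$ immediately yields the desired equivalence of endofunctors $\wt\EE\FF\otimes -\wkeq j_*j^*$.
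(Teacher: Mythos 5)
Your argument is correct and amounts to the paper's proof repackaged: the paper establishes the equivalence by running the chain $\stable(W,j_*j^*X)\wkeq\stable(j^*W,j^*X)\wkeq\cdots\wkeq\stable(W,\wt\EE\FF\otimes X)$ over the generators $W$, using the $(j_!,j^*,j_*)$ adjunctions together with \aref{cor:wtEEloc} and \aref{cor:u*u_*} --- the latter being exactly your appeal to \aref{prop:wtEEFF1FF2property}, just stated for the unit/counit rather than for your transposed map $\alpha_X$. The one ingredient you supply that the paper does not need is the direct verification that $i^*j_*\wkeq 0$ (hence that $j_*j^*X$ is $\wt\EE\FF$-local, via the observation that a nonempty scheme with isotropy in $\co(\FF)$ admits no equivariant map to one with isotropy in $\FF$); the paper sidesteps this by absorbing $j_*$ into the first adjunction step, but your argument for it is sound.
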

\begin{proof}
 We have natural equivalences
 \begin{align*}
  \stable(W, j_*j^* X) & 
  \wkeq \stable(j^*W, j^*X) 
  \\
  & \wkeq \stable(j_!j^*W, \wt\EE\FF\otimes X)
\\
& \wkeq \stable(\wt\EE\FF\otimes j_!j^*W, \wt\EE\FF\otimes X)
\\
& \wkeq \stable(\wt\EE\FF\otimes W, \wt\EE\FF\otimes  X)
\\
& \wkeq \stable(W, \wt\EE\FF\otimes  X),
 \end{align*}
where the second follows by adjunction and the equivalence $j^*X\wkeq j^*(\wt\EE\FF\otimes X)$, the fourth from \aref{cor:u*u_*}, and the third and fifth from \aref{cor:wtEEloc}. 
\end{proof}

Write $\L_{\co(\FF)} \SH^G_{\TT}(S)$ for the essential image of 
$\wt\EE\FF\wedge -:\SH^G_{\TT}(S)\to \SH^G_{\TT}(S)$.
Since $\EE\FF_+\wedge -$ is a colocalization endofunctor, 
 $\wt\EE\FF\wedge -$ is a localization endofunctor. 
 In particular
  $\wt\EE\FF$ is an idempotent object of $\SH^G_\TT(S)$, see \cite[Proposition 4.8.2.4]{HA} and so 
$\SH^G_{\TT}(S) \to \L_{\co(\FF)}\SH^G_{\TT}(S)$ is a symmetric monoidal localization.

We will abuse notation and terminology slightly by saying that a stabilizing set of spheres
 $\{T^{\E}\in \HH^G_\bb(S)\}$ is in $\HH^{G,\co(\FF)}_\bb(S)$ if it is in the essential image of $j_!$. Suppose that $\TT\subseteq \Sph^G_B$ is a set of spheres such that $p^*\TT$ is in $\HH^{G,\co(\FF)}_\bb(S)$. In this case,  we can stabilize $\HH^{G,\co(\FF)}_\bb(S)$ with respect to $\TT$ and we define
\[
\SH^{G,\co(\FF)}_{\TT}(S):= \HH_\bb^{G,\co(\FF)}(S)[(p^*\TT)^{-1}].
\]
Equivalently, $\SH^{G,\co(\FF)}_{\TT}(S)\simeq \spt_{S^1}^{G,\co(\FF)}(S)[(p^*\TT)^{-1}]$

\begin{proposition}\label{prop:uswtEE}
Let $\FF$ be a family which satisifies \aref{a:pair} and write $j:\Sm^G_S[\co(\FF)]\subseteq \Sm^G_S$ for the inclusion. 
\begin{enumerate}
\item The functor $j_!:\spt^{G,\co(\FF)}_{S^1}(S)\to \spt^G_{S^1}(S)$ is symmetric monoidal.
\item There is an equivalence $j_*\simeq \wt\EE\FF\wedge j_!$. Moreover, $j_*$ induces
 a symmetric monoidal equivalence $\overline{j}_*:\spt^{G,\co(\FF)}_{S^1}(S)\to \L_{\co(\FF)}\spt^G_{S^1}(S)$. 
\end{enumerate}

\end{proposition}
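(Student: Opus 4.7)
The plan is to prove~(2) first, as it provides the structural equivalence that enables~(1). The main tools are \aref{cor:j!j*} (which identifies $j_*j^*$ with the localization $\wt\EE\FF\wedge-$) together with the stable-level full faithfulness of $j_!$ and $j_*$, extending \aref{prop:i*ff} as noted in the preceding discussion.

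For the equivalence $j_*\simeq \wt\EE\FF\wedge j_!$ of~(2), given $Y\in \spt^{G,\co(\FF)}_{S^1}(S)$, full faithfulness of $j_!$ yields $j^*j_!Y\simeq Y$, whence applying $j_*$ together with \aref{cor:j!j*} to $j_!Y$ gives $j_*Y\simeq j_*j^*(j_!Y)\simeq \wt\EE\FF\wedge j_!Y$. For the equivalence $\overline{j}_*$, full faithfulness is inherited from $j_*$, and essential surjectivity follows from \aref{cor:j!j*}: any $X\in \L_{\co(\FF)}\spt^G_{S^1}(S)$ satisfies $X\simeq \wt\EE\FF\wedge X\simeq j_*j^*X\simeq \overline{j}_*(j^*X)$. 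Since $\wt\EE\FF$ is an idempotent object of $\spt^G_{S^1}(S)$ (\aref{prop:EEimage}), the subcategory $\L_{\co(\FF)}\spt^G_{S^1}(S)$ is a symmetric monoidal reflective localization of $\spt^G_{S^1}(S)$, and this structure transports via $\overline{j}_*$ to $\spt^{G,\co(\FF)}_{S^1}(S)$, making $\overline{j}_*$ symmetric monoidal.

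For~(1), via the identification from~(2), the tensor product on $\spt^{G,\co(\FF)}_{S^1}(S)$ corresponds to the tensor product in $\L_{\co(\FF)}\spt^G_{S^1}(S)$; the latter coincides with the restriction of the tensor product from $\spt^G_{S^1}(S)$, using that if $X\simeq \wt\EE\FF\wedge X'$ and $Y\simeq \wt\EE\FF\wedge Y'$ then $X\wedge Y\simeq \wt\EE\FF\wedge X\wedge Y$ by idempotency of $\wt\EE\FF$. With this, $j_!$ intertwines the tensor structures, and its symmetric monoidality reduces to checking compatibility on compact generators. The main obstacle is reconciling the symmetric monoidal structures: ensuring that the transported structure on $\spt^{G,\co(\FF)}_{S^1}(S)$ agrees with its intrinsic description and that $j_!$ strictly preserves the tensor product requires careful use of the idempotency of $\wt\EE\FF$ from \aref{prop:EEimage} together with the symmetric monoidality of $j^*$ established in \aref{cor:sev}.
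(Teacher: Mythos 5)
The equivalence statements in your proposal are fine and coincide with the paper's argument: $j_*Y\simeq j_*j^*j_!Y\simeq\wt\EE\FF\wedge j_!Y$ via full faithfulness of $j_!$ and \aref{cor:j!j*}, and essential surjectivity of $\overline{j}_*$ from $X\simeq\wt\EE\FF\wedge X\simeq j_*j^*X$ for local $X$. The gap is in the symmetric monoidality claims, which are the actual content of part (1) and of the second half of part (2). Saying that the localized monoidal structure on $\L_{\co(\FF)}\spt^G_{S^1}(S)$ ``transports via $\overline{j}_*$'' only equips $\spt^{G,\co(\FF)}_{S^1}(S)$ with \emph{some} symmetric monoidal structure; it does not show that $\overline{j}_*$ (or $j_!$) is symmetric monoidal for the \emph{intrinsic} smash product on $\spt^{G,\co(\FF)}_{S^1}(S)$, which is what the proposition asserts and what is used downstream (e.g.\ in \aref{prop:wtEEFFSH} to invert spheres). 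Your derivation of (1) from (2) is therefore circular as written, and ``reduces to checking compatibility on compact generators'' is not carried out: symmetric monoidality is coherent structure, and you have not exhibited even a lax monoidal structure on $j_!$ whose structure maps could be checked on generators. You also explicitly flag ``the main obstacle'' of reconciling the two structures without resolving it.

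The missing input is the one the paper leads with: $S$ itself lies in $\Sm^G_S[\co(\FF)]$, so the monoidal unit is representable in the subcategory and $j_!$ preserves it; combined with preservation of products by $j$, this makes $j_!$ symmetric monoidal directly. (Compare the Remark after \aref{cor:sev}: for a general inclusion $u$, the functor $u_!$ fails to be symmetric monoidal precisely because the unit need not be representable in $\CC_S$ --- this is exactly the point where $\co(\FF)$ behaves better than an arbitrary $\E$, and your proof never uses it.) With (1) in hand, the paper obtains monoidality of $\overline{j}_*$ for free, as the composite $(\wt\EE\FF\wedge-)\circ j_!$ of symmetric monoidal functors --- the reverse of your logical order. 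Your transport idea could be salvaged by instead identifying $\overline{j}_*$ as the inverse of the restriction of the symmetric monoidal functor $j^*$ to $\L_{\co(\FF)}\spt^G_{S^1}(S)$ (using $j^*j_*\simeq\id$ and that the local smash product is the restriction of the ambient one), but as written neither route is completed.
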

\begin{proof}
Note that $S\in \Sm^{G}_S[\co(\FF)]$, so that $j_!$ preserves terminal objects. That it is symmetric monoidal then follows from the fact that $j$ preserves products. 
Since  $j_!$ is a fully faithful left adjoint,   the unit map $\id\to j^*j_!$ is an equivalence.
Precomposing $j_!$ with the equivalence $\wt\EE\FF\land(-)\simeq j_*j^*$ of \aref{cor:j!j*}, we obtain equivalences $\wt\EE\FF\land j_!(-)\simeq j_*j^*j_!\simeq j_*$ and consequently a commutative diagram
\[
\begin{tikzcd}
\spt_{S^1}^{G,\co(\FF)}(S)\ar[r, "j_!"]\ar[rd, "\overline{j}_*"'] & \spt_{S^1}^G(S)\ar[d,"{\wt\EE\FF\land -}"]\\
& \L_{\co(\FF)}\spt^G_{S^1}(S).
\end{tikzcd}
\]
In particular, the functor $\overline{j}_*:\spt^{G,\co(\FF)}_{S^1}(S)\to \L_{\co(\FF)}\spt^G_{S^1}(S)$ is a composite of symmetric monoidal functors, hence symmetric monoidal itself.
It is fully faithful, since the composite functor  $\spt^{G,\co(\FF)}_{S^1}(S)\to \L_{\co(\FF)}\spt^G_{S^1}(S)\subseteq\spt^G_{S^1}(S)$ (also denoted $j_*$) is fully faithful by \aref{prop:i*ff}.
To see that $j_*$ is also essential surjective, suppose given $X\in\spt_{S^1}^G(S)$, and consider $j^*X\in\spt_{S^1}^{G,\co(\FF)}(S)$.
By the previous equivalence $\wt\EE\FF\land j_!(-)\simeq j_*$ and  \aref{cor:j!j*}, we see that
\[
\wt\EE\FF\land j_!j^*X\simeq\wt\EE\FF\land X
\]
since both are equivalent to $j_*j^*X$.
Thus $\overline{j}_*:\spt^{G}_{S^1}(S)\to\spt^{G,\co(\FF)}_{S^1}(S)$ is an equivalence of symmetric monoidal $\infty$-categories.
\end{proof}

\begin{proposition}\label{prop:wtEEFFSH}
Let $\FF$ be a family which  satisifies \aref{a:pair} and write $j:\Sm^G_S[\co(\FF)]\subseteq \Sm^G_S$ for the inclusion. Suppose that  $p^*\TT$ is in  $\HH^{G,\co(\FF)}_\bb(S)$.
\begin{enumerate}
\item  $j_!$ induces a symmetric monoidal functor $ j_!:\SH^{G,\co(\FF)}_{\TT}(S)\to \SH^G_{\TT}(S)$.
\item $\wt\EE\FF\wedge j_!$ induces
 a symmetric monoidal equivalence
 $$
 \wt\EE\FF\wedge j_!:\SH_\TT^{G,\co(\FF)}(S)\to
 \L_{\co(\FF)} \SH^G_{\TT}(S).
 $$
\end{enumerate}

\end{proposition}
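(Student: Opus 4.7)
The plan is to deduce this statement from the unstable version \aref{prop:uswtEE} by showing that both sides are obtained from their $S^1$-stabilized versions by inverting the set $p^*\TT$, and that this procedure commutes with the symmetric monoidal localization $\L_{\co(\FF)}$.

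For part (1), I would first verify that $j_!:\HH_\bb^{G,\co(\FF)}(S)\to\HH_\bb^G(S)$ is symmetric monoidal. This is immediate since $S\in \Sm^G_S[\co(\FF)]$ so $j$ preserves terminal objects and products, hence $j_!$ preserves the smash product and the unit $S^0$. Since $p^*\TT$ lies in $\HH_\bb^{G,\co(\FF)}(S)$ by hypothesis, the symmetric monoidal localization $\HH_\bb^G(S)\to\SH^G_\TT(S)$ inverts the image of $p^*\TT$ under $j_!$. By the universal property of symmetric monoidal inversion (see the discussion around \cite[Section 2.1]{Robalo}), the composite $\HH_\bb^{G,\co(\FF)}(S)\xrightarrow{j_!}\HH_\bb^G(S)\to\SH^G_\TT(S)$ factors uniquely through a symmetric monoidal functor $\SH_\TT^{G,\co(\FF)}(S)\to\SH^G_\TT(S)$, which we continue to denote by $j_!$.

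For part (2), I would begin by passing from the $S^1$-stabilization to the $\TT$-stabilization via the identifications
$$\SH_\TT^{G,\co(\FF)}(S)\simeq \spt_{S^1}^{G,\co(\FF)}(S)[(p^*\TT)^{-1}], \qquad \SH_\TT^{G}(S)\simeq \spt_{S^1}^{G}(S)[(p^*\TT)^{-1}].$$
Applying \aref{prop:uswtEE}(2), the functor $\wt\EE\FF\wedge j_!$ induces a symmetric monoidal equivalence $\spt_{S^1}^{G,\co(\FF)}(S)\simeq\L_{\co(\FF)}\spt_{S^1}^G(S)$. Inverting $p^*\TT$ on both sides preserves the equivalence; the left-hand side yields $\SH_\TT^{G,\co(\FF)}(S)$ by definition. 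For the right-hand side, I invoke \aref{lem:Lcomm} with $\CC=\spt_{S^1}^G(S)$, idempotent $E=\wt\EE\FF$ (which is idempotent by the discussion preceding the statement), and $X=p^*\TT$, giving the symmetric monoidal identification
$$\L_{\co(\FF)}\big(\spt_{S^1}^G(S)[(p^*\TT)^{-1}]\big)\simeq \big(\L_{\co(\FF)}\spt_{S^1}^G(S)\big)[(p^*\TT)^{-1}].$$
The left-hand side here is precisely $\L_{\co(\FF)}\SH^G_\TT(S)$, so combining these equivalences yields the desired symmetric monoidal equivalence $\wt\EE\FF\wedge j_!:\SH_\TT^{G,\co(\FF)}(S)\xrightarrow{\sim}\L_{\co(\FF)}\SH_\TT^G(S)$.

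The main subtlety is the compatibility between the unstable equivalence of \aref{prop:uswtEE} and the two subsequent symmetric monoidal localizations, but this is clean because \aref{lem:Lcomm} is exactly the tool tailored for interchanging an idempotent-based localization with the inversion of a collection of objects. The hypothesis that $p^*\TT$ lies in $\HH_\bb^{G,\co(\FF)}(S)$ is essential throughout: it ensures that the same set of spheres can be used to stabilize both categories, so that the $\TT$-stabilization is compatible with $j_!$ and with restriction to the essential image of $\wt\EE\FF\wedge(-)$.
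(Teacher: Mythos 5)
Your proposal is correct and follows essentially the same route as the paper: part (1) via the universal property of symmetric monoidal inversion applied to the symmetric monoidal left adjoint $j_!$, and part (2) by stabilizing the equivalence of \aref{prop:uswtEE} with respect to $p^*\TT$ and then invoking \aref{lem:Lcomm} to commute the idempotent localization $\L_{\co(\FF)}$ past the inversion of $p^*\TT$.
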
 
\begin{proof}
Since $j_!$ is a symmetric monoidal left adjoint, it induces a symmetric monoidal functor on   $p^*\TT$-stabilizations.

By \aref{prop:uswtEE}
the induced map
\[
\SH_\TT^{G,\co(\FF)}(S)\simeq\spt_{S^1}^{G,\co(\FF)}(S)[p^*\TT^{-1}]\to (\L_{\co(\FF)}\spt_{S^1}^G(S))[p^*\TT^{-1}]
\]
is an equivalence in $\CAlg(\LPrx)$. 
The result now follows from the equivalence from \aref{lem:Lcomm}
\[
  (\L_{\co(\FF)}\spt^G_{\S^1}(S)))[p^*\TT^{-1}]\simeq \L_{\co(\FF)}(\spt^G_{\S^1}(S))[p^*\TT^{-1}]).
\]

\end{proof}

\subsection{Adjacent pairs}

\begin{lemma}\label{lem:Xconc}
	Suppose that $\FF\subseteq \FF'$ is $N$-adjacent at $H$ and and that $H$ acts trivially on $S$.  Let $X\in \Sm^G_S[\FF'\minus \FF]$.  Then the canonical map
	\[
	f:G\times_{N_GH}X^H\to X
	\]
	is an isomorphism.
\end{lemma}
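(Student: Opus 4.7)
The map $f$ is $G$-equivariant, so choosing coset representatives $g_{1},\dots,g_{m}$ for $G/N_{G}H$ identifies it with a disjoint union of closed immersions $\coprod_{i}g_{i}\cdot X^{H}\to X$, whose images are the closed subschemes $X^{g_{i}Hg_{i}^{-1}}$. The plan is to show $X=\coprod_{i}X^{g_{i}Hg_{i}^{-1}}$ as schemes, in two stages: first the set-theoretic decomposition, then the fact that $X^{H}\hookrightarrow X$ is an \emph{open} immersion, so each $X^{g_{i}Hg_{i}^{-1}}$ is clopen and the decomposition is by connected components.

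For the set-theoretic step, I would use the isotropy hypothesis $X\in\Sm^{G}_{S}[\FF'\minus\FF]$, which at each $x\in X$ says $\stab(x)\cap N$ is $G$-conjugate to $H$. Since $H\leq N$ and $N$ is normal, any such conjugate lies in $N$ and equals some $g_{i}Hg_{i}^{-1}$ determined by a unique coset $g_{i}N_{G}H$; the inclusion $g_{i}Hg_{i}^{-1}\subseteq\stab(x)$ then yields $g_{i}^{-1}x\in X^{H}$, so $x\in X^{g_{i}Hg_{i}^{-1}}$. Disjointness of distinct pieces is a cardinality argument: a common point would force $\stab(x)\cap N$ to contain two distinct $G$-conjugates of $H$, violating $|\stab(x)\cap N|=|H|$.

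To upgrade to a scheme-theoretic decomposition, the crux is to show $X^{H}\hookrightarrow X$ is \'etale, equivalently that the normal bundle $N_{X^{H}/X}$ vanishes. Since $G$ is linearly reductive, there is an $H$-equivariant splitting $T_{X/S}|_{X^{H}}\simeq T_{X^{H}/S}\oplus N_{X^{H}/X}$ with $(N_{X^{H}/X})^{H}=0$, so it suffices to show $H$ acts trivially on $T_{x}X$ for each $x\in X^{H}$. Let $K=\stab(x)\supseteq H$; applying the cardinality argument at $x$ gives $K\cap N=H$. Working in an equivariant \'etale slice $X\simeq G\times_{K}V$ with $V=T_{x}X$, any point $[e,v]$ has $G$-stabilizer $\stab_{K}(v)$, and the isotropy hypothesis forces $\stab_{K}(v)\cap N$ to be a $G$-conjugate of $H$ contained in $K\cap N=H$, hence equal to $H$ by cardinality. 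Thus $Hv=v$ for all $v\in V$ near $0$, and by linearity $H$ acts trivially on all of $V=T_{x}X$, giving the desired vanishing.

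The main obstacle will be arranging the equivariant \'etale slice $X\simeq G\times_{K}V$ in the relative setting over $S$. This is classical for finite groups acting on smooth schemes in invertible characteristic, and the assumption that $H$ acts trivially on $S$ is what makes the fixed point subscheme $X^{H}$ interact well with the local structure: without it, the identity $K\cap N=H$ and the tangent-space analysis would not descend cleanly to a relative statement.
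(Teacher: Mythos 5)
Your set-theoretic analysis is essentially the paper's: identify $f$ with $\coprod_{[g]\in G/N_GH}X^{gHg^{-1}}\to X$, use the isotropy hypothesis to get surjectivity (each $\stab(x)\cap N$ is conjugate to $H$ and hence contains some $g_iHg_i^{-1}$), and use the cardinality argument to get disjointness. Where you and the paper part ways is in upgrading the set-theoretic bijection to a scheme isomorphism, and here your route has a genuine gap.

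The paper's upgrade is short and elementary: since $f$ is a morphism between \emph{flat}, finitely presented $S$-schemes, EGA IV 17.9.5 reduces the question to fibers over closed points of $S$, where one has a bijective closed immersion of smooth --- in particular \emph{reduced} --- schemes over a field, which is automatically an isomorphism. No slice theorem, no normal bundle analysis.

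Your route instead tries to show directly that $X^H\hookrightarrow X$ is an open immersion via vanishing of $N_{X^H/X}$, which you deduce from triviality of the $H$-action on $T_xX$, which you in turn deduce from an equivariant \'etale slice $X\simeq G\times_K V$ at $x$. This is the gap you yourself flag: a Luna-type slice theorem over a general quasi-compact, quasi-separated base $S$ is not an off-the-shelf tool here --- it is substantially harder than the statement being proved, and the paper does not invoke or need it. Note also a secondary subtlety in your reduction even granting the slice: a closed immersion of smooth $S$-schemes with vanishing normal bundle is \emph{unramified} (from the conormal sequence), but to conclude \emph{\'etale} one still needs flatness of $X^H\to X$, which is again established fiberwise --- so a fiberwise criterion of the same flavor as EGA IV 17.9.5 re-enters anyway. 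Finally, since $S$ is not assumed reduced, $X$ itself need not be reduced, so one cannot argue that a closed subscheme with clopen underlying space is automatically an open subscheme; this is exactly why the paper passes to fibers where smoothness forces reducedness. In short: your set-theoretic step is right, but the scheme-theoretic step should be replaced by the fiberwise isomorphism criterion for flat, finitely presented morphisms, which sidesteps both the slice theorem and the reducedness issue.
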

\begin{proof}
	The map $f$ is identified with the map  
	$\coprod_{[g]\in G/N_G(H)} X^{gHg^{-1}}\to X$ induced by the inclusions $X^{gHg^{-1}}\subseteq X$. 
	We show that the induced map $f_s$ on the fiber over any $s\in S$,  is an isomorphism. By \cite[17.9.5]{EGA4} this implies that $f$ is an isomorphism as it is a map between
	 smooth (in particular flat) finitely presented $S$-schemes.
	The stabilizer $\stab(x)$ of any $x\in X_s$ contains a subgroup conjugate to $H$ which implies that $\coprod X_s^{gHg^{-1}}\to X_s$ is surjective. 
	On the other hand the closed subschemes $X_s^{gHg^{-1}}\subseteq X_s$ are pairwise disjoint  since all stabilizers of $X_s$ are in 
	$\{K\leq G \mid (K\cap N ) = (H)\}$. Indeed, if $\stab(x)$ contains both $H$ and $gHg^{-1}$, $\stab(x)\cap N$ contains both of these subgroups, which means that $H=gHg^{-1}$. 
	Thus $\coprod X_s^{gHg^{-1}}\to X_s$ is bijective, closed immersion. Since these are smooth over $\spec(k(s))$, in particular reduced, the map $f_s$ is an isomorphism. 
\end{proof}

Let $H\leq N$ be a subgroup. Then $\W_NH$ is a normal subgroup of $\W_GH$ and we will often write $\WW H = \W_G H/\W_N H$ for the quotient of Weyl groups. We will often write
 $\EE_{\W_NH}(\W_GH) $ instead of $\EE\FF(\W_NH)$ for the universal 
  $\W_NH$-free motivic $\W_GH$-motivic space, in order to emphasize the ambient group, where 
  $\FF(\W_NH)$ is the family of subgroups $\{K\leq \W_GH \mid K\cap \W_NH = \{e\}\}$. 

\begin{proposition}\label{prop:EENadj}
	Suppose that $\FF\subseteq \FF'$ is $N$-adjacent at $H$. Then 
	\begin{enumerate}
		\item $(G\times_{\NN_GH} \EE_{\W_NH}(\W_GH))_+\wedge \EE(\FF',\FF)
		\xrightarrow{\sim} \EE(\FF',\FF)$ is an equivalence in $\spt_{S^1}^G(S)$
		\item $\EE(\FF', \FF)|_{\NN_GH}\xrightarrow{\sim} \wt\EE\FF[H]\wedge \EE(\FF',\FF)|_{\NN_GH}$ in $\spt_{S^1}^{\NN_GH}(S)$.
	\end{enumerate}
\end{proposition}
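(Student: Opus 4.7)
The plan is to establish (1) first via the criterion of \aref{prop:wtEEFF1FF2property}, then deduce (2) by restricting (1) to $\NN_GH$ and exploiting the locality properties of the identity-coset summand of the restricted induced space.

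For (1), my approach is to apply \aref{prop:wtEEFF1FF2property}, which reduces the claim to showing that the map $(G\times_{\NN_GH} \EE_{\W_NH}(\W_GH))_+\to S^0$ becomes an equivalence upon restriction to $\Sm^G_S[\FF'\minus\FF]$. Fix $W\in\Sm^G_S[\FF'\minus\FF]$. By \aref{lem:Xconc}, there is a canonical $G$-equivariant isomorphism $W\cong G\times_{\NN_GH} W^H$. The fixed subscheme $W^H$ carries a natural $\W_GH$-action, and because each stabilizer $K$ of a point of $W$ satisfies $(K\cap N)=(H)$, the $\NN_GH$-stabilizers of points of $W^H$ have $N$-intersection exactly $H$; passing to the quotient by $H$, their image in $\W_NH$ is trivial. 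Thus $\W_NH$ acts freely on $W^H$, which puts $W^H\in \Sm^{\W_GH}_S[\FF(\W_NH)]$. Now using the induction-restriction adjunction (\aref{lem:indres}) together with the factorization of $W^H\hookrightarrow W$ through the identity-coset summand and the defining universal property of $\EE_{\W_NH}(\W_GH)$, I obtain
\[
\map_G(W, G\times_{\NN_GH}\EE_{\W_NH}(\W_GH)) \simeq \map_{\W_GH}(W^H,\EE_{\W_NH}(\W_GH))\simeq \pt,
\]
which completes the restriction check and establishes (1).

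For (2), I restrict the equivalence of (1) to $\NN_GH$, obtaining
\[
\EE(\FF',\FF)|_{\NN_GH}\simeq \bigl(G\times_{\NN_GH}\EE_{\W_NH}(\W_GH)\bigr)_+\bigr|_{\NN_GH}\wedge \EE(\FF',\FF)|_{\NN_GH}.
\]
The identity-coset piece of $(G\times_{\NN_GH}\EE_{\W_NH}(\W_GH))|_{\NN_GH}$ is precisely $\EE_{\W_NH}(\W_GH)$, regarded as an $\NN_GH$-motivic space via pullback along $\NN_GH\to \W_GH$. Since $H$ acts trivially there, every stabilizer contains $H$; in particular, this summand lies in the essential image of $j_!:\HH^{\NN_GH,\co(\FF[H])}_\bb(S)\to\HH^{\NN_GH}_\bb(S)$, hence by \aref{prop:wtEEFFSH} the map $\EE_{\W_NH}(\W_GH)\to \wt\EE\FF[H]\wedge\EE_{\W_NH}(\W_GH)$ is an equivalence. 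The remaining pieces of $(G\times_{\NN_GH}\EE_{\W_NH}(\W_GH))|_{\NN_GH}$, indexed by non-identity double cosets in $\NN_GH\backslash G/\NN_GH$, take the form $\NN_GH\times_{\NN_GH\cap g\NN_GH g^{-1}}(\EE_{\W_NH}(\W_GH))^g$ where the superscript denotes the $g$-twisted action.

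The main obstacle is therefore the analysis of these non-identity double-coset contributions. For each such $g\notin\NN_GH$, the twisted summand has stabilizers containing $gHg^{-1}$ rather than $H$ itself, and one must verify that, after smashing with $\EE(\FF',\FF)|_{\NN_GH}$, these summands are absorbed into the $\wt\EE\FF[H]$-local part. My plan is to apply the Wirthmueller isomorphism \aref{prop:wirthmueller} to rewrite each such piece as an induction from $\NN_GH\cap g\NN_GHg^{-1}$, reduce via the projection formula to checking a statement about the twisted $\EE_{\W_NH}(\W_GH)^g$, and then combine with \aref{prop:wtEEFF1FF2property} applied in $\NN_GH$ to the pair of families obtained by restricting $(\FF,\FF')$ to $\NN_GH$. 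Successfully carrying this out will yield the desired equivalence (2); this interplay between double-coset decomposition and the localization $\wt\EE\FF[H]\wedge(-)$ is the delicate point of the whole argument.
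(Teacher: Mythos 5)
Your argument for part (1) is essentially the paper's: reduce via \aref{prop:wtEEFF1FF2property} to a mapping-space check on $W\in\Sm^G_S[\FF'\minus\FF]$, invoke \aref{lem:Xconc} to write $W\cong G\times_{\NN_GH}W^H$, observe that $\W_NH$ acts freely on $W^H$, and conclude by induction--restriction and the universal property of $\EE_{\W_NH}(\W_GH)$. That part is correct.

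Part (2) is where the proposal breaks down, in two ways. First, the intermediate claim about the identity-coset summand is false: membership in the essential image of $j_!$ (all stabilizers containing $H$) does \emph{not} imply that $Y\to\wt\EE\FF[H]\wedge Y$ is an equivalence. \aref{prop:wtEEFFSH} concerns $\wt\EE\FF[H]\wedge j_!$, i.e.\ the functor $j_*$, and says nothing about the unit $j_!\to\wt\EE\FF[H]\wedge j_!\simeq j_*$ being invertible. A minimal counterexample is $Y=S^0$ with $H=G$: then $S^0$ lies in the image of $j_!$, but $\wt\EE\FF[G]\wedge S^0=\wt\EE\mcal{P}\not\simeq S^0$, the cofiber being $\Sigma\EE\mcal{P}_+$. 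Concretely, $\EE\FF[H]_+\wedge \pi^*\EE_{\W_NH}(\W_GH)_+$ is the (suspension spectrum of the) universal space for the nonempty family of subgroups $K\leq\NN_GH$ with $H\not\leq K$ and $K$ contained in the preimage of some member of $\FF(\W_NH)$, and this is not contractible. Any correct version of your argument must smash with $\EE(\FF',\FF)|_{\NN_GH}$ \emph{before} drawing this conclusion, which is exactly the content you would still need to prove. Second, the treatment of the non-identity double cosets is explicitly only a plan (``my plan is to\dots successfully carrying this out will yield\dots''), so even setting aside the first issue the proof is incomplete at its self-identified ``delicate point.''

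The paper's route to (2) avoids all of this. Restriction to $\NN_GH$ identifies $\EE(\FF',\FF)|_{\NN_GH}$ with $\EE(\FF'|_{\NN_GH},\FF|_{\NN_GH})$, and one checks the elementary combinatorial identity $\FF_{\NN_GH}[H]\cap\FF'|_{\NN_GH}=\FF_{\NN_GH}[H]\cap\FF|_{\NN_GH}$ (a subgroup of $\NN_GH$ lying in $\FF'\minus\FF$ necessarily contains $H$). Then \aref{prop:usefulF}, applied with $\mcal{E}=\FF_{\NN_GH}[H]$, gives the equivalence $\EE(\FF',\FF)|_{\NN_GH}\xrightarrow{\sim}\wt\EE\FF[H]\wedge\EE(\FF',\FF)|_{\NN_GH}$ in one step, with no double-coset decomposition and no appeal to part (1). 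I recommend you replace your argument for (2) with this one, or else supply the missing cancellation of $\EE\FF[H]_+\wedge(-)\wedge\EE(\FF',\FF)|_{\NN_GH}$ on every double-coset summand.
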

\begin{proof}
It suffices to prove these equivalences in the case $S=B$; the general case follows by applying $f^*$ where for $f:S\to B$ is the structure map.	
For the first item, by \aref{prop:wtEEFF1FF2property},
	it suffices to show that the projection 
	\[
	p:(G\times_{\NN_GH} \EE_{\W_NH}(\W_GH))_+ \to  S^0
	\]
	induces equivalences $\map_{\HH^G(B)}(X,p)$, for any $X\in \Sm^G_B[\FF'\minus \FF]$. But for such an $X$ we have by   \aref{lem:Xconc} that $X\cong G\times_{\NN_GH}X^H$, so we have 
	\begin{align*}
	\stable_{\spt_{S^1}^G(B)}(X_+, & (G\times_{\NN_GH} \EE_{\W_NH}(\W_GH))_+)  \\ & \wkeq 
	\stable_{\spt_{S^1}^{\NN_GH}(B)}(X^{H}_+, (G\times_{\NN_GH} \EE_{\W_NH}(\W_GH))_+) \\
	& \wkeq \stable_{\spt_{S^1}^{\W_GH}(B)}(X^{H}_+, (G\times_{\NN_GH} \EE_{\W_NH}(\W_GH))^H_+)\\
	& \wkeq  \stable_{\spt_{S^1}^{\W_GH}(B)}(X^{H}_+, (S^0)^H)\\
	& \wkeq \stable_{\spt_{S^1}^{\NN_GH}(B)}(X^{H}_+, S^0) \\
	& \wkeq \stable_{\spt_{S^1}^{G}(B)}((G\times_{\NN_GH}X^{H})_+, S^0).
\end{align*}

	For the second item, we have that 
	$\FF_{\NN_GH}[H]\cap \FF'|_{\NN_GH} = \FF_{\NN_GH}[H]\cap \FF|_{\NN_GH}$ and so this follows from 
	\aref{prop:usefulF}.
\end{proof}

\section{Fixed point functors}\label{sec:fp}

We define fixed point functors on motivic spaces and spectra. Throughout this section, $N\unlhd G$ is a normal subgroup and we write $\pi:G\to G/N$ for the quotient homomorphism. Unless noted otherwise, we assume that $N$ acts trivially on $S$.

\subsection{Fixed point motivic spaces}\label{fps}

We begin  by extending the adjunction
\[
\pi^{-1}:\Sm_S^{G/N}\rightleftarrows \Sm^{G}_{S}:(-)^N
\]
to motivic $G$-spaces. 
Note that $\pi^{-1}$ is full and faithful and induces an equivalence  
\begin{equation}\label{eqn:pieq}
\Sm_S^{G/N}\wkeq \Sm^{G}_S[\co(\FF[N])]\subseteq \Sm^G_S
\end{equation}
with the subcategory of smooth $G$-schemes over $S$ on which $N$ acts trivially.

The functor $\pi_*$ in the following propositions is the $N$-fixed point functor on motivic $G$-spaces. In keeping with standard notation, we  sometimes write 
\[
(-)^N := \pi_*. 
\]

\begin{proposition}\label{prop:usfpadj}
The functor $\pi^{-1}:\Sm^{G/N}_S\to \Sm^G_S$ induces  adjoint pairs  of functors
\begin{align*}
\pi^{*}:\HH^{G/N}(S)  & \rightleftarrows \HH^G(S) : \pi_*,\textrm{ and} \\
\pi^{*}:\HH^{G/N}_\bb(S) &  \rightleftarrows \HH^G_\bb(S) : \pi_* 
\end{align*}
such that the diagrams commute

\[
\begin{tikzcd}
\Sm^{G/N}_S \ar[r, "\pi^{-1}"] \ar[d] &   \Sm^{G}_S\ar[d] \\
\HH^{G/N}(S)\ar[r, "\pi^*"]  \ar[d, "(-)_+"'] & \HH^G(S)   \ar[d, "(-)_+"] \\
\HH^{G/N}_\bb(S) \ar[r, "\pi^*"] &  \HH_\bb^{G}(S)
\end{tikzcd}
\,\,\textrm{ and } \,\,\,
\begin{tikzcd}
\Sm^G_S \ar[r, "(-)^N "] \ar[d] &  \Sm^{G/N}_S  \ar[d] \\
\HH^G(S)\ar[r,  "\pi_*"] \ar[d, "(-)_+"'] & \HH^{G/N}(S)   \ar[d,  "(-)_+"'] \\
\HH^G_\bb(S) \ar[r,  "\pi_*"] &  \HH^{G/N}_\bb(S).
\end{tikzcd}
\]
Moreover, 
the $\pi^*$ and $\pi_*$ are symmetric monoidal and $\pi_*$ preserve colimits.

\end{proposition}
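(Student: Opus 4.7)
The plan is to realize $\pi^{-1}\colon \Sm^{G/N}_S \to \Sm^G_S$, via the equivalence \eqref{eqn:pieq}, as the inclusion $u\colon \Sm^G_S[\co(\FF[N])] \subseteq \Sm^G_S$ of the subcategory of smooth $G$-schemes on which $N$ acts trivially, and then derive everything from \aref{cor:sev}. Here $\co(\FF[N])$, the set of subgroups of $G$ containing $N$, is conjugacy-closed, so the source is of the form $\Sm^G_S[\mathcal{E}]$ and in particular satisfies properties (P) and (H) required by \aref{cor:sev}.

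First I would invoke \aref{cor:sev} to produce the adjoint triple $(u_!,u^*,u_*)$ on both $\HH$ and $\HH_\bb$, and set $\pi^* := u_!$ and $\pi_* := u^*$. This immediately yields the asserted adjunctions in both the based and unbased settings, together with the commutativity of the left-hand diagram, which is among the diagrams of \aref{cor:sev}. For the right-hand diagram, the point to check is that on a representable $Y \in \Sm^G_S$ the restriction $u^*Y$ is represented by $Y^N$: for $Z \in \Sm^{G/N}_S$ one has $u^*Y(Z) = \Hom_{\Sm^G_S}(Z,Y)$, and since $N$ acts trivially on $Z$ every such $G$-map factors uniquely through the closed subscheme $Y^N \subseteq Y$, giving a natural identification $\pi_* Y \simeq Y^N$. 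The pointed versions are induced by $(-)_+$ since both $\pi^*$ and $\pi_*$ preserve terminal objects.

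It remains to verify the monoidality and cocontinuity assertions. The functor $\pi_* = u^*$ is symmetric monoidal by \aref{cor:sev}, and preserves all colimits since it admits a further right adjoint $u_*$. For $\pi^* = u_!$, the remark immediately following \aref{cor:sev} supplies preservation of binary products, so the only remaining point is preservation of the unit. But the terminal object of $\Sm^{G/N}_S$ is $S$ (with trivial action), whose image under $\pi^{-1}$ is the terminal object $S \in \Sm^G_S$; hence $u_!$ sends the representable unit of $\HH^{G/N}(S)$ to the representable unit of $\HH^G(S)$, and symmetric monoidality of $\pi^*$ follows. The main thing to watch is the verification that units are preserved by $\pi^*$, which is precisely what rules out the generic failure of $u_!$ to be monoidal noted in the remark after \aref{cor:sev}; everything else is formal from the framework already set up.
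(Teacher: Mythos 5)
Your proposal is correct and takes essentially the same route as the paper: invoke \aref{cor:sev} for the inclusion $u\colon\Sm^G_S[\co(\FF[N])]\subseteq\Sm^G_S$ via the equivalence \eqref{eqn:pieq}, and set $\pi^*:=u_!$, $\pi_*:=u^*$. The paper dismisses the remaining claims as "straightforward"; you helpfully make explicit the one non-trivial point, namely that $u_!$ preserves the monoidal unit (because $\pt$ is representable by $S$ on both sides and $\pi^{-1}(S)=S$), which is exactly what rescues symmetric monoidality of $\pi^*$ from the generic failure noted in the remark after \aref{cor:sev}.
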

\begin{proof}
Nisnevich topologies correspond under the equivalence \eqref{eqn:pieq}, so the adjunction can be obtained as a special case of \aref{cor:sev}, where we set $\pi^*:=(\pi^{-1})_!$ and $\pi_*:=(\pi^{-1})_*$. The remaining claims about the $\pi^*$ and $\pi_*$ are straightforward. 
\end{proof}

\subsection{Fixed point spectra}

We view $G/N$-equivariant vector bundles on $S$ as $G$-equivariant vector bundles via the quotient homomorphism $\pi:G\to G/N$.
Given a stabilizing subset $\TT\subseteq \Sph^{{G/N}}_{B}$,
 we have the stabilizing subset 
 $\pi^*\TT = \{T^{\pi^*\E} \mid T^\E\in \TT\} \subseteq \Sph^{G}_B$ which for  simplicity we usually write again $\TT$. We call a $G$-sphere of the form $\pi^*(T^\E)$ an \emph{$N$-trivial $G$-sphere}.  
 Recall also that we write $\Ntriv = \pi^*\big(\Sph^{G/N}_B\big)$. 

\begin{proposition}
Let $\TT\subseteq \Sph^{{G/N}}_{B}$ be a stabilizing set of spheres.
There is an adjoint pair of symmetric monoidal functors 
$$
\pi^*:\SH_{\TT}^{{G/N}}(S) \rightleftarrows \SH_{\TT}^G(S):\pi_*
$$
such that 
$\pi_*(\Sigma^{\infty}_{\TT}Y)\simeq \Sigma^{\infty}_{\TT}(Y^N)$ for $Y\in \HH_\bb^G(S)$.
\end{proposition}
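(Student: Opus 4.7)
The plan is to extend the unstable adjunction $(\pi^{*},(-)^{N})$ of \aref{prop:usfpadj} to the $\TT$-stabilizations using the universal property of symmetric monoidal inversion \cite[Section 2.1]{Robalo}, and then to verify that the extended functors remain adjoint.

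By \aref{prop:usfpadj} both unstable functors $\pi^{*}$ and $(-)^{N}$ are symmetric monoidal and colimit-preserving, so they are morphisms in $\CAlg(\LPr)$. The functor $\pi^{*}$ sends $T^{\E}\in\TT$ (with $\E\in\Rep^{G/N}_{B}$) to $T^{\pi^{*}\E}\in\pi^{*}\TT$. The functor $(-)^{N}$ sends $T^{\pi^{*}\E}$ back to $T^{\E}$, because $N$ acts trivially on $\V(\pi^{*}\E)$ so the $N$-fixed-point subscheme equals $\V(\pi^{*}\E)$ itself, and passing to the Thom space with its residual $G/N$-structure yields $T^{\E}$. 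In both cases the target spheres are invertible in the relevant $\TT$-stabilization, so Robalo's universal property produces unique symmetric monoidal colimit-preserving extensions
\[
\pi^{*}\colon \SH^{G/N}_{\TT}(S)\to \SH^{G}_{\TT}(S),\qquad \widetilde{\pi}_{*}\colon \SH^{G}_{\TT}(S)\to\SH^{G/N}_{\TT}(S),
\]
each commuting with $\Sigma^{\infty}_{\TT}$. The desired formula $\widetilde{\pi}_{*}(\Sigma^{\infty}_{\TT}Y)\simeq \Sigma^{\infty}_{\TT}(Y^{N})$ is then automatic from the construction.

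To exhibit the adjunction $\pi^{*}\dashv\widetilde{\pi}_{*}$, post-compose the unstable unit and counit with $\Sigma^{\infty}_{\TT}$ to obtain monoidal natural transformations between functors out of $\HH_{\bb}^{G/N}(S)$ and $\HH_{\bb}^{G}(S)$ respectively; promoting Robalo's universal property to an equivalence on mapping $\infty$-categories of monoidal functors extends these uniquely to natural transformations $\widetilde{\eta}\colon\id\Rightarrow\widetilde{\pi}_{*}\pi^{*}$ and $\widetilde{\epsilon}\colon\pi^{*}\widetilde{\pi}_{*}\Rightarrow\id$ on the full stabilizations. Both triangle identities are equalities between natural transformations of colimit-preserving functors, so by the compact generation of \aref{prop:summary}(2) it suffices to check them on generators $\Sigma^{-n\VV}\Sigma^{\infty}_{\TT}X_{+}$, where after de-suspending they reduce to the unstable triangle identities already provided by \aref{prop:usfpadj}.

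The main obstacle is bookkeeping: Robalo's universal property must be used not only to extend functors but also to extend natural transformations, and the triangle identities must then be verified on a generating family rather than inferred formally. Once these points are in hand, every ingredient — the construction, the unit, the counit, and the adjointness — is controlled by the unstable adjunction of \aref{prop:usfpadj}, and no further analysis is needed.
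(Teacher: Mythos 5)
Your proof rests on exactly the two facts the paper's (very short) proof cites --- $\pi^*(X\otimes T^{\E})\simeq \pi^*(X)\otimes T^{\pi^*\E}$ and $(Y\otimes T^{\pi^*\E})^N\simeq Y^N\otimes T^{\E}$ --- but you take a more roundabout route. The paper simply observes that the unstable adjunction of \aref{prop:usfpadj} \emph{stabilizes}: because the left adjoint intertwines $\Sigma^{\E}$ with $\Sigma^{\pi^*\E}$ and the right adjoint intertwines $\Sigma^{\pi^*\E}$ back with $\Sigma^{\E}$ (equivalently, commutes with the corresponding loop functors), the entire adjunction passes to the colimit in $\LPr$ (equivalently, the limit along the right adjoints) defining the two $\TT$-stabilizations; no triangle identities need to be re-verified, and the formula $\pi_*\Sigma^{\infty}_{\TT}Y\simeq\Sigma^{\infty}_{\TT}(Y^N)$ is just the second compatibility. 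Your version --- build $\widetilde{\pi}_*$ by the inversion universal property and then re-prove adjointness --- can be made to work, but two steps are asserted rather than established. First, Robalo's universal property is an equivalence of mapping \emph{spaces} in $\CAlg(\LPrx)$; to extend the unit and counit you need an arrow-category enhancement, e.g.\ by viewing a monoidal natural transformation as a monoidal colimit-preserving functor into $\Fun(\Delta^1,-)$ with the pointwise tensor, and this descends only because $\eta_{T^{\E}}$ and $\epsilon_{T^{\pi^*\E}}$ are themselves equivalences (true, since $\pi^*$ is fully faithful unstably, but it must be said --- invertible objects of $\Fun(\Delta^1,\DD)$ are equivalences between invertibles). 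Second, checking the triangle identities ``on generators'' is legitimate only in the sense that a natural transformation between colimit-preserving functors is determined by its restriction to a generating subcategory, and reducing the restricted identities to the unstable ones after desuspending again invokes the compatibility of the unit and counit with $\Sigma^{\pi^*\E}$ --- which is precisely the content that makes the paper's one-line stabilization argument work. So the proposal is correct in outline, but once its gaps are filled it collapses to the paper's argument.
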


\begin{proof}
	The adjoint pair is the stabilization of the adjoint pair in \aref{prop:usfpadj}, using that 
 $\pi^*(X\otimes T^\E)\simeq \pi^*(X)\otimes T^{\pi^*\E}$ and $(Y\otimes T^{\pi^* \E})^N\simeq Y^N\otimes T^\E$.
\end{proof}

That the fixed point functor in the previous proposition commutes with stabilization is a consequence of the fact that we have only stabilized with respect to a set  of $N$-trivial spheres.
In general, fixed point functors do not commute with stabilization; rather, this is a key feature of the geometric fixed points functor, defined later.

\begin{lemma}\label{lem:fppi}
Let $\TT\subseteq \Sph^{G/N}_B$. 
The equivalence \eqref{eqn:pieq} induces inverse equivalences
\[
\tilde{\pi}^*:\SH^{{G/N}}_{\TT}(S)\simeq \SH^{G,\co(\FF[N])}_{\TT}(S):\tilde{\pi}_*
\]
which fit into commutative diagrams
\[
\begin{tikzcd}
\SH^{{G/N}}_{\TT}(S)\ar[r, "\tilde{\pi}^*"]\ar[dr, "\pi^*"'] & \SH^{G,\co(\FF[N])}_{\TT}(S)\ar[d, hookrightarrow, "j_!"] \\
& \SH^{G}_\TT(S)
\end{tikzcd}
\,\,\textrm{ and } \,\,\,
\begin{tikzcd}
 \SH^{G,\co(\FF[N])}_{\TT}(S) \ar[r, "\tilde{\pi}_*"], \ar[d, hookrightarrow, "j_!"]& \SH^{{G/N}}_{\TT}(S) \\
\SH^G(S)\ar[ur, "\pi_*"'] & 
\end{tikzcd}
\]
\end{lemma}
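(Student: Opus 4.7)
The strategy is to exploit the factorization $\pi^{-1} = j \circ u$, where $u \colon \Sm^{G/N}_S \to \Sm^G_S[\co(\FF[N])]$ is the equivalence of categories recorded in \eqref{eqn:pieq} and $j$ is the subsequent inclusion. As a preliminary I would verify that $\Sm^G_S[\co(\FF[N])]$ satisfies conditions (P) and (H): property (H) is immediate since $\A^1$ has trivial $G$-action, and (P) holds because a fixed-point-reflecting \'etale map preserves stabilizers, so if every stabilizer of the target contains $N$, so does every stabilizer of the source. Consequently the functorial machinery built up in \aref{cor:sev}, \aref{prop:i*ff}, and \aref{lem:ideal} applies to the inclusion $j$.

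Next, because $u$ is an equivalence of small categories that preserves and reflects (fixed point) Nisnevich squares and commutes with $\A^1$-products, the adjoint pairs $(u_!, u^*)$ and $(u^*, u_*)$ are all inverse equivalences of symmetric monoidal $\infty$-categories at each of the levels $\Pre$, $\Pre_\bb$, $\HH$, and $\HH_\bb$. Set $\tilde\pi^* := u_!$ and $\tilde\pi_* := u^*$. To stabilize, observe that for each $T^\E \in \TT$ the total space $\V_S(\pi^*\E)$ lies in $\Sm^G_S[\co(\FF[N])]$, so the equivalence identifies the generating $G/N$-sphere $T^\E$ with the $N$-trivial $G$-sphere $T^{\pi^*\E}$. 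Inverting $\TT$ on the source and the matching spheres on the target therefore yields the desired symmetric monoidal equivalence $\tilde\pi^* \colon \SH^{G/N}_\TT(S) \simeq \SH^{G,\co(\FF[N])}_\TT(S) : \tilde\pi_*$.

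Finally, the commutative diagrams follow directly from $\pi^{-1} = j \circ u$. The left triangle is the chain of equivalences $\pi^* \simeq (\pi^{-1})_! \simeq j_! u_! \simeq j_! \tilde\pi^*$. For the right triangle, which asserts $\tilde\pi_* \simeq \pi_* j_!$, I would combine $\pi_* \simeq (\pi^{-1})^* \simeq u^* j^* \simeq \tilde\pi_* j^*$ with the fact that $j_!$ is fully faithful (by \aref{lem:ideal}), so the unit $\id \xrightarrow{\sim} j^* j_!$ is an equivalence and hence $\pi_* j_! \simeq \tilde\pi_* j^* j_! \simeq \tilde\pi_*$. No step is genuinely an obstacle: the lemma is essentially formal bookkeeping that records how the fully faithful embedding $j_!$ sits inside the fixed-point adjunction $(\pi^*, \pi_*)$, and the technical content has already been developed in \aref{sec:filtadj}.
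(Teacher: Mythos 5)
Your approach matches the paper's: factor $\pi^{-1}=j\circ u$ with $u$ the equivalence \eqref{eqn:pieq}, note that $u$ transports the Nisnevich topology and $\A^1$-invariance so that $u_!$ and $u^*$ are inverse symmetric monoidal equivalences of motivic spaces, match the $N$-trivial spheres, stabilize, and read off the triangles from the factorization. The published proof is a one-liner of the same content. Your verification of (P), (H), and the left triangle are fine.

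One real slip: you cite \aref{lem:ideal} for the full faithfulness of $j_!\colon\SH^{G,\co(\FF[N])}_\TT(S)\hookrightarrow\SH^G_\TT(S)$. That lemma is stated for a \emph{family} $\FF$, and its proof relies on $\Sm^G_S[\FF]$ being a $\otimes$-ideal in $\Sm^G_S$ (equivalently, on $\HH^{G,\FF}_\bb(S)$ being an $\HH^G_\bb(S)$-module). The cofamily sieve $\Sm^G_S[\co(\FF[N])]$ is not a $\otimes$-ideal --- the product of an $N$-trivial $G$-scheme with a general one need not be $N$-trivial --- so \aref{lem:ideal} does not apply. The full faithfulness you need is still true: unstably it is \aref{prop:i*ff}; both $j_!$ and $j^*$ are symmetric monoidal for the cofamily (\aref{prop:uswtEE}, \aref{prop:wtEEFFSH}), and the spheres in $\TT$ lie in the essential image of $j_!$, so the unit equivalence $\id\xrightarrow{\sim}j^*j_!$ persists after inverting $\TT$. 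Alternatively extract it from \aref{prop:wtEEFFSH} together with $j^*j_*\simeq\id$. One further point worth flagging: your identification $\pi_*\simeq u^*j^*$ is correct for $\pi_*$ as the right adjoint of $\pi^*\colon\SH^{G/N}_\TT(S)\to\SH^G_\TT(S)$; the fixed-point functor out of the fully stabilized $\SH^G(S)$ is \emph{not} simply restriction along $\pi^{-1}$ (by the tom Dieck splitting it takes the unit to something much bigger). So your right-triangle argument should be understood as living at the $\TT$-stabilization level, which is where the chain $\pi_*\simeq(\pi^{-1})^*\simeq u^*j^*$ actually holds.
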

\begin{proof}
	Nisnevich topologies correspond under the equivalence \eqref{eqn:pieq}, so that
	$\pi^*:\HH_\bb^{{G/N}}(S)\simeq \HH_\bb^{G,\co(\FF[N])}(S)$ is an equivalence of symmetric monoidal $\i$-categories with inverse $\pi_*$. The first statement follows. The commutativity of the displayed diagrams is straightforward to check.

\end{proof}

Let $\TT\subseteq \Sph^G_B$  be a stabilizing set of  $G$-spheres  and $\TT^N = \{T^{\E^N}\mid T^\E\in \TT\} $ the associated  stabilizing set of $G/N$-spheres. 
Continuing to overload notation, we simply write $\pi^*:\SH_{\TT^{N}}^{{G/N}}(S) \to \SH_{\TT}^G(S)$ again for composite
\[
\SH^{{G/N}}_{\TT^N}(S) \xrightarrow{\iota^*} \SH^{{G/N}}_{\TT}(S) \xrightarrow{\pi^*} \SH^{G}_{\TT}(S).
\]
Since $\pi^*$ preserves colimits, we obtain the fixed-points  adjunction
\[
\pi^*:\SH_{\TT^{N}}^{{G/N}}(S) \rightleftarrows \SH_{\TT}^G(S):\pi_*.
\]

The stabilizing set of spheres $\TT$ does not appear in the notation for the fixed points functor $\pi_*$. 
We usually consider  stabilization with respect to all spheres
and will always make the domain of $\pi_*$ explicit in other cases. 
When $\TT = \Sph^G_B$, in keeping with standard notation, we will  sometimes write 
\[
(-)^N:= \pi_{*}.
\]

	We have also not made reference to the base scheme in the notation. We show in the next section, after proving the Adams isomorphism, that the fixed points functor is compatible with the various change-of-base functors in motivic homotopy.

\subsection{Geometric fixed points}\label{sub:gfp}

Recall that
$\FF[N] :=\{H\leq G \mid N \not\subseteq H \}$ and that we write
\[
\L_{\co(\FF[N])} \SH^{G}_{\TT}(S)\subseteq \SH^G_{\TT}(S).
\] 
for the essential image of the endofunctor
 $\wt\EE\FF[N]\otimes -:\SH^G_{\TT}(S)\to \SH^G_{\TT}(S)$.
Write  $W = \rho_{G}/\rho_{G/N}$. 
By  \aref{ex:key} we have
\begin{equation}\label{eqn:TFN}
\wt{\EE}\FF[N] \wkeq \colim_{n} T^{n W}.
\end{equation}
\begin{lemma}\label{lem:TFN}
	The map $S^{0}\to T^{W}$ induces an equivalence in $\HH^G_\bb(S)$,
	\[
	\wt{\EE}\FF[N]\wkeq T^{W}\otimes\wt{\EE}\FF[N].
	\]
\end{lemma}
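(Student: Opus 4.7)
The plan is to exhibit the cofiber of $S^0\to T^W$ as something that becomes null after tensoring with $\wt{\EE}\FF[N]$. The zero section $z\colon S\hookrightarrow \V(W)$ gives rise to the cofiber sequence $U_{1+}\to \V(W)_+ \to T^W$ in $\HH_\bb^G(S)$, where $U_1 = \V(W)\minus z(S)$ is precisely the first stage in the system of approximations to $\EE\FF[N]_S$ from \aref{ex:key}. Since $\V(W)\to S$ is an $\A^1$-equivalence, $\V(W)_+\simeq S^0$ in $\HH_\bb^G(S)$, and the sequence identifies with $U_{1+}\to S^0\to T^W$, with the second map being (up to canonical equivalence) the one appearing in the statement. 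Tensoring with $\wt{\EE}\FF[N]$, it therefore suffices to verify that
\[
U_{1+}\otimes \wt{\EE}\FF[N]\simeq \pt.
\]

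For this, recall from \aref{ex:key} that $U_1\in \Sm^G_S[\FF[N]]$: every point of $\V(W)\minus\{0\}$ has stabilizer not containing $N$, since the fixed locus of $N$ in $\V(W)$ is precisely $z(S)$ (as $W^N=0$). By \aref{cor:EEFFequiv}(3), the coaugmentation $\EE\FF[N]_+\otimes U_{1+}\xrightarrow{\sim} U_{1+}$ is thus an equivalence. Combining this with \aref{lem:EErew}, which tells us that $\wt{\EE}\FF[N]\wedge \EE\FF[N]_+\simeq \pt$, yields
\[
U_{1+}\otimes \wt{\EE}\FF[N]\simeq U_{1+}\otimes \EE\FF[N]_+\otimes \wt{\EE}\FF[N]\simeq \pt,
\]
which finishes the argument.

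There is no substantial obstacle: the result is essentially a formal consequence of the colocalization/localization machinery of \aref{sec:universalF}, the key geometric input being the identification of $U_1$ as an $\FF[N]$-object. As a sanity check, one can also arrive at the same conclusion directly via \eqref{eqn:TFN}: tensoring the equivalence $\wt{\EE}\FF[N]\simeq \colim_n T^{nW}$ with $T^W$ gives $\colim_n T^{(n+1)W}$, and the map induced by $S^0\to T^W$ is the shift of the indexing, which is a cofinality equivalence.
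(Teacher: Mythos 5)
Your main argument is correct, and it takes a genuinely different and in some ways cleaner route than the paper's. The paper invokes the colimit presentation $\wt{\EE}\FF[N]\simeq\colim_n T^{nW}$ from \eqref{eqn:TFN} together with the cyclic permutation lemma \cite[Lemma~6.3]{Hoyois:6}; the latter is needed precisely to make rigorous what you gesture at in your ``sanity check,'' namely that the induced map on colimits is a shift in indexing. Passing to the colimit requires comparing the transition maps of the approximating system (which come from specific inclusions of representations) with the map obtained by smashing with $T^W$, and the coherence of the symmetric monoidal structure has to be controlled; that is the role of the $3$-fold cyclic permutation condition. Your primary argument avoids this entirely: the cofiber sequence $U_{1+}\to\V(W)_+\simeq S^0\to T^W$ reduces the claim to $U_{1+}\otimes\wt{\EE}\FF[N]\simeq\pt$, which follows from the observation that $U_1\in\Sm^G_S[\FF[N]]$ (since $W^N=0$, so the $N$-fixed locus of $\V(W)$ is the zero section) together with \aref{cor:EEFFequiv} and \aref{lem:EErew}. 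In effect you trade the colimit manipulation and the cyclic permutation input for a single cofiber-sequence argument at the first stage of the geometric approximation. One minor caveat: the ``cofinality'' reasoning in your sanity check is exactly the step where the paper reaches for the cyclic permutation lemma, and as stated it is not quite watertight; but since it plays no role in your main argument this does not affect the correctness of your proof.
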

\begin{proof}
	This follows from \eqref{eqn:TFN} together with the fact that the cyclic permutation acts as the identity on  $T^W\wedge T^W\wedge T^W$, see 
 \cite[Lemma 6.3]{Hoyois:6}.
\end{proof}

\begin{proposition}\label{prop:UV}
 The stabilization functor $\SH^{G}_{\Ntriv}(S)\to \SH^{G}(S)$ induces an equivalence
$$
\L_{\co(\FF[N])} \SH^{G}_{\Ntriv}(S)\xrightarrow{\sim} \L_{\co(\FF[N])} \SH^{G}(S)
$$
of symmetric monoidal stable $\infty$-categories.
\end{proposition}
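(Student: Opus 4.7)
The plan is to show that after applying the localization $\L_{\co(\FF[N])}$, every $G$-sphere $T^V$ is already invertible in $\SH^G_{\Ntriv}(S)$, so that the further inversion required to pass from $\SH^G_{\Ntriv}(S)$ to $\SH^G(S)$ becomes trivial. Combined with \aref{lem:Lcomm}, which identifies both $\L_{\co(\FF[N])}\SH^G_{\Ntriv}(S)$ and $\L_{\co(\FF[N])}\SH^G(S)$ as symmetric monoidal inversions of $\L_{\co(\FF[N])}\HH^G_\bb(S)$ at $\Ntriv$ and $\Sph_B^G$ respectively, the universal property of localization will then force the induced symmetric monoidal functor to be an equivalence.

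To establish the key claim, let $V$ be a $G$-representation over $B$. Since $|G|$ is invertible on $B$, the group $G$ is linearly reductive and $V$ decomposes as $V\cong V^N \oplus V^{\perp}$ with $(V^{\perp})^N=0$; correspondingly $T^V\simeq T^{V^N}\wedge T^{V^{\perp}}$. The factor $T^{V^N}$ is $N$-trivial, hence already lies in $\Ntriv$. For the second factor, note that every nonzero point of $V^{\perp}$ has stabilizer not containing $N$, so $V^{\perp}\smallsetminus 0\in \Sm^G_S[\FF[N]]$. Then \aref{cor:EEFFequiv} gives $\EE\FF[N]_+\wedge (V^{\perp}\smallsetminus 0)_+\simeq (V^{\perp}\smallsetminus 0)_+$, and smashing the defining cofiber sequence \eqref{eqn:cof} with $(V^{\perp}\smallsetminus 0)_+$ yields $\wt\EE\FF[N]\wedge (V^{\perp}\smallsetminus 0)_+\simeq *$. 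Feeding this into the Thom cofiber sequence
$$
(V^{\perp}\smallsetminus 0)_+ \to V^{\perp}_+ \to T^{V^{\perp}},
$$
together with the $\A^1$-equivalence $V^{\perp}_+\simeq S^0$, produces $\wt\EE\FF[N]\wedge T^{V^{\perp}}\simeq \wt\EE\FF[N]$. Thus $T^{V^{\perp}}$ becomes the unit in $\L_{\co(\FF[N])}\SH^G_{\Ntriv}(S)$, and so $T^V\simeq T^{V^N}$ in the localization, which is invertible by construction.

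With this in hand, the inversion of the remaining $G$-spheres required to obtain $\SH^G(S)$ from $\SH^G_{\Ntriv}(S)$ adds nothing new after $\wt\EE\FF[N]$-localization, and the stabilization induces the desired symmetric monoidal equivalence. The main obstacle is the identification $\wt\EE\FF[N]\wedge T^{V^{\perp}}\simeq \wt\EE\FF[N]$; this is the motivic analogue of the classical fact that representation spheres without fixed vectors become geometrically trivial at the family $\co(\FF[N])$, and it is precisely what the $\FF[N]$-colocalization machinery of \aref{sec:universalF} was built to deliver.
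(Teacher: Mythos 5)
Your proof is correct, and its overall skeleton matches the paper's: both arguments reduce the statement, via \aref{lem:Lcomm}, to showing that the extra spheres one must invert to pass from $\SH^G_{\Ntriv}(S)$ to $\SH^G(S)$ are already invertible after smashing with $\wt\EE\FF[N]$. The difference is in how that invertibility is established. The paper first reduces to the single sphere $T^W$ with $W=\rho_G/\rho_{G/N}$ (using \aref{rem:single} and $T^{\rho_G}\simeq T^{\rho_{G/N}}\otimes T^W$) and then quotes \aref{lem:TFN}, whose proof rests on the geometric presentation $\wt\EE\FF[N]\simeq\colim_n T^{nW}$ of \aref{ex:key} together with the cyclic-permutation argument. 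You instead treat an arbitrary sphere, split $V\cong V^N\oplus V^\perp$ by linear reductivity (the averaging idempotent for $N$ is $G$-equivariant since $N$ is normal, so $V^\perp$ really is a $G$-summand), and prove $\wt\EE\FF[N]\wedge T^{V^\perp}\simeq\wt\EE\FF[N]$ directly from the colocalization formalism via the cofiber sequence $(\V(V^\perp)\smallsetminus 0)_+\to\V(V^\perp)_+\to T^{V^\perp}$ and the vanishing $\wt\EE\FF[N]\wedge(\V(V^\perp)\smallsetminus 0)_+\simeq\ast$. This is precisely the maneuver the authors themselves deploy inside the proof of \aref{prop:wtEEFF1FF2property}, so it is certainly available; it buys you independence from the geometric model of $\wt\EE\FF[N]$ for this proposition and yields the slightly stronger statement that \emph{every} Thom sphere of a representation without $N$-fixed vectors becomes the unit in $\L_{\co(\FF[N])}\SH^G_{\Ntriv}(S)$, at the cost of being a bit longer than simply citing \aref{lem:TFN}. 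One small point worth making explicit if you write this up: the equivalence $\wt\EE\FF[N]\wedge T^{V^\perp}\simeq\wt\EE\FF[N]$ you produce is induced by the zero-section map $S^0\simeq\V(V^\perp)_+\to T^{V^\perp}$, which is what guarantees it is compatible with the symmetric monoidal structure and hence that the final appeal to the universal property of inversion goes through.
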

\begin{proof}
Write $\L = \L_{\co(\FF[N])}$. We make use of the equivalences 
$\SH^{G}_{ T^{\rho_{G/N}}}(S)\wkeq \SH^{G}_{\Ntriv }(S)$ and   
$\SH^{G}_{T^{\rho_G}}(S)\wkeq \SH^{G}(S)$ and the equivalence \eqref{eqn:TFN}.
Since $T^{\rho_{G}} = T^{\rho_{G/N}}\otimes T^W$, we have an equivalence 
\[
\SH^G(S) \wkeq \SH^G_{\Ntriv}(S)[(T^W)^{-1}]
\]
 and under this equivalence
$\L \SH^{G}(S)\wkeq 
(\L \SH^{G}_{\Ntriv}(S))[(T^W)^{-1}]$ by \aref{lem:Lcomm}.
But it follows from \aref{lem:TFN} that
$\Sigma^W$ is an autoequivalence of $\L\SH^G_{\Ntriv}(S)$, and therefore the stabilization induces an equivalence.
\[
\L \SH^{G}_{\Ntriv}(S) \wkeq (\L\SH^G_{\Ntriv}(S))[(T^W)^{-1}].
\]
The result follows since $(\L\SH^G_{\Ntriv}(S))[(T^W)^{-1}] \wkeq \L(\SH^G_{\Ntriv}(S)[(T^W)^{-1}])$.
\end{proof}

Write $\psi$ for the composite
\[
\SH^{G/N}(S)\xrightarrow{\pi^*} \SH^G_{\Ntriv}(S)\to \SH^G(S)\to \L_{\co(\FF[N])}\SH^G(S).
\]

\begin{proposition}
The functor $\psi: \SH^{{G/N}}(S) \to\L_{\co(\FF[N])}\SH^G(S)$
 is an equivalence of symmetric monoidal stable $\infty$-categories.
\end{proposition}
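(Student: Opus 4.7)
The plan is to identify $\psi$ as a composite of three symmetric monoidal equivalences already established.

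First, I would verify the hypotheses needed to apply \aref{prop:wtEEFFSH} with $\FF = \FF[N]$ and $\TT = \Ntriv$. Since $N$ acts trivially on $S$ by the standing assumption of this section and $\FF[N] = \FF_{\all}\cap\FF[N]$, \aref{a:pair} holds trivially for the pair $\FF[N]\subseteq\FF_{\all}$ with normal subgroup $N$. Moreover, an $N$-trivial $G$-representation $\E = \pi^*\E'$ carries a trivial $N$-action, so every point of $\V(\E)$ has stabilizer containing $N$, hence lying in $\co(\FF[N])$. Therefore $T^{\pi^*\E}\in\HH^{G,\co(\FF[N])}_\bb(S)$, which is to say $p^*\Ntriv$ lies in $\HH^{G,\co(\FF[N])}_\bb(S)$ in the sense required.

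Next, I would use \aref{lem:fppi} to factor $\pi^*:\SH^{G/N}(S)\to \SH^G_{\Ntriv}(S)$ as $j_!\circ\tilde{\pi}^*$, where
\[
\tilde{\pi}^*:\SH^{G/N}(S)\xrightarrow{\sim}\SH^{G,\co(\FF[N])}_{\Ntriv}(S)
\]
is a symmetric monoidal equivalence. Then \aref{prop:wtEEFFSH}(2) applied to $\TT = \Ntriv$ gives a symmetric monoidal equivalence
\[
\wt\EE\FF[N]\wedge j_!:\SH^{G,\co(\FF[N])}_{\Ntriv}(S)\xrightarrow{\sim}\L_{\co(\FF[N])}\SH^G_{\Ntriv}(S),
\]
and \aref{prop:UV} provides a symmetric monoidal equivalence $\L_{\co(\FF[N])}\SH^G_{\Ntriv}(S)\xrightarrow{\sim}\L_{\co(\FF[N])}\SH^G(S)$ induced by stabilization with respect to $T^W$.

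Assembling these, $\psi$ is naturally identified with the composite
\[
\SH^{G/N}(S)\xrightarrow{\tilde{\pi}^*,\,\sim}\SH^{G,\co(\FF[N])}_{\Ntriv}(S)\xrightarrow{\wt\EE\FF[N]\wedge j_!,\,\sim}\L_{\co(\FF[N])}\SH^G_{\Ntriv}(S)\xrightarrow{\sim}\L_{\co(\FF[N])}\SH^G(S),
\]
which is a composite of three symmetric monoidal equivalences, proving the result. The only minor subtlety is to check that this composite genuinely recovers $\psi$ as defined; this is a straightforward diagram chase combining the commutative diagrams of \aref{lem:fppi} with the naturality square expressing the localization $\L_{\co(\FF[N])}$ against the stabilization $\SH^G_{\Ntriv}(S)\to\SH^G(S)$ (which in fact is exactly the content of the equivalence of \aref{prop:UV}). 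There is no real obstacle; all the substance is contained in the preceding propositions.
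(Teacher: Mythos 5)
Your proof is correct and follows essentially the same route as the paper's: the paper likewise identifies $\psi$, via \aref{prop:UV}, with the composite of $\pi^*$ into $\SH^G_{\Ntriv}(S)$ followed by $\wt\EE\FF[N]\otimes -$, and then invokes \aref{lem:fppi} and \aref{prop:wtEEFFSH} for the two resulting equivalences. Your explicit verification of \aref{a:pair} and of the hypothesis that $p^*\Ntriv$ lies in $\HH^{G,\co(\FF[N])}_\bb(S)$ is a welcome addition that the paper leaves implicit.
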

\begin{proof}
 
 By \aref{prop:UV} this composite is  equivalent to 
 \[
 \SH^{G/N}(S)\xrightarrow{\pi^*} \SH^G_{\Ntriv}(S) \xrightarrow{\wt\EE\FF[N]\otimes -} \L_{\co(\FF[N])} \SH^G_{\Ntriv}(S),
 \]
 which is an equivalence by \aref{lem:fppi} and \aref{prop:wtEEFFSH}.

 \end{proof}

\begin{definition}
Let $N\trianglelefteq G$ be a normal subgroup. The \emph{motivic geometric fixed points functor} 
\[
(-)^{\mgf N}:\SH^G(S) \to \SH^{{G/N}}(S)
\]
is the composite $\SH^G(S)\to \L_{\co(\FF[N])}\SH^G(S)\xrightarrow{\psi^{-1}} \SH^{G/N}(S)$
\end{definition}

\begin{proposition}\label{prop:gfp}
The functor $(-)^{\mgf N}:\SH^G(S) \to \SH^{{G/N}}(S)$ satisfies the following properties.

\begin{enumerate}
\item It is a symmetric monoidal left adjoint. Moreover, its right adjoint  $\wt\EE\FF[N]\otimes \pi^*$ is full and faithful. 
\item  There is a natural equivalence $( \Sigma^{\infty}X)^{\mgf N}\simeq \Sigma^{\infty}(X^{N})$ for $X\in \HH^G_\bb(S)$. 
\item $(-)^{\mgf N}\wkeq 
(\wt\EE\FF[N]\otimes -)^{N}$.
\item There is a natural transformation $(-)^N \to (-)^{\mgf N}$.
\item There is a natural equivalence $\wt\EE\FF[N]\otimes Y\simeq \wt\EE\FF[N]\otimes Y^{\mgf N}$ for $Y\in \SH^G(S)$. 

\end{enumerate}
\end{proposition}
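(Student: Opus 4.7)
The plan is to leverage the equivalence $\psi\colon \SH^{G/N}(S) \xrightarrow{\sim} \L_{\co(\FF[N])}\SH^G(S)$, $Z \mapsto \wt\EE\FF[N]\otimes \pi^* Z$, established just before the definition of $(-)^{\mgf N}$. Items (1), (3), (4), and (5) are formal consequences of $\psi$ being a symmetric monoidal equivalence, while (2) requires a genuine computation.

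For (1), the functor $\wt\EE\FF[N]\otimes -\colon \SH^G(S)\to \L_{\co(\FF[N])}\SH^G(S)$ is a symmetric monoidal smashing localization (as $\wt\EE\FF[N]$ is idempotent) with fully faithful right adjoint given by the inclusion $\iota$, so composing with $\psi^{-1}$ presents $(-)^{\mgf N}$ as a symmetric monoidal left adjoint. Its right adjoint is $\iota\circ \psi=\wt\EE\FF[N]\otimes \pi^*$, which is fully faithful. For (3), since $\psi$ is an equivalence, $\psi^{-1}$ agrees with its right adjoint; computing the right adjoint of $\psi=(\wt\EE\FF[N]\otimes -)\circ \pi^*$ as the composite of right adjoints yields $\pi_*\circ \iota$, so $X^{\mgf N}=\psi^{-1}(\wt\EE\FF[N]\otimes X)=(\wt\EE\FF[N]\otimes X)^N$. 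Item (4) then follows by applying $(-)^N=\pi_*$ to the map $X\to \wt\EE\FF[N]\otimes X$ induced by $S^0\to \wt\EE\FF[N]$. For (5), apply $\psi$ to the identity $Y^{\mgf N}=\psi^{-1}(\wt\EE\FF[N]\otimes Y)$: on one hand $\psi(Y^{\mgf N})=\wt\EE\FF[N]\otimes \pi^* Y^{\mgf N}$ by the definition of $\psi$, and on the other hand $\psi\psi^{-1}(\wt\EE\FF[N]\otimes Y)=\wt\EE\FF[N]\otimes Y$.

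The most substantive step, and the main obstacle, is (2). Granting (3), it suffices to exhibit an equivalence $(\wt\EE\FF[N]\otimes \Sigma^\infty X)^N\simeq \Sigma^\infty X^N$ in $\SH^{G/N}(S)$; equivalently, by applying $\psi$, to establish
\[
\wt\EE\FF[N]\otimes \Sigma^\infty X \simeq \wt\EE\FF[N]\otimes \Sigma^\infty \pi^* X^N
\]
in $\SH^G(S)$. The counit $\pi^* X^N\to X$ of the $(\pi^*, (-)^N)$-adjunction on $\HH_\bb^G(S)$ furnishes a canonical candidate map. To see that it becomes an equivalence after smashing with $\wt\EE\FF[N]$, pass to $\spt^G_{S^1}(S)$ and invoke \aref{prop:wtEEFF1FF2property} with $\FF=\FF[N]$, $\FF'=\FF_{\all}$; this pair satisfies \aref{a:pair} via the normal subgroup $N$, using the standing hypothesis that $N$ acts trivially on $S$. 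The criterion reduces the claim to checking that $\pi^* X^N\to X$ evaluates to an equivalence on every $W\in \Sm^G_S[\co(\FF[N])]$, i.e., on smooth $G$-schemes $W$ satisfying $W=W^N$. For such $W$, $(\pi^* X^N)(W)=X^N(W)=X(W^N)=X(W)$, and an unwinding of the counit identifies it with the identity of $X(W)$, completing the argument.
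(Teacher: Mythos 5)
Your proposal is correct; the paper's own proof of this proposition is literally the single line ``Easy consequence of the results above'', and your argument is a faithful unpacking of exactly those results (the symmetric monoidal equivalence $\psi$, its right adjoint $\pi_*\circ\iota$, and the localization $\wt\EE\FF[N]\otimes-$ with fully faithful right adjoint). In particular your treatment of item (2) --- reducing to the counit $\pi^*X^N\to X$ becoming an equivalence after smashing with $\wt\EE\FF[N]$, and checking this via \aref{prop:wtEEFF1FF2property} for the pair $\FF[N]\subseteq\FF_{\all}$ (which satisfies \aref{a:pair} with the normal subgroup $N$ acting trivially on $S$) by evaluating on $N$-trivial smooth $G$-schemes --- is a valid route to the only non-formal part of the statement.
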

\begin{proof}
Easy consequence of the results above.
\end{proof}

In \aref{sub:base} and \aref{sub:appl} below, we show that fixed points commutes with arbitrary base-change. We note here the easier fact that geometric fixed points commutes with arbitrary base-change. 

\begin{proposition}\label{prop:bcgfp}
Let $p:T\to S$ be a map in $\Sch^{G/N}_B$. There is a natural equivalence
$p^*(Y^{\mgf N})\simeq (p^*Y)^{\mgf N}$.  
\end{proposition}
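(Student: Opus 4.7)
The plan is to unwind the definition $(-)^{\mgf N}\simeq\psi^{-1}(\wt\EE\FF[N]\otimes -)$ and verify that each of the two factors commutes with $p^*$.

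First I would check the base change for the universal space: applying $\aref{prop:bcE}$ to the $G$-scheme map $p:T\to S$ (viewed as $G$-equivariant via $\pi$) gives $p^*\EE\FF[N]_S\simeq \EE\FF[N]_T$. Since $p^*$ is a symmetric monoidal left adjoint (hence exact), the defining cofiber sequence
\[
\EE\FF[N]_+\to S^0\to\wt\EE\FF[N]
\]
pulls back to itself, yielding $p^*\wt\EE\FF[N]_S\simeq \wt\EE\FF[N]_T$. Combined with the fact that $p^*$ is symmetric monoidal, this gives a natural equivalence
\[
p^*(\wt\EE\FF[N]_S\otimes Y)\simeq \wt\EE\FF[N]_T\otimes p^*Y,
\]
so $p^*$ commutes with the localization endofunctor $\L_{\co(\FF[N])}$ and, in particular, sends $\L_{\co(\FF[N])}\SH^G(S)$ into $\L_{\co(\FF[N])}\SH^G(T)$.

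Next I would observe that the square
\[
\begin{tikzcd}
\SH^{G/N}(S)\ar[r,"\pi^*"]\ar[d,"p^*"'] & \SH^G(S)\ar[d,"p^*"] \\
\SH^{G/N}(T)\ar[r,"\pi^*"] & \SH^G(T)
\end{tikzcd}
\]
commutes. This is because at the level of sites both composites send $X\in\Sm^{G/N}_S$ to $X\times_ST$ with $G$ acting through $\pi$ and trivially on the $T$-factor; passing to motivic localizations preserves the identification since all four functors are symmetric monoidal left adjoints. Combining this with the previous step, the composite $\psi=\wt\EE\FF[N]\otimes\pi^*(-)$ satisfies $p^*\circ\psi_S\simeq\psi_T\circ p^*$ naturally, and because $\psi_S$ and $\psi_T$ are equivalences by construction, so is the conjugate statement for $\psi^{-1}$.

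Finally, using $(-)^{\mgf N}\simeq\psi^{-1}\circ(\wt\EE\FF[N]\otimes -)$, we assemble
\[
p^*(Y^{\mgf N})\simeq p^*\psi_S^{-1}(\wt\EE\FF[N]_S\otimes Y)\simeq \psi_T^{-1}\bigl(p^*(\wt\EE\FF[N]_S\otimes Y)\bigr)\simeq \psi_T^{-1}(\wt\EE\FF[N]_T\otimes p^*Y)\simeq (p^*Y)^{\mgf N},
\]
natural in $Y$. There is no real obstacle; the only subtlety is recognizing that the localization $\L_{\co(\FF[N])}$ is controlled by the single object $\wt\EE\FF[N]$, which reduces the whole question to the geometric base change of \aref{prop:bcE}. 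Ambidexterity of fixed points against $p^*$ (which fails for ordinary $(-)^N$) is avoided entirely by passing through the equivalence $\psi^{-1}$ instead of through $\pi_*$ directly.
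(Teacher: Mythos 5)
Your argument is correct and is essentially the paper's own proof: both reduce everything to the base-change equivalence $p^*\wt\EE\FF[N]_S\simeq\wt\EE\FF[N]_T$ (via \aref{prop:bcE} and exactness of $p^*$) and the commutation of $p^*$ with $\pi^*$ and the localization, yielding $p^*\psi\simeq\psi p^*$. You simply spell out the steps that the paper compresses into one commutative diagram.
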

\begin{proof}
Since $p^*\wt\EE\FF[N]_S\simeq \wt\EE\FF[N]_T$, we have a commutative diagram
\[
\begin{tikzcd}
\SH^{G/N}(S)\ar[r, "\pi^*"]\ar[d, "p^*"] &  \SH^G(S) \ar[r]\ar[d, "p^*"] & \L_{\co(\FF[N])}\SH^G(S)\ar[d, "p^*"] \\
\SH^{G/N}(T)\ar[r, "\pi^*"] &  \SH^G(T) \ar[r] & \L_{\co(\FF[N])}\SH^G(T),
\end{tikzcd}
\]
so that $p^*\psi\simeq \psi p^*$, which implies the result. 
\end{proof}

Recall that for a subgroup $H\leq N$ we write $\WW H = \W_G H/\W_N H$ for the quotient of Weyl groups. 
We also write $\EE_{\W_NH}(\W_GH) $ for the universal $\W_NH$-free $\W_GH$-motivic space (also denoted $\EE\FF(\W_NH)$). See \aref{sec:filtadj} for a recollection of $N$-adjacency.

\begin{proposition}\label{prop:Phiadj}
	Suppose that $\FF\subseteq \FF'$ is $N$-adjacent at the subgroup $H\leq N$. Then, for $X\in \SH^{G}(S)$, there is a natural equivalence
	\[
	(\EE(\FF',\FF)\otimes X)^{N} \simeq G/N_+\ltimes_{\WW H} \left(\EE_{\W_NH}(\W_GH)_+\otimes X^{\mgf H}\right)^{\W_NH} 
	\]
	
\end{proposition}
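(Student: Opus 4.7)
The plan is to successively rewrite the left-hand side using both parts of \aref{prop:EENadj}, the projection formula from the Wirthmueller isomorphism (\aref{prop:wirthmueller}) for $\iota\colon N_GH\hookrightarrow G$, and an interchange of $N$-fixed points with the induction $\iota_!$. By \aref{prop:EENadj}(1) together with the Wirthmueller equivalence $\iota_!\simeq\iota_*$ and the resulting projection formula, one obtains
\[
\EE(\FF',\FF)\otimes X \;\simeq\; \iota_!\bigl(\EE_{\W_NH}(\W_GH)_+\otimes \iota^*\EE(\FF',\FF)\otimes \iota^*X\bigr),
\]
and by \aref{prop:EENadj}(2) a factor of $\wt\EE\FF[H]$ may be inserted freely on the inner term, localizing $\iota^*X$ at isotropy containing $H$.

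Applying $(-)^N=\pi_*$ reduces the proof to the assertion that for $Y\in\SH^{N_GH}(S)$,
\[
\pi_*\iota_!\bigl(\EE_{\W_NH}(\W_GH)_+\otimes Y\bigr) \;\simeq\; \iota'_!\bigl(\EE_{\W_NH}(\W_GH)_+\otimes Y\bigr)^{N_NH},
\]
where $\iota'\colon\WW H\hookrightarrow G/N$ is the canonical inclusion coming from the isomorphism $N_GH/N_NH\cong\WW H$, so that $G/N_+\ltimes_{\WW H}(-)=\iota'_!$. The relevant square of groups
\[
\begin{tikzcd}
N_GH \ar[r, "\iota"] \ar[d, "\pi'"'] & G \ar[d, "\pi"] \\
\WW H \ar[r, "\iota'"] & G/N
\end{tikzcd}
\]
is cartesian only when $H\lhd N$, so no formal base change applies. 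The argument uses the Wirthmueller equivalence $\iota_!\simeq\iota_*$ and the $\W_NH$-free isotropy of $\EE_{\W_NH}(\W_GH)_+$ to eliminate the non-identity terms in a Mackey-style decomposition of $\pi_*\iota_!$: a contribution indexed by a double coset $NgN_GH$ with $g\notin N\cdot N_GH$ would involve fixed points at a subgroup of $N_GH$ whose image in $\W_GH$ meets $\W_NH$ nontrivially, and such fixed points vanish on $\EE_{\W_NH}(\W_GH)_+$.

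Finally, since $H$ acts trivially on $\EE_{\W_NH}(\W_GH)$ viewed as a $\W_GH$-space and $H\lhd N_NH$ with $N_NH/H=\W_NH$, the $N_NH$-fixed points split as $((-)^{H})^{\W_NH}$, and
\[
\bigl(\EE_{\W_NH}(\W_GH)_+\otimes \iota^*\EE(\FF',\FF)\otimes \iota^*X\bigr)^{H}\;\simeq\;\EE_{\W_NH}(\W_GH)_+\otimes X^{\mgf H}
\]
by \aref{prop:EENadj}(2) and the characterization of geometric fixed points in \aref{prop:gfp}(3). Combining these identifications yields the proposition. The principal obstacle is the interchange in the second displayed equation: the absence of a cartesian square forces one to exploit the specific $\W_NH$-free structure of $\EE_{\W_NH}(\W_GH)_+$ together with the isotropy-theoretic localizations established in \aref{sec:universalF}, rather than appeal to a purely formal base-change argument.
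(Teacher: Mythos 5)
Your reductions at either end are correct and match the paper's: \aref{prop:EENadj} together with the projection formula rewrites $\EE(\FF',\FF)\otimes X$ as $\lambda_!\bigl(\EE_{\W_NH}(\W_GH)_+\otimes\wt\EE\FF[H]\otimes\lambda^*(\EE(\FF',\FF)\otimes X)\bigr)$ for $\lambda:\NN_GH\hookrightarrow G$, and your identification of the inner $H$-fixed points with geometric fixed points via $\wt\EE\FF[H]$ and the monoidality of $(-)^{\mgf H}$ is fine. The gap is in the step you yourself single out as the crux, the interchange $\pi_*\lambda_!\simeq\overline\lambda_!\bigl((-)^{\NN_NH}\bigr)$. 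You assert it via a ``Mackey-style decomposition of $\pi_*\iota_!$'' over double cosets, with the non-identity terms killed by $\W_NH$-freeness. No such decomposition is established in the paper or available off the shelf for genuine fixed points of induced motivic spectra (producing one is essentially of the same depth as the splitting theorem itself), and the heuristic is wrong on its own terms: in the classical model $(\mathrm{Coind}_{\NN_GH}^{G}Y)^{N}\simeq\prod_{[g]\in N\backslash G/\NN_GH}Y^{\NN_GH\cap g^{-1}Ng}$, and since $N$ is normal every double coset contributes a copy of $Y^{\NN_NH}$; the non-identity terms do not vanish --- they are exactly what assembles into the induction $G/N_+\ltimes_{\WW H}(-)$ on the right-hand side. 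So $\W_NH$-freeness plays no role in this step, and the argument as written does not go through.

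The interchange is in fact purely formal and needs neither a cartesian square of groups nor any isotropy hypothesis. The two composites $\pi\circ\lambda$ and $\overline\lambda\circ\pi''\circ\pi':\NN_GH\to G/N$ agree (both are the quotient by $\NN_NH=\NN_GH\cap N$ followed by the inclusion $\WW H\hookrightarrow G/N$), so the restriction functors $\lambda^*\pi^*$ and $\pi'^*\pi''^*\overline\lambda^*$ agree. By the Wirthmueller isomorphism \aref{prop:wirthmueller}, $\lambda_!\simeq\lambda_*$ and $\overline\lambda_!\simeq\overline\lambda_*$, so $\pi_*\lambda_!$ and $\overline\lambda_!\pi''_*\pi'_*$ are both right adjoint to that one restriction functor, hence naturally equivalent on all of $\SH^{\NN_GH}(S)$. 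This is exactly how the paper argues; applying the resulting equivalence to $\EE_{\W_NH}(\W_GH)_+\otimes\wt\EE\FF[H]\otimes\lambda^*(\EE(\FF',\FF)\otimes X)$ finishes the proof. Replace your double-coset step with this adjunction argument.
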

\begin{proof}
Consider the commutative diagram of group homomorphisms
\[
\begin{tikzcd}
& \NN_GH \ar[r, hookrightarrow, "\lambda"]\ar[ld, twoheadrightarrow, "\pi'"' ]\ar[d, twoheadrightarrow,] & G\ar[d, twoheadrightarrow, "\pi"] \\
\W_GH\ar[r, twoheadrightarrow,  "\pi''"] & \ar[r, hookrightarrow, "\overline{\lambda}"]\WW & G/N.
\end{tikzcd}
\]	
There is a natural equivalence
$\pi_*\lambda_! \simeq \overline{\lambda}_{!}\pi''_*\pi'_*$ since both are seen to be right adjoint to the restriction along $\NN_GH \to G/N$.	
	By  \aref{prop:EENadj}, the canonical maps  
\[
	\begin{tikzcd}
\lambda_! \left(\EE_{\W_NH}(\W_GH)_+\otimes \lambda^*(\EE(\FF',\FF)\otimes X)\right)
\ar[r,"\sim"]\ar[d,"\sim"]& \EE(\FF',\FF)\otimes X \\
\lambda_!\left(\EE_{\W_NH}(\W_GH)_+\otimes \wt\EE\FF[H]\otimes \lambda^*(\EE(\FF',\FF) \otimes X)\right)	&  \\
\end{tikzcd}
\]
are equivalences. 
Applying $\pi_*$, the equivalence displayed above and that $(-)^{\mgf}$ is symmetric monoidal,  yields the result.

\end{proof}

\subsection{Homotopy fixed points and the Tate construction}\label{sub:Tate}

\begin{definition}
	Let $X\in \SH^{G}(S)$ and $N\leq G$ a normal subgroup.
	\begin{enumerate}
		\item The \emph{motivic homotopy fixed point spectrum} of $X$ is 
		\[
		X^{\hh N}:= \pi_*i_*i^*(X) \simeq F(\EE\FF(N)_+, X)^{N}
	\]
	\item the {\emph motivic Tate spectrum} of $X$ is 
	\[
	X^{\tte N}:=\wt\EE\FF(N)\wedge F(\EE\FF(N)_+, X)^{N}.
	\]
	\end{enumerate}
\end{definition}

\begin{corollary}[Motivic Tate diagram]\label{prop:motivicTate}
	Suppose that $p$ is invertible on $B$. Let 
	$X\in \SH^{C_{p}}(B)$. There is a natural  
	pushout square in $\SH(B)$
	\[
	\begin{tikzcd}
	X^{C_{p}} \ar[r] \ar[d] & X^{\mgf C_{p}} \ar[d] \\
	X^{\hh C_{p}} \ar[r] & X^{\tte C_{p}}. 
	\end{tikzcd}
	\]	
\end{corollary}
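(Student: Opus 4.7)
The plan is to produce the square as a comparison between two cofiber sequences obtained by smashing with $\EE\FF(C_p)_+\to S^0\to \wt\EE\FF(C_p)$ and then applying $\pi_*=(-)^{C_p}$. The key structural observation, which makes the Tate square especially clean for $G=N=C_p$, is that the three families of subgroups of $C_p$ relevant here coincide:
\[
\FF(C_p)=\FF[C_p]=\mathcal{P}=\{e\},
\]
so $\EE\FF(C_p)\simeq\EE\FF[C_p]\simeq \EE\mathcal{P}$ and likewise for the unreduced suspensions.

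First, I would consider the unit map $\eta\colon X\to F(\EE\FF(C_p)_+,X)$ in $\SH^{C_p}(B)$ and smash the defining cofiber sequence $\EE\FF(C_p)_+\to S^0\to \wt\EE\FF(C_p)$ with both $X$ and $F(\EE\FF(C_p)_+,X)$. This produces a map between cofiber sequences in $\SH^{C_p}(B)$ whose left square is
\[
\begin{tikzcd}
\EE\FF(C_p)_+\wedge X\ar[r]\ar[d] & X\ar[d,"\eta"] \\
\EE\FF(C_p)_+\wedge F(\EE\FF(C_p)_+,X)\ar[r] & F(\EE\FF(C_p)_+,X).
\end{tikzcd}
\]
The left vertical is an equivalence: writing $i\colon \Sm^{C_p}_B[\FF(C_p)]\hookrightarrow \Sm^{C_p}_B$, \aref{prop:EEimage} identifies $\EE\FF(C_p)_+\wedge-$ with the colocalization $i_!i^*$, and since $i^*\EE\FF(C_p)_+\simeq S^0$ we have $i^*F(\EE\FF(C_p)_+,X)\simeq i^*X$, so the induced map is an equivalence after applying $i_!i^*$.

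Next I would apply $\pi_*=(-)^{C_p}$. This is a right adjoint (\aref{prop:usfpadj}), hence preserves fiber sequences; as the ambient categories are stable, it preserves cofiber sequences as well. The resulting diagram is
\[
\begin{tikzcd}[column sep=small]
(\EE\FF(C_p)_+\wedge X)^{C_p}\ar[r]\ar[d,"\wr"'] & X^{C_p}\ar[r]\ar[d] & (\wt\EE\FF(C_p)\wedge X)^{C_p}\ar[d] \\
(\EE\FF(C_p)_+\wedge F(\EE\FF(C_p)_+,X))^{C_p}\ar[r] & F(\EE\FF(C_p)_+,X)^{C_p}\ar[r] & (\wt\EE\FF(C_p)\wedge F(\EE\FF(C_p)_+,X))^{C_p}
\end{tikzcd}
\]
with cofiber sequences as rows. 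Since the leftmost vertical is an equivalence, the right-hand square is a pushout (equivalently, a pullback) in $\SH(B)=\SH^{C_p/C_p}(B)$.

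Finally, I would identify the four vertices using the equality $\FF(C_p)=\FF[C_p]$ and the results of \aref{sub:gfp}: by \aref{prop:gfp}(3) the upper right corner is $(\wt\EE\FF[C_p]\wedge X)^{C_p}=X^{\mgf C_p}$; by definition the lower left corner is $F(\EE\FF(C_p)_+,X)^{C_p}=X^{\hh C_p}$ and the lower right corner is $X^{\tte C_p}$; and the upper left corner is $X^{C_p}$. The main subtlety is really just bookkeeping — verifying that the left vertical arrow is an equivalence (via the colocalization identification $\EE\FF(C_p)_+\wedge -\simeq i_!i^*$) and, crucially, exploiting the coincidence $\FF(C_p)=\FF[C_p]$ that is special to $G=N$ of prime order and lets the geometric fixed points functor appear on the upper right of the square.
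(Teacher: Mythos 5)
Your proposal is correct and is essentially the paper's own argument: the paper's entire proof is to display the same two-row comparison of cofiber sequences obtained by smashing $\EE C_{p+}\to S^0\to\wt\EE C_p$ with $X$ and with $F(\EE C_{p+},X)$, applying $(-)^{C_p}$, and noting that the leftmost vertical is an equivalence (hence the right square is a pushout). Your additional remarks — that $\FF(C_p)=\FF[C_p]=\mathcal{P}$ when $G=N=C_p$, which is what lets $(\wt\EE\FF(C_p)\wedge X)^{C_p}$ be identified with $X^{\mgf C_p}$, and that the left vertical is an equivalence because $\EE\FF(C_p)_+\wedge-\simeq i_!i^*$ and $i^*\eta$ is an equivalence — are exactly the bookkeeping the paper leaves implicit.
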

\begin{proof}
	We have a commutative diagram with exact rows
	\[
	\begin{tikzcd}
	(\EE C_{p\,+}\wedge X)^{C_{p}}\ar[r]\ar[d,"\sim"] & X^{C_{p}} \ar[r]\ar[d] & (\wt\EE C_{p} \wedge X)^{C_{p}} \ar[d] \\
	(\EE C_{p\,+}\wedge F(\EE C_{p\,+}, X))^{C_{p}}\ar[r]  & F(\EE C_{p\,+}, X)^{C_{p}} \ar[r] & (\wt\EE C_{p} \wedge F(\EE C_{p\,+}, X))^{C_{p}}. 
	\end{tikzcd}
	\]
\end{proof}

\section{Quotient spectra}\label{sec:quotient}
As in the previous section, we let $N\unlhd G$ be a normal subgroup and $\pi:G\to G/N$ the quotient homomorphism.  It turns out (as in classical equivariant homotopy) that the functor
$\pi^*:\SH^{G/N}(S)\to \SH^G(S)$ does not have have a left adjoint, except in the trivial case that $G=\{e\}$. It does however have a partial left adjoint, constructed in this section, defined on the full subcategory of $N$-free $G$-spectra.

\subsection{Stabilization of free objects}
We write $\SH_{\TT}^{G,\Nfree}(S) := \SH_{\TT}^{G,\FF(N)}(S)$. In this section, we show that the stabilization 
\[
\lambda^*:\SH_{\Ntriv}^{G,\Nfree}(S)\to \SH^{G,\Nfree}(S)
\]
is an equivalence of stable $\i$-categories. We are grateful to Tom Bachmann for suggestions which streamlined the argument given here for this equivalence. 

Recall that  the quotient functor induces an equivalence of categories $(-)/N:\Sm^G_S\simeq \Sm^{G/N}_{S/N}$ if $N$ acts freely on $S$. Under this equivalence, Nisnevich topologies correspond, so we obtain an equivalence of symmetric monoidal $\i$-categories
\[
\HH^{G}_\bb(S) \xrightarrow{\sim} \HH^{G/N}_\bb(S/N).
\]
Lastly, stabilizing with respect to $\Sph^{G/N}_{S/N}$, we obtain an equivalence of symmetric monoidal $\i$-categories
\[
\pi_!:\SH^G_{\Ntriv}(S)\xrightarrow{\sim} \SH^{G/N}(S/N).
\]

\begin{lemma}\label{lem:easycase}
If $N$ acts freely on $S$, then 
$\lambda^*:\SH_{\Ntriv}^{G,\Nfree}(S)\to \SH^{G,\Nfree}(S)$ is an equivalence.
\end{lemma}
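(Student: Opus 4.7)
The plan is to identify both sides of $\lambda^*$ with $\SH^{G/N}(S/N)$ via the quotient equivalence, and check that the identifications are compatible with $\lambda^*$. Since $N$ acts freely on $S$, every $X\in\Sm^G_S$ inherits a free $N$-action (any $N$-fixed point of $X$ would descend to an $N$-fixed point of $S$), so $\Sm^{G,\Nfree}_S=\Sm^G_S$ and hence $\HH^{G,\Nfree}_\bb(S)=\HH^G_\bb(S)$. Consequently both $\SH^{G,\Nfree}_{\Ntriv}(S)$ and $\SH^{G,\Nfree}(S)$ are stabilizations of this single unstable category, and $\lambda^*$ is the tensor-inverting localization from the stabilization at $p^*\Ntriv$ to the stabilization at the larger set $p^*\Sph^G_B$; it is an equivalence exactly when the additional spheres are already invertible on the source.

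Write $\overline{p}\colon S/N\to B$ for the quotient structure map. By \aref{prop:qe}, the quotient functor $(-)/N$ is an equivalence $\Sm^G_S\simeq\Sm^{G/N}_{S/N}$ that preserves products and Nisnevich topologies, hence induces a symmetric monoidal equivalence $\HH^G_\bb(S)\simeq\HH^{G/N}_\bb(S/N)$. For $\E\in\pi^*\Rep^{G/N}_B$ the $N$-action on $p^*\E$ is supported only on the $S$-factor, so $(p^*\E)/N\simeq\overline{p}^*\E$ as $G/N$-vector bundles on $S/N$, and thus $T^{p^*\E}$ corresponds to $T^{\overline{p}^*\E}$. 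Hence $p^*\Ntriv$ corresponds exactly to $\overline{p}^*\Sph^{G/N}_B$, recovering the identification $\SH^{G,\Nfree}_{\Ntriv}(S)\simeq\SH^{G/N}(S/N)$ indicated in the text preceding the lemma.

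For the larger stabilization, by \aref{rem:single} it suffices to invert the single sphere $T^{p^*\rho_G}$. Since $|N|$ is invertible on $B$, Maschke's theorem gives a splitting $\rho_G\simeq\rho_{G/N}\oplus W$ with $W=\rho_G/\rho_{G/N}$, and accordingly $T^{p^*\rho_G}\simeq T^{p^*\rho_{G/N}}\wedge T^{p^*W}$. The first factor is $N$-trivial and has already been inverted under the identification above. The proof thus reduces to showing that the Thom space of the twisted $G/N$-bundle $(p^*W)/N\simeq S\times^N W$ on $S/N$ is already invertible in $\SH^{G/N}(S/N)$.

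The main obstacle is this last invertibility claim. My approach: since $|G/N|$ is invertible on $B$, the group $G/N$ is linearly reductive, so $G/N$-equivariant vector bundles on smooth $G/N$-schemes are fpNis-locally of the form $\overline{p}^*V_0$ for some $G/N$-representation $V_0$ on the local base. Hence fpNis-locally on $S/N$ the bundle $S\times^N W$ is such a pullback, with Thom space inverted in $\SH^{G/N}(S/N)$ by construction. Combining this with Nisnevich descent for the Picard presheaf of a stable presentable symmetric monoidal $\infty$-category, local invertibility promotes to global invertibility of $T^{S\times^N W}$, completing the argument.
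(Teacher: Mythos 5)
Your argument coincides with the paper's through its main step: the paper likewise observes that $N$-freeness of $S$ forces $\Sm^{G,\Nfree}_S=\Sm^G_S$, identifies $\SH^{G,\Nfree}_{\Ntriv}(S)$ with $\SH^{G/N}(S/N)$ via the quotient equivalence $\pi_!$, and reduces the lemma to showing that the remaining spheres are already invertible there. Where you diverge is in how that invertibility is handled. The paper's proof is a one-liner at this point: for $E$ an equivariant vector bundle on $S$ one has $\pi_!(T^E)\simeq T^{E/N}$, and $E/N$ is a $G/N$-equivariant vector bundle on $S/N$, whose Thom space is invertible in $\SH^{G/N}(S/N)$ by the general (known) fact that Thom spaces of arbitrary equivariant vector bundles over the base are invertible in the stable category. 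You instead reduce via \aref{rem:single} to the single sphere $T^{p^*W}$, $W=\rho_G/\rho_{G/N}$, and set out to prove invertibility of $T^{S\times^N W}$ from scratch by a local-to-global argument; this is a legitimate, more self-contained route, and your descent step (invertibility is Nisnevich-local, by descent for $\SH^{G/N}(-)$) is exactly how the general fact is proved in the literature. However, your local input is stated too strongly and is not justified: it is not clear (and in mixed characteristic quite delicate) that an equivariant vector bundle on a smooth $G/N$-scheme is fpNis-locally \emph{isomorphic} to the pullback of a representation. What is true, and is all you need, is that affine-locally the bundle $E$ is a \emph{direct summand} of $\mathcal{O}\otimes n\rho_{G/N}$ by linear reductivity; the complementary summand $E'$ then gives $T^{E}\wedge T^{E'}\simeq T^{n\rho_{G/N}}$, which is already inverted, so $T^E$ is locally invertible and your descent argument finishes. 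With that local claim weakened accordingly, your proof is correct.
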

\begin{proof}
It suffices to see that if $E\to S$ is an equivariant vector bundle on $S$ then $T^{E}$ is invertible in $\SH_{\Ntriv}^{G,\Nfree}(S)\simeq \SH_{\Ntriv}^{G}(S)$. Under the equivalence
$\pi_!:\SH^{G}_{\Ntriv}(S)\simeq \SH^{G/N}(S/N)$ of symmetric monoidal $\i$-categories, we have that $\pi_!(T^E)\simeq T^{E/N}$, where $E/N\to S/N$ is the quotient, which is a $G/N$-equivariant vector bundle. In particular, $T^{E/N}$ is invertible, so the result follows.
\end{proof}

\begin{theorem}\label{cor:stabequiv}
	Let $N\trianglelefteq G$ be a normal subgroup.
	The stabilization functor
	\[
	\lambda^*:\SH_{\Ntriv}^{G,\Nfree}(S)\to\SH^{G,\Nfree}(S)
	\]
	is an equivalence of stable $\infty$-categories.
\end{theorem}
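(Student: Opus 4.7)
The plan is to reduce to the setting of \aref{lem:easycase} by means of the jointly conservative family $\{p^{*}\}_{p\colon X\to S\,\in\,\Sm^{G}_{S}[\FF(N)]}$ of \aref{prop:summary}(3).

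Set $W=\rho_{G}/\rho_{G/N}$. Combining \aref{rem:single} with the argument of \aref{prop:UV} gives an equivalence $\SH^{G}(S)\simeq\SH^{G}_{\Ntriv}(S)[(T^{W})^{-1}]$. Tensoring with $\HH^{G,\Nfree}_{\bb}(S)$ over $\HH^{G}_{\bb}(S)$, and using that the relative tensor product in $\LPrx$ commutes with the colimit construction $\CC[T^{-1}]$, one obtains an identification
\[
\SH^{G,\Nfree}(S)\simeq\SH^{G,\Nfree}_{\Ntriv}(S)[(T^{W})^{-1}]
\]
under which the universal map is $\lambda^{*}$. Since $T^{W}$ is the Thom space of a vector bundle, motivic purity (as developed in \cite{Hoyois:6}) exhibits it as a dualizable object, so the inversion of $T^{W}$ is a Bousfield localization, and $\lambda^{*}$ is an equivalence as soon as it is conservative.

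To verify conservativity, let $f\colon A\to B$ be a morphism in $\SH^{G,\Nfree}_{\Ntriv}(S)$ with $\lambda^{*}f$ an equivalence. For any $p\colon X\to S$ in $\Sm^{G}_{S}[\FF(N)]$, the group $N$ acts freely on $X$ since every stabilizer meets $N$ trivially; consequently $\Sm^{G}_{X}[\FF(N)]=\Sm^{G}_{X}$, so that $\SH^{G,\Nfree}_{\Ntriv}(X)=\SH^{G}_{\Ntriv}(X)$, and \aref{lem:easycase} applied to $X$ shows that $\lambda^{*}_{X}$ is an equivalence. Naturality of $\lambda^{*}$ with respect to pullback yields $\lambda^{*}_{X}\,p^{*}f\simeq p^{*}\lambda^{*}f$, which is an equivalence, so $p^{*}f$ is an equivalence for every such $p$. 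Joint conservativity of $\{p^{*}\}$ then implies that $f$ is an equivalence, completing the argument.

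The main obstacle is establishing that $\lambda^{*}$ is a Bousfield localization, that is, the dualizability of $T^{W}$ in the partially stabilized $\SH^{G,\Nfree}_{\Ntriv}(S)$, where only $N$-trivial $G$-spheres have been inverted; this dualizability must be extracted from purity in a form compatible with the $\Nfree$-localizing subcategory rather than simply imported from the fully stabilized $\SH^{G}(S)$.
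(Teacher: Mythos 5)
Your conservativity argument is sound, and it is essentially the same maneuver the paper uses for its full-faithfulness step: pull back along the jointly conservative family $\{p^*\}$ of \aref{prop:summary}, note that $N$ acts freely on any $X\in\Sm^G_S[\FF(N)]$, and invoke \aref{lem:easycase} over $X$. The gap is the other half, namely the claim that $\lambda^*$ is a Bousfield localization. This cannot be skipped: a conservative colimit-preserving functor of presentable stable $\infty$-categories need not be an equivalence (the direct-sum functor $\CC\times\CC\to\CC$ is conservative, colimit-preserving, and not an equivalence), so you genuinely need the right adjoint $\lambda_*$ to be fully faithful, and your justification fails on two counts. First, as you acknowledge, dualizability of $T^W$ in $\SH^{G,\Nfree}_{\Ntriv}(S)$ is not established: purity makes $T^W$ invertible only after all spheres have been inverted, and producing a dual (really, an inverse) for $T^W$ in the partially stabilized category is essentially the content of the theorem, so this route is circular. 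Second, even granting dualizability, the inference that inverting a dualizable object is a localization is not a valid principle: for a symmetric dualizable $T$, an object of $\CC[T^{-1}]\simeq\Stab_T(\CC)$ is a compatible sequence $(X_n)$ with $X_n\simeq T^{\vee}\otimes X_{n+1}$, the right adjoint extracts $X_0$, and neither the unit nor the counit of this adjunction is an equivalence unless $T\otimes-$ is already an autoequivalence of $\CC$ --- in which case $\lambda^*$ is an equivalence for formal reasons and conservativity is superfluous.

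For comparison, the paper completes the argument differently: it proves that the unit $\id\to\lambda_*\lambda^*$ is an equivalence --- this uses not only that $\lambda^*$ commutes with $p^*$ but also that $\lambda_*$ does, because $p$ is smooth --- and then checks essential surjectivity directly on the generators $T^{-\VV}\otimes X_+\simeq p_\#p^*(T^{-\VV})$ of \aref{prop:summary}, using that $p_\#$ commutes with $\lambda^*$ and that $p^*(T^{-\VV})$ lies in the image of $\lambda^*_X$ by \aref{lem:easycase}. To salvage your localization-plus-conservativity strategy you would have to show that $T^W\otimes-$ is already an autoequivalence of $\SH^{G,\Nfree}_{\Ntriv}(S)$, which comes down to the same two checks.
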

\begin{proof}

Write $\lambda_*$ for the right adjoint of $\lambda^*$. We first show that 
$\id \to \lambda_*\lambda^*$ is an equivalence, i.e., $\lambda^*$ is fully faithful. By \aref{prop:summary} it suffices to check that $p^*\to p^*\lambda_*\lambda^*$ is an equivalence for any $p:X\to S$ in $\Sm^{G,\Nfree}_S$ (indeed the family of such $p^*$ is conservative). 
But 
 $\lambda^*$ commutes with $p^*$, and since $p$ is smooth, so does $\lambda_*$.  Therefore
 this transformation is identified with 
$p^*\to \lambda_*\lambda^*p^*$ and  since $X$ is $N$-free, $\lambda_*\lambda^*p^*\simeq p^*$ by \aref{lem:easycase} as required.

To see that $\lambda^*$ is essentially surjective, by \aref{prop:summary}, it suffices to show that
$T^{-\VV}\otimes X_+$ is in the image of $\lambda^*$, where  $T^\VV\in  \Sph_B^G$ and $p:X\to S$ is in $\Sm^{G, \Nfree}_S$. But $T^{-\VV}\otimes X_+\simeq p_\#p^*(T^{-\VV})$ and by \aref{lem:easycase}, we have $p^*(T^{-\VV})$ is in the essential image of $\lambda^*$ and since $\lambda^*$ and $p_\#$ commute, we are done.   
\end{proof}

\begin{remark}
It follows that when $S$ has $N$-free action, there is an induced equivalence of symmetric monoidal $\i$-categories
\[
\pi_!:\SH^G(S)\simeq \SH^{G/N}(S/N).
\]	
\end{remark}

Let $p:X\to S$ be a map. The exceptional pushforward on $N$-free $G$-spectra  $p_!:\SH^{G,\Nfree}(X)\to \SH^{G,\Nfree}(S)$ can be defined as the composite
\[
i^*p_!i'_!:\SH^{G,\Nfree}(X)\hookrightarrow \SH^G(X)\to \SH^G(S) \to \SH^{G,\Nfree}(S).
\]
If $p$ is smooth, then $p_!\simeq p_\#\Sigma^{-\Omega_f}$, where $\Omega_f$ is the sheaf of differentials of $X$ over $S$. 

\begin{corollary}\label{cor:Nfreegen}
	The stable $\i$-category $\SH^{G,\Nfree}(S)$
	is generated under colimits by any of the following sets.
\begin{enumerate}
	\item $\Sigma^{-k\rho_{G/N}}p_{\#}\SS_X$ where $k\geq 0$ and $p:X\to S$ is in $\Sm^{G,\Nfree}_S$ with $X$  affine.
	\item $\Sigma^{-k\rho_{G/N}}p_{!}\SS_X$
	 where $k\geq 0$ and $p:X\to S$ is in $\Sm^{G,\Nfree}_S$ with $X$  affine.
	\item $\Sigma^{-k\rho_{G/N}}i^*q_*\SS_X$ where $k\geq 0$ and $q:X\to S$ is in  $\Sch^{G}_S$ with $q$  projective.
	\item $\Sigma^{-k\rho_{G/N}}q_*\SS_X$ where $k\geq 0$  and $q:X\to S$ is in  $\Sch^{G,\Nfree}_S$.
\end{enumerate} 
\end{corollary}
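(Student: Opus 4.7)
The strategy is to derive all four generating sets from a single core. First, \aref{cor:stabequiv} identifies $\SH^{G,\Nfree}(S)\simeq\SH^{G,\Nfree}_{\Ntriv}(S)$, and applying \aref{prop:summary}(2) to the latter produces generators $\Sigma^{-k\VV}\Sigma^\infty_{\Ntriv}X_+$ with $T^\VV\in\Ntriv$, $k\geq 0$, and affine $p\colon X\to S$ in $\Sm^{G,\Nfree}_S$. By \aref{rem:single} applied to $G/N$-representations and pulled back via $\pi^*$, every $N$-trivial $G$-representation is, Nisnevich-locally on $B$, a quotient of a sum of copies of $\rho_{G/N}$, so the stabilizing set $\Ntriv$ is equivalent to the single sphere $T^{\rho_{G/N}}$. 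Since $\Sigma^\infty X_+=p_\#\SS_X$ for smooth $p$, this yields set (1).

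Set (2) follows from (1) via the smooth exceptional formula $p_!\simeq p_\#\Sigma^{-\Omega_p}$ recalled just before the statement. The Thom spectrum $T^{-\Omega_p}$ is an invertible object of $\SH^G(X)$, and since $X\in\Sm^{G,\Nfree}_S$ is $N$-free one has $\Sm^{G,\Nfree}_X=\Sm^G_X$, so both $p_\#\SS_X$ and $p_!\SS_X$ belong to $\SH^{G,\Nfree}(S)$. Working Nisnevich-locally on $X$ and using linear reductivity of $G$ to split $\Omega_p$ equivariantly, the resulting Thom cofiber sequences show that the two collections generate the same localizing subcategory.

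For (3), starting from (2) I would equivariantly compactify each affine smooth $N$-free $p\colon Y\to S$ into a $G$-projective $\bar q\colon\bar Y\to S$ in $\Sch^G_S$ (available since objects of $\Sch^G_S$ are $G$-quasi-projective and $G$ is linearly reductive). The open--closed localization triangle for $j\colon Y\hookrightarrow\bar Y$ with closed complement $i\colon Z\hookrightarrow\bar Y$, pushed forward by $\bar q$, gives
\[
p_!\SS_Y \longrightarrow \bar q_*\SS_{\bar Y} \longrightarrow (\bar q\circ i)_*\SS_Z,
\]
using $\bar q_!=\bar q_*$ since $\bar q$ is proper. Applying the projector $i^*\colon\SH^G(S)\to\SH^{G,\Nfree}(S)$ and noting that $p_!\SS_Y$ already lies in $\SH^{G,\Nfree}(S)$ (hence $i^*p_!\SS_Y\simeq p_!\SS_Y$), this exhibits every generator of (2) as a cofiber of generators of type (3), so (3) generates $\SH^{G,\Nfree}(S)$. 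For (4), an analogous argument applies, where one further arranges that the relevant pieces lie in $\Sch^{G,\Nfree}_S$ by restricting the compactification to its $N$-free open locus and using the observation that $q^*$ sends $\SH^G(S)$ into $\SH^{G,\Nfree}(X)$ whenever $X$ is $N$-free, so that $q_*\SS_X$ automatically lies in $\SH^{G,\Nfree}(S)$.

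The principal obstacle lies in making the last paragraph precise: arranging the equivariant compactifications (and, for (4), the reduction to $\Sch^{G,\Nfree}_S$) compatibly with the six-functor manipulations of \cite{Hoyois:6} and with the projector $i^*$, so that each step of the argument remains inside the localizing subcategory under construction. The hypothesis that $|G|$ is invertible in $\mathcal{O}_B$ — equivalently, linear reductivity of $G$ — is essential throughout for the required equivariant representation-theoretic splittings and for the existence of the equivariant projective compactifications.
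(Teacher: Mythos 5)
Your steps (1) and (3) follow the paper's proof: (1) is \aref{prop:summary} combined with the equivalence of \aref{cor:stabequiv}, and (3) is exactly the compactification-plus-gluing argument the paper gives. The two remaining passages, however, each have a genuine gap. For (1) $\Rightarrow$ (2): writing $p_!\simeq p_\#\Sigma^{-\Omega_p}$ is the right starting point, but ``working Nisnevich-locally and splitting $\Omega_p$ equivariantly'' does not recover $p_\#\SS_X$ from the objects $p_\#T^{-\Omega_p}$. An equivariant vector bundle on an affine $N$-free scheme is not locally trivial as an equivariant bundle; linear reductivity only yields a splitting $\Omega_p\oplus\E\cong p^*(n\rho_G)$, and no Thom cofiber sequence lets you cancel the complementary summand $\E$. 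The paper's argument introduces a new geometric object: the total space $r:\V(\E)\to X$, which is again smooth, affine and $N$-free over $S$, and for which $\Omega_{p\circ r}\cong(p\circ r)^*(n\rho_G)$, whence $\Sigma^{n\rho_{G/N}}(p\circ r)_!\SS_{\V(\E)}\simeq p_\#\SS_X$ (using $\A^1$-invariance and the identification of $T^{n\rho_G}$ with an $N$-trivial sphere inside $\SH^{G,\Nfree}$). This exhibits each type-(1) generator as a shift of a type-(2) generator for the auxiliary scheme $\V(\E)$; without that move the step does not go through.

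For (3) $\Rightarrow$ (4): restricting the projective compactification to its $N$-free open locus destroys properness, and there is no localization triangle expressing the pushforward from an open subscheme in terms of the pushforward from the ambient projective scheme, so this does not produce type-(4) objects that generate the type-(3) ones. The paper's device is different: since $i_!i^*\simeq\SS_{\EE\FF(N)}\otimes-$ and $\EE\FF(N)\simeq\colim_nU_n$ with $U_n\in\Sm^{G,\Nfree}_S$, one gets $i_!i^*q_*\SS_X\simeq\colim_nq_{n*}\SS_{X\times U_n}$ with $q_n:X\times U_n\to S$ and $X\times U_n$ $N$-free, which writes each type-(3) generator directly as a colimit of type-(4) objects. (Your parenthetical claim that $q_*\SS_X$ lies in the $N$-free subcategory because $q^*$ preserves $N$-freeness is also a non sequitur: that $q_*$ preserves $N$-free spectra is the content of \aref{lem:f*i!}, proved via dualizability of the skeleta of $\EE\FF(N)$, not a formal consequence of the adjunction.)
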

\begin{proof}
The objects in (1) are generators of $\SH^{G,\Nfree}_{\Ntriv}(S)$ by \aref{prop:summary}.

If $X$ is affine then there is a surjection
$p^*(n\rho_{G})\to \Omega_p$ for some $n> 0$.   Let $\E$ be the kernel of this map. Then in $\SH^{G,\Nfree}(X)$ we have an equivalence $T^{\Omega_p}\wedge T^{\E}\simeq T^{n\rho_{G/N}}$. Let $r:\V(\E)\to X$ be the projection. We then have 
\[
\Sigma^{n\rho_{G/N}}(p\circ r)_!(\SS_{\V(\E)})\simeq p_{\#}\SS_{X}.
\]
Therefore the generators in (1) are contained in the category generated under colimits by the objects in (2).

Next, we check that the set in (2) is contained in the  the category  generated under  colimits by the objects
$\Sigma^{-k\rho_{G/N}}i^*q_*\SS_X$ of (3).

Let $p:Y\to S$ be in $\Sm^{G,\Nfree}_S$. Since $p$ is $G$-quasi-projective,
there is an equivariant compactification
\[
\begin{tikzcd}
Y \ar[r, hookrightarrow, "u" ]\ar[rd, "p"']& \overline{Y}\ar[d, "f"] \ar[r, hookleftarrow, "t"]& Z \ar[dl, "g" ] \\
& S & 
\end{tikzcd}
\]
where $f$ is an equivariant projective morphism, $u$ is an invariant open, and $t$ is an invariant closed complement. From the gluing sequence, using the fact that $f,g$ are proper,    we obtain an exact sequence in $\SH^{G}(S)$ of the form
\[
\Sigma^{-k\rho_{G/N}}p_!\SS_{Y}\to \Sigma^{-k\rho_{G/N}}f_{*}\SS_{\overline{Y}}\to \Sigma^{-k\rho_{G/N}}g_{*}\SS_Z.
\]
Applying $i^*$ finishes the third case.

Write $\EE\FF(N)\simeq \colim_n U_n$ where $U_n\in \Sm_S^{G,\Nfree}$. Now,  case (4) follows by noting that if $q:X\to S$ is projective, then $i_!i^*q_*(\SS_X)$ is the colimit of $q_{n*}(\SS_{X\times U_n})$ where $q_n:X\times U_n\to S$.  
 
\end{proof}

\subsection{Quotient functor}

\begin{proposition}\label{prop:hqe}
	Let  $N\trianglelefteq G$ be a normal subgroup which acts trivially  on $S$.
\begin{enumerate}
\item 	There are colimit preserving functors
	\begin{align*}
	(-)/N:\HH^{G,\Nfree}(S) &\to \HH^{G/N}(S/N) \textrm{  and}\\
	(-)/N:\HH^{G,\Nfree}_{\bb}(S) &\to \HH^{G/N}_{\bb}(S/N)
	\end{align*}
	such that the diagram commutes
	\[
	\begin{tikzcd}
		\Sm^{G,\Nfree}_S \ar[r, "(-)/N"]\ar[d] & \Sm^{G/N}_{S/N} \ar[d] \\	
		\HH^{G,\Nfree}(S) \ar[r, "(-)/N"]\ar[d, "(-)_+"' ] & \HH^{G/N}(S/N)\ar[d, "(-)_+"]\\
		\HH^{G,\Nfree}_{\bb}(S) \ar[r, "(-)/N"] & \HH^{G/N}_{\bb}(S/N).
	\end{tikzcd}
	\]
\item The functor $(-)/N$ satisfies a projection formula:
$(X\times \pi^*Y)/N\simeq (X/N)\times Y$
 for $X\in \HH^{G,\Nfree}(S)$ and $Y\in \HH^{G/N}(S/N)$. Similarly, if $A,B$ are based then $(A\wedge \pi^*B)/N\simeq (A/N)\wedge B$.

	\end{enumerate}
\end{proposition}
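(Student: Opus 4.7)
The plan is to construct $(-)/N$ as the left Kan extension of the scheme-theoretic quotient, verify that this extension descends to motivic spaces by checking compatibility with the generating relations, and then prove the projection formula by extending its scheme-theoretic incarnation by colimits. Concretely, since $N$ acts freely on any $X\in\Sm^{G,\Nfree}_S$, the quotient $X/N$ is representable by a smooth $G/N$-scheme over $S/N=S$, defining a functor $q\colon \Sm^{G,\Nfree}_S\to \Sm^{G/N}_{S/N}$. I would set $(-)/N:=\L_\mot\circ q_!\circ\iota$, where $q_!$ is left Kan extension along $q$ and $\iota$ is the inclusion of motivic spaces into presheaves; equivalently, $(-)/N$ is left adjoint to the pullback functor $q^*$, once $q^*$ is known to preserve motivic spaces.

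The central verification is that $q$ preserves fixed point Nisnevich squares and satisfies $q(X\times_S\A^1)=q(X)\times_S \A^1$. The second identity is immediate since $N$ acts trivially on $\A^1$. For the first: given a cartesian square in $\Sm^{G,\Nfree}_S$ with $p$ fixed point reflecting \'etale and $j$ an open immersion, flat descent along the $N$-torsor $Y\to Y/N$ shows that $p/N$ is \'etale, $j/N$ is an open immersion, and the square stays cartesian after quotienting because $N$ acts freely throughout, so taking the quotient commutes with the relevant fiber products. Fixed-point reflection descends because on an $N$-free $G$-scheme the stabilizer of $[x]\in X/N$ in $G/N$ equals $\Stab_G(x)N/N$. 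Arguing then as in \aref{prop:peq} and \aref{cor:sev}, $q_!$ preserves motivic equivalences and $(-)/N$ is colimit-preserving. The pointed version follows by applying $(-)_+$, using that $q$ preserves the initial object.

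For the projection formula, I first handle representables. For $X\in\Sm^{G,\Nfree}_S$ and $Y\in\Sm^{G/N}_{S/N}$, the product $X\times_S\pi^{-1}Y$ is $N$-free (as the $N$-action factors through $X$), and the canonical isomorphism $(X\times_S\pi^{-1}Y)/N\cong (X/N)\times_S Y$ gives the scheme-level identity. Regarding $X\times\pi^*Y$ as a presheaf on $\Sm^{G,\Nfree}_S$, it agrees with the presheaf represented by $X\times_S\pi^{-1}Y$, so $q_!(X\times \pi^*Y)\simeq (X/N)\times Y$. Both sides of the desired equivalence preserve colimits separately in each variable — $(-)/N$ is a left adjoint, and the cartesian monoidal product on $\HH^{G/N}(S/N)$ preserves colimits in each variable — so expressing arbitrary motivic spaces as colimits of representables extends the formula to all $X$ and $Y$; the based case is then formal.

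I expect the main obstacle to be the bookkeeping in the descent step, specifically the compatibility of $q$ with the fixed point Nisnevich structure and with fixed point reflection; once this is established, the projection formula is essentially automatic by continuity.
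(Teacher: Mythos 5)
Your proposal is correct and takes essentially the same route as the paper: the paper also defines $q^*$ by precomposition along the scheme-level quotient $q\colon\Sm^{G,\Nfree}_S\to\Sm^{G/N}_{S/N}$, observes that $q$ preserves Nisnevich squares and satisfies $q(W\times\A^1)\cong q(W)\times\A^1$ so that $q^*$ restricts to motivic spaces, and obtains $(-)/N$ as the left adjoint, with the projection formula reduced to the scheme-level identity (\aref{prop:qe}) and extended by colimits. You simply supply more detail on the descent verifications than the paper does.
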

\begin{proof}
	Write $q:\Sm^G_S \to \Sm^{G/N}_{S/N}$ for the quotient functor, $q(W) =W/N$.
	Since  $q$ sends  Nisnevich squares in $\Sm^{G,\Nfree}_S$ to Nisnevich squares in $\Sm^{G/N}_S$ and
 \[
 q(W\times \A^1)\cong q(W)\times \A^1,
 \]
 the functor $q^*:\Pre(\Sm^{G/N}_{S/N})\to\Pre(\Sm^G_S)$ defined by precomposition restricts to a functor 
	$q^*:\HH^{G/N}(S/N)\to \HH^{G,\Nfree}(S)$. Since $q_*$ preserves limits, it admits a left adjoint
	\[
	(-)/N:\HH^{G,\Nfree}(S) \to \HH^{G/N}(S/N)
	\]
	 with the stated properties. Similarly, since $q_*$ preserves the terminal object, it  induces a limit preserving functor  $q_*:\HH^{G/N}_{\bb}(S/N)\to \HH^{G,\Nfree}_{\bb}(S)$ on based spaces and therefore admits a left adjoint $(-)/N:\HH^{G,\Nfree}_{\bb}(S) \to \HH^{G/N}_{\bb}(S/N) $
	The last statement follows from \aref{prop:qe}.
\end{proof}

If $X\in \Sm^G_S$ doesn't have free action, the scheme $X/G$ need not be smooth. It is still possible  to define a quotient functor on motivic $G$-spaces. However, this functor does not stabilize to give a quotient functor on all of $\SH^G(S)$. 

\begin{proposition}\label{prop:sqe}
	Let  $N\trianglelefteq G$ be a normal subgroup 
	which acts  trivially  on $S$.

	\begin{enumerate}
	\item There is a
	colimit preserving
	functor 
	\[
	\pi_!:\SH^{G,\Nfree}(S) \to\SH^{G/N}(S/N)
	\] 
	such that the diagram commutes
	\[
	\begin{tikzcd}
		\Sm^{G,\Nfree}_S \ar[r, "\pi_!"]\ar[d] & \Sm^{G/N}_{S/N} \ar[d] \\	
		\SH^{G,\Nfree}(S) \ar[r, "\pi_!"] & \SH^{G/N}(S/N).	
	\end{tikzcd}
	\]
	The right adjoint of $\pi_!$ is the composite $i^*\pi^*$ where $i:\Sm^{G,\Nfree}_S\subseteq \Sm^G_S$ is the inclusion. 
\item The functor $\pi_!$ satisfies a projection formula. That is, if $X\in \SH^{G,\Nfree}(S)$ and $Y\in \SH^{G/N}(S/N)$ then $\pi_!(X\otimes \pi^*Y)\simeq \pi_!(X)\otimes Y$.
\end{enumerate}
\end{proposition}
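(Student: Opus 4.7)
The plan is to obtain $\pi_!$ by extending the unstable quotient functor of \aref{prop:hqe} to spectra via stabilization, using the equivalence $\lambda^*\colon\SH^{G,\Nfree}_{\Ntriv}(S)\xrightarrow{\sim}\SH^{G,\Nfree}(S)$ of \aref{cor:stabequiv}.

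First, I would observe that the adjunction $(-)/N \dashv q^*$ of \aref{prop:hqe} is an adjunction of $\HH^{G/N}_\bb(S/N)$-modules, where the module structure on the source is via $\pi^*$. Since $N$ acts trivially on $S$ we have $S=S/N$ as schemes, and $\pi^*Y$ has trivial $N$-action; smashing it with an $N$-free $G$-object preserves $N$-freeness, so the module structure is well-defined. The projection formula $(A\wedge \pi^*B)/N\simeq (A/N)\wedge B$ in part (2) of \aref{prop:hqe} is precisely the $\HH^{G/N}_\bb(S/N)$-linearity of $(-)/N$. The right adjoint $q^*$ sends a presheaf $F$ to $W\mapsto F(W/N)$; by the universal property of the quotient $W\to W/N$ for equivariant maps into $N$-trivial targets, this agrees with $i^*\pi^*$ on representable $F$, and hence, by colimit-extension, on all of $\HH^{G/N}_\bb(S/N)$ (both functors preserve colimits since $i^*$ is the middle adjoint of \aref{cor:sev} and $\pi^*$ is a left adjoint).

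Next, I would tensor this module adjunction with $\SH^{G/N}(S/N)$ over $\HH^{G/N}_\bb(S/N)$. On the source, this has the effect of inverting the $N$-trivial spheres $\pi^*\Sph^{G/N}_{S/N}$, producing $\SH^{G,\Nfree}_{\Ntriv}(S)$ by definition (see also \aref{lem:Lcomm}); on the target it produces $\SH^{G/N}(S/N)$. We obtain an adjunction
\[
\pi_!\colon \SH^{G,\Nfree}_{\Ntriv}(S) \rightleftarrows \SH^{G/N}(S/N) \colon i^*\pi^*
\]
of $\SH^{G/N}(S/N)$-modules, and the $\SH^{G/N}(S/N)$-linearity of $\pi_!$ is exactly the stable projection formula of part (2). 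Precomposing with $(\lambda^*)^{-1}$ from \aref{cor:stabequiv} then yields the desired functor $\pi_!\colon\SH^{G,\Nfree}(S)\to\SH^{G/N}(S/N)$ with the right adjoint inheriting the description $i^*\pi^*$.

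The crucial input is \aref{cor:stabequiv}: without knowing that inverting only the $N$-trivial spheres on the $N$-free side already produces all of $\SH^{G,\Nfree}(S)$, one would only get a quotient functor out of the partially stabilized $\SH^{G,\Nfree}_{\Ntriv}(S)$. The remaining bookkeeping task is to verify that the right-adjoint description $i^*\pi^*$ (originally obtained via $\lambda^*$-stabilization) matches the composite $i^*\pi^*$ with $\pi^*$ going into the full $\SH^G(S)$, and that the displayed diagram of schemes commutes; both can be checked directly on the compact generators of \aref{cor:Nfreegen}, where everything reduces to the evident formula $W_+/N = (W/N)_+$ for $W\in\Sm^{G,\Nfree}_S$.
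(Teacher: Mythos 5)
Your proposal is correct and follows essentially the same route as the paper: extend the unstable quotient functor of \aref{prop:hqe} to the $\Ntriv$-stabilization (the paper does this by noting $(-)/N$ commutes with $\Sigma^V$ for $N$-trivial $V$, you do it by tensoring the module adjunction over $\HH^{G/N}_\bb(S/N)$), and then transport along the equivalence of \aref{cor:stabequiv}. Your extra care in identifying the stable right adjoint with $i^*\pi^*$ on representables is a welcome elaboration of a point the paper's proof leaves implicit, but it is not a different argument.
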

\begin{proof}
If $V\to S$ is $G/N$-equivariant vector bundle, we have $(\Sigma^VX)/N\simeq \Sigma^VX/N$ by \aref{prop:hqe}. 
	It follows that $(-)/N$ extends to a colimit preserving functor 
	\[
	\pi_!:\SH_{\Ntriv}^{G,\Nfree}(S) \to \SH^{G/N}(S).
	\]
The first statement then follows from \aref{cor:stabequiv}. The second statement follows from \aref{prop:hqe}.
\end{proof}

\section{The motivic Adams isomorphism}\label{sec:adams}

In classical homotopy, the Adams isomorphism 
 identifies the $N$-fixed points of an $N$-free $G$-spectrum $X$ with the quotient of $X$ by the $N$-action. 
   This equivalence was established by Adams for $N=G$ in \cite[Theorem 5.3]{Adams:equivariant} and generalized in
\cite[Theorem II.7.1]{LMS}. A recent modern take, in terms of orthogonal spectra, on this appears in \cite{ReichVarisco}. In this section, we establish a version for $N$-free motivic $G$-spectra.

\subsection{The Adams transformation}
\label{sub:atrans}

For the remainder of this section we suppose that $S$ has trivial $N$-action and we let $\pi:G\to {G/N}$ denote the quotient homomorphism.
The Adams isomorphism is a comparison of the two functors 
 \[
 \pi_!, \pi_*i_!:\SH^{G,\Nfree}(S) \to \SH^{G/N}(S).
\]

We first construct a comparison transformation $\tau:\pi_{!}\to \pi_{*}i_!$.
Consider the cartesian square of surjective homomorphisms
\begin{equation}\label{eqn:cartd}
\begin{tikzcd}
	G\times_{G/N}G \ar[r, "\pr_1"]\ar[d, "\pr_2"'] & G \ar[d, "\pi"] \\
	G \ar[r, "\pi"] & G/N.
\end{tikzcd}
\end{equation}

Write $G'=G\times_{G/N}G$ and $N'=\ker(\pr_2)$. Observe that if $X\in\Sm^{G,\Nfree}_S$ then $\pr^*_1X$ is in $\Sm^{G',N'\textrm{-free}}_S$.
We have a commutative diagram
\[
\begin{tikzcd}
	\SH^{G,\Nfree}(S)\ar[r, "i_!"]\ar[d, "j^*\pr_1^*i_!"'] & \SH^{G}(S) \ar[d, "\pr_1^*"]\\
	\SH^{G',N'\textrm{-free}}(S)\ar[r, "j_!"] & \SH^{G'}(S),
\end{tikzcd}
\]
of colimit preserving functors, where $j:\Sm^{G',N'\textrm{-free}}_S\subseteq \Sm^{G'}_S$ is the inclusion.
\begin{proposition}\label{prop:prpieq}
With notation as above,
	the diagram commutes
\[
\begin{tikzcd}
\SH^{G,\Nfree}(S)\ar[d,"\pi_!"]\ar[r, "j^*\pr_1^*i_!"] & \SH^{G',N'\textrm{-free}}(S)\ar[d,"\pr_{2!}"]\\
\SH^{G/N}(S)\ar[r, "\pi^*"] & \SH^G(S) .
\end{tikzcd}
\]
\end{proposition}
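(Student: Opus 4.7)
The plan is to reduce to a commutative square of functors on categories of smooth equivariant schemes, and then propagate this square through the presheaf, motivic localization, and stabilization constructions. The scheme-level observation is that for $X \in \Sm_S^{G,\Nfree}$, the kernel $N' = \ker(\pr_2)$ consists of pairs $(g,e)$ with $g \in N$, so $\pr_1$ restricts to an isomorphism $N' \xrightarrow{\sim} N$; consequently the $N'$-action on $\pr_1^{-1}X$ coincides with the original free $N$-action, and there is a canonical isomorphism $(\pr_1^{-1}X)/N' \cong X/N$. Under the identification $G'/N' \cong G$ via $\pr_2$, the residual $G$-action on this quotient coincides with the pullback along $\pi$ of the $G/N$-action on $X/N$, since any lift $(g_1, g)\in G'$ of $g \in G$ satisfies $g_1 N = gN$ and so $g_1$ and $g$ induce the same automorphism of $X/N$. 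This yields a commutative square
\[
\begin{tikzcd}
\Sm^{G,\Nfree}_S \ar[r, "\pr_1^{-1}"] \ar[d, "{(-)/N}"'] & \Sm^{G',N'\textrm{-free}}_S \ar[d, "{(-)/N'}"] \\
\Sm^{G/N}_S \ar[r, "\pi^{-1}"'] & \Sm^G_S.
\end{tikzcd}
\]

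Taking presheaves and applying motivic localization and pointing via the functoriality of \aref{cor:sev} and \aref{lem:indres}, this extends to a commutative square of symmetric monoidal left adjoints between the pointed motivic space categories $\HH_\bb^{G,\Nfree}(S)$, $\HH_\bb^{G',N'\textrm{-free}}(S)$, $\HH_\bb^{G/N}(S)$ and $\HH_\bb^G(S)$. Next, one stabilizes with respect to the $N$-trivial $G$-spheres, which under the identity $\pr_1^*\pi^* = \pr_2^*\pi^*$ correspond to $N'$-trivial $G'$-spheres; both composites preserve these spheres, so the square descends to the $\Ntriv$-stabilizations. By \aref{cor:stabequiv} the canonical map $\SH^{G,\Nfree}_{\Ntriv}(S)\simeq \SH^{G,\Nfree}(S)$ is an equivalence, and similarly for $G'$, yielding the desired commutative square of stable $\infty$-categories.

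All six functors in the final square preserve colimits ($\pi_!, \pr_{2!}$ by \aref{prop:sqe}; $i_!$ and $\pr_1^*$ as symmetric monoidal left adjoints; $j^*$ since it admits both adjoints; and $\pi^*$ by \aref{prop:usfpadj}), so the commutativity verified on the generators $\Sigma^\infty X_+$ with $X \in \Sm^{G,\Nfree}_S$ affine extends uniquely to the whole category. The main subtlety will be ensuring coherence of the identifications across the three layers (schemes, motivic spaces, motivic spectra), but since each layer is obtained from the previous one by applying a left Kan extension or a symmetric monoidal stabilization to a commutative diagram, the requisite coherence is automatic from the functoriality package of Section~\ref{sec:EMHT}.
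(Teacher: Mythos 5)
Your approach is genuinely different from the paper's, and the scheme-level observation at its core is correct and nicely done: $\pr_1$ identifies $N' = \ker(\pr_2)$ with $N$, the $N'$-action on $\pr_1^{-1}X$ is the original $N$-action, and the residual $G'/N'\cong G$-action on $(\pr_1^{-1}X)/N'\cong X/N$ is pulled back from $G/N$. This is exactly the content that the paper's natural transformation (a composite of the unit of $(\pi_!,i^*\pi^*)$, the counit of $(i_!,i^*)$ combined with $\pi\pr_1=\pi\pr_2$, and the counit of $(\pr_{2!},j^*\pr_2^*)$) encodes. Whereas the paper writes down this transformation directly and checks it is an equivalence on generators, you propose to manufacture the commutativity by Kan-extending the scheme-level square through presheaves, motivic localization, and stabilization.

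The propagation step, however, has genuine gaps that are not "automatic":
\begin{enumerate}
\item The quotient functors $(-)/N$ and $(-)/N'$ are \emph{not} symmetric monoidal: $(X\times Y)/N\not\simeq (X/N)\times(Y/N)$ for $N$-free $X,Y$. This means you cannot invoke the universal property of symmetric monoidal inversion for these legs; you must work with the module stabilization $\Stab_{\TT}(\MM)\simeq \MM\otimes_{\CC}\CC[\TT^{-1}]$, which is a different statement.
\item Your sphere bookkeeping is off. The image of $\Ntriv$ under $\pr_1^*$ is $\pr_1^*\pi^*\Sph^{G/N}_B=\pr_2^*\pi^*\Sph^{G/N}_B$, which is a \emph{proper} subset of the $N'$-trivial $G'$-spheres (the former are $K$-trivial, $K=\pr_2^{-1}(N)=N\times N$; the latter are only $N'$-trivial). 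You therefore cannot directly cite \aref{cor:stabequiv} to identify the top-right corner with $\SH^{G',N'\text{-free}}(S)$; you need the further claim that the $\pr_1^*\Ntriv$-stabilization of $\HH_\bb^{G',N'\text{-free}}(S)$ agrees with the full stabilization, which requires a separate (if similar) argument.
\item After stabilizing at $\Ntriv$, the bottom-right corner is $\SH^G_{\Ntriv}(S)$, not $\SH^G(S)$. You need an additional pass to the localization $\SH^G_{\Ntriv}(S)\to\SH^G(S)$ on the bottom-right, and then verify that the right vertical and bottom horizontal factor through it compatibly with the paper's functors $\pr_{2!}$ and $\pi^*$. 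This is true but requires saying so.
\item The closing sentence "the commutativity verified on the generators ... extends uniquely" is not a self-contained argument: agreement of two colimit-preserving functors on generators does not by itself produce a natural equivalence. What makes it work is precisely the layered Kan-extension/stabilization construction you sketch, so the verification on generators is evidence rather than proof, and the coherence you flag as "automatic" is the whole point that needs to be established.
\end{enumerate}
The paper's route avoids all of this by producing the natural transformation explicitly and then applying \aref{cor:Nfreegen}: once you have a map of colimit-preserving functors, checking it on a set of colimit generators does suffice. That makes the explicit-transformation approach shorter and less fragile here, even though your scheme-level reduction better explains \emph{why} the diagram should commute.
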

\begin{proof}
	We have a transformation, defined as the composite
	\[
	{\pr_{2!}j^*\pr_1^*i_!} \to
	\pr_{2!}j^*\pr_1^*i_!i^*\pi^*\pi_!\to
	\pr_{2!}j^*\pr_2^*\pi^*\pi_! \to
	\pi^*\pi_!,
	\]
	where the first arrow is induced by the unit of the adjunction $(\pi_!, i^*\pi^*)$ and the last by the counit of $(\pr_{2!}, j^*\pr_2^*)$.
	To check this is an equivalence, by \aref{cor:Nfreegen} it suffices to check this is an equivalence on $\Sigma^{-\rho_{G/N}}X_+$ where $X\in \Sm^{G,\Nfree}_S$. But this is clear.
\end{proof}

\begin{remark}\label{rem:conv}
	It is sometimes convenient to write again $\pr_1^*$ for $j^*\pr_1^*i_!$ so that 
	the equivalence
$\pi^*\pi_!\simeq \pr_{2!}j^*\pr_1^*i_!$ can be expressed 
 compactly as
	\[
\pi^*\pi_!\simeq \pr_{2!}\pr^*_1.
\]
 Since $\pr_1^*$ takes $N$-free spectra to $\ker(\pr_2)$-free spectra, this is only a minor overloading of notation and no confusion should arise.  
\end{remark}

We obtain a transformation $\hat{\tau}:\pi^*\pi_! \to i_!$ via
\[
\pi^*\pi_! \simeq \pr_{2!}j^*\pr^*_1 i_!\to \pr_{2!}j^*\Delta_!\Delta^*\pr^*_1i_!\simeq i_!,
\]
where $\Delta:G\hookrightarrow G\times_{G/N}G$ is the diagonal and we use that $\Delta_!\simeq \Delta_*$ by the Wirthmueller isomorphism \aref{prop:wirthmueller}.
\begin{definition}
	The {\it Adams transformation}
	\[
	\tau:\pi_! \to \pi_*i_!
	\]
	is the  transformation induced by adjunction from 
	$\hat{\tau}:\pi^*\pi_! \to i_!$ constructed above.
\end{definition}

In a certain sense, the Adams transformation is ``smashing",
as made precise in the proposition below.
First note that there is a canonical transformation
\begin{equation}\label{eqn:pformfree}
\pi_*i_!i^*(\SS_S)\otimes \id\to \pi_*i_!i^*\pi^*
\end{equation}
between endofunctors of $\SH^{G/N}(S)$
obtained as the adjoint of 
\[
\pi^*\pi_*i_!i^*(\SS_S)\otimes \pi^*\to i_!i^*(\SS_S)\otimes \pi^*\pi_!\simeq i_!i^*\pi^*.
\]
Since $i_!i^*\simeq i_!i^*(\SS_S)\otimes\id \simeq \SS_{\EE\FF(N)}\otimes\id$ by \aref{prop:EEimage}, the transformation \eqref{eqn:pformfree} can equivalently be written as 
$\pi_*(\SS_{\EE\FF(N)})\otimes\id \to \pi_*(\SS_{\EE\FF(N)}\otimes\pi^*)$.

\begin{proposition}\label{prop:untwist}
Let $X\in\SH^{G,\Nfree}(S)$.	
The diagram
	\[
	\begin{tikzcd}
	\pi_!X \ar[r, "\tau"] \ar[d]& \pi_*i_!X \ar[r] & \pi_*i_!i^*\pi^*\pi_!X \\
	\pi_!i^*(\SS_S)\otimes \pi_!X\ar[rr, "\tau\otimes \id"'] & & \pi_*i_!i^*(\SS_S)\otimes \pi_!X
	\ar[u ] 
	\end{tikzcd}
	\]
	commutes, where the left vertical map is obtained from the op-lax monoidality of $\pi_!$, the right one is \eqref{eqn:pformfree}, and the top right horizontal arrow comes from the unit of the adjunction $(\pi_!, i^*\pi^*)$.
\end{proposition}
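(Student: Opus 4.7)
The plan is to reduce the commutativity to an identity of two natural maps $\pi^{*}\pi_{!}X \to i_{!}i^{*}\pi^{*}\pi_{!}X$ via the $(\pi^{*}, \pi_{*})$-adjunction, and then verify this identity using the explicit construction of $\hat\tau$ from \aref{sub:atrans}. Throughout, I identify $i_{!}i^{*}(-)\simeq i_{!}(\SS_{S})\otimes(-)$ via \aref{prop:EEimage}, so both composites can be compared as maps into $i_{!}(\SS_{S})\otimes \pi^{*}\pi_{!}X$.

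First, I unwind the top composite. By naturality of the counit $\varepsilon:\pi^{*}\pi_{*}\to\id$, its $(\pi^{*}, \pi_{*})$-adjoint is
\[
\pi^{*}\pi_{!}X \xrightarrow{\hat\tau_{X}} i_{!}X \xrightarrow{i_{!}\eta_{X}} i_{!}i^{*}\pi^{*}\pi_{!}X,
\]
where $\eta_{X}:X\to i^{*}\pi^{*}\pi_{!}X$ is the unit of $(\pi_{!}, i^{*}\pi^{*})$. Second, I unwind the bottom composite. Since \eqref{eqn:pformfree} is adjoint to the counit of $(\pi^{*}, \pi_{*})$ on the first tensor factor, its adjoint is
\[
\pi^{*}\pi_{!}X \xrightarrow{\pi^{*}\mu_{\SS, X}} \pi^{*}\pi_{!}(\SS_{S})\otimes \pi^{*}\pi_{!}X \xrightarrow{\hat\tau_{\SS_{S}}\otimes\id} i_{!}(\SS_{S})\otimes \pi^{*}\pi_{!}X,
\]
where $\mu_{\SS, X}:\pi_{!}X\to \pi_{!}(\SS_{S})\otimes \pi_{!}X$ is the oplax comultiplication of $\pi_{!}$ (whose $(\pi_{!}, i^{*}\pi^{*})$-adjoint is $\eta_{\SS_{S}}\otimes\eta_{X}$ under the identification $\SS_{S}\otimes X = X$, using that $i^{*}\pi^{*}$ is symmetric monoidal).

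The heart of the argument is then to show
\[
i_{!}(\eta_{X}) \circ \hat\tau_{X} \;=\; \bigl(\hat\tau_{\SS_{S}}\otimes \id_{\pi^{*}\pi_{!}X}\bigr) \circ \pi^{*}\mu_{\SS, X}
\]
as maps $\pi^{*}\pi_{!}X \to i_{!}(\SS_{S})\otimes \pi^{*}\pi_{!}X$. I would prove this by tracing through the construction of $\hat\tau$ from \aref{sub:atrans}: it factors as the base-change equivalence $\pi^{*}\pi_{!}\simeq \pr_{2!}\pr_{1}^{*}i_{!}$ of \aref{prop:prpieq}, followed by the ambidexterity unit $\id\to \Delta_{!}\Delta^{*}$ of \aref{prop:wirthmueller}, and the identification $\pr_{2!}\Delta_{!}\Delta^{*}\pr_{1}^{*}i_{!}\simeq i_{!}$. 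Each of these is a morphism between symmetric monoidal (hence oplax monoidal) functors arising from a symmetric monoidal equivalence or a Wirthm\"uller-type identification, so each is compatible with the oplax structures. Combining them exhibits $\hat\tau$ as an oplax-comonoidal natural transformation, which amounts precisely to the desired identity after applying the projection formula decomposition $i_{!}i^{*}\pi^{*}\pi_{!}X\simeq i_{!}(\SS_{S})\otimes \pi^{*}\pi_{!}X$ to rewrite $i_{!}(\eta_{X})$.

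The main obstacle is the bookkeeping of coherences in the monoidality of $\hat\tau$. If a direct coherent proof turns out awkward, an alternative is to test the desired identity on the generators of $\SH^{G,\Nfree}(S)$ listed in \aref{cor:Nfreegen}: both composites preserve colimits in $X$, and on a generator $\Sigma^{-k\rho_{G/N}}p_{\#}\SS_{Y}$ with $p: Y\to S$ in $\Sm^{G,\Nfree}_{S}$, the projection formulas for $p_{\#}$ and for $\pi_{!}$ reduce everything to the case $X = p_{\#}\SS_{Y}$, where $\tau_{X}$ is identified with the ambidexterity equivalence for a finite \'etale map and the identity becomes transparent.
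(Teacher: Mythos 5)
Your proposal follows essentially the same route as the paper: pass to the $(\pi^*,\pi_*)$-adjoint, reduce to the identity $i_!(\eta_X)\circ\hat\tau_X=(\hat\tau_{\SS}\otimes\id)\circ\pi^*\mu_{\SS,X}$, and verify it by tracing $\hat\tau$ through its factorization via $\pr_{2!}j^*\pr_1^*i_!$ and $\Delta_!\Delta^*$, using the monoidality of the constituent functors (the paper records the same argument as an explicit two-column diagram whose squares commute by those monoidality properties together with the ``smashing'' identity $\Delta_!\Delta^*\simeq\Delta_!\Delta^*(\SS)\otimes\id$). Your unwinding of both adjoints is correct, so the two proofs are essentially the same.
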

\begin{proof}
	Consider the following diagram
	\[
	\begin{tikzcd}
	\pi^*\pi_!X \ar[r] \ar[ddd, bend right=80, "\hat{\tau}"']& \pi^*\pi_!i^*(\SS_S)\otimes \pi^*\pi_!X \ar[dddd, bend left=85, "\hat{\tau}_{i^*\SS_S}\otimes \id"] \\
	\pr_{2!}j^*\pr_1^*i_!X\ar[u, "\sim"'] \ar[r]\ar[d] & \pr_2!j^*\pr_1^*i_!i^*(\SS_S)\otimes \pr_{2!}j^*\pr_1^*i_!X\ar[u, "\sim"]  \ar[d] \\
	\pr_{2!}j^*\Delta_!\Delta^*\pr^*_1i_!X \ar[r] \ar[d, "\sim"']& 	\pr_{2!}j^*\Delta_!\Delta^*\pr^*_1i_!i^*(\SS_S)\otimes 	\pr_{2!}j^*\pr^*_1i_!X \ar[d, "\simeq"]\\
	i_!X \ar[r]\ar[d] & i_!i^*(\SS_S) \otimes \pr_{2!}j^*\pr^*_1i_!X \ar[d, "\sim" ]\\
	i_!i^*\pi^*\pi_!X\ar[r,"\sim"]&  i_!i^*(\SS_S) \otimes \pi^*\pi_!X.
	\end{tikzcd}
	\]
	
	The functors $\pi^*, \pr_1^*, j^*$ are symmetric monoidal, $i_!$ is non-unital symmetric monoidal, and $i^*(\SS_S)=\SS_{\EE\FF(N)}$ is the unit of $\SH^{G,\Nfree}(S)$. It is straightforward to check that the top square commutes. Using the natural equivalence $\Delta_!\Delta^*\simeq \Delta_!\Delta^*(\SS_S)\otimes \id$, it is straightforward to check that the remaining squares commute. This implies the result by adjointness.   
\end{proof}

\subsection{Changing the base}\label{sub:base}
Our next goal is to verify that the Adams transformation $\tau$ is compatible with the various change of base functors in motivic homotopy. 
First, however, we recall some basic facts about manipulating natural transformations and adjunctions used below.

Let  
\[
\begin{tikzcd}
\AA \ar[r,"f^*"]\ar[d, "g^*"'] & \BB \ar[d, "k^*"]  
\ar[dl, shorten <= 5pt, shorten >= 5pt, Rightarrow, "\phi"'] \\
\CC \ar[r, "h^*"']& \DD
\end{tikzcd}
\] 
be a diagram of $\infty$-categories, where $\phi:k^*f^*\to h^*g^*$ is a natural transformation.  Suppose that $f^*$, $h^*$ admit respective left adjoints $f_!$ and $h_!$. 
The \emph{left mate} of $\phi$ is a natural transformation 

\[
\begin{tikzcd}
\AA \ar[d, "g^*"'] & \BB \ar[d, "k^*"] \ar[l, "f_!"']  \\
\CC & \DD \ar[l, "h_!"] \ar[ul, shorten <= 5pt, shorten >= 5pt, Rightarrow, "\phi_L"' ].
\end{tikzcd}
\]
 Explicitly, 
 $\phi_L:h_!k^*\to g^*f_!$ is defined to be the composite
\[
h_!k^* \to h_!k^*f^*f_! \xrightarrow{\phi} h_!h^*g^*f_! \to g^*f_!.
\]

Similarly, if $g^*,k^*$ admit respective right adjoints $g_*$ and $k_*$ then its
\emph{right mate} is a transformation 
\[
\begin{tikzcd}
\AA \ar[r, "f^*"]  \ar[dr, shorten <= 5pt, shorten >= 5pt, Rightarrow, "\phi_R" ] & \BB \\
\CC\ar[u,"g_*"]\ar[r, "h^*"']  & \DD \ar[u, "k_*"'].
\end{tikzcd}
\]
Explicitly, $\phi_R:f^*g_*\to k_*h^*$ is defined to be the composite
\[
f^*g_* \to k_*k^*f^*g_* \xrightarrow{\phi} k_*h^*g^*g_*\to k_*h^*.
\]

If $\psi$ and $\phi$ are natural transformations, then $\psi\simeq \phi_L$ if and only if $\phi\simeq \psi_R$.

\begin{lemma}\label{lem:mateseq}
	Suppose we are given a diagram of $\infty$-categories. 
	\[
	\begin{tikzcd}
	\AA \ar[r,"f^*"]\ar[d, "g^*"']  & \BB \ar[d, "k^*"] 
	\ar[dl, shorten <= 5pt, shorten >= 5pt,  Rightarrow, "\phi"']\\
	\CC \ar[r, "h^*"']& \DD.
	\end{tikzcd}
	\]
	where $f^*,h^*$ have left adjoints and $g^*,k^*$ have right adjoints. Then $\phi_L$ is an equivalence if and only if $\phi_R$ is an equivalence.
\end{lemma}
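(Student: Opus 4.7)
The plan is to invoke the standard \emph{mate correspondence}: this is a purely formal statement about $2$-categories (and works equally well in the $(\i,2)$-category $\Cati$), asserting that the operations ``take left mate'' and ``take right mate'' are mutually inverse bijections between the appropriate hom-sets of transformations. In particular the property of being an equivalence is preserved. The strategy is to transfer $\phi_L$ through the given adjunctions to mapping spaces, identify the resulting map with the transformation induced by $\phi_R$, and conclude with the Yoneda lemma.

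Concretely, fix $X\in\BB$ and $Y\in\CC$, and consider the map on mapping spaces induced by $\phi_L$, namely
\[
\phi_L^\ast:\Map_\CC(g^\ast f_!X,Y)\longrightarrow \Map_\CC(h_!k^\ast X,Y).
\]
The adjunctions $(f_!,f^\ast)$ and $(g^\ast,g_\ast)$ yield a natural equivalence $\Map_\CC(g^\ast f_!X,Y)\simeq \Map_\BB(X,f^\ast g_\ast Y)$, while the adjunctions $(h_!,h^\ast)$ and $(k^\ast,k_\ast)$ yield a natural equivalence $\Map_\CC(h_!k^\ast X,Y)\simeq \Map_\BB(X,k_\ast h^\ast Y)$. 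Under these identifications, the map $\phi_L^\ast$ becomes a map
\[
\Map_\BB(X,f^\ast g_\ast Y)\longrightarrow \Map_\BB(X,k_\ast h^\ast Y).
\]

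The key step is to verify that this induced map is exactly $(\phi_R)_\ast$, i.e.~post-composition with the right mate $\phi_R\colon f^\ast g_\ast Y\to k_\ast h^\ast Y$. This is a diagram chase: unwinding the defining composites of $\phi_L$ and $\phi_R$ in terms of $\phi$, units and counits, and repeatedly applying the triangle identities, both composites reduce to the common composite built from $\phi$ together with a single unit and a single counit. Granting this identification, the Yoneda lemma finishes the argument in two steps: first, $\phi_L$ is an equivalence at $X$ iff $\phi_L^\ast$ is an equivalence for all $Y$; second, by Yoneda applied in $\BB$, this holds iff $\phi_R$ is an equivalence at every $Y$ (with $X$ arbitrary), which is precisely the condition that $\phi_R$ is a natural equivalence.

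The main (and really only) obstacle is the diagram chase identifying $\phi_L^\ast$ with $(\phi_R)_\ast$ under the adjunction equivalences. Although routine, it involves several applications of naturality of units and counits together with the triangle identities, and so should be written out with some care; in the $\infty$-categorical setting the same manipulations go through since all the relevant coherences for adjunctions between $\infty$-categories are built into the formalism.
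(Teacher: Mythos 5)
Your argument is correct, and it is the standard one: the paper's own proof is literally the single sentence ``This is a straightforward check,'' so your write-up is exactly the check being alluded to. Identifying $\phi_L^*\colon\Map_\CC(g^*f_!X,Y)\to\Map_\CC(h_!k^*X,Y)$ with $(\phi_R)_*\colon\Map_\BB(X,f^*g_*Y)\to\Map_\BB(X,k_*h^*Y)$ under the adjunction equivalences (both are identified with the map $\Map_\AA(f_!X,g_*Y)\to\Map_\DD(k^*X,h^*Y)$ built from $\phi$, one unit and one counit), and then applying Yoneda in each variable, is precisely how one proves this. One small caution about your opening framing: the statement that ``take left mate'' and ``take right mate'' are mutually inverse bijections refers to the correspondence between $\phi$ and $\phi_L$ (resp.\ between $\phi$ and $\phi_R$) --- this is the content of the paper's remark that $\psi\simeq\phi_L$ iff $\phi\simeq\psi_R$ --- whereas the lemma compares the two \emph{different} mates $\phi_L$ and $\phi_R$ of the same $\phi$, taken with respect to different pairs of adjunctions; your actual mapping-space argument handles this correctly, so only the slogan, not the proof, needs adjusting.
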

\begin{proof}
	This is a straightforward check. 
\end{proof}

\begin{remark}
	It often happens that $\phi$ is  invertible. Care should be taken to not confuse the mates of $\phi$ with those of $\phi^{-1}$ (assuming all requisite adjoints exist). For example, it is not the case the mates of $\phi$ are equivalences exactly when the mates of $\phi^{-1}$ are equivalences.
\end{remark}

We will need to know that units and counits of adjunctions are compatible across equivalences induced by mates.

\begin{lemma}\label{lem:LR}
	Let $L:\CC\rightleftarrows\DD:R$ and $L':\CC'\rightleftarrows\DD':R'$ be two adjoint pairs and $F:\CC\to \CC'$ and $G:\DD\to \DD'$ functors. Let $\phi: FR \to R'G$ and  $\psi:L'F\to GL$ be mates. 
	Write $\eta$, $\epsilon$ for the unit and counit of $(L,R)$ and $\eta',\epsilon'$ for the unit and counit of $(L',R')$.
	Then for $X\in \DD$, $Y\in \CC$, the diagrams 
	\[
	\begin{tikzcd}
	L'R'GX \ar[rd, "\epsilon'  G"'] & L'FRX \ar[r, "\psi R" ]
	\ar[l,  "L'\phi"'] & GLRX \ar[dl, "G \epsilon"] \\
	& GX & 
	\end{tikzcd} 
	\]
	and
	\[
	\begin{tikzcd} 
	& FY\ar[dr, "\eta' F"] \ar[dl, "F \eta"' ] &  \\
	FRLY \ar[r, "\phi L"] & R'GLY 
	& R'L'FY \ar[l,  "R' \psi"'] . 
	\end{tikzcd} 
	\]
	commute.
\end{lemma}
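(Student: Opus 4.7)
The plan is to unpack $\psi$ as the left mate of $\phi$, which by definition means
\[
\psi = (\epsilon' GL)\circ (L'\phi L)\circ (L'F\eta): L'F \to GL,
\]
and then reduce each of the two commutativity assertions to a triangle identity by two applications of naturality. There is no conceptual input beyond this; the content is bookkeeping.

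For the first (counit) diagram, I would evaluate $(G\epsilon)\circ(\psi R)$ at $X\in \DD$, substitute the formula above, and apply naturality of $\epsilon'$ to push $G\epsilon_X$ past $\epsilon'_{GLRX}$; this replaces the counit at $GLRX$ by the counit at $GX$, at the cost of precomposing with $L'R'G\epsilon_X$. A second naturality argument, this time for $\phi$ along the morphism $\epsilon_X\colon LRX\to X$, rewrites $L'R'G\epsilon_X\circ L'\phi_{LRX}$ as $L'\phi_X\circ L'FR\epsilon_X$. The two factors that remain sandwiched after $L'\phi_X$ combine as $L'FR\epsilon_X\circ L'F\eta_{RX}=L'F((R\epsilon_X)\circ (\eta_{RX}))$, which is the identity by the triangle identity for $(L,R)$. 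What survives is exactly $(\epsilon' G)\circ (L'\phi)$, as required.

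For the second (unit) diagram I would run the dual argument, starting from $(R'\psi)\circ(\eta' F)$ at $Y\in \CC$. Substituting the formula for $\psi$ gives
\[
R'\psi_Y = (R'\epsilon'_{GLY})\circ (R'L'\phi_{LY})\circ (R'L'F\eta_Y),
\]
and naturality of $\eta'$, applied successively to the morphisms $F\eta_Y\colon FY\to FRLY$ and $\phi_{LY}\colon FRLY\to R'GLY$, pushes the initial $\eta'_{FY}$ to the right past each of these in turn. The surviving leftmost factor is then $(R'\epsilon'_{GLY})\circ (\eta'_{R'GLY})$, which is the identity on $R'GLY$ by the triangle identity for $(L',R')$, and the remaining composite is precisely $(\phi L)\circ (F\eta)$.

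I expect no real obstacle: the whole argument uses only the explicit mate formula, the naturality of the four units and counits, and the two triangle identities. The only mild care needed is that at each step the correct component of the natural transformation is tracked. The symmetry between the two diagrams is a manifestation of the fact that $\psi$ is the left mate of $\phi$ if and only if $\phi$ is the right mate of $\psi$.
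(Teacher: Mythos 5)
Your argument is correct and is essentially the paper's own proof: both unpack $\psi$ via the explicit mate formula $\psi=(\epsilon'GL)\circ(L'\phi L)\circ(L'F\eta)$, then use naturality of $\epsilon'$ (resp. $\eta'$) and of $\phi$ together with the triangle identity for $(L,R)$ (resp. $(L',R')$) to collapse to the claimed composite; the paper merely records the same steps as two pasted commutative diagrams rather than an equational chain.
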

\begin{proof}
	The first claim follows from the commutativity of the diagram
	\[
	\begin{tikzcd}
	L'FRX \ar[r, " L'F\eta R"]\ar[dr, " \id"'] & L'FRLRX \ar[r, " L'\phi LR"] \ar[d, "L'FR\epsilon "] & L'R'GLRX \ar[d, "L'R'G\epsilon "] \ar[r, " \epsilon'GLR"] & GLRX \ar[d, "G\epsilon"] \\
	& L'FRX \ar[r,"L'\phi" ] & L'R'GX \ar[r, "\epsilon' G" ] & GX
	\end{tikzcd}
	\]
	since the top composite is $\psi R$.  The second claim follows from the commutativity of the diagram
	\[
	\begin{tikzcd}
	R'L'FY \ar[r," R'LF\eta"] & R'L'FRLY \ar[r, " R'L'\phi L"] &  R'L'R'GLY \ar[r, " \epsilon GL"] & R'GLY \\
	FY \ar[u, "\eta' F "] \ar[r, "F\eta "]& FRLY\ar[u, " \eta'FRL"] \ar[r, " \phi L"]  & R'GLY,  \ar[ur, "\id"'] \ar[u, "\eta'R'GL"]& 	
	\end{tikzcd}
	\] 
	where the top composite is $R'\psi$.
\end{proof}

Let $p:T\to S$ be a map in $\Sch^{G}_{B}$, on which $N$ acts trivially. Write $\pi:G\to G/N$  for the quotient. We will use $i$ both to denote the inclusion $\Sm_S^{G,\Nfree}\subseteq \Sm_S^G$ as well as the inclusion $\Sm_T^{G,\Nfree}\subseteq \Sm_T^G$.
Fix equivalences 
\[
\alpha:\pi^*p^*\xrightarrow{\sim} p^*\pi^*
\] 
and 
\[
\gamma:   i_!p^* \simeq p^*i_!. 
\]

The right mate of $\alpha$ is a transformation 
$\alpha_R:p^*\pi_*\to \pi_*p^*$.
We write $\nu$ for the right mate of $\alpha_R$,
\[
\nu=(\alpha_R)_R:\pi_*p_*\xrightarrow{\sim} p_*\pi_*,
\]
which is an equivalence $\alpha\simeq (\alpha_R)_L$ by \aref{lem:mateseq}, since $\alpha\simeq (\alpha_R)_L$ is an equivalence.
Write $\nu' =\nu (\gamma^{-1})_R$ which is an equivalence
\[
\nu':\pi_*i_!p_*\xrightarrow{\sim} p_*\pi_*i_!.
\]

We have an equivalence 
\[
\alpha^{-1}\gamma_R : p^*i^*\pi^*\simeq i^*\pi^*p^*. 
\]
Write $\beta= (\alpha^{-1}\gamma_R)_L $ for the left mate of $\alpha^{-1}\gamma_R$. Then $\beta$ is an equivalence
\[
\beta:\pi_!p^*\simeq  p^*\pi_!
\]  
and write $\phi= (\beta^{-1})_R$, which is a transformation
\[
\phi:\pi_!p_*\to p_*\pi_!.
\]

If $p$ is smooth, then $\alpha$ has a left mate
$\alpha_{L}:p_\#\pi^*\xrightarrow{\sim} \pi^*p_\#$ which is an equivalence.  It follows that $\alpha_R$ is an equivalence  by \aref{lem:mateseq}. In \aref{cor:properbc} we see that $\alpha_R$ is more generally an equivalence even when $p$ is not smooth.
Write  
\[
\overline{\alpha}:p_\#\pi_*\to \pi_*p_\#
\]
for the left mate of $\alpha_R^{-1}:\pi_*p^*\simeq p^*\pi_*$ and 
\[
\overline{\alpha\gamma}:p_\#\pi_*i_!\to \pi_*i_!p_\#
\]
for the left mate of $\alpha_R^{-1}\gamma$.

For the next lemmas it is convenient to fix some further exchange transformations. Let $\kappa_i:\pr_i^*p^*\simeq p^*\pr_i^*$ and $\lambda:j^*p^*\simeq p^*j^*$. Set $\nu=((\lambda\kappa_2)^{-1})_L:\pr_{2!}p^*\simeq p^*\pr_{2!}$.

We use the following basic consequence of the fact that $\SH^{G,\FF}(S)$ is the value of a 
functor  $\eSch_B[\cd]^{\op}$. Let $f:T\to S$ be a scheme map and $\phi:G\to K$ a homomorphism. The exchange $\phi^*f^*\simeq f^*\phi^*$ expresses the fact that $(\id, f)(\phi,\id)$ and 
$(\phi,\id)(\id, f)$ are both equal to  $(\phi,f)$. 
In particular, these exchanges can be chosen compatibly. That is, if $\phi:G\to K$ and $\psi:K\to H$ are homomorphisms, then the diagram
\[
\begin{tikzcd}
\phi^*\psi^*f^*\ar[r, "\sim"]\ar[d, "\sim"'] & \phi^*f^*\psi^* \ar[r, "\sim"]& f^*\phi^*\psi^* \ar[d, "\sim"] \\
(\psi\phi)^*f^* \ar[rr, "\sim"] && f^*(\psi\phi)^*
\end{tikzcd}
\]
commutes, and similarly for a composite of scheme maps.

\begin{lemma}\label{lem:taucom}
		Let $X\in \SH^{G,\Nfree}(S)$. The following diagram commutes:
	\[
	\begin{tikzcd}
	\pi^*\pi_!p^*X \ar[r, " \alpha\beta", "\sim"']\ar[d, "\hat{\tau}p^* "' ] & p^*\pi^*\pi_!X \ar[d, " p^*\hat{\tau}"]\\
	i_!p^*X \ar[r, " \sim", "\gamma"']  & p^*i_!X . 
	\end{tikzcd}
	\]
\end{lemma}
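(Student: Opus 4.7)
The plan is to decompose the definition of $\hat{\tau}$ into its constituent pieces and verify base-change compatibility for each one. Recall that $\hat{\tau}: \pi^*\pi_! \to i_!$ factors as
\[
\pi^*\pi_! \overset{(1)}{\simeq} \pr_{2!}j^*\pr_1^*i_! \overset{(2)}{\to} \pr_{2!}j^*\Delta_!\Delta^*\pr_1^*i_! \overset{(3)}{\simeq} i_!,
\]
where (1) is the equivalence of \aref{prop:prpieq}, (2) is induced by the unit of $\Delta_!\dashv\Delta^*$ (using $\Delta_!\simeq\Delta_*$), and (3) comes from $\pr_1\Delta = \pr_2\Delta = \id_G$. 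Since $\SH^{G,\FF}(-)$ defines a functor on $\eSch_B[\cd]^{\op}$, we may choose a coherent system of star exchanges $\alpha,\lambda,\kappa_i,\delta$ for $\pi^*,j^*,\pr_i^*,\Delta^*$ against $p^*$; the corresponding shriek exchanges (for $\pr_{2!}, j_!, \Delta_!, i_!$) are then obtained as mates, with $\gamma$ being one of these.

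For piece (2), the compatibility with $p^*$ is immediate from the naturality of the unit $\id \to \Delta_!\Delta^*$ and the fact that $\Delta_!$ and $\Delta^*$ commute with $p^*$ up to our fixed exchanges. For piece (3), the equivalence is realized already in $\eSch_B[\cd]$ (identities of group homomorphisms composed with a structure map), so its compatibility with $p^*$ is automatic from the coherent choice of exchanges. The main work goes into piece (1): the equivalence $\pi^*\pi_!\simeq \pr_{2!}j^*\pr_1^*i_!$ was itself constructed from a unit and a counit, and its compatibility with $p^*$ requires applying \aref{lem:LR} to translate between star exchanges and their shriek mates. Pasting the three resulting commutative rectangles produces a diagram involving $\hat\tau p^*$ on one side and $p^*\hat\tau$ on the other, mediated by a composite of exchange transformations whose net effect is precisely the exchange $\beta$ going in one direction (by definition $\beta = (\alpha^{-1}\gamma_R)_L$) and $\gamma$ going in the other.

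The main obstacle is the bookkeeping in step~(1), which chains several units and counits of the adjunctions $(\pi_!, i^*\pi^*)$, $(\pr_{2!}, j^*\pr_2^*)$, and $(i_!, i^*)$, each of which must be tracked through the exchanges with $p^*$. An alternative which bypasses most of this diagram chase is to observe that both the composite $p^*\hat\tau\cdot \alpha\beta$ and $\gamma\cdot\hat\tau p^*$ are natural transformations between the same pair of colimit-preserving functors $\SH^{G,\Nfree}(S)\to\SH^G(T)$, and so by \aref{cor:Nfreegen} it suffices to verify the equality on the generators $\Sigma^{-k\rho_{G/N}}q_\#\SS_Y$ with $q:Y\to S$ smooth and $N$-free. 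For such generators the exchange $\gamma$ unwinds via smooth base change $q^*\SS\simeq\SS$ together with the projection formula, reducing the check to a direct calculation using the Wirthm{\"u}ller equivalence for $\Delta$.
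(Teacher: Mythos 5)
Your main line of argument --- decomposing $\hat{\tau}$ into the equivalence $\pi^*\pi_!\simeq\pr_{2!}j^*\pr_1^*i_!$, the unit of $\Delta_!\dashv\Delta^*$, and the final identification, and then checking base-change compatibility of each piece, with \aref{lem:LR} doing the work of moving $p^*$ across the units and counits of $(\pi_!,i^*\pi^*)$ and $(\pr_{2!},j^*\pr_2^*)$ because the relevant star and shriek exchanges are mates --- is exactly the proof given in the paper.

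One caution about your proposed shortcut at the end: commutativity of a square of natural transformations is not an objectwise condition, so \aref{cor:Nfreegen} does not by itself reduce the claim to a computation on the generators $\Sigma^{-k\rho_{G/N}}q_\#\SS_Y$. That corollary is used in the paper (e.g.\ in \aref{prop:prpieq}) to check that a single transformation is an \emph{equivalence}, which is an objectwise property; identifying two transformations requires comparing them as maps in the functor category, hence tracking naturality in $Y$ as well, which reintroduces essentially all of the bookkeeping you were hoping to avoid. Stick with the diagram chase.
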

\begin{proof}
We have to see that each rectangle of the diagram below commutes
\[
\begin{tikzcd}
\pi^*\pi_!p^*X \ar[r, "\beta"] & \pi^*p^*\pi_!X \ar[r, "\alpha"] & p^*\pi^*\pi_!X \\
\pr_{2!}j^*\pr_1^*i_!p^*X \ar[r, "\kappa_1\gamma"]\ar[u,"\sim"]\ar[d] & \pr_{2!}j^*p^*\pr_1^*i_!X \ar[r, "\nu\lambda"] & p^*\pr_{2!}j^*\pr_1^*i_!X \ar[u, "\sim"']\ar[d] \\
\pr_{2!}j^*\Delta_!\Delta^*\pr_1^*i_!p^*X \ar[r]\ar[d, "\simeq"] & \pr_{2!}j^*\Delta_!\Delta^*p^*\pr_1^*i_!X \ar[r] & p^*\pr_{2!}j^*\Delta_!\Delta^*p^*\pr_1^*i_!X \ar[d, "\simeq"]\\
i_!p^* \ar[rr, "\gamma"] & & p^*i_!.
\end{tikzcd}
\]

To see the top diagram commutes,  consider the following diagram
	\[
	\begin{tikzcd}
	{\pr_{2!}j^*\pr_1^*i_!p^*X}\ar[r] \ar[d]\ar[dr]  &[-9pt] {\pr_{2!}j^*\pr_1^*p^*i_!X} \ar[dr] \ar[r] &  {p^*\pr_{2!}j^*\pr_1^*i_!X} \ar[dd, bend left=80] \\
	\pr_{2!}j^*\pr_1^*i_!i^*\pi^*\pi_!p^*X \ar[d]\ar[r]	& \pr_{2!}j^*\pr_1^*i_!p^*i^*\pi^*\pi_!X \ar[r] & \pr_{2!}j^*\pr_1^*p^*i_!i^*\pi^*\pi_!X \ar[d] \\
	\pr_{2!}j^*\pr_2^*\pi^*\pi_!p^*X \ar[d]\ar[r]	& \pr_{2!}j^*\pr_2^*p^*\pi^*\pi_!X \ar[r] & p^*\pr_{2!}j^*\pr_2^*\pi^*\pi_!X \ar[d] \\
	\pi^*\pi_!p^*X\ar[rr]	& & p^*\pi^*\pi_!X.
	\end{tikzcd}
	\]
	The  outer composites of this diagram yield the diagram of the lemma and a straightforward inspection suffices to see that most of the pieces of this diagram commute. The remaining  pieces involve either moving $p^*$ across the unit for the adjunction $(\pi_!, i^*\pi^*)$ or across the counit for the adunction $(\pr_{2!}, j^*\pr_2^*)$. In either case, that this results in a commutative diagram follows from \aref{lem:LR}, since the pairs of exchange equivalences $\pi_!p^*\simeq p^*\pi_!$, $p^*i^*\pi^*\simeq i^*\pi^*p^*$ and $\pr_{2!}p^*\simeq p^*\pr_{2!}$, $p^*j^*\pr_2^*\simeq j^*\pr_2^*p^*$ are mates. 

The argument for commutativity of the remaining squares is similar. 
\end{proof}

\begin{proposition}\label{prop:puptau}
	Let $X\in \SH^{G,\Nfree}(S)$. The diagram
	\[
	\begin{tikzcd}
	\pi_!p^*X \ar[r, "\beta"]\ar[d, "\tau p^*"'] & p^*\pi_!X \ar[d, "p^*\tau"] \\
	\pi_*i_!p^*X  & p^*\pi_*i_!X \ar[l, "\gamma^{-1}\alpha_R"]. 
	\end{tikzcd}
	\]
	commutes.
\end{proposition}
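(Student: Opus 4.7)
The plan is to deduce the proposition from the previous Lemma (\aref{lem:taucom}) by passing through the $(\pi^{*},\pi_{*})$-adjunction at $T$. Under the adjunction equivalence
\[
\Map(\pi_{!}p^{*}X,\pi_{*}i_{!}p^{*}X)\xrightarrow{\sim}\Map(\pi^{*}\pi_{!}p^{*}X,i_{!}p^{*}X),\qquad \phi\longmapsto \epsilon_{T}\circ\pi^{*}\phi,
\]
two maps $\pi_{!}p^{*}X\rightrightarrows \pi_{*}i_{!}p^{*}X$ are equal iff their images agree. So I would begin by computing the image of each side of the claimed equation under this equivalence and checking the resulting identity.

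The right vertical composite $\tau p^{*}$ is, by construction, the $(\pi^{*},\pi_{*})$-adjoint of $\hat{\tau}p^{*}$, so by the triangle identity its image is just $\hat{\tau}p^{*}$. For the other composite $\gamma^{-1}\alpha_{R}\circ p^{*}\tau\circ\beta$, I would unwind $\epsilon_{T}\circ\pi^{*}(-)$ in four steps: (a)~naturality of $\epsilon_{T}$ pulls $\gamma^{-1}$ outside, since $\gamma$ does not involve $\pi_{*}$; (b)~\aref{lem:LR} applied to the pair of adjunctions $(\pi^{*},\pi_{*})$ on $S$ and on $T$ with $F=G=p^{*}$ and mates $\psi=\alpha$, $\phi=\alpha_{R}$, identifies $\epsilon_{T}\circ\pi^{*}\alpha_{R}$ with $p^{*}\epsilon_{S}\circ\alpha$; (c)~naturality of $\alpha$ moves it past $\pi^{*}p^{*}\tau$, yielding $p^{*}\pi^{*}\tau\circ\alpha$; (d)~the triangle identity for $(\pi^{*},\pi_{*})$ on $S$ collapses $p^{*}\epsilon_{S}\circ p^{*}\pi^{*}\tau$ to $p^{*}\hat{\tau}$, since $\tau$ is defined as the adjoint of $\hat{\tau}$. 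The net result is the map $\gamma^{-1}\circ p^{*}\hat{\tau}\circ\alpha\circ\pi^{*}\beta$.

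To finish, I would invoke \aref{lem:taucom}: the commutativity established there states precisely that $p^{*}\hat{\tau}\circ(\alpha\circ\pi^{*}\beta)=\gamma\circ\hat{\tau}p^{*}$, so the map from the previous paragraph equals $\hat{\tau}p^{*}$. Hence both $(\pi^{*},\pi_{*})$-adjoints agree and the proposition follows.

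The main obstacle here is not conceptual but bookkeeping: one must track whiskerings carefully, especially distinguishing $\alpha$ whiskered by $\pi_{!}$ versus $\pi_{*}i_{!}$, and correctly identify which mate of $\alpha$ appears at which stage. In particular, \aref{lem:LR} must be invoked in the precise form relating the counits of $(\pi^{*},\pi_{*})$ on $S$ and $T$, rather than the version relating units, since we are manipulating $\epsilon_{T}\circ\pi^{*}(-)$ rather than $\pi^{*}(-)\circ\eta$. Once this bookkeeping is organized, the argument is a direct diagram chase.
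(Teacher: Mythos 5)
Your argument is correct and is essentially the paper's own proof: both reduce the statement via the $(\pi^*,\pi_*)$-adjunction to an identity between maps $\pi^*\pi_!p^*X\to i_!p^*X$, use \aref{lem:LR} to exchange the counit of $(\pi^*,\pi_*)$ across $p^*$ via the mates $\alpha$ and $\alpha_R$, and then conclude from \aref{lem:taucom}. The only difference is presentational — the paper assembles these steps into one large commutative diagram whose outer boundary is the square of \aref{lem:taucom}, while you carry out the same manipulations equationally.
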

\begin{proof}
	To see this, consider the diagram
	\[
	\begin{tikzcd}[column sep = large]
	\pi^*\pi_!p^*X \ar[r, "\pi^*\beta"]\ar[d, "\pi^*\tau p^*"'] & \pi^*p^*\pi_!X \ar[d, "\pi^*p^*\tau"]\ar[r, "\alpha", "\sim"'] & p^*\pi^*\pi_!X \ar[d, "p^*\pi^*\tau"] \\
	\pi^*\pi_*i_!p^*X  \ar[d] & \pi^*p^*\pi_*i_!X 
\ar[r, "\alpha", "\sim"']\ar[l, "\pi^*\gamma^{-1}\alpha_R"'] & p^*\pi^*\pi_*i_!X. \ar[d] \\
	i_!p^*X & & p^*i_!X. \ar[ll, "\sim"'] 
	\end{tikzcd}
	\]
	By adjointness, it suffices to see that the top left square commutes. The right square commutes and the lower rectangle commutes by \aref{lem:LR}, so it suffices to see the outer diagram commutes. This follows from \aref{lem:taucom}.
\end{proof}

\begin{proposition}
 Let $Y\in \SH^{G,\Nfree}(T)$. 	The following diagram commutes:
	\[
	\begin{tikzcd}
	p_\#\pi_!Y \ar[d, "p_\#\tau"']\ar[r, "\beta_L"] &  \pi_!p_\#Y \ar[d, "\tau p_\#"]   \\
	p_\#\pi_*i_!Y \ar[r, "\overline{\gamma\alpha}"]  & \pi_*i_!p_\#Y.
	\end{tikzcd}
	\]

\end{proposition}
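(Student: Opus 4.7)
The plan is to reduce the asserted commutativity to Proposition~\ref{prop:puptau} via the mate correspondence for the adjunction $p_\# \dashv p^*$. Both composites $\tau p_\# \circ \beta_L$ and $\overline{\gamma\alpha} \circ p_\#\tau$ are natural transformations $p_\#\pi_! \Rightarrow \pi_*i_!p_\#$, and they agree if and only if their transposes $\pi_! \Rightarrow p^*\pi_*i_!p_\#$ under $p_\# \dashv p^*$ do. I will therefore compute both transposes explicitly and observe that they coincide as a direct consequence of Proposition~\ref{prop:puptau} applied at $X = p_\#Y$.

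For the top-right composite, I first recall that $\beta_L$ is by construction the left mate of $\beta$. A standard triangle-identity computation (of the type encoded in Lemma~\ref{lem:LR}) yields $p^*(\beta_L)_Y \circ \eta_{\pi_!Y} = \beta_{p_\#Y} \circ \pi_!\eta_Y$. Composing with $p^*\tau_{p_\#Y}$, the transpose of $(\tau p_\#) \circ \beta_L$ at $Y$ is $p^*\tau_{p_\#Y} \circ \beta_{p_\#Y} \circ \pi_!\eta_Y$.

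For the left-bottom composite, naturality of $\eta$ applied to $\tau_Y$ gives $p^*p_\#\tau_Y \circ \eta_{\pi_!Y} = \eta_{\pi_*i_!Y} \circ \tau_Y$. The same triangle-identity argument, applied now to $\overline{\gamma\alpha}$ as the left mate of $\alpha_R^{-1}\gamma$, gives $p^*\overline{\gamma\alpha}_Y \circ \eta_{\pi_*i_!Y} = (\alpha_R^{-1}\gamma)_{p_\#Y} \circ \pi_*i_!\eta_Y$. Combining these, and then using naturality of $\tau$ applied to $\eta_Y$ to replace $\pi_*i_!\eta_Y \circ \tau_Y$ by $\tau_{p^*p_\#Y} \circ \pi_!\eta_Y$, the transpose of $\overline{\gamma\alpha} \circ (p_\#\tau)$ at $Y$ becomes $(\alpha_R^{-1}\gamma)_{p_\#Y} \circ \tau_{p^*p_\#Y} \circ \pi_!\eta_Y$.

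At this point I would invoke Proposition~\ref{prop:puptau} at $X = p_\#Y$, which identifies $\tau_{p^*p_\#Y}$ with $(\gamma^{-1}\alpha_R)_{p_\#Y} \circ p^*\tau_{p_\#Y} \circ \beta_{p_\#Y}$. Substituting, the factors $(\alpha_R^{-1}\gamma)_{p_\#Y}$ and $(\gamma^{-1}\alpha_R)_{p_\#Y}$ cancel, and the second transpose reduces to $p^*\tau_{p_\#Y} \circ \beta_{p_\#Y} \circ \pi_!\eta_Y$, matching the first. The one delicate point is the careful bookkeeping of the various exchange transformations $\alpha$, $\beta$, $\gamma$, $\alpha_R$ together with the several invocations of naturality of $\eta$ and of $\tau$; beyond that the argument is essentially mechanical, and no genuine obstacle should arise once the mate calculus (and the statement of the previous proposition) is in hand.
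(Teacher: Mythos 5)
Your proof is correct, and it is essentially the paper's argument phrased in adjoint-transpose form: the paper factors the rectangle horizontally through $p_\#(\,\cdot\,)p^*p_\#$ into three squares (outer two commute by naturality of $\eta$ and $\epsilon$, the middle one is $p_\#$ applied to \aref{prop:puptau} at $X=p_\#Y$), and your passage to adjoints under $p_\#\dashv p^*$ is exactly the computation that makes that factorization work. The mate formulas you use ($p^*(\phi_L)_Y\circ\eta = \phi_{p_\#Y}\circ(\text{unit})$) are correct, and the observation that $\alpha_R^{-1}\gamma$ and $\gamma^{-1}\alpha_R$ are inverse is the key cancellation both arguments rely on.
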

\begin{proof}

	The diagram of the lemma is the composite of the squares
	\[
	\begin{tikzcd}
	p_\#\pi_!Y  \ar[r]\ar[d, "p_\#\tau"'] & p_\#\pi_!p^*p_\#Y  \ar[d,"p_\#\tau p^*p_\#"]\ar[r, "\beta"] & p_\#p^*\pi_!p_\#Y \ar[r] \ar[d, " p_\#p^*\tau p_\#"] & \pi_!p_\#Y \ar[d, "\tau p_\# " ]\\
	p_\#\pi_*i_!Y \ar[r]  & p_\#\pi_*i_!p^*p_\#Y \ar[r, "(\gamma\alpha_R)^{-1}"]  &  p_\#p^*\pi_*i_!p_\#Y \ar[r] & \pi_*i_!p_\#Y.
	\end{tikzcd}
	\]
	The first and the third square commute by functoriality. The second square commutes by \aref{prop:puptau}.

\end{proof}

\begin{proposition}\label{lem:comtaup*}
	Let $Y\in \SH^{G,\Nfree}(T)$. 	The following diagram commutes:
	\[
	\begin{tikzcd}
	\pi_!p_*Y \ar[d, "\tau p_*" ]\ar[r, "\phi", "\sim"' ] &  p_*\pi_!Y \ar[d, "p_*\tau" ]   \\
	\pi_*i_!p_*Y \ar[r, "\nu'", "\sim"' ]  & p_*\pi_*i_!Y.
	\end{tikzcd}
	\]
\end{proposition}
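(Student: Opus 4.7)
The plan is to deduce this as a formal consequence of \aref{prop:puptau} by taking right mates with respect to the $(p^{*},p_{*})$ adjunction. The square in \aref{prop:puptau} is an identity of natural transformations $\pi_{!}p^{*}\Rightarrow p^{*}\pi_{*}i_{!}$; the desired square is an identity of natural transformations $\pi_{!}p_{*}\Rightarrow p_{*}\pi_{*}i_{!}$. Since $\phi=(\beta^{-1})_{R}$ is by definition the right mate of $\beta$, and $\nu'=\nu\circ\pi_{*}(\gamma^{-1})_{R}$ is a composite of right mates (of $\alpha_{R}^{-1}$ and of $\gamma^{-1}$), both horizontal edges of the target square are mates of the corresponding horizontal edges of the source square. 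The vertical edges $\tau p_{*}$ and $p_{*}\tau$ are obtained from $\tau p^{*}$ and $p^{*}\tau$ by pre/post composition with the unit and counit of $(p^{*},p_{*})$, so no new mate is needed for them---they are whiskered copies of the single natural transformation $\tau$.

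Concretely, I would argue as follows. By the adjunction $(p^{*},p_{*})$, to check that two natural transformations $\pi_{!}p_{*}\to p_{*}\pi_{*}i_{!}$ agree, it is equivalent (via the unit) to check that their common adjunct transformations $p^{*}\pi_{!}p_{*}\to \pi_{*}i_{!}p_{*}$ agree. Unwinding the definitions of $\phi$ and $\nu'$, the two candidates for this adjunct transformation are obtained by whiskering the two paths around the square of \aref{prop:puptau} by $p_{*}$ on the right, then composing with the counit $\epsilon\colon p^{*}p_{*}\Rightarrow\id$. The compatibility of these whiskerings with the unit/counit---needed to move $\phi$ past $\tau p_{*}$ and to move $\nu'$ past $p_{*}\tau$---is precisely what is furnished by \aref{lem:LR}, applied to the mate pairs $(\beta,\phi)$ and $(\gamma^{-1}\alpha_{R},\nu')$.

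Stringing this together, the commutativity of the desired square is equivalent to the commutativity of
\[
\begin{tikzcd}
p^{*}\pi_{!}p_{*}Y \ar[r, "\beta p_{*}"] \ar[d, "p^{*}\tau p_{*}"'] & p^{*}p_{*}\pi_{!}Y \ar[d, "p^{*}p_{*}\tau"] \\
p^{*}\pi_{*}i_{!}p_{*}Y \ar[d, "\gamma^{-1}\alpha_{R} p_{*}"'] & p^{*}p_{*}\pi_{*}i_{!}Y \ar[d, "\epsilon"] \\
\pi_{*}i_{!}p^{*}p_{*}Y \ar[r, "\pi_{*}i_{!}\epsilon"'] & \pi_{*}i_{!}Y,
\end{tikzcd}
\]
in which the top square is obtained from \aref{prop:puptau} applied to $X=p_{*}Y$ (and so commutes), and the bottom square is a naturality square for the counit $\epsilon$ (and so commutes trivially). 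Reassembling gives the claim.

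The main obstacle will be the bookkeeping of all the mate identifications and of the order in which units and counits are inserted; this is not conceptually difficult but requires care, and the argument parallels the preceding proof of the $p^{*}$-compatibility in \aref{prop:puptau}. There are no further geometric inputs beyond those already used to establish \aref{prop:puptau} and the definitions of $\phi$ and $\nu'$.
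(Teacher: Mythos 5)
Your strategy is the paper's: pass to the adjoint statement under $(p^*,p_*)$ and feed in \aref{prop:puptau} at $X=p_*Y$ together with \aref{lem:LR} for the mate pairs $(\beta,\phi)$ and $(\gamma^{-1}\alpha_R,\nu')$. The prose names exactly the right ingredients. However, the displayed diagram does not implement the plan, and taken literally it is circular. The top horizontal arrow cannot be $\beta p_*$ (whose source is $\pi_!p^*p_*Y$, not $p^*\pi_!p_*Y$); it must be $p^*\phi$. The middle horizontal arrow $p^*\nu'\colon p^*\pi_*i_!p_*Y\to p^*p_*\pi_*i_!Y$ is missing, so neither ``square'' closes up. Once these are restored, your top square has edges $p^*\phi$, $p^*\tau p_*$, $p^*p_*\tau$, $p^*\nu'$: it is precisely $p^*$ applied to the square you are trying to prove, \emph{not} an instance of \aref{prop:puptau}, whose square at $X=p_*Y$ has $\beta$ and $\tau p^*p_*$ on its edges and source $\pi_!p^*p_*Y$. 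Likewise the bottom square is not a naturality square for $\epsilon$; it is the \aref{lem:LR} compatibility exhibiting $\nu'$ as the right mate of $\gamma^{-1}\alpha_R$. So as written the argument assumes its conclusion.

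The repair is the extra column the paper inserts: precompose with the equivalence $\beta\colon\pi_!p^*p_*Y\xrightarrow{\sim} p^*\pi_!p_*Y$. Then (i) \aref{lem:LR} applied to the mate pair $(\beta^{-1},\phi)$ identifies the composite $\epsilon\pi_!\circ p^*\phi\circ\beta$ with $\pi_!\epsilon$, so the outer rectangle commutes by naturality of $\tau$; (ii) the left triangle becomes a genuine instance of \aref{prop:puptau} at $X=p_*Y$; (iii) the bottom triangle is the \aref{lem:LR} square for $\nu'$; and (iv) cancelling the equivalence $\beta$ recovers the adjoint of the desired square, hence the square itself. In short: keep your plan, but the actual content of the proof is the $\beta$-precomposition and the two \aref{lem:LR} identifications, which your diagram elides.
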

\begin{proof}
	Consider the following diagram. By adjointness, we need to see that the combined top rectangles commute. 
\[
\begin{tikzcd}
\pi_!p^*p_*Y\ar[r, "\beta", "\sim"'] \ar[rdd, bend right = 10, "\tau p^*p_*"' ] & p^*\pi_!p_*Y \ar[r, "p^*\phi"] \ar[d, "p^*\tau p_*"]& p^*p_*\pi_!Y\ar[r, "\epsilon \pi_!"] \ar[d, "p^*p_*\tau"] & \pi_!Y  \ar[d, "\tau"] \\
&p^*\pi_*i_!p_*Y \ar[d, "\gamma^{-1}\alpha_R"]\ar[r, "\nu'", "\sim"'] & p^*p_*\pi_*i_!Y \ar[r, "\epsilon\pi_*i_!" ] & \pi_*i_!Y \\
& \pi_*i_!p^*p_*Y .\ar[urr, bend right=10, "\pi_*i_!\epsilon"'] & & 
\end{tikzcd}
\]
Here $\epsilon$ is the counit of the adjunction $(p^*,p_*)$. The left triangle commutes by \aref{prop:puptau}, the bottom triangle commutes by \aref{lem:LR}, and the top composite is equivalent to $\pi_!\epsilon$, also by \aref{lem:LR}. It follows that the outer diagram commutes, and since $\beta$ is an equivalence the combined rectangles commute as desired. 
\end{proof}

\begin{lemma}\label{lem:f*i!}
	Let $f:T\to S$ be a map in $\Sch_B^G$. 
	The diagram
	\[
	\begin{tikzcd}
	\SH^{G,\Nfree}(T)\ar[d, "f_*"' ] \ar[r, "i_!"] &  \SH^{G}(T) \ar[d, "f_*"] \\
	\SH^{G,\Nfree}(S)\ar[r, "i_!"] &   \SH^{G}(S)
	\end{tikzcd}
	\]
	commutes.
\end{lemma}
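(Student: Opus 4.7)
The plan is to reduce the diagram commutativity to showing that $f_*$ preserves the essential image of the fully faithful functor $i_!\colon\SH^{G,\Nfree}(S)\hookrightarrow\SH^G(S)$.  By \aref{prop:EEimage} this essential image coincides with the colocal subcategory $\{M\in\SH^G(S) : \EE\FF(N)_{+S}\otimes M\simeq M\}$, on which the counit of $(i_!,i^*)$ is an equivalence.  Granted that $f_*i_!Y$ lies in this essential image for every $Y\in\SH^{G,\Nfree}(T)$, the desired natural equivalence is obtained via the chain
\[
f_*i_!Y\simeq i_!i^*f_*i_!Y\simeq i_!f_*i^*i_!Y\simeq i_!f_*Y,
\]
using the counit of $(i_!,i^*)$, a mate equivalence $i^*f_*\simeq f_*i^*$, and $i^*i_!\simeq\id$ (the latter by full faithfulness of $i_!$, see \aref{prop:i*ff}).

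For the mate equivalence, I first observe that $f^*i_!\simeq i_!f^*$ as left adjoints $\SH^{G,\Nfree}(S)\to\SH^G(T)$.  This holds because base change along $f$ preserves $N$-free actions at the level of schemes, giving a commutative square of scheme categories that induces a commutative square of left Kan extensions on presheaves and descends through Nisnevich localization, $\A^1$-localization, and stabilization to $\SH^G$.  Passing to right adjoints (uniqueness of right adjoints for an equivalence of left adjoints) yields $i^*f_*\simeq f_*i^*$.

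The main content is then to verify that $f_*i_!Y$ is $\EE\FF(N)_{+S}$-colocal.  My approach uses the orthogonality between this colocal subcategory and the local subcategory $\L_{\co(\FF(N))}\SH^G(S)$: since $\wt\EE\FF(N)\wedge\EE\FF(N)_+\simeq*$ by \aref{lem:EErew}, for any $\EE\FF(N)_+$-colocal $A$ one has $\wt\EE\FF(N)\otimes A\simeq 0$, and hence $\map(A,B)\simeq\map(\wt\EE\FF(N)\otimes A,B)=0$ for any $\wt\EE\FF(N)$-local $B$; conversely, any $A$ right-orthogonal to $\L_{\co(\FF(N))}\SH^G(S)$ exhibits the counit $\EE\FF(N)_+\otimes A\to A$ as a split epimorphism and is therefore a retract of its coreflection, so lies in the essential image of $i_!$.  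Given $Z\in\L_{\co(\FF(N))}\SH^G(S)$, the equivalence $f^*\wt\EE\FF(N)_S\simeq\wt\EE\FF(N)_T$ (from \aref{prop:bcE} applied to $\EE\FF(N)$ and the defining cofiber sequence) shows $f^*Z\in\L_{\co(\FF(N))}\SH^G(T)$, whence
\[
\map_{\SH^G(S)}(Z,f_*i_!Y)\simeq\map_{\SH^G(T)}(f^*Z,i_!Y)=0
\]
by the orthogonality in $\SH^G(T)$.  The only real subtlety is setting up this orthogonality structure correctly; notably, no genuine base change formula for $f_*$ is required, only the base change $f^*\EE\FF(N)_S\simeq\EE\FF(N)_T$ of \aref{prop:bcE} together with the formal properties of the colocalization.
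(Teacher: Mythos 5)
Your reduction is the right one — the content of the lemma is precisely that $f_*i_!Y$ lies in the essential image of $i_!$, i.e.\ is $\EE\FF(N)_+$-colocal — but the orthogonality you use to establish this is backwards, and this is a genuine gap. The orthogonality the paper provides (\aref{cor:EEFFequiv}, \aref{cor:wtEEloc}, \aref{lem:EErew}) is $\map(\text{colocal},\text{local})=0$: the essential image of $i_!$ is the \emph{left} orthogonal ${}^{\perp}\L_{\co(\FF(N))}$, and $\L_{\co(\FF(N))}$ is the right orthogonal of the colocal subcategory. You need the opposite vanishing twice: once to claim $\map_{\SH^G(T)}(f^*Z,i_!Y)=0$ (a map \emph{from} a local object \emph{to} a colocal one), and once to deduce from right-orthogonality to $\L_{\co(\FF(N))}$ that the counit $\EE\FF(N)_+\otimes A\to A$ splits. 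Neither holds: $\map(\mathcal{L},\mathcal{C})=0$ would force the connecting map $\wt\EE\FF(N)\to\Sigma\,\EE\FF(N)_+$ of the isotropy cofiber sequence \eqref{eqn:cof} to be null, hence $\SS_S\simeq \EE\FF(N)_+\oplus\wt\EE\FF(N)$, which fails already classically (for $G=C_p$, $N=G$ it would make $A(C_p)\cong\Z\times\Z$ as a ring). The correct left-orthogonality characterization does not help either, because $f_*$ is a right adjoint and cannot be moved out of the \emph{source} of a mapping spectrum; this is exactly why colocality of $f_*i_!Y$ is not formal.

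The paper closes this gap with dualizability rather than orthogonality. By \aref{prop:EEimage}, $i_!i^*\simeq \EE\FF(N)_+\otimes(-)$, and by \aref{cor:EEdual} one may write $\EE\FF(N)_+\simeq\colim_n U_{n+}$ with each $U_{n+}$ dualizable. Dualizability converts the tensor into a cotensor, $U_{n+}\otimes f_*(-)\simeq F(DU_{n+},f_*(-))\simeq f_*F(f^*DU_{n+},-)\simeq f_*(f^*U_{n+}\otimes -)$, which \emph{does} commute with the right adjoint $f_*$; passing to the colimit and using $f^*\EE\FF(N)_S\simeq\EE\FF(N)_T$ (\aref{prop:bcE}) together with the colocality of $i_!Y$ in $\SH^G(T)$ gives $i_!i^*f_*i_!\simeq f_*i_!$. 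If you want to salvage your outline, replace the orthogonality step by this projection-formula argument; the rest of your reduction (the mate equivalence $i^*f_*\simeq f_*i^*$ obtained from $f^*i_!\simeq i_!f^*$, and the chain $f_*i_!Y\simeq i_!i^*f_*i_!Y\simeq i_!f_*Y$) then goes through.
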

\begin{proof}
	The functor $f_*:\SH^{G,\Nfree}(T)\to \SH^{G,\Nfree}(S)$ may be computed as the composite 
\[
\SH^{G,\Nfree}(T) \xrightarrow{i_!} \SH^G(T) \xrightarrow{f_*} \SH^G(S)\xrightarrow{i^*}\SH^{G,\Nfree}(S).
\]
We have $i_!i^*\simeq i_!i^*(\SS)\otimes -$ and $i_!i^*(\SS)\simeq \colim_n U_n$, where $U_n$ is dualizable over $S$, by \aref{cor:EEdual}. Therefore 
\begin{align*}
i_!i^*f_*i_!\simeq \colim_n F(D(U_n), f_*i_!) & \simeq f_*F(f^*D(U_n),i_!) \\
&\simeq \colim_n f_*F(Df^*U_n, i_!) \\
& \simeq f_* \colim_n f^*U_n\otimes i_! \\
& \simeq f_*(i_!i^*(\SS)\otimes i_!) \\
& \simeq f_*i_!,
\end{align*}
where we use that $f^*i_!i^*(\SS)\simeq i_!i^*(\SS)$ by \aref{prop:bcE} since $i_!i^*(\SS)\simeq \EE \FF(N)$. 
\end{proof}

\subsection{ \texorpdfstring{$\tau$}{tau} is an equivalence}

In this subsection, we show that the Adams transformation $\tau$ is an equivalence. By \aref{cor:Nfreegen}, it suffices to show that $\tau$ is an equivalence on 
$\Sigma^{-k\rho_{G/N}}q_\#\SS_X$  where $q:X\to S$ is in $\Sm^{G,\Nfree}_S$, which in turn would follow by showing that $\tau$ is an equivalence on all $q_\#\SS_X$. Unfortunately, we do not know how to show this directly. Instead our strategy is to 
 first show that it is an equivalence on those $q_\#\SS_X$ which are {\em dualizable}. This immediately implies that $\tau$ is an equivalence on the subcategory of $\SH^{G,\Nfree}(S)$ generated under colimits by ($N$-trivial desuspensions of) such $q_\#\SS_X$. Of course,  this is only a proper subcategory of $\SH^{G,\Nfree}(S)$, unless $S$ is the spectrum of a field of characteristic zero. However, since  $\SS_{\EE\FF(N)}$ has dualizable skeleta by \aref{cor:EEdual}, we can at least conclude that 
\[
\tau:\pi_!\SS_{\EE\FF(N)}\to \pi_*\SS_{\EE\FF(N)}
\]
 is an equivalence. 
This now allows us to 
define an inverse $\tau^{-1}$ using \aref{prop:untwist} and conclude that $\tau$ is an equivalence in general.

\begin{lemma}\label{lem:freedual}
	Let $\E\in \SH^{G,\Nfree}(S)$ and suppose that $i_!\E$ is dualizable. Then $\E$ is also dualizable and $i_!F_{\SH^{G,\Nfree}(S)}(\E, \DD)\simeq F_{\SH^{G}(S)}(i_!\E, i_!\DD)$.

 In particular,  the dual of $\E$, regarded as an object of $\SH^{G,\Nfree}(S)$, agrees via $i_!$, with the dual of $\E$, regarded as an object of $\SH^G(S)$.
	\end{lemma}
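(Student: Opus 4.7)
The plan is to use the identification, via $i_!$, of $\SH^{G,\Nfree}(S)$ with the full subcategory $\EE\FF(N)_+\otimes\SH^G(S)\subseteq\SH^G(S)$, cf.\ \aref{lem:ideal} and \aref{prop:EEimage}. This essential image is a localizing tensor ideal, and inherits a symmetric monoidal structure from $\SH^G(S)$ with unit $\SS_{\EE\FF(N)}:=\EE\FF(N)_+\simeq i_!i^*(\SS_S)$. Under this identification $i_!$ becomes a symmetric monoidal equivalence onto $\EE\FF(N)_+\otimes\SH^G(S)$, and the endofunctor $L:=\EE\FF(N)_+\otimes(-):\SH^G(S)\to\EE\FF(N)_+\otimes\SH^G(S)$ is a symmetric monoidal colocalization; idempotency of $\EE\FF(N)_+$ ensures that $L$ sends the unit $\SS_S$ to the unit $\EE\FF(N)_+$ of the target.

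Since any symmetric monoidal functor preserves dualizable objects, if $i_!\E$ is dualizable in $\SH^G(S)$ with dual $D:=F_{\SH^G(S)}(i_!\E,\SS_S)$, then $L(i_!\E)$ is dualizable in the localized category with dual $L(D)=\EE\FF(N)_+\otimes D$. But $L(i_!\E)\simeq i_!\E$, since $i_!\E$ already lies in the essential image of $L$, so $i_!\E$ is dualizable in $\EE\FF(N)_+\otimes\SH^G(S)$, whence $\E$ is dualizable in $\SH^{G,\Nfree}(S)$ with $i_!(\E^\vee)\simeq\EE\FF(N)_+\otimes D$.

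For the internal hom formula, dualizability gives $F_{\SH^{G,\Nfree}(S)}(\E,\DD)\simeq\DD\otimes\E^\vee$ and $F_{\SH^G(S)}(i_!\E,i_!\DD)\simeq i_!\DD\otimes D$. Applying $i_!$ to the first (which preserves tensor products) and using the preceding computation of $i_!(\E^\vee)$ yields $i_!F_{\SH^{G,\Nfree}(S)}(\E,\DD)\simeq i_!\DD\otimes\EE\FF(N)_+\otimes D\simeq i_!\DD\otimes D$, where the last equivalence absorbs the idempotent $\EE\FF(N)_+$ into $i_!\DD$, which itself lies in the essential image of $i_!$. This matches $F_{\SH^G(S)}(i_!\E,i_!\DD)$, as required. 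There is no genuine obstacle here: the argument simply packages the standard principle that a symmetric monoidal colocalization preserves and reflects dualizability on objects of its essential image, and the only point requiring a little care is tracking the two distinct monoidal units ($\SS_S$ versus $\EE\FF(N)_+$), which is immediate from \aref{sec:universalF}.
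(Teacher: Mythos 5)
Your argument is correct and is essentially the paper's proof in different clothing: the paper phrases step one as ``$\E\simeq i^*i_!\E$ and $i^*$ is symmetric monoidal,'' which under the identification of $\SH^{G,\Nfree}(S)$ with the tensor ideal is exactly your symmetric monoidal colocalization $L=\EE\FF(N)_+\otimes(-)$, and the paper's second step likewise absorbs $i_!i^*(\SS_S)$ into $F(i_!\E,i_!\DD)\simeq D_S(i_!\E)\otimes i_!\DD$ via the projection onto $i_!\DD$. The one point you leave implicit is the final ``in particular'' clause: your computation yields $i_!(\E^\vee)\simeq\EE\FF(N)_+\otimes D_S(i_!\E)$, and to identify this with $D_S(i_!\E)$ itself you still need that $D_S(i_!\E)$ lies in the essential image of $i_!$ (e.g.\ because it is a retract of $D_S(i_!\E)\otimes i_!\E\otimes D_S(i_!\E)$, or by the paper's observation that $F(i_!\E,\SS_S)\simeq F(i_!\E,i_!i^*\SS_S)$); this is a one-line addition, not a flaw in the approach.
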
 
	
\begin{proof}
	This first claim follows formally from the fact that $\E\simeq i^*i_!\E$ and that $i^*$ is a symmetric monoidal functor.
The second claim follows by applying $i_!$ to the  equivalence
	\[
	i^*F_{\SH^G(S)}(i_!\E,i_!\DD)\simeq F_{\SH^{G,\Nfree}(S)}(\E,\DD)
	\]
and that $i_!i^*\simeq i_!i^*(\SS_S)\otimes \id$, see \aref{prop:EEimage}, together with the commutative square
	\[
	\begin{tikzcd}
		i_!i^*\SS_S\otimes D_S(i_!\E)\otimes i_!\DD
		\ar[r, "\sim"]\ar[d, "\sim"'] & i_!i^*\SS_S\otimes F(i_!\E,i_!\DD)\ar[d]\\
		D_S(i_!\E)\otimes i_!\DD\ar[r, "\sim"] & F(i_!\E,i_!\DD)
		\end{tikzcd}
	\]
where the vertical maps are induced by the projection $i_!i^*(\SS_S)\to \SS_S$ and the left hand vertical map is an equivalence since $i_!i^*\SS_S\otimes i_!\DD\to i_!\DD$ is an equivalence.

The last statement follows since $ F_{\SH^{G}(S)}(i_!\E, \SS_S)\simeq F_{\SH^{G}(S)}(i_!\E, i_!(i^*\SS_S))$.
\end{proof}

\begin{lemma}\label{lem:dcomm}
	Let $p:Y\to S$ be a smooth equivariant map. For any 
	$\E \in \SH^{G}(Y)$, the diagram
	\[
	\begin{tikzcd}
		D_S(p_\#\E)\otimes_{S} p_\#\E \ar[r, "\ev"]  &  \SS_{S} \\
		p_\#(p^*D_S(p_{\#}\E)\otimes_{Y} \E)   \ar[ur, "\ev'"', bend right=20]  \ar[u,"\sim"]	&
	\end{tikzcd}
	\]
	commutes, where the vertical equivalence is  the projection formula and the diagonal map is the composite	
	\begin{align*}
	p_\#(p^*D_S(p_\#\E)\otimes_{Y} \E) & \to p_\#(D_Y(p^*p_\#\E)\otimes_{Y} \E)  \to p_\#(D_Y(\E)\otimes_{Y} \E)\\
	&   \xrightarrow{p_\#\ev} p_\#\SS_Y\simeq p_\#p^*\SS_S\to \SS_S,
	\end{align*}
	of the canonical map followed by maps induced by the unit and counit of the adjunction $(p_\#,p^*)$, respectively. 
\end{lemma}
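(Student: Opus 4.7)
The plan is to pass to adjoints under the adjunction $(p_\#, p^*)$. Both composites are morphisms $p_\#(p^*D_S(p_\#\E) \otimes_Y \E) \to \SS_S$, so it suffices to show their adjoints $p^*D_S(p_\#\E) \otimes_Y \E \to p^*\SS_S \simeq \SS_Y$ coincide. The key input is the explicit description of the projection formula equivalence: for smooth $p$, the isomorphism $p_\#(p^*F \otimes_Y \E) \xrightarrow{\sim} F \otimes_S p_\#\E$ is adjoint, through the symmetric monoidal equivalence $p^*(F \otimes_S p_\#\E) \simeq p^*F \otimes_Y p^*p_\#\E$, to $\id_{p^*F} \otimes \eta_\E$, where $\eta_\E \colon \E \to p^*p_\#\E$ is the unit of $(p_\#, p^*)$.

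With this, the adjoint of the horizontal composite $\ev \circ \alpha$ (with $\alpha$ the projection formula arrow) works out to
\[
p^*D_S(p_\#\E) \otimes \E \xrightarrow{\id \otimes \eta_\E} p^*D_S(p_\#\E) \otimes p^*p_\#\E \simeq p^*(D_S(p_\#\E) \otimes p_\#\E) \xrightarrow{p^*\ev_S} p^*\SS_S.
\]
For the diagonal, note that the final arrow $p_\#p^*\SS_S \xrightarrow{\epsilon} \SS_S$ is the counit, and by the triangle identity the adjoint of $\epsilon \circ p_\#\delta$ is simply $\delta$ for any $\delta \colon X \to p^*\SS_S$. Applying this observation, the adjoint of the diagonal $\ev'$ simplifies to
\[
p^*D_S(p_\#\E) \otimes \E \xrightarrow{c \otimes \id} D_Y(p^*p_\#\E) \otimes \E \xrightarrow{D_Y(\eta_\E) \otimes \id} D_Y(\E) \otimes \E \xrightarrow{\ev_\E} \SS_Y,
\]
where $c \colon p^*D_S(-) \to D_Y(p^*-)$ is the canonical transformation coming from the symmetric monoidal structure on $p^*$.

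To match the two adjoints, I would invoke naturality of evaluation applied to $\eta_\E \colon \E \to p^*p_\#\E$, which gives the identity $\ev_\E \circ (D_Y(\eta_\E) \otimes \id_\E) = \ev_{p^*p_\#\E} \circ (\id \otimes \eta_\E)$, and then use the defining characterization of $c$, namely that $\ev_{p^*F} \circ (c \otimes \id_{p^*F})$ agrees with $p^*\ev_S$ after the monoidal comparison. The resulting calculation rewrites both adjoints as the single composite $p^*\ev_S \circ (\text{monoidal comparison}) \circ (\id \otimes \eta_\E)$. The main obstacle is purely bookkeeping: keeping track of which unit or counit belongs to which adjunction at each stage, and managing the monoidal comparison $p^*(A \otimes B) \simeq p^*A \otimes p^*B$ consistently throughout. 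There is no conceptual difficulty beyond a careful adjunction chase.
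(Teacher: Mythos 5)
Your proposal is correct and follows essentially the same route as the paper's proof: pass to adjoints under $(p_\#,p^*)$, use the explicit unit-plus-monoidal-comparison description of the projection formula, identify the exchange $p^*D_S\to D_Yp^*$ via its compatibility with evaluation (the paper's \aref{lem:LR}), and conclude by naturality of evaluation with respect to $\eta_\E$ (the paper's final formal square $\ev\circ(D(\varphi)\otimes\id)=\ev\circ(\id\otimes\varphi)$). The only cosmetic difference is that the paper applies $p_\#$ at the end and compares diagrams downstairs, whereas you compare the two adjoints directly in $\SH^G(Y)$.
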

\begin{proof}
	The composite of the vertical and horizontal map is adjoint to  the composite map in the diagram
	\[
	\begin{tikzcd}
		p^* D_S(p_\#\E)\otimes_Y\E\ar[r]\ar[dr] & p^* (D_S(p_\#\E)\otimes_Y p_\#\E) \ar[r]  & p^*\SS_S\simeq\SS_Y,\\
		& p^*D_Y(p_\#\E)\otimes_Y p^*p_\#\E\ar[r]\ar[u, "\sim"]& D_Y(p^*p_\#\E)\otimes_Y p^*p_\#\E\ar[u,"\ev"], 
		\end{tikzcd}
	\]
where the left diagonal is induced by the unit of the adjunction $(p_\#,p^*)$ and the square commutes by \aref{lem:LR} (with $F=p^*=G$, $R=F(p_\#\E,-)$, and $R'=F(p^*p_\#\E,0)$).
 	Applying $p_\#$ everywhere, we obtain the commutative diagram
	\[
	\begin{tikzcd}
		p_\#(p^*D_S(p_\#\E)\otimes_Y\E) \ar[r]\ar[d] & p_\#(p^*D_S(p_\#\E)\otimes_Y p^*p_\#\E)
		\ar[dr, bend left =20]\ar[d] &  & \\
		p_\#(D_Y(p^*p_\#\E)\otimes_Y\E)\ar[r]\ar[rd, bend right=18] & p_\#(D_Y(p^*p_\#\E)\otimes_Y p^*p_\#\E)\ar[r] &p_\#p^*\SS_S \ar[r]& \SS_S.\\
		& p_\#(D_Y(\E)\otimes\E)\ar[ur, bend right =16] &
		\end{tikzcd}
	\]
	where the lower piece of the diagram commutes for formal reasons: given a map $\varphi:M\to N$ in any symmetric monoidal category, the induced diagram
	\[ 
	\begin{tikzcd}
		D(N)\otimes M\ar[rr, "D(\varphi)\otimes\id_M"]\ar[d, "\id_{D(N)}\otimes\varphi"'] & & D(M)\otimes M\ar[d] \\
		D(N)\otimes N\ar[rr] & &\SS
		\end{tikzcd}
	\]
	commutes.
	This proves the claim.
\end{proof}

Let $q:X\to S$ be an object in $\Sm_S^{G,\Nfree}$
and write $\oX =X/N$, $f:X\to\oX$ for the quotient. Since $N$ acts trivially on $S$, the structure map 
factors through the quotient and we have the diagram in $\Sm_S^G$
\begin{equation}\label{eqn:tri}
\begin{tikzcd}
	X\ar[r, "{f}"]\ar[dr, "q"'] & \oX \ar[d, "{p}"] \\
	& S,
\end{tikzcd}
\end{equation}
where $f$ is finite \'etale. Our first goal is to establish \aref{thm:pidual}, which says that when $q_\#\SS_X$ is dualizable, its dual is computed as $\pi_!D_S(q_\#\SS_X)$.

We define a candidate evaluation map 
\begin{equation}\label{eqn:ev}
\epsilon:\pi_!D_S(p_\#f_{\#}\SS_X) \otimes_S p_\#\SS_{\oX} \to \SS_{S}
\end{equation}
as follows:
\begin{align*}
\pi_!D_S(p_\#f_{\#}\SS_X) \otimes_S p_\#\SS_{\oX} & \xrightarrow{\sim} p_\#p^*\pi_!D_S(p_\#f_{\#}\SS_X)  \\ 
& \to p_\#\pi_!D_{\oX}(p^*p_\#f_{\#}\SS_X)\\
&  \to  p_\#\pi_!f_\#\SS_X\simeq p_\#\SS_{\oX} \\
& \to \SS_S .
\end{align*}
Here the first equivalence is the projection formula, the second arrow is the equivalence 
$ p^*\pi_!\simeq \pi_!p^* $ together with the exchange $p^*D_S\to D_{\oX} p^*$, the
third arrow  is induced by the unit of the adjunction $(p_\#, p^*)$ together with the equivalence $D_{\oX}(f_{\#}\SS_X)\simeq f_{\#}\SS_X$, and the last arrow is induced by the counit of the adjunction $(p_\#, p^*)$. 

\begin{remark}\label{rem:altepsilon}
	The adjoint of $\epsilon$ is the map
	\begin{equation}\label{eqn:e}
	e:\pi_!D_S(q_\#\SS_X)\to D_S(p_\#\SS_{\oX}).
	\end{equation}
	The map $e$ can also be described as the adjoint of the map
	\[
	e':D_S(q_\#\SS_X)\to \pi^*D_S(p_\#\SS_{\oX}),
	\]
	which is the dual of the composite
	$	p_\#\SS_{\oX}\to p_\#f_*f^*\SS_{\oX}\simeq p_\#f_\#\SS_X$.
\end{remark}

Note that the evaluation map for $q_{\#}\SS_X$ factors canonically as
\[
D_S(q_{\#}\SS_X) \otimes_{S} q_\#\SS_{X} \to i_!\SS_{\EE\FF(N)}\to \SS_{S},
\]
where $\SS_{\EE\FF(N)}\in \SH^{G,\Nfree}(S)$ is the unit.
\begin{lemma}\label{lem:evalcomp}
	The following diagram commutes:
	\[
	\begin{tikzcd}
		\pi_!(D_S(q_{\#}\SS_X) \otimes_{S} q_\#\SS_{X}) \ar[d]\ar[r]	& \pi_!\SS_{\EE\FF(N)}\ar[d]\\
		\pi_!(D_S(q_{\#}\SS_X)) \otimes_{S} p_\#\SS_{\oX} \ar[r, "{\epsilon}"] & \SS_{S}. 
	\end{tikzcd}
	\]	
\end{lemma}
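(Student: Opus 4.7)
The plan is to decompose both composites around the square into chains of adjunction units/counits and projection-formula equivalences, then compare them by a diagram chase of the same flavor as \aref{lem:dcomm} and \aref{lem:taucom}. First I would fix notation: write $A=D_S(q_\#\SS_X)$, which by \aref{lem:freedual} lies (via $i_!$) in $\SH^{G,\Nfree}(S)$; recall $q=p\circ f$ where $f\colon X\to\oX$ is the $N$-torsor and $p\colon\oX\to S$ is smooth $G/N$-equivariant; and identify the left vertical map as the oplax structure map $\pi_!(A\otimes q_\#\SS_X)\to \pi_!A\otimes \pi_!q_\#\SS_X\simeq \pi_!A\otimes p_\#\SS_{\oX}$, adjoint via the projection formula of \aref{prop:sqe} to the unit $q_\#\SS_X\to \pi^*\pi_!q_\#\SS_X\simeq \pi^*p_\#\SS_{\oX}$.

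Next I would explicitly unwind $\epsilon\colon\pi_!A\otimes p_\#\SS_{\oX}\to\SS_S$ into its five constituent arrows (projection formula, $p^*\pi_!\simeq\pi_!p^*$, $p^*D_S\to D_{\oX}p^*$, the unit of $(p_\#,p^*)$ combined with $D_{\oX}(f_\#\SS_X)\simeq f_\#\SS_X$, and the counit of $(p_\#,p^*)$), and do the same for the top composite, which is $\pi_!$ of the factorization $D_S(q_\#\SS_X)\otimes q_\#\SS_X\to i_!\SS_{\EE\FF(N)}\to \SS_S$. Since the first arrow in this factorization is $i_!$ applied to the internal evaluation in $\SH^{G,\Nfree}(S)$ (which is available by \aref{lem:freedual}) and the second is the counit $i_!i^*\SS_S\to\SS_S$, applying $\pi_!$ (in the sense $\pi_!i^*$) just produces $\pi_!$ of the free evaluation followed by the canonical map $\pi_!\SS_{\EE\FF(N)}\to\SS_S$ adjoint to the identity of $\SS_{\EE\FF(N)}$.

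The core of the argument is then to assemble a single large commutative diagram whose outer boundary is the two composites of the lemma and whose interior is built from cells that commute by: naturality, the triangle identities for $(\pi_!,\pi^*),(p_\#,p^*),(f_\#,f^*)$ and $(i_!,i^*)$, the projection formula, and the mate identity of \aref{lem:LR}. The essential non-formal input is the pointwise statement of \aref{lem:dcomm} for the map $p$, applied to $\E=f_\#\SS_X$; together with the Wirth\-m\"uller equivalence $f_\#\simeq f_*$ (since $f$ is finite \'etale) this makes $f_\#\SS_X\in\SH^{G,\Nfree}(\oX)$ self-dual and, upon applying $\pi_!$ (which identifies $\pi_!f_\#\SS_X\simeq\SS_{\oX}$ because $f$ is an $N$-torsor), collapses to the identity on $\SS_{\oX}$. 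This is precisely the compatibility that matches the two routes.

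The main obstacle will be bookkeeping the many mates interleaving the adjunctions $(\pi_!,\pi^*),(p_\#,p^*),(f_\#,f^*),(i_!,i^*)$: one must pick compatible exchange isomorphisms so that the equivalences $\pi_!q_\#\SS_X\simeq p_\#\SS_{\oX}$ and $\pi_!f_\#\SS_X\simeq\SS_{\oX}$ implicit in both composites are realized by the same concrete zig-zag. Once the decomposition in the second paragraph is fixed, however, each subcell of the comparison diagram either commutes on the nose by a triangle identity or reduces to \aref{lem:LR} applied to a pair of mates already in play, and the proof is complete by formal diagram chase.
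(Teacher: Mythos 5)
Your outline tracks the paper's proof quite closely in its broad strokes: a large diagram chase built from the projection formula, the mate machinery of \aref{lem:LR}, the equivalences $\pi_!p_\#\simeq p_\#\pi_!$, and \aref{lem:dcomm} applied to $p$ and $\E=f_\#\SS_X$, together with the ambidexterity equivalence $f_\#\simeq f_*$ for the finite \'etale map $f$ making $f_\#\SS_X$ self-dual. These are indeed the ingredients the paper uses, and the first three paragraphs of your plan are on the right track.

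However, your account of the ``essential non-formal input'' has a genuine gap. You claim that self-duality of $f_\#\SS_X$, plus the identification of $\pi_!$ of $q_\#\SS_X$ with $p_\#\SS_{\oX}$, ``collapses to the identity on $\SS_{\oX}$'' and that this is ``precisely the compatibility that matches the two routes.'' It is not. What is actually required is the commutativity, in $\HH^G_\bb(\oX)$ and \emph{before} $\pi_!$ is applied, of the triangle
\[
\begin{tikzcd}
f_\#\SS_X\otimes_{\oX}f_\#\SS_X \ar[rr]\ar[dr] & & f_\#\SS_X\\
& f_\#\SS_X\otimes_{\oX}\SS_{\oX},\ar[ur,"\sim"'] &
\end{tikzcd}
\]
where the horizontal arrow is the projection formula $f_\#\SS_X\otimes f_\#\SS_X\simeq f_\#f^*f_\#\SS_X$ followed by the counit for $(f^*,f_*)$ under ambidexterity, and the diagonal is induced by the projection $f_\#\SS_X\to\SS_{\oX}$. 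This is a statement about the evaluation pairing of the self-dual object $f_\#\SS_X$, and it is not a formal consequence of adjunction identities or mate manipulations: the paper proves it (in the unnamed lemma immediately following) by unwinding the ambidextrous counit into an explicit collapse map on $(X\times_{\oX}X)_+$ and comparing the gluing sequences for the closed immersions $\Delta(X)\subseteq X\times_{\oX}X$ and $\Delta(f)\subseteq X\times_{\oX}\oX$, using that both are invariant closed immersions and that $X\times_{\oX}X$ maps to $X$ via the first projection. Without isolating and proving this sub-lemma, your diagram chase cannot close; with it, the remainder of your plan — unwinding $\epsilon$ into its five constituent arrows and assembling the comparison diagram via \aref{lem:dcomm} and \aref{lem:LR} — is sound and matches the paper.
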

\begin{proof}
	The left vertical map comes from the op-lax monoidality of $\pi_!$. It may also be described as the map induced by $p_\#f_\#\SS_X\to p_\#\SS_{\oX}$ together with the equivalence
	\[
	\pi_!(D_S(p_{\#}f_\#\SS_X) \otimes_{S} p_\#\SS_{\oX})\simeq \pi_!D_S(p_{\#}f_\#\SS_X) \otimes_{S} p_\#\SS_{\oX}.
	\]
Consider the following diagram
	\[
	\begin{tikzcd}
		p_\#\pi_!\big(p^*D(q_{\#}\SS_X)\otimes_{\oX} f_\#\SS_X\big) \ar[r]\ar[d] &[-8pt] p_\#\pi_!\big(D(f_\#\SS_X) \otimes_{\oX} f_\#\SS_X\big) \ar[r]\ar[d] &[-5pt] p_\#\SS_{\oX} \ar[r] &[-9pt] \SS_S  \\
		p_\#\pi_!\big(p^*D(q_{\#}\SS_X)\otimes_{\oX} \SS_{\oX}\big) \ar[r]& p_\#\pi_!\big(D(f_\#\SS_X)\otimes_{\oX} \SS_{\oX}\big),\ar[ur]  & &
	\end{tikzcd}
	\]
	where the top and bottom horizontal arrows of the square are the composite of the exchange $p^*D_S\to D_{\oX}p^*$ with the map induced by the unit of the adjunction $(p_\#,p^*)$.
	Via \aref{lem:dcomm} and the equivalences  $\pi_!p_\#\simeq p_\#\pi_!$, 
	we see that the top row is identified with the composite of the top horizontal and right vertical arrows in the diagram of the lemma. The composite around the lower part of this diagram is identified with the composite of the left vertical and bottom horizontal arrows of the diagram in the lemma. That this diagram commutes follows from the next lemma together with the equivalence
	$D_{\oX}(f_\#\SS_X)\simeq f_{\#}\SS_X$.
\end{proof}

\begin{lemma}
	The following diagram commutes
	\[
	\begin{tikzcd}
		f_{\#}\SS_X\underset{\overline{X}}{\otimes}f_{\#}\SS_X \ar[rr]\ar[rd] & & f_\#\SS_{X}\\
		& f_\#\SS_X\underset{\overline{X}}{\otimes}\SS_{\overline{X}}\ar[ur, "\sim"'] &,
		\end{tikzcd}
	\]
	where the horizontal map is the projection formula $f_{\#}\SS_X\underset{\overline{X}}{\otimes}f_{\#}\SS_X\simeq f_{\#}f^*f_\#\SS_X$ followed by the counit (using the ambidexterity equivalence $f_\#\simeq f_*$).
\end{lemma}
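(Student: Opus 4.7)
The plan is to reduce the commutativity of the triangle to a comparison of two morphisms $f^*f_\#\SS_X \to \SS_X$, related via the ambidexterity equivalence $f_\#\simeq f_*$.

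First I will rewrite both arrows so that they take the common form $f_\#(\alpha)$ for some $\alpha\colon f^*f_\#\SS_X \to \SS_X$. Using the projection formula equivalence
\[
f_\#\SS_X \otimes_{\overline{X}} f_\#\SS_X \simeq f_\#(f^*f_\#\SS_X),
\]
the top horizontal arrow is, by definition, $f_\#$ applied to the counit
$f^*f_\#\SS_X \simeq f^*f_*\SS_X \xrightarrow{\epsilon^{(f^*, f_*)}} \SS_X$,
using the ambidexterity identification of \aref{prop:wirthmueller}. The diagonal arrow is $\id \otimes \nu$, where $\nu\colon f_\#\SS_X \to \SS_{\overline{X}}$ is the canonical augmentation, namely the counit $f_\#f^*\SS_{\overline{X}} \simeq f_\#\SS_X \to \SS_{\overline{X}}$ of the $(f_\#, f^*)$ adjunction composed with the structural isomorphism $f^*\SS_{\overline{X}} \simeq \SS_X$. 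Under the projection formula together with the unit isomorphism $f_\#\SS_X \otimes \SS_{\overline{X}} \simeq f_\#\SS_X$, the diagonal composite also takes the form $f_\#(f^*\nu)$ (followed by $f^*\SS_{\overline{X}} \simeq \SS_X$).

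It therefore suffices to establish the equality of two natural transformations $f^*f_\#\SS_X \to \SS_X$: the $(f^*, f_*)$ counit transported via ambidexterity, and $f^*\nu$. This equality is precisely the defining compatibility of the ambidexterity equivalence $f_\# \simeq f_*$, which by construction intertwines the $(f_\#, f^*)$ counit with the $(f^*, f_*)$ counit; the identification follows from applying $f^*$ and invoking a triangle identity.

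The main technical obstacle is careful bookkeeping through the ambidexterity equivalence together with all the unit, counit, and projection formula identifications. A conceptually cleaner alternative is to verify the claim étale-locally on $\overline{X}$: since $f$ is a finite étale $N$-torsor, pulling back along $f$ itself splits $f$ as the fold map $\coprod_N \overline{X} \to \overline{X}$, and both composites then become the explicit projection onto the diagonal summand indexed by $e \in N$.
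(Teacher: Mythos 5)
Your reduction of both composites to maps of the form $f_\#(\alpha)$ with $\alpha\colon f^*f_\#\SS_X\to\SS_X$ is a reasonable first step and matches the paper's setup, but the key claim in your second paragraph is where the argument breaks. You assert that the $(f^*,f_*)$-counit transported along the ambidexterity equivalence agrees with $f^*\nu$ ``by the defining compatibility of the ambidexterity equivalence \ldots a triangle identity.'' No such triangle identity exists: the two maps in question are counits of \emph{different} adjunctions ($f^*$ as right adjoint of $f_\#$ versus $f^*$ as left adjoint of $f_*$), and the ambidexterity equivalence does not formally intertwine them. Concretely, by \cite[Theorem 6.9]{Hoyois:6} (which the paper quotes), the transported counit $f^*f_\#\to\id$ is the composite $f^*f_\#\simeq p_{2\#}p_1^*\to p_{2\#}\Delta_*\Delta^*p_1^*\simeq\id$, i.e.\ the collapse of $(X\times_{\overline{X}}X)_+$ onto its clopen diagonal; whereas $f^*\nu$ is induced by a projection $X\times_{\overline{X}}X\to X$, which does not kill the complement of the diagonal. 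Already for the trivial $2$-element torsor (the fold map $\pt\sqcup\pt\to\pt$) the first map is the projection of $f^*f_*(A,B)=(A\oplus B,A\oplus B)$ onto the matching summand while the second is a fold, so the two candidate $\alpha$'s are genuinely distinct. The whole content of the lemma is precisely that, after applying $f_\#$ and passing through the projection formula and the purity identification of the cofiber of $(X\times_{\overline{X}}X\smallsetminus\Delta(X))_+\to(X\times_{\overline{X}}X)_+$ with $X_+$, the two composites agree; this is what the paper establishes by comparing the cofiber sequences attached to the closed immersions $\Delta(X)\subseteq X\times_{\overline{X}}X$ and $\Delta(f)\subseteq X\times_{\overline{X}}\overline{X}$. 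Your argument skips exactly this step.

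Your fallback of checking after pulling back along $f$ does not rescue the proof as written. First, it needs justification that equality of the two maps may be tested after $f^*$ (conservativity is not enough --- you need faithfulness on mapping spectra, which here follows from $\SS_{\overline{X}}$ being a retract of $f_*f^*\SS_{\overline{X}}$ since $|N|$ is invertible, plus compatibility of the ambidexterity equivalence and the counits with this base change); none of this is addressed. Second, and more tellingly, your asserted outcome is wrong for one of the two composites: after splitting $f$ the map built from $f^*\nu$ becomes a fold/sum over the summands indexed by $N$, not ``the projection onto the diagonal summand indexed by $e$.'' Carrying out the local computation honestly would have exposed the conflation of the two counits in the previous step. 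To repair the proof you need to (i) take the explicit description of the transported counit as the collapse map as input, and (ii) supply the geometric comparison (the paper's cofiber-sequence diagram, or an equivalent purity argument) identifying $f_\#(\mathrm{collapse})$ with $f_\#(p_{1+})$ under the relevant equivalences.
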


\begin{proof}
	Let $p_i:X\times_{\oX} X\to X$ be the projection to the $i$th factor and $\Delta:X\to X\times_{\oX} X$ the diagonal. Under the ambidexterity equivalence $f_\#\simeq f_*$, the counit of the adjunction $(f^*,f_*)$ is the arrow $f^*f_\#\to \id$, defined as the composite 
	\[
	f^*f_\# \simeq p_{2\#}p_1^*\to p_{2\#}\Delta_*\Delta^*p^*_1\simeq p_{2\#}\Delta_*\simeq \id, 
	\]
	see \cite[Theorem 6.9]{Hoyois:6}. In particular, together with the equivalence $f_\#p_{2\#}\simeq f_\#p_{1\#}$, we see the morphism $f_\#f^*f_\#\SS_X\to f_\#\SS_X$ in $\SH^G_\bb(\oX)$ 
	can be represented by  the projection 
	\[
	f_{\#}(X\times_{\overline{X}}X)_+\to f_{\#}\frac{(X\times_{\overline{X}}X)_+ }{(X\times_{\overline{X}}X\smallsetminus \Delta(X))_+}\simeq f_{\#}X_+,
	\]
	where $X\times_{\overline{X}}X$ is an $X$-scheme via the \emph{first} coordinate.
	The lemma follows from the commutativity of the diagram where the rows are the cofiber sequences in $\HH^G_\bb(\oX)$ associated to the closed immersions $\Delta(X)\subseteq X\times_{\overline{X}} X$ and $\Delta(f)\subseteq X\times_{\overline{X}}\overline{X}$, the diagonal and the graph of $f$, respectively:
	\[
	\begin{tikzcd}
		(X\times_{\overline{X}} X \smallsetminus \Delta(X))_+\ar[r]\ar[d] & (X\times_{\overline{X}} X)_+\ar[r, "p_1"]\ar[d] & X_+\ar[d]\\
		(X\times_{\overline{X}}\overline{X} \smallsetminus \Delta(f))_+\ar[r] & (X\times_{\oX}\oX)_+\ar[r, "\sim"] & X_+ .
		\end{tikzcd}
	\]
	
\end{proof}

Now we suppose that  that $q_\#\SS_X$ is dualizable in $\SH^{G,\Nfree}$ and that $G$ is isomorphic to a semi-direct product of $N$ and $G/N$. We define a ``coevaluation'' 
\begin{equation}\label{eqn:coev}
\eta:\SS_{S}\to    p_\#\SS_{\oX}  \otimes_S  \pi_!D_S(p_\#f_{\#}\SS_X)
\end{equation}
as follows. 
Since $G$ is isomorphic to a semi-direct product of $G/N$ and $N$, there is a map $\SS_S\to \pi_!\SS_{\EE\FF(N)}$ which splits the canonical map $\pi_!\SS_{\EE \FF(N)}\to \SS_{S}$, obtained  by taking the $N$-quotient of the (unstable) map $N'_+\to \SS_{\EE\FF(N)}$, where $N'$ is the $G$-set $G/(G/N)$. 
Now, since $q_\#\SS_{X}$ is dualizable in $\SH^{G}(S)$ it is dualizable in $\SH^{G,\Nfree}(S)$ by \aref{lem:freedual} and the duals in both categories agree under the standard inclusion. This means we have a coevaluation map 
\[
\coev:\SS_{\EE\FF(N)} \to   p_\#f_\#\SS_{X}  \otimes_S  D_S(p_\#f_{\#}\SS_X).
\]
 Now,
$\eta$ is defined as the composite
\begin{align*}
\SS_{S}\to \pi_!\SS_{\EE\FF(N)}& \xrightarrow{\pi_{!}\coev}  \pi_{!}( p_\#\SS_{X}  \otimes_S  D_S(p_\#f_{\#}\SS_X)) \\
& \to  p_\#\SS_{\oX}  \otimes_S  \pi_!D_S(p_\#f_{\#}\SS_X).
\end{align*}

\begin{theorem}\label{thm:pidual}
	Let $q:X\to S$ be an object of $\Sm^{G,\Nfree}_S$ and $f:X\to \oX$, $p:\oX\to S$  as in \eqref{eqn:tri}. Suppose that $q_{\#}\SS_X\in\SH^G(S)$ is dualizable. Then 
	$p_{\#}\SS_{\oX}$ is dualizable in $\SH^{G/N}(S)$  and
	\eqref{eqn:e}
	is an equivalence 
	\[
	\pi_!D_S(q_\#\SS_X)\xrightarrow{\sim} D_S(\pi_!q_\#\SS_X).
	\] 	
\end{theorem}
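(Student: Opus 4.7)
First I observe that $\pi_! q_\# \SS_X \simeq p_\# \SS_{\oX}$ in $\SH^{G/N}(S)$: one has $q = p \circ f$, the exchange $\pi_! p_\# \simeq p_\# \pi_!$ holds over $\oX$ (where $N$ acts trivially) by taking left adjoints of the equivalence $p^* \pi^* \simeq \pi^* p^*$, and $\pi_! f_\# \SS_X \simeq \SS_{\oX}$ because the $N$-quotient of $X$ viewed as an $\oX$-scheme via $f$ is $\oX$ itself, cf.\ \aref{prop:hqe}. The content of the theorem is therefore that the pair $(p_\# \SS_{\oX},\, \pi_! D_S(q_\#\SS_X))$ exhibits a duality in $\SH^{G/N}(S)$, with evaluation the map $\epsilon$ of \eqref{eqn:ev}.

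The strategy is to use the candidate coevaluation $\eta$ of \eqref{eqn:coev} and verify the triangle identities for $(\epsilon, \eta)$. Since the construction of $\eta$ requires a splitting $\SS_S \to \pi_!\SS_{\EE\FF(N)}$, the first step is to reduce to the case $G \cong N \rtimes (G/N)$ by base change. Pulling back along $q:X\to S$ itself produces a base over which the group extension splits canonically, since $N$ acts freely on $X$. Using \aref{prop:puptau} together with the compatibility $\pi_! p_\# \simeq p_\# \pi_!$ and the conservativity of smooth pullbacks from \aref{prop:summary}(3), the map $e$ in \eqref{eqn:e} need only be shown to be an equivalence after such a base change, so one may assume throughout that the extension is split.

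In the split case, the key input is \aref{lem:evalcomp}, which identifies $\epsilon$ with $\pi_!$ applied to the true evaluation $D_S(q_\#\SS_X) \otimes_S q_\# \SS_X \to \SS_{\EE\FF(N)}$—existing because dualizability of $q_\# \SS_X$ in $\SH^G(S)$ implies dualizability in $\SH^{G,\Nfree}(S)$ with the same dual by \aref{lem:freedual}—post-composed with the splitting $\pi_!\SS_{\EE\FF(N)} \to \SS_S$. A dual compatibility for $\eta$ is built into its definition. Invoking the projection formula of \aref{prop:sqe}(2), the two triangle identities for $(\epsilon,\eta)$ reduce to those for the dualizability pairing on $q_\# \SS_X$ in $\SH^{G,\Nfree}(S)$, together with the fact that $\SS_S \to \pi_! \SS_{\EE\FF(N)} \to \SS_S$ is the identity. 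The main obstacle will be the bookkeeping in this reduction: each individual piece—projection formula, ambidexterity $f_\# \simeq f_*$ for the finite \'etale $f$, and the exchange between $\pi_!$ and $p_\#$—is available, but matching the composites on the two sides of the triangle identities demands the careful coherence supplied by \aref{lem:LR} and the exchange compatibilities of \aref{sub:base}, especially since $\pi_!$ is only an op-lax symmetric monoidal partial left adjoint rather than a genuine symmetric monoidal functor.
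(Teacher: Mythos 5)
The split case of your argument follows the paper: construct $\eta$ using the section $\SS_S\to\pi_!\SS_{\EE\FF(N)}$, identify $\epsilon$ via \aref{lem:evalcomp} with $\pi_!$ of the genuine evaluation in $\SH^{G,\Nfree}(S)$ (using \aref{lem:freedual}), and check the triangle identities. The problem is your reduction to the split case, which does not work as stated.

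The extension $1\to N\to G\to G/N\to 1$ is an extension of abstract finite groups; whether it splits is independent of the base scheme, so base-changing along $q:X\to S$ (or any scheme map) cannot make it split. The freeness of the $N$-action on $X$ trivializes the \emph{torsor} $X\to\oX$ after pullback, but it does not produce the $G$-set $G/(G/N)$ needed to define the section $\SS_S\to\pi_!\SS_{\EE\FF(N)}$, and hence $\eta$ remains undefined in the non-split case under your reduction. Moreover, \aref{prop:summary}(3) asserts that the \emph{family} of all smooth pullbacks is jointly conservative; a single $q^*$ is not conservative, so even granting the splitting you could not descend the equivalence back to $S$. The correct reduction is a change of \emph{groups}, not of base: set $G'=G\times_{G/N}G$ with $N'=\ker(\pr_2)$, note that $G'$ \emph{is} a semidirect product of $N'$ and $G$ (via the diagonal $\Delta:G\to G'$), and use the identification $\pi^*\pi_!\simeq\pr_{2!}\pr_1^*$ of \aref{prop:prpieq} to recognize $\pi^*e$ as an instance of $e$ for the group $G'$ (this requires checking a compatibility diagram for the evaluation maps). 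Since $\pi^*$ is conservative (as $\Phi^N\pi^*\simeq\id$), the split case for $G'$ yields the general case; dualizability of $p_\#\SS_{\oX}=\pi_!q_\#\SS_X$ then follows by applying the symmetric monoidal functor $\Phi^N$ to the dualizable object $\pi^*\pi_!q_\#\SS_X$. You should replace your base-change step with this change-of-groups argument.
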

\begin{proof}
	First we explain why it suffices to assume that $G$ is a semi-direct product of $N$ and $G/N$. Suppose that the result is true in this case.  Given an arbitrary $G$ and $N$ and let $X$ be a dualizable $N$-free smooth $G$-scheme over $S$, consider the cartesian square \eqref{eqn:cartd}. Since $X$ is dualizable over $S$, as a smooth $G$-scheme, the $\ker(\pr_2)$-free $G\times_{G/N}G$-scheme $\pr_1^*X$ is dualizable over $S$.  Since 
	$G\times_{G/N}G$ is a semi-direct product of $G$ and $\ker(\pr_2)$, our hypothesis implies that the composite is an equivalence
	\[
	\pr_{2!}\pr_{1}^*D_S(q_\#\SS_X)\xrightarrow{\sim} \pr_{2!}D_S(\pr_{1}^*q_\#\SS_X)\xrightarrow{e} D_S(\pr_{2!}\pr_{1}^*q_\#\SS_X),
	\] 	
	where we again write $e$ for an instance of \eqref{eqn:e} for the group $G\times_{G/N}G$ and we use that $\pr_{1}^*D_S(q_\#\SS_X)\simeq D_S(\pr^*_1q_\#\SS_X)$ 
	since $q_\#\SS_X$ is dualizable and $\pr^*_1$ is symmetric monoidal.
	We claim that this implies that $p_{\#}\SS_{\oX}$ is dualizable in $\SH^{G/N}(S)$. Indeed, $\pr_{2!}\pr_{1}^*q_\#X\simeq \pi^*\pi_!q_\#\SS_{X}$ is dualizable in
	$\SH^{G}(S)$ and therefore  
	$\Phi^N(\pi^*\pi_!q_\#\SS_X)\simeq\pi_!q_\#\SS_{X}\simeq p_{\#}\SS_{\oX}$ is dualizable  in $\SH^{G/N}(S)$ as $\Phi^N$ is symmetric monoidal.
		In this case  $\epsilon$ and $\eta$ defined above are evaluation and coevaluation maps for the duality pairing.

The diagram
\begin{equation}\label{eqn:five}
\begin{tikzcd}
\pi^*\pi_!D_S(q_\#\SS_X)\ar[r, "\pi^*e"] & \pi^*D_S(\pi_!q_\#\SS_X) \ar[r, "\sim"] & D_S(\pi^*\pi_!q_\#\SS_X)\ar[d, "\sim"] \\
\pr_{2!}\pr_1^*D_S(q_\#\SS_X) \ar[u, "\sim"'] \ar[r,"\sim"] & \pr_{2!}D_S(\pr_1^*q_\#\SS_X) \ar[r, "e", "\sim"'] & D_S(\pr_{2!}\pr_1^*q_\#\SS_X)
\end{tikzcd}
\end{equation}
commutes. This follows by adjointness from the commutativity of the diagram
\[
\begin{tikzcd}
\pi^*\pi_!D_S(q_\#\SS_X) \otimes \pi^*\pi_!q_\#\SS_X \ar[r] & \SS_S \\
\pr_{2!}\pr_1^*D_S(q_\#\SS_X) \otimes \pr_{2!}\pr_1^*q_\#\SS_S, \ar[u, "\sim"]\ar[ur] &
\end{tikzcd}
\]
where the the horizontal and diagonal arrows  come from \eqref{eqn:ev}. We thus find that
	\[ 
	\pi^*\pi_!D_S(q_\#\SS_X)\xrightarrow{\pi^*e} \pi^*D_S(\pi_!q_\#\SS_X)
	\]	
	is an equivalence. Since $\pi^*$ is conservative, it follows that $e$ is an equivalence.

	It remains to establish the result under that assumption that $G$ is a semi-direct product. We show that in this case the evaluation and coevaluation maps \eqref{eqn:ev} and \eqref{eqn:coev} satisfy the triangle identities.
	To compactify notation, we write $X=p_{\#}f_{\#}\SS_{X}$, $\oX=p_\#\SS_{\oX}$, $\SS=\SS_S$, and $\SS_{\EE\FF}=\SS_{\EE\FF(N)}$.
	We have to check that the two composites below are the identity
	
	\begin{align*}
	{\oX} \otimes \SS & \xrightarrow{\id\otimes\eta} {\oX}\otimes \pi_!D(X)\otimes {\oX} \xrightarrow{\epsilon\otimes \id} \SS\otimes {\oX}\\
	\SS \otimes \pi_!D({X})   & \xrightarrow{\eta\otimes \id}  \pi_!D(X)\otimes {\oX} \otimes \pi_!D({X})\xrightarrow{\id\otimes\epsilon} \pi_!D({X})\otimes \SS.
	\end{align*}
	To establish the first identity we observe that we have a commutative diagram
	\[
	\begin{tikzcd}[column sep =small] 
		\oX\otimes\SS \ar[rrr, "\id\otimes \eta"]\ar[dr]&[-40pt] &[-15pt]  & \oX\otimes \pi_!D(X)\otimes \oX \ar[rr, "{\epsilon\otimes\id}"] &[-13pt] &[-8pt] \SS\otimes \oX \\
		&  \oX\otimes \pi_!\SS_{\EE \FF} \ar[rr]\ar[urr, "{\id\otimes \pi_{!}(\coev)}"]&  & \oX\otimes \pi_!(D(X)\otimes X) \ar[u] &\pi_!(X\otimes D(X))\otimes \oX \ar[r] \ar[lu] & \pi_!\SS_{\EE\FF}\otimes \oX \ar[u] \\
		\pi_!(X\otimes \SS_{\EE\FF}) \ar[ur]\ar[rrr, "{\pi_!(\id \otimes \coev)}"] \ar[uu, "{\id}"]& & &\pi_!(X\otimes D(X)\otimes X)\ar[rr, "{\pi_!(\ev\otimes \id)}"]\ar[ur]\ar[u] &   & \pi_!(\SS_{\EE\FF}\otimes X), \ar[u]	\ar[uu, bend right=70, "{\id}"']
	\end{tikzcd} 
	\]
	in which $X\simeq X\otimes\SS_{\EE\FF}$ and the upper righthand square commutes by Lemma \ref{lem:evalcomp}.
	The second identity is established by a similar diagram. 
\end{proof}

Write $h:D_S\pi_!\xrightarrow{\sim}\pi_*D_S$ for the canonical equivalence. 
\begin{proposition}
	Let $q:X\to S$ be an object of $\Sm^{G,\Nfree}_S$ such that  that $q_{\#}\SS_X\in\SH^G(S)$ is dualizable.	The diagram
	\[
	\begin{tikzcd}
	D_SD_S(\pi_!q_\#\SS_X)\ar[r, "D(e)", "{\sim}"']&  D_S(\pi_!D_S(q_\#\SS_X)) \ar[r, "\sim"', "{h}"] &  \pi_*D_SD_S(q_\#\SS_X)\\
	\pi_!q_\#\SS_X \ar[rr, "\tau"]\ar[u, "\sim"',"{\can}"]&   &  \pi_*q_\#\SS_X \ar[u,"\sim", "\pi_*\can"']
	\end{tikzcd}
	\]
	commutes, where $e$ is the map from \aref{rem:altepsilon}.

\end{proposition}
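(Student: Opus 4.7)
The plan is to pass to $(\pi^*,\pi_*)$-adjoints and reduce the commutativity to a statement about the single map $\hat\tau : \pi^*\pi_!X \to X$ underlying the Adams transformation, where $X = q_\#\SS_X$. Since $X$ is dualizable, $\pi_*\can$ is an equivalence, so the square commutes iff $\tau$ equals the composite $\pi_*\can^{-1}\circ h \circ D(e)\circ \can : \pi_!X \to \pi_*X$. By the $(\pi^*,\pi_*)$-adjunction, it suffices to show both sides induce the same map $\pi^*\pi_!X \to X$. The adjoint of $\tau$ is $\hat\tau$ itself, by construction in \aref{sub:atrans} (using $i_!X = X$ since $X$ is $N$-free).

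To compute the adjoint of the right-hand composite, I would use two identifications. First, for any dualizable $Y\in\SH^{G/N}(S)$, the canonical equivalence $h:D\pi_!Y \xrightarrow{\sim}\pi_*DY$ arises from the chain
\[
\Hom(W, D\pi_!Y) = \Hom(W\otimes \pi_!Y, \SS) \simeq \Hom(\pi^*W\otimes Y, \SS) \simeq \Hom(W, \pi_*DY),
\]
using the projection formula together with the $(\pi_!,\pi^*)$- and $(\pi^*,\pi_*)$-adjunctions; tracing the identity through shows that the $(\pi^*,\pi_*)$-adjoint of $h$, after the equivalence $\pi^*D\pi_!Y \simeq D\pi^*\pi_!Y$ valid on dualizables, is $D\eta_Y : D\pi^*\pi_!Y \to DY$, where $\eta$ is the unit of the $(\pi_!,\pi^*)$-adjunction. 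Second, by \aref{rem:altepsilon} the map $e$ is the $(\pi_!,\pi^*)$-adjoint of $e' = D\hat u$, where $\hat u : \pi^*\pi_!X \to X$ is built from the unit of the $(f^*,f_*)$-adjunction applied to $\SS_{\overline X}$ together with the ambidexterity equivalence $f_*\simeq f_\#$ for the finite \'etale cover $f:X\to\overline X$. A direct comparison with the definition of $\hat\tau$ in \aref{sub:atrans}, using the identification $\pi^*\pi_!\simeq\pr_{2!}\pr_1^*$ of \aref{prop:prpieq} and the fact that the Wirthm\"uller equivalence $\Delta_!\simeq\Delta_*$ for $\Delta: G\hookrightarrow G\times_{G/N}G$ restricts, on $X = q_\#\SS_X$, to the ambidexterity equivalence for $f$, gives $\hat u = \hat\tau$. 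Hence $\pi^*e\circ\eta_{DX} = e' = D\hat\tau$.

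Combining these facts, the adjoint of the right-hand composite is
\[
\pi^*\pi_!X \xrightarrow{\pi^*\can} D^2\pi^*\pi_!X \xrightarrow{D(\pi^*e)} D\pi^*\pi_!DX \xrightarrow{D\eta_{DX}} D^2X \xrightarrow{\can^{-1}} X,
\]
in which the middle two arrows combine to $D(\pi^*e\circ\eta_{DX}) = D^2\hat\tau$, and the whole composite reduces by naturality of $\can$ to $\hat\tau$. Thus both adjoints coincide, proving commutativity. The main obstacle is the identification $\hat u = \hat\tau$: one must carefully match the ambidexterity ingredient used to define $e$ (via the finite \'etale $f: X \to \overline X$) with the Wirthm\"uller ingredient used to construct $\hat\tau$ (via the diagonal $\Delta$). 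Once this identification is in hand, the remaining content of the proof is a formal manipulation of adjunctions, the projection formula, and the naturality of $\can$.
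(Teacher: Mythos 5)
Your reduction is sound and in fact mirrors the paper's own first move: by adjointness the square is equivalent to an identity between two maps $\pi^*\pi_!q_\#\SS_X\to q_\#\SS_X$, one of which is $\hat\tau$. Your computation of the $(\pi^*,\pi_*)$-adjoint of $h$ as $D\eta$, the unravelling of $e$ to $e'$ via \aref{rem:altepsilon}, and the final appeal to naturality of $\can$ are all correct formal manipulations, and packaging the problem as the single identity $\hat u=\hat\tau$ is a clean way to state what must be shown.

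The gap is that this identity is asserted, not proved, and it is where all the content of the proposition lives. The two maps are constructed from genuinely different data: $\hat u$ is $p_\#$ of the unit of $(f^*,f_*)$ for the finite \'etale cover $f:X\to\oX$, transported through the ambidexterity equivalence $f_*\simeq f_\#$, whereas $\hat\tau$ is the composite $\pr_{2!}\pr_1^*\to\pr_{2!}\Delta_!\Delta^*\pr_1^*\simeq\id$ built from the Wirthm\"uller equivalence $\Delta_!\simeq\Delta_*$ for the group-level diagonal $\Delta:G\hookrightarrow G\times_{G/N}G$, together with the identification $\pi^*\pi_!\simeq\pr_{2!}\pr_1^*$ of \aref{prop:prpieq}. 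Matching them requires (i) translating the group-level $\pr_i,\Delta$ into the scheme-level $p_i:X\times_{\oX}X\to X$ and the scheme diagonal, so that both maps are exhibited as the projection $(X\times_{\oX}X)_+\to (X\times_{\oX}X)_+/(X\times_{\oX}X\smallsetminus\Delta(X))_+\simeq X_+$; and (ii) controlling how units and counits pass across the Wirthm\"uller equivalence and across the exchange $\Delta_!D\to D\Delta_*$ and its mate $D\Delta^*\to\Delta^*D$ (the paper's diagram \eqref{eqn:uco} together with \aref{lem:LR}), since $e'$ is defined as a \emph{dual} of a unit while $\hat\tau$ is built from a counit. This is exactly the work carried out in the paper's large diagram (its subdiagrams involving $\gamma$, $\psi$, $\psi_R$ and the identification of the ambidexterity counit $f^*f_\#\to\id$). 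Until that comparison is written out, "a direct comparison gives $\hat u=\hat\tau$" is a restatement of the proposition rather than a proof of it.
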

\begin{proof}
Write $D = D_S$ and $X=q_\#\SS_X$. 	
The commutativity of the diagram of the proposition is equivalent, by adjointness, to that of
	\[
	\begin{tikzcd}
	\pi^*DD(\pi_!X)\ar[r, "\pi^*hD(e)"]&  \pi^*\pi_*DD(X)  \ar[r, "\epsilon"]&  DD(X)\\
	\pi^*\pi_!X \ar[rr, "\hat{\tau}"]\ar[u, "\pi^*\can"]&   &  X \ar[u, "\can"'],
	\end{tikzcd}
	\]
where $\epsilon:\pi^*\pi_*\to \id$ is the counit.	
The outer composite of this diagram agrees with the outer composite of the following diagram below, which we will show to be commutative:
	\[
	\begin{tikzcd}[column sep=small]
	&[-5pt]	\pi^*DD(\pi_!X) \ar[r, "De"] \ar[d,  "\pi^*D\to D\pi^*"] &[-5pt]  \pi^*D(\pi_!D(X)) \ar[r, "\epsilon h"] \ar[d, "\pi^*D\to D\pi^*"'] \ar[d, phantom, yshift=.6ex, shift left=1.3ex, bend left=50, "\circled{\bf 4}"] & DD(X)  \\ 
	\pi^*\pi_!X \ar[ur, "\pi^*\can"]\ar[dr, "\can"']	
	& D(\pi^*D(X))\ar[r,"De"]& D(\pi^*\pi_!D(X))  \ar[dr, phantom, yshift=1.6ex, "\circled{\bf 3}"]\ar[ur,   "D(\id \to \pi^*\pi_!)"'] & \\
	& DD(\pi^*\pi_!X)\ar[u,"D(\pi^*D\to D\pi^*)"'] &  D(\pr_{2!}\pr_1^*D(X))\ar[r, "D\beta"] \ar[u]  & D(\pr_{2!}\Delta_!\Delta^* \pr_1^*D(X)) \ar[uu ]\\
		& & & D(\pr_{2!}\Delta_!\Delta^* D(\pr_1^*X))\ar[u, "D(\pr_1^*D\to D\pr_1^*)"']\\
	&  DD(\pr_{2!}\pr_1^*X) \ar[uu]\ar[r,"De"]\ar[dr, "\phi"'] &  D(\pr_{2!}D(\pr_1^*X)) \ar[uu, " D(\pr_1^*D\to D\pr_1^*)"]\ar[r,"\phi"] \ar[ur, "D\beta"]\ar[luuu, phantom, "\circled{\bf 5}"] & D(\pr_{2!}D(\Delta_!\Delta^* \pr_1^*X))  \ar[u , "D\gamma"']\ar[u, phantom, shift left=1, yshift=-.6ex, bend left=60, "\circled{\bf 2}"] \\
	& &DD(\pr_{2!}\Delta_!\Delta^*\pr_1^*X) \ar[ur, "De"' ] & \\
	& \pr_{2!}\pr^*_1X\ar[uu, "\can"] \ar[r,"\phi"] \ar[uuuuul, bend left, "\sim"] &\pr_{2!}\Delta_!\Delta^*\pr^*_1X \ar[u, "\can"]\ar[r, "\sim" ] & X. 
	\ar[uuuuuu, bend right=70, "\sim"'] \ar[uu, phantom, "\circled{\bf 1}"] \\
	\end{tikzcd} 
	\]
	Here,   $\pr_i$ and $\Delta$ are as in \ref{sub:atrans},
$\phi:\id\to \Delta_!\Delta^*$ arises from the Wirthmueller isomorphism $\Delta_!\simeq \Delta_*$ together with the unit $\id \to \Delta_*\Delta^*$, $\beta:\Delta_!\Delta^*\to \id$ is the counit, and we identify $\pi_*\pi_!X\simeq\pr_{2!}\pi_1^*X$ as in \aref{rem:conv}. The map $\gamma$ is induced by the composite of exchanges
\[
\Delta_!D \xrightarrow{\psi} D\Delta_*\;\;\;\textrm{ and }\;\;\; D\Delta^* \xrightarrow{\psi_R} \Delta^*D
\]
as follows:
First, the exchange $\psi$ is the equivalence fitting in the commutative diagram
\[
\begin{tikzcd}
\Delta_!D\ar[d, "\sim"']\ar[r,"\psi"] & D\Delta_* \ar[d, "\sim"] \\
\Delta_*D & D\Delta_! \ar[l,"\sim"'],
\end{tikzcd}
\]
where the vertical equivalences are the Wirthmueller isomorphism and the bottom one is the standard equivalence. In particular $\psi$ is an equivalence. The exchange $\psi_{R}$ is its right mate
 and $\psi_R$ is an equivalence on dualizable objects, so in particular $\psi_R$ applied to $X$ is invertible. 
 By \aref{lem:LR}, the diagram
 \begin{equation}\label{eqn:uco} 
 \begin{tikzcd}
 D\Delta_!\Delta^*q_\#\SS_X \ar[rrr, phantom, bend left = 17, "\gamma"'] & D\Delta_*\Delta^*q_\#\SS_X \ar[l,"\sim"']\ar[rd, "D({\rm unit})"'] 
 & \Delta_!D\Delta^*q_\#\SS_X \ar[l,"\psi"']\ar[r, "\psi_R"] & \Delta_!\Delta^*D q_\#\SS_X \ar[dl, "{\rm counit}_D"]
	\ar[lll, rounded corners, 	"\gamma"',
 to path={ -- ([yshift= 3.0ex]\tikztostart.north) - |([yshift= 1.5ex]\tikztotarget.north) -- (\tikztotarget) }]
\\
 & & Dq_\#\SS_X & 
 \end{tikzcd}
 \end{equation}
 commutes, where $\gamma$ is by definition the upper horizontal composite. This implies that the subdiagram marked $\circled{\bf 2}$ above commutes.

Next to see that $\circled{\bf 1}$ commutes, we need to see that the diagram
\[
\begin{tikzcd}
\pr_{2!}D(\Delta_!\Delta^*\pr_2^*q_\#\SS_X)\ar[r,"e"] & D(\pr_{2!}\Delta_!\Delta^*\pr_2^*q_\#\SS_X)\\
\pr_{2!}\Delta_!\Delta^*\pr_2^*D(q_\#\SS_X)\ar[r,"\sim"]\ar[u,"\gamma'"] & D(q_\#\SS_X)
\ar[u, "\sim"]
\end{tikzcd}
\] 
commutes, where $\gamma'$ is the composite of $\gamma$ with the exchange $\pr_1^*D\to D\pr_1^*$. 
Using \aref{rem:altepsilon} and that $\Delta_!\Delta^*\pr_1^*q_\#\SS_X\simeq m_\#\SS_{G'/G\times X}$, where $m:G'/G\times X\to S$ is the $G'$-equivariant structure map, we see that the arrow $e:\pr_{2!}D(\Delta_!\Delta^*\pr_1^*q_\#\SS_X)\to 
D(\pr_{2!}\Delta_!\Delta^*\pr_1^*q_\#\SS_X)$ is obtained as the adjoint of
the map 
\[
D(\Delta_!\Delta^*\pr_1^*q_\#\SS_X)\simeq D(\Delta_*\Delta^*\pr_2^*q_\#\SS_X)\xrightarrow{D({\rm unit})} D(\pr_2^*q_\#\SS_X)\simeq 
\pr_2^*D(q_\#\SS_X).
\]
It now follows, using \eqref{eqn:uco},
that this diagram, and hence $\circled{\bf 1}$, commutes.

  That $\circled{\bf 3}$ commutes follows from the 
 	commutativity of the diagram for $W\in \SH^{G,\Nfree}(S)$
 	\[
 	\begin{tikzcd}
 	W \ar[r] \ar[d, "\sim"']& \pi^*\pi_!W \\
 	\pr_{2!}\Delta_!\Delta^*\pr_1^*W \ar[r, "\beta"] & \pr_{2!}\pr_1^*W. \ar[u, "\sim"'] 
 	\end{tikzcd}
 	\]
 	This commutativity can be checked for $W$ the suspension spectrum of a smooth $N$-free $G$-scheme over $S$, which is straightforward to verify.  
	Using \aref{lem:LR}, we see that the subdiagram $\circled{\bf 4}$ commutes. That subdiagram $\circled{\bf 5}$ follows by applying $D$ to the diagram \eqref{eqn:five}.
The remaining subdiagrams are easily seen to commute.
\end{proof}

Next, we verify that $\EE\FF[N]_+$ is a colimit of   dualizable spectra.  
Let $\FF$ be a family. 
If $X\in \Sm^{G}_B$, recall that we write
\[
X^\FF = \cup_{H\in \co(\FF)}X^{H}\;\textrm{and}\; X(\FF) = X\minus X^{\FF}.
\]

Write $f:X\to B$ and  $g:X(\FF)\to B$ for the structure maps. 

\begin{lemma}
	Suppose that $f_{\#}(\SS_X)$ is 
	dualizable in $\SH^{G}(B)$. Then $g_{\#}(\SS_{X(\FF)})$ is dualizable. 
\end{lemma}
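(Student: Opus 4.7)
The plan is a double induction: outer on $|G|$ and, for fixed $G$, inner on the number $n$ of $G$-conjugacy classes of stabilizers of $X$ in $\co(\FF)$. The degenerate cases $n=0$ (whence $X(\FF)=X$) and $X(\FF)=\emptyset$ are trivial. For the inductive step I would choose a representative $H$ of a conjugacy class $(H)\in\co(\FF)$ minimal among stabilizers of $X$ in $\co(\FF)$; after harmlessly enlarging $\FF$ to include every proper subgroup of $H$ (which does not alter $X(\FF)$, since by minimality these do not arise as stabilizers of $X$), the set $\FF':=\FF\cup\{(H)\}$ is a family adjacent to $\FF$.

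Writing $g':X(\FF')\to B$ and $i:Z\hookrightarrow X(\FF')$ for the closed immersion of the complement $Z:=X(\FF')\minus X(\FF)$, the localization sequence yields a cofiber sequence
\[
g_\#\SS_{X(\FF)}\to g'_\#\SS_{X(\FF')}\to g'_\#i_*\SS_Z
\]
in $\SH^G(B)$. The middle term is dualizable by the inner inductive hypothesis applied to the pair $(X,\FF')$ (which has $n-1$ outstanding stabilizer classes), so it suffices---dualizable objects being closed under fibers---to show the rightmost term is dualizable. By \aref{lem:Xconc} we have $Z\cong G\times_{\NN_GH}Z^H$, with $Z^H$ an open subscheme of the smooth $B$-scheme $X^H$; then purity \cite[Proposition~5.7]{Hoyois:6}, the Wirthmüller isomorphism \aref{prop:wirthmueller} (which ensures that $\iota_!$ for $\iota:\NN_GH\hookrightarrow G$ preserves dualizable objects), the invertibility of the Thom space of the normal bundle $\NNN$ of $i$, and \aref{lem:fppi} (since $H$ acts trivially on $Z^H$) together reduce the task to showing that $(h^H)_\#\SS_{Z^H}$ is dualizable in $\SH^{\W_GH}(B)$, where $h^H:Z^H\to B$.

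Applying $\iota^*$ followed by the geometric fixed points functor $(-)^{\mgf H}$ (both symmetric monoidal, so preserving dualizable objects) to $f_\#\SS_X$ and using \aref{prop:gfp}(2), the spectrum $(f^H)_\#\SS_{X^H}$ is dualizable in $\SH^{\W_GH}(B)$. Since $H\neq\{e\}$ we have $|\W_GH|<|G|$, so the outer inductive hypothesis applies at group $\W_GH$ to yield the desired dualizability of $(h^H)_\#\SS_{Z^H}$. The main technical subtlety, and the hardest part to handle carefully, is that $Z^H$ is not in general of the form $(X^H)(\FF^H)$ for a single family $\FF^H$ on $\W_GH$: the $\W_GH$-free locus of $X^H$ strictly contains $Z^H$ (points whose $G$-stabilizer contains $H$ but meets $\NN_GH$ only in $H$ have trivial $\W_GH$-stabilizer yet lie in $X^H\minus Z^H$), so $Z^H$ must instead be obtained from $X^H$ by iteratively excising the finitely many smooth closed subschemes $X^H\cap X^{\langle H,gHg^{-1}\rangle}$ for $g\in G\minus\NN_GH$ generating larger $G$-stabilizers of $X$. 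Each excision fits into another cofiber sequence whose Thom piece is handled by a further application of the lemma at group $\W_GH$, and the nested reduction terminates by the well-founded lex ordering on $(|G|,n)$.
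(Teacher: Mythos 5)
Your skeleton is the same as the paper's: filter $\FF\subseteq\FF_{\all}$ by adjacent families, use the gluing cofiber sequence $g_\#\SS_{X(\FF)}\to g'_\#\SS_{X(\FF')}\to g'_\#i_*\SS_Z$, identify $Z\cong G\times_{\NN_GH}Z^H$ via \aref{lem:Xconc}, and detect dualizability of the third term with geometric fixed points. But there is a genuine gap exactly where you flag the "main technical subtlety," and it comes from applying $(-)^{\mgf H}$ to the wrong object. Taking $\Phi^H$ of $f_\#\SS_X$ produces $(f^H)_\#\SS_{X^H}$, and you then have to cut $X^H$ down to the open subscheme $Z^H$. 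This step is not carried out: the locus to be excised is all of $\bigcup_{K\supsetneq H}X^K\cap X^H$ (not only the $X^H\cap X^{\langle H,gHg^{-1}\rangle}$ you list — a stabilizer strictly containing $H$ need not be generated by conjugates of $H$), and for $K\not\leq\NN_GH$ this locus is not cut out by any family of subgroups of $\W_GH$, so "a further application of the lemma at group $\W_GH$" does not apply to it; the recursion you sketch is therefore not an instance of the statement being proved, and the lex-ordering termination argument has nothing well-defined to terminate.

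The difficulty disappears if you instead apply $(-)^{\mgf H}$ to the \emph{middle} term $g'_\#\SS_{X(\FF')}$ (restricted to $\NN_GH$), which is dualizable by your inner induction. Every $H$-fixed point of $X(\FF')$ has stabilizer lying in $\FF'$ and containing $H$; since families are closed under subgroups and $\FF'\minus\FF=\{(H)\}$, such a stabilizer must equal $H$, so $X(\FF')^H=Z^H$ on the nose and there is nothing to excise. By \aref{prop:gfp}(2) the geometric $H$-fixed points of the middle term is $(h^H)_\#\SS_{Z^H}$, which is therefore dualizable because $(-)^{\mgf H}$ and restriction are symmetric monoidal; inducing back up handles the third term, and the two-out-of-three property for dualizables in the cofiber sequence finishes the step. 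This is the paper's argument: a single (finite, downward) induction along the filtration, with no outer induction on $|G|$ and no stratification of $X^H$ required.
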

\begin{proof}
	Filter the inclusion $\FF\subseteq \FF_{\all}$ by adjacent families
	$\FF=\FF_{0}\subseteq\FF_1\subseteq \FF_{n}=\FF_{\all}$. 
	Write $\alpha_{i}:X(\FF_i)\subseteq X$ and $\beta_{i}:X(\FF_{i})^{\FF_{i-1}}\subseteq X(\FF_{i})$ for the inclusions and write $f_{i}:X(\FF_i)\to B$ for the induced structure map. 
	From the  gluing sequence, we obtain exact sequences
	\[
	f_{i\#}(\SS_{X(\FF_{i})}) \to f_{i+1\#}(\SS_{X(\FF_{i+1})}) \to
	f_{i+1\#}\beta_{i+1 *}(\SS_{X(\FF_{i+1})^{\FF_{i}}}).
	\]
	Let $H_i\leq G$ be a subgroup such that $\FF_{i+1}\minus \FF_{i}=\{(H_i)\}$. Then since $X(\FF_{i+1})^{\FF_{i}}$ is concentrated at $(H_i)$, we have that 
	$X(\FF_{i+1})^{\FF_{i}}\cong G\times_{\NN H_i}X(\FF_{i+1})^{H_i}$ by \aref{lem:Xconc}.
	We then have that 
	\[
	f_{i+1\#}(\beta_{i+1})_*(\SS_{X(\FF_{i+1})^{\FF_{i}}})
	\simeq G_+\wedge_{\NN H_i}(f'_{H_i})_\#(\SS_{X(\FF_{i+1})^{H_{i}}}),
	\]
	where $f'_{H_i}:X(\FF_{i+1})^{H_{i}}\to B$
	is the structure map.

	Now suppose that $f_{i+1\#}(\SS_{X(\FF_{i+1})})$ is dualizable. It follows that 
	$(f'_{H_i})_\#(\SS_{X(\FF_{i+1})^{H_{i}}})$ is also dualizable, since this is obtained by applying the geometric fixed points functor.
	Therefore 
	$f_{i+1\#}(\beta_{i+1})_*(\SS_{X(\FF_{i+1})^{\FF_{i}}})$ is dualizable and we conclude that 
	$f_{i\#}(\SS_{X(\FF_{i})})$ is dualizable as well.
	Since we have assumed that $f_{\#}(\SS_X)=f_{n\#}(\SS_{X(\FF_{n})})$ is dualizable, the result follows by (finite) induction.

\end{proof}

\begin{corollary}\label{cor:EEdual}
	Let $S\in \Sch^G_B$.	Given a family $\FF$, $\SS_{\EE\FF}\in \SH^{G}(S)$ can be expressed as 
	\[
	\SS_{\EE\FF}\simeq \colim_{n\in \NN}q_{n\#}\SS_{U_n}
	\]
	where $q_{n}:U_n\to S$ is in $\Sm^{G}_S[\FF]$ and  $q_{n\#}\SS_{U_n}$ is dualizable in $\SH^{G}(S)$.
\end{corollary}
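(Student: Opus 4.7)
The plan is to produce the required colimit presentation directly from the geometric model of $\EE\FF_S$ constructed in \aref{ex:bc}, and then to invoke the preceding lemma to establish dualizability of each term.

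First, let $f:S\to B$ be the structure map and consider the open invariant subschemes $V_n := \V_S(n\rho_G)(\FF) = \V_S(n\rho_G)\minus \bigcup_{H\in\co(\FF)}\V_S(n\rho_G)^{H}$ of the linear $G$-schemes $\V_S(n\rho_G)$ over $S$. Since $V_n \simeq f^*U_n$ with $U_n$ as in \aref{ex:bc}, the diagram $(V_n)_{n\in\N}$ is a system of approximations to $\EE\FF_S$, and by construction $V_n\in \Sm^G_S[\FF]$. Applying \aref{prop:key} gives an equivalence $\EE\FF_S \simeq \colim_n V_n$ in $\HH^G(S)$. Writing $q_n:V_n\to S$ for the structure map and applying the (colimit-preserving) functor $\Sigma^\infty(-)_+$ yields
\[
\SS_{\EE\FF_S} \simeq \colim_{n\in\N} q_{n\#}\SS_{V_n}
\]
in $\SH^G(S)$.

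It remains to verify that each $q_{n\#}\SS_{V_n}$ is dualizable in $\SH^G(S)$. Let $p_n:\V_S(n\rho_G)\to S$ be the projection of the ambient vector bundle. The zero section $S\hookrightarrow \V_S(n\rho_G)$ is an $\A^1$-equivalence, so $\V_S(n\rho_G)_+\simeq S_+$ in $\HH_\bb^G(S)$, giving $p_{n\#}\SS_{\V_S(n\rho_G)}\simeq \SS_S$, which is trivially dualizable as the monoidal unit. The preceding lemma---whose proof uses only the gluing sequence, the filtration of $\FF\subseteq \FF_\all$ by adjacent families, and the symmetric monoidality of geometric fixed points, and hence applies over the base $S$ equally well as over $B$---then implies that $q_{n\#}\SS_{V_n} = q_{n\#}\SS_{\V_S(n\rho_G)(\FF)}$ is dualizable. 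This completes the proof.

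The main step---really the only nontrivial one---is the appeal to the preceding lemma, which itself reduces dualizability across the open-complement pair $(X(\FF)\subseteq X)$ to a geometric-fixed-points computation via $N$-adjacent filtrations. Everything else is a direct application of the geometric model \aref{ex:bc} and homotopy invariance.
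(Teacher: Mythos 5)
Your overall architecture is the same as the paper's: get the colimit presentation from \aref{ex:bc} and \aref{prop:key}, feed the dualizability of $\SS$ of the ambient affine space into the preceding lemma, and conclude. The one place where you diverge is also where there is a genuine gap: you assert that the preceding lemma ``applies over the base $S$ equally well as over $B$'' because its proof only uses gluing, adjacent filtrations, and geometric fixed points. That is not true for a general $S\in\Sch^G_B$, which may carry a nontrivial $G$-action. The lemma's proof invokes \aref{lem:Xconc} to identify $X(\FF_{i+1})^{\FF_i}$ with $G\times_{\NN H_i}X(\FF_{i+1})^{H_i}$, and that lemma requires $H_i$ to act trivially on the base; it also needs $X(\FF_{i+1})^{H_i}$ to be smooth over the base (for the purity/gluing step) and applies the geometric fixed points functor $(-)^{\mgf H_i}$, which in this paper is only constructed when $H_i$ acts trivially on the base scheme. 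All of these hypotheses hold automatically over $B$ (trivial action) but can fail over $S$, so your appeal to the lemma over $S$ is not justified as written.

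The fix is exactly the reduction the paper performs, and you already have the key observation in hand: $V_n\simeq f^*U_n$. Since $f^*:\SH^G(B)\to\SH^G(S)$ is symmetric monoidal it preserves dualizable objects, and smooth base change gives $f^*\bigl(q_{n\#}\SS_{U_n}\bigr)\simeq q_{n\#}\SS_{f^*U_n}$ (equivalently, $f^*\Sigma^\infty(U_n)_+\simeq\Sigma^\infty(f^*U_n)_+$); moreover $f^*$ preserves colimits and $f^*\EE\FF_B\simeq\EE\FF_S$ by \aref{prop:bcE}. So it suffices to prove both the colimit presentation and the dualizability over $B$, where your argument (homotopy invariance giving $p_{n\#}\SS_{\V_B(n\rho_G)}\simeq\SS_B$, then the preceding lemma) goes through verbatim. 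With that one-line reduction inserted, your proof is correct and coincides with the paper's.
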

\begin{proof}
	It suffices to show this when $S=B$. Use the previous lemma together with the presentation of \aref{ex:bc}.
\end{proof}

\begin{corollary}\label{cor:adual}
	Suppose that $q_{\#}\SS_X\in\SH^G(S)$ is dualizable
	where $q:X\to S$ be an object of $\Sm^{G,\Nfree}_S$. Then the Adams transformation is an equivalence
	\[
	\tau:\pi_{!}q_{\#}\SS_{X} \xrightarrow{\sim} \pi_*i_!q_{\#}\SS_{X}.
	\]
		In particular, 
	\[
	\tau:\pi_{!}\SS_{\EE\FF(N)} \xrightarrow{\sim} \pi_*i_!\SS_{\EE\FF(N)}
	\]
\end{corollary}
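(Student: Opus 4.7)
My plan is to handle the two claims separately: the first follows directly from the commutative square in the immediately preceding proposition, and then I will bootstrap to $\SS_{\EE\FF(N)}$ via a filtered colimit argument.

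For the dualizable case, when $q_\#\SS_X$ is dualizable three of the four edges of that square are automatically equivalences: the canonical maps $\can$ on both $q_\#\SS_X$ and $\pi_!q_\#\SS_X$ (by dualizability, noting that $\pi_!$ is symmetric monoidal and hence preserves dualizable objects), $D(e)$ (by \aref{thm:pidual}), and $h$ (always an equivalence). Chasing the square then forces $\tau$ to be an equivalence as well.

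For the ``in particular'' statement my strategy is to write $\SS_{\EE\FF(N)}$ as a filtered colimit of dualizable objects and commute $\tau$ past this colimit. By \aref{cor:EEdual} applied to the family $\FF(N)$, one has
\[
\SS_{\EE\FF(N)} \simeq \colim_{n\in\N} q_{n\#}\SS_{U_n}
\]
with $q_n\colon U_n\to S$ in $\Sm^{G,\Nfree}_S$ and each $q_{n\#}\SS_{U_n}$ dualizable in $\SH^G(S)$; by full faithfulness of $i_!$ this is equivalently a colimit in $\SH^{G,\Nfree}(S)$. Since $\pi_!$ is a left adjoint, the source of $\tau$ identifies with $\colim_n \pi_! q_{n\#}\SS_{U_n}$. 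For the target, $i_!$ preserves all colimits, and I would show that $\pi_*\colon \SH^G(S)\to \SH^{G/N}(S)$ preserves filtered colimits by checking that its left adjoint $\pi^*$ preserves compact objects: the compact generators of $\SH^{G/N}(S)$ from \aref{prop:summary} are of the form $\Sigma^{-k\rho_{G/N}}\Sigma^\infty X_+$ with $X$ smooth affine over $S$, and $\pi^*$ sends such an object to $\Sigma^{-k\pi^*\rho_{G/N}}\Sigma^\infty\pi^{-1}(X)_+$, which remains compact in $\SH^G(S)$ because $\pi^{-1}(X)$ is a smooth affine $G$-scheme and $\pi^*\rho_{G/N}$ is a $G$-equivariant vector bundle over $B$. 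Given this, naturality of $\tau$ identifies the Adams transformation at $\SS_{\EE\FF(N)}$ with a filtered colimit of the equivalences from the first part, hence itself an equivalence.

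The main obstacle I anticipate is the reduction to compactness of generators needed to commute $\pi_*$ past the relevant filtered colimit; this is what makes the bootstrap from the dualizable case to $\SS_{\EE\FF(N)}$ work and is not entirely formal. Once this compactness statement is established, the rest of the argument is a straightforward combination of the preceding proposition with the colimit presentation of \aref{cor:EEdual}.
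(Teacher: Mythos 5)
Your argument follows the paper's intended route for both parts and is essentially correct, but you should correct one piece of reasoning. You assert that $\pi_!$ is symmetric monoidal and hence preserves dualizable objects; this is not true — $\pi_!$ satisfies a projection formula (\aref{prop:sqe}) and is op-lax monoidal (\aref{prop:untwist}), but it does not preserve the monoidal unit (indeed $\pi_!\SS_{\EE\FF(N)}\neq\SS_S$ in general), so it is not a symmetric monoidal functor and there is no formal reason for it to preserve dualizability. The fact you actually need — that $\pi_!q_\#\SS_X\simeq p_\#\SS_{\oX}$ is dualizable in $\SH^{G/N}(S)$ — is instead a direct part of the conclusion of \aref{thm:pidual}, which you already invoke for $D(e)$; just cite that same theorem for the dualizability of the target. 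With that fix, the chase around the square in the preceding proposition goes exactly as you describe: $\can$ on $\pi_!q_\#\SS_X$ is an equivalence by \aref{thm:pidual}, $D(e)$ is an equivalence by \aref{thm:pidual}, $h$ is always an equivalence, and $\pi_*\can$ is an equivalence since $\can$ on $q_\#\SS_X$ is one (by hypothesis).

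For the ``in particular'' statement, the paper's proof merely says it ``follows, using \aref{cor:EEdual},'' leaving the colimit commutation implicit. Your filling-in is correct and worth having: $\pi_!$ preserves all colimits as a left adjoint, and $\pi_*$ preserves filtered colimits because $\pi^*$ sends compact generators $\Sigma^{-k\VV}\Sigma^\infty X_+$ of $\SH^{G/N}(S)$ (with $X$ affine and $T^\VV\in\Sph^{G/N}_B$) to compact objects $\Sigma^{-k\pi^*\VV}\Sigma^\infty(\pi^{-1}X)_+$ of $\SH^G(S)$, since $\pi^{-1}X$ is smooth affine and $\pi^*\VV\in\Rep^G_B$. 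Then naturality of $\tau$ gives the equivalence on $\SS_{\EE\FF(N)}\simeq\colim_n q_{n\#}\SS_{U_n}$ as a filtered colimit of the equivalences from the first part. This is the same strategy as the paper, made explicit.
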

\begin{proof}
	The first statement is immediate from the previous proposition. The second statement then follows, using  
	\aref{cor:EEdual}.
\end{proof}

Next we will define a transformation 
\[
\nu: \pi_*i_!\to \pi_!, 
\]
which we will show is inverse to $\tau$. We begin with the following observation. Recall that there is a transformation $\pi_*i_!i^*(\SS_S)\otimes \id \to \pi_*i_!i^*\pi^*$, see \eqref{eqn:pformfree}.
\begin{lemma}
	The map \eqref{eqn:pformfree} is an equivalence.
\end{lemma}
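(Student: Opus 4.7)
My plan is to reduce the map \eqref{eqn:pformfree} to a standard projection formula for the symmetric monoidal adjunction $\pi^*\dashv\pi_*$, evaluated at $\SS_{\EE\FF(N)} := i_!i^*\SS_S$, and then to resolve this via the colimit presentation from \aref{cor:EEdual}.

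First I would use \aref{prop:EEimage}, which provides a natural equivalence $i_!i^*Z\simeq\SS_{\EE\FF(N)}\otimes Z$ for any $Z\in\SH^G(S)$, to identify \eqref{eqn:pformfree} with the projection formula morphism
\[
\pi_*(\SS_{\EE\FF(N)})\otimes Y\longrightarrow\pi_*(\SS_{\EE\FF(N)}\otimes\pi^*Y).
\]
Next I would invoke \aref{cor:EEdual} to write $\SS_{\EE\FF(N)}\simeq\colim_n q_{n\#}\SS_{U_n}$ as a sequential colimit of dualizable $N$-free $G$-spectra. Since $\pi_*$ preserves colimits (the unstable version appears in \aref{prop:usfpadj}, and this passes to the stable setting via the compatibility of $\pi_*$ with stabilization along $N$-trivial spheres) and since tensoring preserves colimits in each variable, both sides of the projection formula decompose as colimits over $n$ of the projection formula morphisms for the dualizable pieces $q_{n\#}\SS_{U_n}$.

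The main obstacle is then to verify the projection formula on each dualizable $X = q_{n\#}\SS_{U_n}$: that $\pi_*X\otimes Y\to\pi_*(X\otimes\pi^*Y)$ is an equivalence for every $Y\in\SH^{G/N}(S)$. I expect this to be the most delicate step. It is a standard consequence of dualizability for any symmetric monoidal adjunction: the symmetric monoidal functor $\pi^*$ preserves duals, i.e.\ $\pi^*(X^\vee)\simeq(\pi^*X)^\vee$, and this compatibility allows the projection formula morphism to be transported across the adjunction via the internal hom isomorphism $X\otimes(-)\simeq F(X^\vee,-)$ available for dualizable $X$. Once the dualizable case is established, assembling along the colimit over $n$ yields the equivalence in full generality.
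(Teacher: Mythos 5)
Your reduction is exactly the paper's strategy: identify \eqref{eqn:pformfree} with the projection morphism $\pi_*(\SS_{\EE\FF(N)})\otimes Y\to\pi_*(\SS_{\EE\FF(N)}\otimes\pi^*Y)$, write $\SS_{\EE\FF(N)}\simeq\colim_n q_{n\#}\SS_{U_n}$ with dualizable terms via \aref{cor:EEdual}, and pass to the colimit. The gap is in your treatment of the dualizable case. The projection formula $f_*(X)\otimes Y\to f_*(X\otimes f^*Y)$ is a formal consequence of dualizability when the dualizable object is $Y$, i.e.\ lives \emph{downstairs}, in the domain of $f^*$ (one moves $Y^\vee$ across the adjunction). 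Here $Y\in\SH^{G/N}(S)$ is arbitrary, and the dualizable object $X=q_{n\#}\SS_{U_n}$ lives \emph{upstairs}, in the source of $\pi_*$. In that situation dualizability alone is not enough: taking $X$ to be the unit, the claim becomes ``$\pi_*$ is smashing,'' which fails for a general lax symmetric monoidal right adjoint (homotopy fixed points being the standard counterexample). Note also that your appeal to ``$\pi^*$ preserves duals, $\pi^*(X^\vee)\simeq(\pi^*X)^\vee$'' does not typecheck, since $X$ is not in the domain of $\pi^*$.

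What actually closes the dualizable case is the machinery of this section. One has $\pi_*(X\otimes\pi^*Y)\simeq\pi_*F(D(X),\pi^*Y)\simeq F(\pi_!D(X),Y)$, where the second equivalence uses the \emph{partial left adjoint} $\pi_!$ (the quotient functor) and its projection formula from \aref{prop:sqe}; this is legitimate because $D(X)$ is again $N$-free by \aref{lem:freedual}. One must then identify $F(\pi_!D(X),Y)$ with $\pi_*(X)\otimes Y$, which requires both $\pi_!D(X)\simeq D(\pi_!X)$ with $\pi_!X$ dualizable (\aref{thm:pidual}) and the Adams isomorphism $\pi_!X\simeq\pi_*i_!X$ on dualizable $N$-free objects (\aref{cor:adual}). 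These two nonformal inputs are precisely what your ``standard consequence of dualizability for any symmetric monoidal adjunction'' elides; without them the internal-hom manipulation upstairs cannot be transported down, and the argument does not close.
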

\begin{proof}
Let $Y\in \SH^{G/N}(S)$ and write $\EE\FF(N) \simeq \colim_n U_n$ with $q_{n\#}\SS_{U_n}$  dualizable over $S$, see \aref{cor:EEdual}. Then 
\eqref{eqn:pformfree} evaluated on $Y$ is the colimit of the transformations
\[
\pi_*(q_{n\#}\SS_{U_n})\otimes Y \to \pi_*(q_{n\#}\SS_{U_n}\otimes \pi^*Y).
\] 
We see that each of these transformations is an equivalence since each fits into the commutative diagram
\[ 
\begin{tikzcd}
\pi_*(DD(q_{n\#}\SS_{U_n})\otimes \pi^*Y) & \pi_*F(D(q_{n\#}\SS_{U_n}), \pi^*Y)) \ar[l, "\sim"']  & F(\pi_!D(q_{n\#}\SS_{U_n}), Y) \ar[l, "\sim"'] \\
\pi_*DD(q_{n\#}\SS_{U_n})\otimes Y \ar[u]& \pi_!q_{n\#}\SS_{U_n}\otimes Y \ar[l,"\sim"'] & F(D(\pi_!q_{n\#}\SS_{U_n}),Y). \ar[l, "\sim"']\ar[u, "\sim"']
\end{tikzcd}
\]	
	
\end{proof}

Now define $\upsilon:\pi_*i_!\to \pi_!$ as the composite
\begin{align*}
\pi_*i_!& \xrightarrow{{\rm unit}} \pi_*i_!i^*\pi^*\pi_!\simeq \pi_*(i_!i^*(\SS_S)\otimes \pi^*\pi_!)
\\
& \xrightarrow{\sim}\pi_*(i_!i^*(\SS_S))\otimes \pi_! \xrightarrow{\tau^{-1}}
\pi_!i^*(\SS_S)\otimes \pi_! \to \pi_!.
\end{align*}

\begin{lemma}\label{lem:tauinv}
	The composite $\pi_!\xrightarrow{\tau}\pi_*i_!\xrightarrow{\upsilon} \pi_!$ is an equivalence.
\end{lemma}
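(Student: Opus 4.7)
The strategy is to use \aref{prop:untwist}, which states that $\tau$ is essentially ``smashing'', to engineer a cancellation between the two copies of $\tau$ that appear in $\upsilon \circ \tau$. Recall that $\upsilon$ was built by first applying the unit of the adjunction $(\pi_!, i^*\pi^*)$, then identifying $\pi_* i_! i^* \pi^* \pi_! \simeq \pi_*i_!i^*(\SS_S) \otimes \pi_!$ via \eqref{eqn:pformfree}, then applying $\tau^{-1} \otimes \id$ using \aref{cor:adual}, and finally composing with the ``multiplication'' $\pi_!i^*(\SS_S) \otimes \pi_! \to \pi_!$ induced by the projection formula and the counit $\pi_!i^*\pi^* \to \id$.

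The plan is as follows. First, apply \aref{prop:untwist} to rewrite the composite
\[
\pi_!X \xrightarrow{\tau} \pi_*i_!X \xrightarrow{\text{unit}} \pi_*i_!i^*\pi^*\pi_!X \simeq \pi_*i_!i^*(\SS_S) \otimes \pi_!X
\]
as
\[
\pi_!X \xrightarrow{\eta} \pi_!i^*(\SS_S) \otimes \pi_!X \xrightarrow{\tau \otimes \id} \pi_*i_!i^*(\SS_S) \otimes \pi_!X,
\]
where $\eta$ comes from the op-lax monoidal structure of $\pi_!$. Inserting this into the definition of $\upsilon$, the composite $\upsilon \circ \tau$ becomes
\[
\pi_!X \xrightarrow{\eta} \pi_!i^*(\SS_S) \otimes \pi_!X \xrightarrow{\tau \otimes \id} \pi_*i_!i^*(\SS_S) \otimes \pi_!X \xrightarrow{\tau^{-1} \otimes \id} \pi_!i^*(\SS_S) \otimes \pi_!X \xrightarrow{m} \pi_!X,
\]
in which the middle two arrows cancel.

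The remaining problem is therefore to verify that the composite $m \circ \eta: \pi_!X \to \pi_!i^*(\SS_S) \otimes \pi_!X \to \pi_!X$ is the identity. Both arrows are built purely out of the symmetric monoidal structures and the unit/counit of the adjunction $(\pi_!, i^*\pi^*)$: the map $\eta$ is the op-lax structure on $\pi_!$ applied to $i^*(\SS_S) \otimes X \simeq X$ (using that $i^*(\SS_S)$ is the unit of $\SH^{G,\Nfree}(S)$), while $m$ factors as
\[
\pi_!i^*(\SS_S) \otimes \pi_!X \simeq \pi_!(i^*(\SS_S) \otimes i^*\pi^*\pi_!X) \simeq \pi_!i^*\pi^*\pi_!X \xrightarrow{\pi_!(\mathrm{counit})} \pi_!X,
\]
using the projection formula and the unit axiom. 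Identifying these arrows, $m \circ \eta$ unwinds to the composite $\pi_!X \to \pi_!i^*\pi^*\pi_!X \to \pi_!X$ of the unit and counit, which is the identity by the triangle identity for the adjunction.

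The main technical obstacle is bookkeeping: several distinct natural equivalences (projection formula, op-lax structure, \eqref{eqn:pformfree}) appear in the definitions of $\tau$, $\upsilon$, and in \aref{prop:untwist}, and one has to check that they are chosen compatibly so that the cancellation described above is genuine. Fortunately, \aref{prop:untwist} has already encoded the key compatibility, so the argument should reduce to a short formal diagram chase.
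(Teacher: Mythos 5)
Your proof is correct and follows essentially the same approach as the paper: the paper's proof is a single diagram whose left square is \aref{prop:untwist}, whose middle arrows are $\tau\otimes\id$ and $\tau^{-1}\otimes\id$ cancelling, and whose outer vertical arrows are precisely your $\eta$ and $m$. You spell out one step the paper leaves implicit — that $m\circ\eta$ reduces to $\mathrm{counit}\circ\pi_!(\mathrm{unit})$ and is therefore the identity by the triangle identity for $(\pi_!, i^*\pi^*)$ — which is a mild sharpening (identity, not merely equivalence) that does not change the argument.
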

\begin{proof}
	The composite $\upsilon\tau$ agrees with the composite around the following diagram
	\[
	\begin{tikzcd}
	\pi_! \ar[r, "\tau"] \ar[d]& \pi_*i_! \ar[r] & \pi_*i_!i^*\pi^*\pi_! \ar[d,"\sim"] \ar[r] & \pi_!\\
	\pi_!i^*(\SS_S)\otimes \pi_!\ar[rr, "\tau\otimes \id"',"\sim"] & & \pi_*i_!i^*(\SS_S)\otimes \pi_!\ar[r, "\tau^{-1}\otimes \id"', "\sim" ]
	& \pi_!i^*(\SS_S)\otimes \pi_!. \ar[u]
	\end{tikzcd}
	\]
\end{proof}

\begin{theorem}[Adams Isomorphism]\label{thm:adams}
	The Adams transformation $\tau:\pi_!\to \pi_*i_!$ is an equivalence.	
\end{theorem}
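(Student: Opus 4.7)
The plan is to prove Theorem~\aref{thm:adams} by exhibiting $\upsilon$ as a two-sided inverse of $\tau$. Lemma~\aref{lem:tauinv} already supplies $\upsilon \circ \tau$ as an equivalence of endofunctors of $\pi_!$, so $\tau$ is a split monomorphism with $\upsilon$ serving (up to canonical equivalence) as its retraction. The outstanding task is to show that $\tau$ is also a split epimorphism, equivalently that the cofiber $C := \cofib(\tau)$ vanishes as a natural transformation.

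First I observe that $C$ is a colimit-preserving functor: $\pi_!$ is a left adjoint, and $\pi_*i_!$ preserves colimits since $\pi_*$ does by \aref{prop:usfpadj} and $i_!$ is a left adjoint. Consequently it suffices to verify $C$ vanishes on a generating set for $\SH^{G,\Nfree}(S)$. By Corollary~\aref{cor:adual}, $C$ already vanishes on any $q_{\#}\SS_X$ whose total space is dualizable in $\SH^G(S)$, and in particular on the unit $\SS_{\EE\FF(N)}$, which by \aref{cor:EEdual} is a filtered colimit of such dualizable objects.

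To extend the vanishing to arbitrary objects, I will compute $\tau \circ \upsilon$ using the smashing diagram of Proposition~\aref{prop:untwist}. Unwinding the definition of $\upsilon$ from~\aref{sub:atrans} and chasing the diagram, one sees that $\tau \circ \upsilon$ is built out of two instances of $\tau_{\SS_{\EE\FF(N)}}$ (one direct, one as $\tau_{\SS_{\EE\FF(N)}}^{-1}$ appearing in the construction of $\upsilon$), the op-lax monoidal structure on $\pi_!$, the equivalence $\pi_*i_!\SS_{\EE\FF(N)} \otimes \pi_!(-) \simeq \pi_*i_!i^*\pi^*\pi_!(-)$ from the lemma preceding~\aref{lem:tauinv}, and the triangle identities for the $(\pi_!, i^*\pi^*)$-adjunction. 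The two occurrences of $\tau_{\SS_{\EE\FF(N)}}$ cancel against each other, and the remaining structural equivalences telescope via the triangle identities to produce $\tau \circ \upsilon \simeq \id_{\pi_*i_!}$, forcing $C = 0$.

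The hard part will be the combinatorial coherence: one must simultaneously track the naturality of $\tau$, the op-lax structure on $\pi_!$, the projection-formula equivalences, and the triangle identities for the $(\pi_!, i^*\pi^*)$-adjunction. This amounts to a mirror-image diagram chase to the one establishing Lemma~\aref{lem:tauinv} and requires care, but introduces no substantially new ideas beyond the smashing property of $\tau$ furnished by Proposition~\aref{prop:untwist}.
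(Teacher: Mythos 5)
Your framing gets the first half right: Lemma~\aref{lem:tauinv} shows $\upsilon\tau$ is an autoequivalence of $\pi_!$, so $\tau$ is split mono, and the remaining task is to show $\tau$ is also split epi. You also correctly note that the dualizable $q_\#\SS_X$ of Corollary~\aref{cor:adual} do \emph{not} generate $\SH^{G,\Nfree}(S)$, so that by itself cannot finish the job. Where the proposal breaks down is the claimed fix: showing $\tau\circ\upsilon\simeq\id_{\pi_*i_!}$ by a ``mirror-image'' chase of Lemma~\aref{lem:tauinv}. The reason Lemma~\aref{lem:tauinv} works is that the construction of $\upsilon$ begins by applying the unit $\pi_*i_!\to\pi_*i_!i^*\pi^*\pi_!$, and Proposition~\aref{prop:untwist} expresses $\tau$ composed with \emph{that same unit} as an op-lax map followed by $\tau_{\SS_{\EE\FF(N)}}\otimes\id$; so in $\upsilon\tau$ the two occurrences of $\tau_{\SS_{\EE\FF(N)}}$ are literally adjacent and cancel. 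In $\tau\upsilon$ the order is reversed, the final map is $\tau$ rather than ``$\tau$ post-composed with the unit,'' and there is no non-invertible unit to slide through. Proposition~\aref{prop:untwist} gives you no control over $\tau$ on its own, only on $({\rm unit})\circ\tau$, so the asserted cancellation is not available. If $\tau\upsilon\simeq\id$ were provable by such a chase, the Adams isomorphism would be a near-formal consequence of \aref{lem:tauinv}; the substance of the theorem is precisely that it is not.

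The paper's actual route around the non-generation of dualizable objects is geometric, not categorical, and uses machinery you did not invoke. Rather than generating by smooth $q_\#\SS_X$, the paper uses item (4) of Corollary~\aref{cor:Nfreegen}: $\SH^{G,\Nfree}(S)$ is generated under colimits by $\Sigma^{-k\rho_{G/N}}q_*\SS_X$ for $q\colon X\to S$ projective and $N$-free. Such a $q$ factors as $p\circ f$ where $f\colon X\to \oX := X/N$ is a finite \'etale torsor and $p\colon\oX\to S$ is projective. The key point is that $f_*\SS_X\simeq f_\#\SS_X$ is dualizable by ambidexterity, so Corollary~\aref{cor:adual} applies to it; then the base-change compatibility of $\tau$ and $\upsilon$ with $p_*$ (Propositions~\aref{lem:comtaup*} and~\aref{lem:f*i!}, plus the analogous statement for $\upsilon$) lets one compare $\tau$ on $q_*\SS_X=p_*f_*\SS_X$ with $p_*$ applied to the already-known equivalence $\tau_{f_*\SS_X}$. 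A ladder-diagram argument then yields the result. None of these ingredients — the projective generators, the factorization through the quotient, or the base-change lemmas of \aref{sub:base} — appear in your proposal, and without them the gap between ``$\tau$ is an equivalence on dualizable objects'' and ``$\tau$ is an equivalence everywhere'' remains open. (Minor point: you cite Proposition~\aref{prop:usfpadj} for $\pi_*$ preserving colimits, but that proposition is about unstable $\pi_*$; the stable claim needs a separate, albeit straightforward, argument via compact-object preservation by $\pi^*$.)
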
 
\begin{proof}
	By \aref{cor:Nfreegen}, it suffices to show that $\tau$ is an equivalence on $\Sigma^{-\VV}q_*\SS_X$ where $\VV$ is an $N$-trivial representation and $q:X\to S$ is an $N$-free (not necessarily smooth) $G$-scheme over $S$. Write $f:X\to \overline{X}$ for the quotient and $p:\oX\to S$ for the induced map. Consider the diagram
	\[
	\begin{tikzcd}
	\pi_!p_*f_*\SS_X\ar[r, "\tau"] \ar[d] & \pi_*i_!p_*f_*\SS_X \ar[r, "\upsilon"]\ar[d, "\sim", "\upsilon' "'] & \pi_!p_*f_*\SS_X \ar[d] \\
	p_*\pi_!f_*\SS_X \ar[r, "\tau", "\sim"'] & p_*\pi_*i_!f_*\SS_X \ar[r, "\upsilon", "\sim"'] & p_*\pi_!f_*\SS_X.
	\end{tikzcd} 
	\]
	The left-hand square commutes by \aref{lem:comtaup*} and \aref{lem:f*i!} and the right-hand square commutes by  
a similar argument. Both  horizontal composites are equivalences by \aref{lem:tauinv}, and therefore the outer vertical arrows are equivalences as the middle one is. Finally, 	$\tau:\pi_!f_*\SS_X \to \pi_*i_!f_*\SS_X$ is an equivalence by \aref{cor:adual} (since $f_\#\SS_X\simeq f_*\SS_X$, by ambidexterity \cite{Hoyois:6}, as $f$ is finite \'etale). 
\end{proof}

\subsection{Applications}\label{sub:appl}
In this section, we present a few applications of the Adams isomorphism.

	Let ${\rm B} G$ denote the classifying stack of the group $G$ and and $f:{\rm B} G\to {\rm B}(G/N)$ the resulting proper map of stacks. We have an equivalence $\SH({\rm B} G)\wkeq \SH^G(S)$ and from this perspective the fixed point functor $\pi_*$ becomes identified with the pushforward functor $f_*$. 
	The following base change results are an instance of the proper and the smooth-proper base change formula, but with the curious feature that $f$ is not a representable morphism. 	
These don't follow immediately from the six functor formalism in \cite{Hoyois:6}, precisely because $f$ is not representable.

We use the names for exchange morphisms given in the first part of the previous subsection.

\begin{corollary}[Proper base change]\label{cor:properbc}
Let $p:T\to S$ be a morphism in $\Sch^{G/N}_B$. The exchange
	\[
	\alpha_R:p^*\pi_*\to \pi_*p^*
	\]
	 is an equivalence. 
 	
\end{corollary}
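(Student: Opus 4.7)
The plan is to filter $X\in\SH^G(S)$ using the isotropy filtration developed in \aref{sec:filtadj} and reduce the claim to base change on each layer, where the identification of fixed points as a composite of geometric fixed points and an Adams-isomorphism quotient will make the compatibility with $p^*$ manifest.

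First I would observe that both $p^*$ and $\pi_*$ preserve finite limits (the former as a two-sided adjoint, the latter as a right adjoint), so the class of $X$ for which $\alpha_R$ is an equivalence is stable under cofiber sequences. Choose a filtration $\emptyset=\FF_{-1}\subseteq\FF_0\subseteq\cdots\subseteq\FF_n=\FF_{\all}$ by $N$-adjacent pairs, giving the finite filtration
$$\ast\simeq\EE\FF_{-1+}\otimes X\to\EE\FF_{0+}\otimes X\to\cdots\to\EE\FF_{n+}\otimes X\simeq X$$
with successive cofibers $\EE(\FF_{i+1},\FF_i)\otimes X$. Since the filtration has finite length, it suffices to check $\alpha_R$ on each layer, and by base-change compatibility of universal $\FF$-spaces (\aref{prop:bcE}), further to show that $p^*(\EE(\FF',\FF)\otimes Y)^N\to(p^*(\EE(\FF',\FF)\otimes Y))^N$ is an equivalence for an $N$-adjacent pair $\FF\subset\FF'$ at a subgroup $H\leq N$ and any $Y\in\SH^G(S)$.

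For such a layer, \aref{prop:Phiadj} provides the identification
$$(\EE(\FF',\FF)\otimes Y)^N\simeq G/N_+\ltimes_{\WW H}\bigl(\EE_{\W_N H}(\W_G H)_+\otimes Y^{\mgf H}\bigr)^{\W_N H}.$$
I would then verify that each ingredient on the right commutes with $p^*$: the geometric fixed points $(-)^{\mgf H}$ by \aref{prop:bcgfp}; the universal free spaces $\EE_{\W_N H}(\W_G H)$ by \aref{prop:bcE}; and the induction $G/N_+\ltimes_{\WW H}-$ because it is modeled by $f_\#$ for the finite étale morphism $f\colon G/N\times_{\WW H}S\to S$ (\aref{rem:pw}), hence by smooth base change in Hoyois's six-functor formalism. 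The remaining and most delicate ingredient is $(-)^{\W_N H}$ applied to the $\W_N H$-free spectrum $\EE_{\W_N H}(\W_G H)_+\otimes Y^{\mgf H}$; here the Adams isomorphism (\aref{thm:adams}) identifies it with the quotient functor $\pi_!^{\W_N H}$ of \aref{prop:sqe}, a left adjoint that visibly commutes with $p^*$ since the $\W_N H$-quotient of a $\W_N H$-free $S$-scheme commutes with base change along $p$ (both $S$ and $T$ having trivial $\W_N H$-action).

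The main obstacle is not the individual base-change compatibilities, but verifying that they assemble into a base-change compatibility for $\alpha_R$ itself, rather than just for the abstract equivalence of functors of \aref{prop:Phiadj}. The plan for this last step is to trace the definition of $\alpha_R$ through the construction in \aref{prop:Phiadj}, using the naturality diagrams established in \aref{sub:base}, in particular \aref{lem:comtaup*}, to see that the base-change transformation for $\pi_*$ factors through the base-change transformations for the geometric fixed points, the induction, and $\pi_!$. Once these compatibilities are assembled, $\alpha_R$ is an equivalence on each layer, and hence on all of $\SH^G(S)$.
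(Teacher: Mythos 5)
Your proposal follows essentially the same route as the paper: filter by $N$-adjacent families, reduce to the layers $\EE(\FF',\FF)\otimes X$, apply \aref{prop:Phiadj}, and commute $p^*$ past each ingredient, with the key step being the Adams isomorphism to handle the fixed points of the $\W_NH$-free piece. The only slip is the citation for the compatibility of the Adams transformation with pullback: the relevant statement is \aref{prop:puptau} (which concerns $p^*$), not \aref{lem:comtaup*} (which concerns $p_*$).
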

\begin{proof}
	Choose a filtration $\emptyset=\FF_{-1}=\FF_0\subseteq \FF_1\subseteq \cdots \subseteq \FF_n= \FF_{{\rm all}}$ such that each pair $\FF_i\subseteq \FF_{i+1}$ is $N$-adjacent, see \aref{sec:filtadj}.
	This gives rise to the filtration of $X\in \SH^{G}(S)$,
	\[
	\ast\simeq \EE\FF_{-1+}\otimes X \to \EE\FF_{0+}\otimes X  \to \cdots \to \EE\FF_{n-1+}\otimes X \to \EE\FF_{n+}\otimes X \simeq X.
	\]
	It thus suffices to check that $\alpha_R$ is an equivalence on each filtration quotient $\EE(\FF_{i+1}, \FF_{i})\otimes X$. Suppose that $\FF\subseteq \FF'$ is $N$-adjacent at $H\leq N$. 
	By \aref{prop:Phiadj}, we find that 
	\begin{align*}
	p^*\pi_*(\EE(\FF',\FF)\otimes X) &\simeq p^*(G/N_+\ltimes_{\WW} (\EE\FF(\W_NH)_+\otimes X^{\mgf H})^{\W_NH} ) \\ 
	& \simeq	(G/N_+\ltimes_{\WW} p^*((\EE\FF(\W_NH)_+\otimes X^{\mgf H})^{\W_NH} )\\
	& \simeq	(G/N_+\ltimes_{\WW}(p^*(\EE\FF(\W_NH)_+\otimes X^{\mgf H}))^{\W_NH} \\
	& \simeq (G/N_+\ltimes_{\WW}(\EE\FF(\W_NH)_+\otimes (p^*X)^{\mgf H})^{\W_NH} \\
	& \simeq 	\pi_*p^*(\EE(\FF',\FF)\otimes X).
	\end{align*}
	Here, the third equivalence follows from \aref{thm:adams} and \aref{prop:puptau}, since
$\EE\FF(\W_NH)_+\otimes X^{\mgf H}$ is $\W_NH$-free. The fourth follows from \aref{prop:bcgfp}.

\end{proof}
\begin{corollary}[Smooth-proper base change]
	Let $p:T\to S$ be a smooth morphism in $\Sch^{G/N}_B$. The exchange
	\[
	\overline{\alpha}:p_\#\pi_*\to \pi_*p_\#
	\]
	 is an equivalence. 	
\end{corollary}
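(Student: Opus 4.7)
The plan is to mimic the proof of proper base change (\aref{cor:properbc}), with $p_\#$ in place of $p^*$. Filter $\FF_{\all}$ by $N$-adjacent families $\emptyset=\FF_{-1}\subseteq\FF_0\subseteq\cdots\subseteq\FF_n=\FF_{\all}$. Since $p_\#$ is a left adjoint and $\pi_*$ is exact between stable $\i$-categories, both preserve the associated cofiber sequences, so it suffices to show $\overline{\alpha}$ is an equivalence on each filtration quotient $\EE(\FF_{i+1},\FF_i)\otimes X$.

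Fix an $N$-adjacent pair $\FF\subseteq\FF'$ at $H\leq N$. By \aref{prop:Phiadj},
\[
\pi_*(\EE(\FF',\FF)\otimes X)\simeq G/N_+\ltimes_{\WW H}\bigl(\EE\FF(\W_NH)_+\otimes X^{\mgf H}\bigr)^{\W_NH}.
\]
Applying $p_\#$ to both sides, commute it past each constituent operation on the right:
\begin{enumerate}
\item[(i)] $p_\#$ commutes with induction $\iota_!$ along $\iota:\WW H\hookrightarrow G/N$, since the right adjoints $\iota^*$ and $p^*$ commute by the standard exchange.
\item[(ii)] $p_\#$ commutes with geometric fixed points $(-)^{\mgf H}$: by adjunction, this reduces to the fact that the right adjoint $\wt\EE\FF[H]\otimes \pi^*(-)$ commutes with $p^*$, which follows from $p^*\wt\EE\FF[H]\simeq\wt\EE\FF[H]$ (cf.~\aref{prop:bcE}), the symmetric monoidality of $p^*$, and the exchange $\pi^*p^*\simeq p^*\pi^*$.
\item[(iii)] $p_\#(\EE\FF(\W_NH)_+\otimes Y)\simeq \EE\FF(\W_NH)_+\otimes p_\# Y$ by the smooth projection formula, since $\EE\FF(\W_NH)_+$ is pulled back from the base.
\item[(iv)] For the outer $\W_NH$-fixed points, invoke the Adams isomorphism \aref{thm:adams} to identify $(-)^{\W_NH}$ on $\W_NH$-free spectra with the quotient functor. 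The quotient then commutes with $p_\#$ because both are left adjoints and their right adjoints ($p^*$ and $i^*\pi^*$ respectively) commute, using that smooth pullback preserves $\W_NH$-freeness.
\end{enumerate}

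Combining these commutations yields $p_\#\pi_*(\EE(\FF',\FF)\otimes X)\simeq \pi_*p_\#(\EE(\FF',\FF)\otimes X)$. The main obstacle is verifying that this chain of equivalences is induced by the mate $\overline{\alpha}$, rather than by some other isomorphism. This requires carefully tracking the units, counits, and projection formula equivalences through the mate construction that defines $\overline{\alpha}$, in the spirit of the compatibility arguments (e.g.~\aref{prop:puptau} and \aref{lem:comtaup*}) that underpin the proof of the Adams isomorphism.
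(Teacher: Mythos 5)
Your proposal is correct and is essentially the paper's own argument: the paper's proof is literally ``similar to the proof of proper base change,'' i.e.\ the same filtration by $N$-adjacent families, reduction to the quotients via \aref{prop:Phiadj}, and commutation of $p_\#$ past each constituent (with the Adams isomorphism and the $p_\#$-compatibility proposition following \aref{prop:puptau} handling the free fixed points). Your explicit flagging of the coherence issue---that the resulting composite must agree with the mate $\overline{\alpha}$---is exactly the point the paper's compatibility lemmas are designed to address.
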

\begin{proof}
	Similar to the proof of proper base change.
\end{proof}

\begin{corollary}[Projection formula]
Let $S\in \Sch_B^{G/N}$.	There is a canonical equivalence 
\[\pi_*(X)\otimes Y\simeq\pi_*(X\otimes \pi^*(Y))
\]
for $X\in \SH^{G}(S)$ and $Y\in \SH^{G/N}(S)$. 
\end{corollary}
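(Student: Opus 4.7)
The plan is to construct the projection formula map
\[
\phi_{X,Y}:\pi_*(X)\otimes Y \to \pi_*(X\otimes \pi^*Y)
\]
by adjunction, namely as the adjoint of the composite $\pi^*(\pi_*(X)\otimes Y)\simeq \pi^*\pi_*(X)\otimes \pi^*Y\xrightarrow{\epsilon\otimes \id} X\otimes \pi^*Y$, using that $\pi^*$ is symmetric monoidal. Both sides of $\phi$ are exact in $X$ and commute with the finite colimits appearing in a filtration by families, so I would then show that $\phi$ is an equivalence by following the filtration strategy used in the proof of \aref{cor:properbc}.

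Concretely, I would choose a filtration of $\FF_{\all}$ by $N$-adjacent families $\emptyset=\FF_{-1}\subseteq\FF_0\subseteq\cdots\subseteq\FF_n=\FF_{\all}$ as in \aref{sec:filtadj}, yielding the finite filtration
\[
\ast\simeq\EE\FF_{-1+}\otimes X\to \EE\FF_{0+}\otimes X\to\cdots\to \EE\FF_{n+}\otimes X\simeq X,
\]
and reduce by induction to verifying that $\phi$ is an equivalence on each quotient $\EE(\FF_{i+1},\FF_i)\otimes X$. Fix such an $N$-adjacent pair $\FF\subseteq\FF'$ at $H\leq N$, and let $\pi':\NN_GH\twoheadrightarrow \W_GH$, $\pi'':\W_GH\twoheadrightarrow \WW H$, and $\overline{\lambda}:\WW H\hookrightarrow G/N$ be the homomorphisms from the proof of \aref{prop:Phiadj}.

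The left side $\pi_*(\EE(\FF',\FF)\otimes X)\otimes Y$ can be rewritten by successively invoking \aref{prop:Phiadj}, the projection formula for the Wirthm\"uller functor $G/N_+\ltimes_{\WW H}(-)$ (which holds since by \aref{prop:wirthmueller} this functor is both a left and right adjoint), the Adams isomorphism \aref{thm:adams} to replace $\W_NH$-fixed points by the $\W_NH$-quotient on $\W_NH$-free spectra, and the projection formula \aref{prop:sqe} for the quotient functor along $\pi''$. This transforms the left side into
\[
G/N_+\ltimes_{\WW H}\bigl(\EE_{\W_NH}(\W_GH)_+\otimes X^{\mgf H}\otimes\pi''^*\overline{\lambda}^*Y\bigr)/\W_NH.
\]
The right side $\pi_*(\EE(\FF',\FF)\otimes X\otimes\pi^*Y)$ admits the same description, using \aref{prop:Phiadj} together with the symmetric monoidality of $(-)^{\mgf H}$ from \aref{prop:gfp}, the identification $(\pi^*Y)^{\mgf H}\simeq \pi''^*\overline{\lambda}^*Y$ (coming from the commutative square of group homomorphisms in the proof of \aref{prop:Phiadj} together with \aref{prop:bcgfp} and the fact that geometric fixed points of a $\pi'^*$-pullback recover the original $\W_GH$-spectrum), and a second application of the Adams isomorphism.

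The hard part will be verifying that this chain of identifications is genuinely induced by the natural transformation $\phi_{X,Y}$, rather than merely giving an abstract equivalence of the two spectra. This requires a diagram chase in the style of \aref{sub:base}, leaning on the identification $\pi_*\lambda_!\simeq \overline{\lambda}_!\pi''_*\pi'_*$ used in the proof of \aref{prop:Phiadj} and the explicit adjoint-construction of the projection-formula map at each of the intermediate steps.
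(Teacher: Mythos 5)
Your proposal is correct and follows essentially the same route as the paper: construct the map by adjunction using the symmetric monoidality of $\pi^*$, reduce via the filtration by $N$-adjacent families to the quotients $\EE(\FF',\FF)\otimes X$, identify both sides through \aref{prop:Phiadj}, and use the Adams isomorphism to transfer the projection formula from $\pi_!$ (where it holds by \aref{prop:sqe}) to $\pi_*$ on $\W_NH$-free spectra. The paper likewise defers the final compatibility check to a diagram chase ``similar to ones above, e.g., that in \aref{prop:untwist},'' so your flagged ``hard part'' is handled there at the same level of detail.
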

\begin{proof}
Consider the map $\pi_*(X)\otimes Y\to\pi_*(X\otimes\pi^*(Y))$ adjoint to the map
\[
\pi^*(\pi_*(X)\otimes Y)\simeq\pi^*\pi_*(X)\otimes\pi^*Y\to X\otimes\pi^*Y,
\]
where the equivalence follows from the symmetric monoidality of $\pi^*$ and the map is given by tensoring the the counit $\pi^*\pi_*X\to X$ with $\pi^*Y$.

Let $\FF\subseteq \FF'$ be an $N$-adjacent pair of families, say at $H\leq N$. 
 $\EE(\FF',\FF)\otimes X$. Then by \aref{prop:Phiadj}
\[
(\EE(\FF',\FF)\otimes X)^N\simeq G/N_+\ltimes_{\WW} (\EE\FF(\W_NH)_+\otimes X^{\mgf H})^{\W_NH} ).
\]
Under this equivalence, the transformation is identified with $G/N_+\ltimes_{\WW}-$ applied to the transformation
\[
(\EE\FF(\W_NH)_+\otimes X^{\mgf})^{\W_NH}\otimes Y \to  (\EE\FF(\W_NH)_+\otimes \Phi^{H}X\otimes \pi^*Y)^{\W_NH}.
\]
That this is an equivalence follows from the commutativity of the diagram
\[
\begin{tikzcd}
\pi_!W\otimes V \ar[r]\ar[d, "\tau\otimes \id "'] & \pi_!(W\otimes \pi^*V) \ar[d, "\tau "]\\
\pi_*W\otimes V \ar[r] & \pi_*(W\otimes \pi^*V) 
\end{tikzcd}
\] 
where $W\in \SH^{G,\Nfree}(S)$, $V\in \SH^{G/N}(S)$; the commutativity can be checked by an argument similar to ones above, e.g., that in \aref{prop:untwist}.

The  general case follows by choosing a filtration 
$\emptyset=\FF_{-1}=\FF_0\subseteq \FF_1\subseteq \cdots \subseteq \FF_n= \FF_{{\rm all}}$ such that each pair $\FF_i\subseteq \FF_{i+1}$ is $N$-adjacent and considering the induced filtration $\EE\FF_i\otimes X$ on $X$.

\end{proof}

\section{Splitting motivic  \texorpdfstring{$G$}{G}-spectra \texorpdfstring{\`a}{a} la tom Dieck}\label{sec:mtds}

Throughout this section, $N\trianglelefteq G$ is a normal subgroup and we assume that $N$ acts trivially on $S$.

For a subgroup $H\leq N$ we write $\WW H = \W_G H/\W_N H$ for the quotient of Weil groups. 
Let $\FF(\W_NH)$ be the family of subgroups $\{K\leq \W_GH \mid K\cap \W_NH = \{e\}\}$. 
As before, we write $\EE_{\W_NH}(\W_GH)=\EE\FF(\W_NH)  $ for the universal $\W_NH$-free $\W_GH$-motivic space, to emphasize the ambient group.

\begin{definition}
Let $X$ be a motivic $G$-spectrum and $H\leq G$ a subgroup.
A \emph{splitting of $X$ at  $H$} is a map $f_H$ in $\SH^{\W_GH}(S)$, 
\[
\begin{tikzcd}
X^H \ar[r] & X^{\mgf H} \ar[l, bend right, "\ctext{  $f_H$}"']
\end{tikzcd}
\]
 which splits the canonical map   $X^H\to X^{\mgf H}$. An  \emph{$N$-splitting} of $X$ is a choice of splitting of $X$ at each subgroup $H\leq N$.  
\end{definition}

\begin{example}
\begin{enumerate}
\item Let $Y\in \HH_\bullet^G(S)$. The suspension spectrum $\Sigma^{\infty}Y$ has a canonical splitting at any subgroup $H\leq G$, defined as follows. Write $\pi:\NN_GH\to \W_GH$ for the quotient. The counit of the adjunction $\pi^*\dashv (-)^H$ on based motivic $\NN_GH$-spaces yields the map $\pi^*(Y^{H}) \to Y$ of spaces and thus a map of $\NN_GH$-spectra
$\pi^*\Sigma^{\infty}(Y^H)\to \Sigma^{\infty}Y$. Its adjoint is the map
\[
\Sigma^{\infty}(Y^H)\to (\Sigma^{\infty}Y)^{H}.
\]
By  \aref{prop:gfp}, this induces the desired splitting.  

\item If $X\in \SH^{G/N}(S)$ then $\phi^*(X)$ is split. 

\item If $X$ and $Y$ are split at $H$ then $X\otimes Y$ is canonically split via the composition $(X\otimes Y)^{\mgf H}\wkeq X^{\mgf H}\otimes Y^{\mgf H} \to X^H\otimes Y^H \to (X\otimes Y)^H$.

\end{enumerate}
\end{example}

Write $i:\Sm^{G,\Nfree}_S\subseteq \Sm^G_S$ 
for the inclusion.

\begin{definition}
The \emph{motivic homotopy orbit point spectrum} of $X$ is 
\[
X_{\hh N}:= \pi_*i_!i^*(X) \simeq (\EE\FF(N)_+\wedge X)/N
\]
\end{definition}
Let $X$ be an $N$-split motivic $G$-spectrum. Let $H\leq N$ be a subgroup and  consider the composition, where for notational brevity, we write simply 
$\EE_H = \EE_{\W_NH}(\W_GH)$.
Define the map $\Theta_{X,H}$ as the following composite, where the maps are explained below, 
\begin{align*}\label{eqn:theta}
\begin{split}
{G/N}_+\ltimes_{\WW H}\left(X^{\mgf H} \right)_{\hh\W_NH}  &\wkeq  G/N_+\ltimes_{\WW H}(\EE_{H+}\otimes X^{\mgf H})^{W_NH} \\
& \wkeq  G/N_+\ltimes_{\WW H} ((\EE_{H+}\otimes X)^{\mgf H})^{W_NH} \\
&\xrightarrow{(f_H)_*}  G/N_+\ltimes_{\WW H} ((\EE_{H+}\otimes X)^H)^{W_NH} \\
& \wkeq (( G_+\ltimes_{\NN_GH}\EE_{H+})\otimes X)^{N} \\
&  \longrightarrow X^N.
\end{split}
\end{align*}
The map $f_H$ is the splitting of $X$ at $H$ and the last map is induced by the projection 
\[
G_+\ltimes_{\NN_GH}\EE_{H+}\wkeq (G\times_{\NN_HG}\EE_H)_+ \to S^0.
\] 
The first equivalence comes from the Adams isomorphism. The second comes from the monoidality of geometric fixed points and that $H$ acts trivially on 
$\EE\FF(\W_NH)_+$. The fourth map, which is an equivalence, comes from a canonical exchange of functors (see proof of \aref{prop:Phiadj}) together with the projection formula for induction-restriction.
Note that this map only depends on the $G$-conjugacy class of the subgroup $H$.

Now define the map of motivic ${G/N}$-spectra
$$
\Theta_X:\bigoplus_{(H)} 
{G/N}_+\ltimes_{\WW H}\left(X^{\mgf H} \right)_{\hh\W_NH}
\to X^N, 
$$ 
where the index is over the set of $G$-conjugacy classes of subgroups of $N$,  
to be the sum over the maps $\Theta_{X,H}$ defined above.  The map $\Theta_X$ 
is natural with respect to maps of $N$-split spectra which are compatible with splitting.

\begin{theorem}[Motivic tom Dieck splitting]\label{thm:mtd}
Let $X\in \SH^G(S)$ be an $N$-split motivic $G$-spectrum. The map
$$
\Theta_X:\bigoplus_{(H)} 
{G/N}_+\ltimes_{\WW H}\left(X^{\mgf H} \right)_{\hh\W_NH}
\to X^N
$$ 
is an equivalence of motivic ${G/N}$-spectra. 
 \end{theorem}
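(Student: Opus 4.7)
The plan is to reduce the theorem to an equivalence on associated graded pieces of a filtration of $X$ by $N$-adjacent subfamilies of $\FF_{\all}$, where each graded piece can be computed using \aref{prop:Phiadj} together with the motivic Adams isomorphism \aref{thm:adams}.

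First I would fix a filtration $\emptyset=\FF_{-1}\subseteq\FF_0\subseteq\cdots\subseteq\FF_n=\FF_{\all}$ as in \aref{sec:filtadj}, so that each pair $\FF_{i-1}\subseteq\FF_i$ is $N$-adjacent at a subgroup $H_i\leq N$ and $i\mapsto(H_i)$ is a bijection onto the set of $G$-conjugacy classes of subgroups of $N$. Smashing the associated filtration with $X$ and applying $(-)^N$ yields a finite filtration
\[
\ast\to(\EE\FF_{0+}\otimes X)^N\to\cdots\to(\EE\FF_{n+}\otimes X)^N\simeq X^N
\]
whose $i$-th subquotient, by \aref{prop:Phiadj} combined with \aref{thm:adams}, is canonically equivalent to the summand $G/N_+\ltimes_{\WW H_i}(X^{\mgf H_i})_{\hh\W_NH_i}$ appearing in $\Theta_X$. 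Equip the source of $\Theta_X$ with the evident filtration by partial sums over indices $j\leq i$.

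Next I would verify that $\Theta_X$ is filtered. The key observation is that the $G$-space $G_+\ltimes_{\NN_GH_i}\EE_{H_i}$ (with $\EE_{H_i}=\EE_{\W_NH_i}(\W_GH_i)$) has all stabilizers in $\FF_i\minus\FF_{i-1}$, so the composite $(G_+\ltimes_{\NN_GH_i}\EE_{H_i})\otimes X\to X$ at the end of the construction of $\Theta_{X,H_i}$ factors through $\EE\FF_{i+}\otimes X$, and by \aref{prop:EENadj} becomes an equivalence after tensoring with $\EE(\FF_i,\FF_{i-1})$. Unwinding the construction of $\Theta_{X,H_i}$, its induced map on the $i$-th associated graded then agrees with the canonical equivalence furnished by \aref{prop:Phiadj} and the Adams isomorphism; the splitting $f_{H_i}$ enters precisely to supply the identification $((\EE_{H_i+}\otimes X)^{\mgf H_i})^{\W_NH_i}\simeq((\EE_{H_i+}\otimes X)^{H_i})^{\W_NH_i}$ appearing there. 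A finite induction on $i$ using the stable long exact sequences then completes the argument.

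The hard part will be the verification in the previous paragraph: carefully tracing the many natural equivalences so as to identify the induced map on associated graded with the canonical equivalence of \aref{prop:Phiadj} (rather than merely some filtered map), and confirming that the splittings $f_{H_i}$ enter compatibly with those identifications.
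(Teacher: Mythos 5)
Your proposal takes essentially the same approach as the paper: both filter by a chain of $N$-adjacent families, reduce to the associated graded (where the paper instead invokes naturality of $\Theta_X$ in $X$ and considers $\Theta_{\EE(\FF_{i+1},\FF_i)\otimes X}$, which is equivalent to your "partial sums" filtration on the source since only the $(H_i)$-summand survives on a concentrated spectrum), and then cite \aref{prop:Phiadj} for the graded pieces. The compatibility check you flag as the hard part is also left implicit in the paper's proof; it comes down to observing that, for $Y$ concentrated at $(H)$, each constituent map of $\Theta_{Y,H}$ — in particular the splitting $(f_H)_*$, since $\EE\FF[H]_+\otimes Y|_{\NN_G H}\simeq 0$ forces $(\EE_{H+}\otimes Y)^H\to(\EE_{H+}\otimes Y)^{\mgf H}$ to be invertible — is an equivalence, with the last projection handled by \aref{prop:EENadj} as you note.
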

\begin{proof}
Since $G$ is finite there is a sequence of families  
\[
\emptyset = \FF_{-1} \subseteq \FF_{0}\subseteq \FF_{1}\subseteq \cdots \subseteq \FF_{n}=\FF_{all}
\]
such that each pair $\FF_i\subseteq \FF_{i+1}$ is $N$-adjacent, see \aref{sec:filtadj}. 
This gives rise to the filtration of the identity functor
\[
\ast\simeq \EE\FF_{-1+}\otimes - \to \EE\FF_{0+}\otimes-  \to \cdots \to \EE\FF_{n-1+}\otimes \to \EE\FF_{n+}\otimes - \simeq \id.
\]
It this suffices to show that to show that 
$\Theta_{X\otimes \EE(\FF',\FF)}$ is an equivalence whenever $\FF\subseteq\FF'$ is an $N$-adjacent pair.

But if $\FF\subseteq \FF'$ is $N$-adjacent at $H\leq N$,  then  all summands of the domain of $\Theta$  vanish except the summand corresponding to the conjugacy class $(H)$ and $\Theta_{X\otimes\EE\FF(\FF',\FF)}$ is an equivalence by \aref{prop:Phiadj}.
 \end{proof}

\begin{corollary}
Let $Y$ be a based motivic $G$-space over $B$. Then 
\[
\pi_{a,b}^G(\SS_B)\iso \bigoplus_{(H)}\pi_{a,b}(\mathbf{B}\W H_+).
\]
\end{corollary}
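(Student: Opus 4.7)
The plan is to deduce this directly from the motivic tom Dieck splitting (\aref{thm:mtd}) applied to the sphere, which corresponds to the motivic $G$-space $X = S^0$. Note that $S^0$ is canonically $G$-split at every subgroup $H \leq G$ by the construction described in the example preceding \aref{thm:mtd} (the splitting is canonical for suspension spectra), so the hypothesis of the splitting theorem is met.

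First I would specialize \aref{thm:mtd} to $N = G$ and $X = S^0$, obtaining an equivalence in $\SH(B)$ of the form
\[
\Theta_{\SS_B}: \bigoplus_{(H)} \left( \Sigma^\infty (S^0)^H \right)_{\hh \W H} \xrightarrow{\sim} (\SS_B)^G.
\]
Next I would identify each summand. Since $(S^0)^H = S^0$ for every subgroup $H$, each $\Sigma^\infty (S^0)^H$ is simply $\SS_B$. By the definition of the motivic homotopy orbits and of the motivic classifying space $\mathbf{B}\W H := \EE \W H / \W H$, one computes
\[
(\SS_B)_{\hh \W H} \simeq (\EE \W H_+ \wedge \SS_B)/\W H \simeq \Sigma^\infty (\EE \W H / \W H)_+ \simeq \Sigma^\infty \mathbf{B}\W H_+,
\]
using that $\SS_B$ is the unit for the smash product and that taking suspension spectra commutes with taking quotients by a free action (the latter via \aref{prop:sqe}).

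Finally, applying $\pi_{a,b}$ to the resulting equivalence and using that $\pi_{a,b}^G(\SS_B)$ is by definition $\pi_{a,b}$ of the $G$-fixed point spectrum $\SS_B^G$, together with the fact that $\pi_{a,b}$ commutes with direct sums in $\SH(B)$, gives the claimed isomorphism. The main content of the proof is entirely packaged into \aref{thm:mtd}; the corollary is essentially a translation into bigraded homotopy groups, so no serious obstacle remains beyond careful bookkeeping of the identifications in the second step.
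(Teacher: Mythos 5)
Your proof is correct and is exactly the intended specialization of \aref{thm:mtd} to $N=G$ and $X=S^0$, which the paper states without further argument. The key identifications — that $\SS_B$ is canonically $G$-split as a suspension spectrum, and that $(\SS_B)_{\hh\W H}\simeq\Sigma^\infty\mathbf{B}\W H_+$ via the Adams isomorphism together with the compatibility of the quotient functor with stabilization from \aref{prop:hqe} and \aref{prop:sqe} — are all handled properly.
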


%

\vspace{20pt}
\scriptsize
\noindent
David Gepner\\
School of Mathematics and Statistics\\
The University of Melbourne\\
Parkville, VIC, 3010\\
Australia\\
\texttt{david.gepner@unimelb.edu.au}

\vspace{10pt}
\noindent
Jeremiah Heller\\
University of Illinois at Urbana-Champaign\\
Department of Mathematics\\
1409 W. Green Street, Urbana, IL 61801\\
United States\\
\texttt{jbheller@illinois.edu}
\end{document}